\documentclass[11pt]{article}
\usepackage[a4paper]{geometry}

\usepackage{amsmath}
\usepackage{amssymb}
\usepackage{amsthm}
\usepackage{mathrsfs}
\usepackage{bbm}
\usepackage{empheq}
\usepackage[loose]{subfigure}
\usepackage{epsfig}
\usepackage{graphicx}
\usepackage{psfrag}
\usepackage[usenames,dvipsnames]{pstricks}
\usepackage{pst-plot}
\usepackage[colorlinks]{hyperref}
\usepackage{tabls}
\usepackage{paralist}

\usepackage[all]{xypic}

\newcommand{\COMMENT}[1]{}
\newcommand{\E}{\mathbb{E}}
\newcommand{\eins}{\mathbbm{1}}
\newcommand{\G}{\mathbf{G}}
\renewcommand{\P}{\mathbb{P}}

\newcommand{\R}{\mathbb{R}}

\newcommand{\numeq}{\stackrel{\mathrm{num}}{=}}

\newtheorem{thm}{Theorem}[section]
\newtheorem{prop}[thm]{Proposition}
\newtheorem{lem}[thm]{Lemma}

\newtheorem{cor}[thm]{Corollary}
\theoremstyle{definition}
\newtheorem{defn}[thm]{Definition}
\newtheorem{rmk}[thm]{Remark}
\newtheorem{eg}{Example}[section]

\hypersetup{
    linkcolor=blue,
}

\title{Optimal Uncertainty Quantification}

\author{
	H.\ Owhadi, T.\ J.\ Sullivan, M.\ McKerns, M. Ortiz \\
	California Institute of Technology \\
	\textsf{owhadi@caltech.edu} \\
	~ \\
	C.\ Scovel \\
	Los Alamos National Laboratory \\
	\textsf{jcs@lanl.gov}
}
\date{\today}

\makeatletter
\@addtoreset{equation}{section}
\@addtoreset{figure}{section}
\@addtoreset{table}{section}
\makeatother

\renewcommand{\thefigure}{\arabic{section}.\arabic{figure}}

\makeatletter
\renewcommand{\p@subfigure}{\thefigure}
\makeatother

\begin{document}

\maketitle

\begin{abstract}
	We propose a rigorous framework for Uncertainty Quantification (UQ) in which the UQ objectives and  the assumptions/information set are brought to the forefront.  This framework, which we call \emph{Optimal Uncertainty Quantification} (OUQ), is based on the observation that, given a set of assumptions and information about the problem, there exist optimal bounds on uncertainties:  these are obtained as values of well-defined optimization problems corresponding to extremizing probabilities of failure, or of deviations, subject to the constraints imposed by the scenarios compatible with the assumptions and information.  In particular, this framework does not implicitly impose inappropriate assumptions, nor does it repudiate relevant information.

	Although OUQ optimization problems are extremely large, we show that under general conditions they have finite-dimensional reductions.  As an application, we develop \emph{Optimal Concentration Inequalities} (OCI) of Hoeffding and McDiarmid type.  Surprisingly, these results show that
uncertainties in input parameters, which propagate to output uncertainties in the classical sensitivity analysis paradigm, may fail to do so if the transfer functions (or probability distributions) are imperfectly known. We show how, for hierarchical structures, this phenomenon may lead to the non-propagation of uncertainties or information across scales.

	In addition, a general algorithmic framework is developed for OUQ and is tested on the Caltech surrogate model for hypervelocity impact and on the seismic safety assessment of truss structures, suggesting the feasibility of the framework for important complex systems.

The introduction of this paper provides both an overview of the paper and a self-contained mini-tutorial about basic concepts and issues of UQ.
\end{abstract}

\newpage

\tableofcontents

\newpage

\section{Introduction}
\label{sec:Introduction}

\subsection{The UQ problem}

This paper sets out a rigorous and unified framework for the statement and solution of uncertainty quantification (UQ) problems centered on the notion of available information.  In general, UQ refers to any attempt to quantitatively understand the relationships among uncertain parameters and processes in physical processes, or in mathematical and computational models for them;  such understanding may be deterministic or probabilistic in nature.  However, to make the discussion specific, we start the description of the proposed framework as it applies to the certification problem;  Section \ref{sec:comparisons} gives a broader description of the purpose, motivation and applications of UQ in the proposed framework and a comparison with current methods.

By \emph{certification} we mean the problem of showing that, with probability at least $1 - \epsilon$, the real-valued response function $G$ of a given physical system will not exceed a given safety threshold $a$.  That is, we wish to show that
\begin{equation}
	\label{eq:def_cert}
	\P[G(X)\geq a] \leq \epsilon.
\end{equation}
In practice, the event $[ G(X) \geq a ]$ may represent the crash of an aircraft, the failure of a weapons system, or the average surface temperature on the Earth being too high.  The symbol $\P$ denotes the probability measure associated with the randomness of (some of) the input variables $X$ of $G$ (commonly referred to as ``aleatoric uncertainty'').

Specific examples of values of $\epsilon$ used in practice are: $10^{-9}$ in the aviation industry (for the maximum probability of a catastrophic event per flight hour, see \cite[p.581]{Soekkha:1997} and \cite{Boeing:2010}), $0$ in the seismic design of nuclear power plants \cite{Esteva:1970, Drenick:1980} and $0.05$ for the collapse of soil embankments in surface mining \cite[p.358]{Hustrulid:2000}.  In structural engineering \cite{Gillford:2005}, the maximum permissible probability of failure (due to any cause) is $10^{-4} K_s n_d / n_r$ (this is an example of $\epsilon$) where $n_d$ is the design life (in years), $n_r$ is the number of people at risk in the event of failure  and $K_s$ is given by the following values (with $1/\text{year}$ units): $0.005$ for places of public safety (including dams); $0.05$ for domestic, office or trade and industry structures;  $0.5$ for bridges; and $5$ for towers, masts and offshore structures.  In US environmental legislation, the maximum acceptable increased lifetime chance of developing cancer due to lifetime exposure to a substance is $10^{-6}$ \cite{Mantel:1961} (\cite{Kelly:1993} draws attention to the fact that ``there is no sound scientific, social, economic, or other basis for the selection of the threshold $10^{-6}$ as a cleanup goal for hazardous waste sites'').

One of the most challenging aspects of UQ lies in the fact that in practical applications, the measure $\P$ and the response function $G$ are not known a priori.  This lack of information, commonly referred to as ``epistemic uncertainty'', can be described precisely by introducing $\mathcal{A}$, the set of all \emph{admissible scenarios} $(f, \mu)$ for the unknown --- or partially known --- reality $(G, \P)$.  More precisely, in those applications, the available information does not determine $(G, \P)$ \emph{uniquely} but instead determines a set $\mathcal{A}$ such that any $(f, \mu) \in \mathcal{A}$ could a priori be $(G, \P)$.  Hence, $\mathcal{A}$ is a (possibly infinite-dimensional) set of measures and functions defining explicitly \emph{information on and assumptions about $G$ and $\P$}.  In practice, this set is obtained from physical laws, experimental data and expert judgment.  It then follows from $(G, \P) \in \mathcal{A}$ that
\begin{equation}
	\label{eq:def_cert_ouq_bounds}
	\inf_{(f, \mu) \in \mathcal{A}} \mu [f(X) \geq a] \leq \P [G(X) \geq a] \leq \sup_{(f, \mu) \in \mathcal{A}} \mu [f(X) \geq a].
\end{equation}
Moreover, it is elementary to observe that
\begin{compactitem}
	\item The quantities on the right-hand and left-hand of \eqref{eq:def_cert_ouq_bounds} are extreme values of optimization problems and elements of $[0, 1]$.
	\item  Both the right-hand and left-hand inequalities are optimal in the sense that they are the sharpest  bounds for $\P [G(X) \geq a]$ that are consistent with the information and assumptions $\mathcal{A}$.
\end{compactitem}
More importantly, in Proposition \ref{prop_opt}, we show that these two inequalities provide sufficient information to produce an optimal solution to the certification problem.

 \begin{figure}[tp]
	\begin{center}
			\includegraphics[width=0.4\textwidth]{./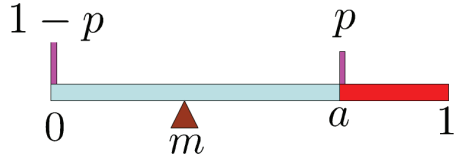}
		\caption{\emph{You are given one pound of play-dough and a seesaw balanced around $m$. How much mass can you put on right hand side of $a$ while keeping the seesaw balanced around $m$?} The solution of this optimization problem can be achieved by placing any mass on the right hand side of $a$, exactly at $a$ (to place mass on $[a,1]$ with minimum leverage towards the right hand side of the seesaw) and any mass on the left hand side of $a$, exactly at $0$ (for maximum leverage towards the left hand side of the seesaw).}\label{fig:playdoh}
	\end{center}
\end{figure}

\begin{eg}
	To give a very simple example of the effect of information and optimal bounds over a class $\mathcal{A}$, consider the certification problem \eqref{eq:def_cert} when $Y := G(X)$ is a real-valued random variable taking values in the interval $[0,1]$ and $a \in (0,1)$;  to further simplify the exposition, we consider only the upper bound problem, suppress dependence upon $G$ and $X$ and focus solely on the question of which probability measures $\nu$ on $\R$ are admissible scenarios for the probability distribution of $Y$.  So far, \emph{any} probability measure on $[0,1]$ is admissible:
	\[
		\mathcal{A} = \{ \nu \mid \text{$\nu$ is a probability measure on $[0,1]$} \}.
	\]
	and so the optimal upper bound in \eqref{eq:def_cert_ouq_bounds} is simply
	\[
		\P [Y \geq a] \leq \sup_{\nu \in \mathcal{A}} \nu [Y \geq a] = 1.
	\]
	Now suppose that we are given an additional piece of information:  the expected value of $Y$ equals $m \in (0,a)$.  These are, in fact, the assumptions corresponding to an elementary Markov inequality, and the corresponding admissible set is
	\[
		\mathcal{A}_{\text{Mrkv}} = \left\{ \nu \,\middle|\,
		\begin{array}{c}
			\text{$\nu$ is a probability measure on $[0,1]$,} \\
			 \E_{\nu}[Y] = m
		\end{array}
		\right\}.
	\]
The least upper bound on $\P [Y \geq a]$ corresponding to the admissible set $\mathcal{A}_{\text{Mrkv}}$ is the solution of the infinite dimensional optimization problem
	\begin{equation}\label{eq:leastupbound}
		\sup_{\nu \in \mathcal{A}_{\text{Mrkv}}} \nu [Y \geq a]
	\end{equation}
Formulating \eqref{eq:leastupbound} as a mechanical optimization problem (see Figure \ref{fig:playdoh}), it is easy to observe that the extremum of \eqref{eq:leastupbound} can be achieved only considering the situation where $\nu$ is the weighted sum of mass a Dirac at $0$ (with weight $1-p$) and a mass of Dirac at $a$ (with weight $p$). It follows that \eqref{eq:leastupbound} can be reduced to the simple (one-dimensional) optimization problem: \emph{Maximize $p$ subject to $a p=m$}. It follows that Markov's inequality is the optimal bound for the admissible set $\mathcal{A}_{\text{Mrkv}}$.
	\begin{equation}
		\P [Y \geq a] \leq \sup_{\nu \in \mathcal{A}_{\text{Mrkv}}} \nu [Y \geq a] = \tfrac{m}{a} \text{.}
	\end{equation}
	In some sense, the OUQ framework that we present in this paper is the the extension of this procedure to situations in which the admissible class $\mathcal{A}$ is complicated enough that a closed-form inequality such as Markov's inequality is unavailable, but optimal bounds can nevertheless be \emph{computed} using reduction properties analogous to the one illustrated in Figure \ref{fig:playdoh}.
\end{eg}

\subsection{Motivating physical example and outline of the paper}
\label{Subsec:motivatingex}

Section \ref{sec:OUQ} gives a formal description of the \emph{Optimal Uncertainty Quantification} framework.  In order to help intuition, we will illustrate and motivate our abstract definitions and results with a practical example: an analytical surrogate model for hypervelocity impact.

The physical system of interest is one in which a 440C steel ball (440C is a standard, i.e.\ a grade of steel) of diameter $D_{\mathrm{p}} = 1.778 \, \mathrm{mm}$ impacts a 440C steel plate of thickness $h$ (expressed in $\mathrm{mm}$) at speed $v$ (expressed in $\mathrm{km \cdot s}^{-1}$) at obliquity $\theta$ from the plate normal.  The physical experiments are performed at the California Institute of Technology SPHIR (Small Particle Hypervelocity Impact Range) facility (see Figure \ref{fig:sphir}).  An analytical surrogate model was developed to approximate the perforation area (in $\mathrm{mm}^{2}$) caused by this impact scenario. The surrogate response function is as follows:
\begin{equation}
	\label{eq:PSAAP_SPHIR_surr}
	H(h, \theta, v) = K \left( \frac{h}{D_{\mathrm{p}}} \right)^{p} (\cos \theta)^{u} \left( \tanh \left( \frac{v}{v_{\mathrm{bl}}} - 1 \right) \right)_{+}^{m},
\end{equation}
where the \emph{ballistic limit velocity} (the speed below which no perforation area occurs) is given by
\begin{equation}
	\label{eq:ballistic_limit}
	v_{\mathrm{bl}} := H_{0} \left( \frac{h }{(\cos \theta)^{n}} \right)^{s}.
\end{equation}
The seven quantities $H_{0}$, $s$, $n$, $K$, $p$, $u$ and $m$ are fitting parameters that have been chosen to minimize the least-squares error between the surrogate and a set of 56 experimental data points;  they take the values
\begin{align*}
	H_{0} &= 0.5794 \, \mathrm{km \cdot s}^{-1}, & s &= 1.4004, & n &= 0.4482, & K &= 10.3936 \, \mathrm{mm^{2}}, \\
	p &= 0.4757, & u &= 1.0275, & m &= 0.4682. &&
\end{align*}

 \begin{figure}[tp]
	\begin{center}
			\includegraphics[width=0.9\textwidth]{./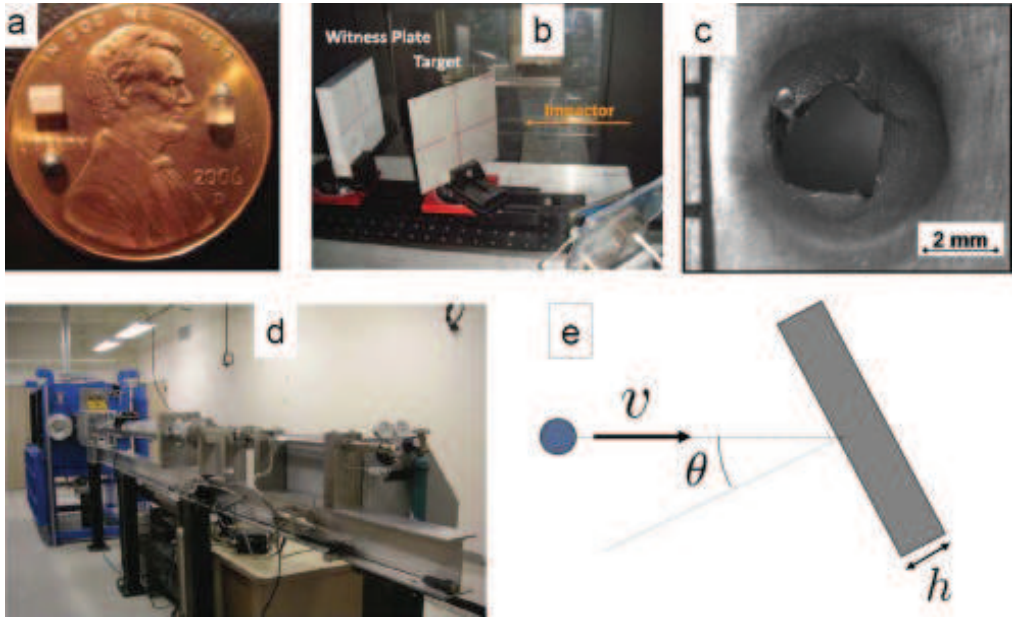}
		\caption{Experimenal set up. (a) Stainless steel spherical projectiles and nylon sabots. (b) Target plate held at the end of the gun barrel (c) Perforation of the target plate  (d) General view of the Small Particle Hypervelocity Impact Range (SPHIR) Facility at Caltech (e) plate thickness $h$, plate obliquity $\theta$ and projectile velocity $v$.}\label{fig:sphir}
	\end{center}
\end{figure}

Hence, in this illustrative example, $H(h, \theta, v)$ will be our response function  $G(X_{1}, X_{2}, X_{3})$ and we will consider  cases in which $H$ is perfectly and imperfectly known.  In Section \ref{Sec:seismic}, we will apply the OUQ framework to the seismic safety assessment of structures and consider a more complex example involving a large number of variables.

\subsubsection{Formulation of the admissible set and reduction theorems.}

For the example considered here, we will assume that the input parameters $h, \theta$ and $v$ are  random variables, of unknown probability distribution $\P$ and of given range
\begin{equation}\label{eq:PSAAP_SPHIR_range}
	\begin{split}
		(h, \theta, v) \in \mathcal{X} & := \mathcal{X}_{1} \times \mathcal{X}_{2} \times \mathcal{X}_{3}, \\
		h \in \mathcal{X}_{1} & := [1.524, 2.667] \, \mathrm{mm} = [60, 105] \, \mathrm{mils}, \\
		\theta \in \mathcal{X}_{2} & := [0, \tfrac{\pi}{6}], \\
		v \in \mathcal{X}_{3} & := [2.1, 2.8] \, \mathrm{km \cdot s}^{-1}.
	\end{split}
\end{equation}
We will measure lengths in both $\mathrm{mm}$ and $\mathrm{mils}$ (recall that $1\,\mathrm{mm}=39.4\,\mathrm{mils}$).

We will adopt the ``gunner's perspective'' that failure consists of not perforating the plate, and therefore seek to obtain an optimal bound on the probability of non-perforation, i.e.\ $\P[H\leq 0]$, with possibly incomplete information on $\P$ and $H$.

Assuming $H$ to be known, if the information on $\P$ is limited to the knowledge that velocity, impact obliquity and plate thickness are independent random variables and that the mean perforation area lies in a prescribed range $[m_{1}, m_{2}] := [5.5, 7.5] \, \mathrm{mm}^{2}$, then this information describes the admissible set $\mathcal{A}_{H}$, where
\begin{equation}
\label{eq:PSAAP_SPHIR_Admissible}
	\mathcal{A}_{H} := \left\{ (H, \mu) \,\middle|\,
	\begin{matrix}
		H \text{ given by \eqref{eq:PSAAP_SPHIR_surr},} \\
		\mu = \mu_{1} \otimes \mu_{2} \otimes \mu_{3}, \\
		m_{1}=5.5 \,\mathrm{mm}^{2} \leq \mathbb{E}_{\mu}[H] \leq m_2=7.5 \,\mathrm{mm}^{2}
	\end{matrix}
	\right\}.
\end{equation}

If the information on $H$ is limited to values of $\operatorname{Osc}_i(H)$,
the component-wise oscillations (defined below, it is a least upper bound on how a change in variable $i$ affects the response function), and if the information on $\P$ is as above, then the corresponding admissible set is $\mathcal{A}_{\mathrm{McD}}$, which corresponds to the assumptions of McDiarmid's inequality \cite{McDiarmid:1989}, and is defined by
\begin{equation}
\label{eq:PSAAP_SPHIR_Admissible_McD}
	\mathcal{A}_{\mathrm{McD}} :=
	\left\{ (f, \mu) \,\middle|\,
	\begin{matrix}
		\mu = \mu_{1} \otimes \mu_{2} \otimes \mu_{3}, \\
			m_{1}=5.5 \,\mathrm{mm}^{2} \leq \mathbb{E}_{\mu}[f] \leq m_2=7.5\, \mathrm{mm}^{2}, \\
		\operatorname{Osc}_{i}(f) \leq \operatorname{Osc}_{i}(H) \text{ for } i = 1, 2, 3
	\end{matrix}
	\right\}.
\end{equation}

\begin{defn}
Let $\mathcal{X} := \mathcal{X}_1 \times \dots \times \mathcal{X}_m$ and consider a function $f \colon \mathcal{X} \to \R$. For $i = 1, \dots, m$, we define the component-wise oscillations
\begin{equation}
	\label{eq:defOsc}
	\operatorname{Osc}_i(f) := \sup_{(x_1,\ldots,x_{m}) \in \mathcal{X}} \sup_{x_i' \in \mathcal{X}_{i}} \left| f(\ldots,x_i,\ldots)- f(\ldots,x_i',\ldots) \right|.
\end{equation}
Thus, $\operatorname{Osc}_i(f)$ measures the maximum oscillation of $f$ in the $i^{\text{th}}$ factor.
\end{defn}
\begin{rmk}
The explicit expression \eqref{eq:PSAAP_SPHIR_surr} of $H$ and the ranges
\eqref{eq:PSAAP_SPHIR_range} allow us to compute the component-wise oscillations $\operatorname{Osc}_i(H)$, which are, respectively, $8.86 \, \mathrm{mm}^2$, $4.17 \, \mathrm{mm}^2$ and $7.20 \, \mathrm{mm}^2$ for thickness, obliquity, and velocity.
\end{rmk}

In general, for any admissible set $\mathcal{A}$ of function/measure pairs for the perforation problem, we define
\begin{equation}
	\label{eq:opta}
	\mathcal{U}(\mathcal{A}):=\sup_{(f, \mu) \in \mathcal{A}} \mu[f(h,\theta,v)\leq 0].
\end{equation}
In this notation, the optimal upper bounds on the probability of non-perforation, given the information contained in $\mathcal{A}_{H}$ and $\mathcal{A}_{\mathrm{McD}}$, are $\mathcal{U}(\mathcal{A}_{H})$ and $\mathcal{U}(\mathcal{A}_{\mathrm{McD}})$ respectively.

In $\mathcal{A}_{H}$ the response function is exactly known whereas in $\mathcal{A}_{\mathrm{McD}}$ it is imperfectly known (the information on the response function is limited to its component-wise oscillations $\operatorname{Osc}_i(H)$). Both $\mathcal{A}_{H}$ and $\mathcal{A}_{\mathrm{McD}}$ describe epistemic uncertainties (since in $\mathcal{A}_{H}$ the probability distributions of thickness, obliquity, and velocity are imperfectly known). $\mathcal{A}_{\mathrm{McD}}$ is the set of response functions $f$ and
probability measures $\mu$ that could be $H$ and $\P$ given the information contained in (i.e.\ the constraints imposed by) $\operatorname{Osc}_i(H)$, the independence of the input variables and the bounds $m_1$ and $m_2$ on the mean perforation area. $\mathcal{U}(\mathcal{A}_{\mathrm{McD}})$ quantifies the worst case scenario, i.e. the largest probability of non-perforation given what $H$ and $\P$ could be.

\paragraph{Reduction theorems.}
The optimization variables associated with $\mathcal{U}(\mathcal{A}_{H})$ are tensorizations of probability measures on thickness $h$, on obliquity $\theta$ and velocity $v$.  This problem is not directly computational tractable since finding the optimum appears to require a search over the spaces of probability measures on the intervals $[1.524, 2.667] \, \mathrm{mm}$,  $[0, \tfrac{\pi}{6}]$ and $[2.1, 2.8] \, \mathrm{km \cdot s}^{-1}$. However, in Section \ref{sec:Reduction} (Theorem \ref{thm:baby_measure} and Corollary \ref{cor:ouqreduce}) we show that, since the constraint $m_{1} \leq \mathbb{E}_{\mu}[H] \leq m_{2}$ is multi-linear in $\mu_1,\mu_2$ and $\mu_3$, the optimum $\mathcal{U}(\mathcal{A}_{H})$ can be achieved by searching among those measures $\mu$ whose marginal distributions on each of the three input parameter ranges have support on at most two points.  That is,
\begin{equation}
\label{eq:Hdeltarecduced}
\mathcal{U}(\mathcal{A}_{H}) = \mathcal{U}(\mathcal{A}_{\Delta}),	
\end{equation}
where the reduced feasible set $\mathcal{A}_{\Delta}$ is given by
\begin{equation}
\label{eq:PSAAP_SPHIR_Admissible_Reduced}
	\mathcal{A}_{\Delta} := \left\{ (H, \mu) \,\middle|\,
	\begin{matrix}
		H \text{ given by \eqref{eq:PSAAP_SPHIR_surr},} \\
		\mu = \mu_{1} \otimes \mu_{2} \otimes \mu_{3}, \\
		\mu_{i} \in \Delta_{1}(\mathcal{X}_{i}) \text{ for } i = 1, 2, 3, \\
		m_{1} \leq \mathbb{E}_{\mu}[H] \leq m_{2}
	\end{matrix}
	\right\},
\end{equation}
where
\[
	\Delta_{1}(\mathcal{X}_i) := \left\{ \alpha \delta_{x^{0}}+ (1-\alpha) \delta_{x^{1}} \,\middle|\, x^{j} \in \mathcal{X}_i, \text{ for } j = 0, 1\text{ and } \alpha \in [0,1] \right\}
\]
denotes the set of binary convex combinations of Dirac masses on $\mathcal{X}_i$.

More generally, although the OUQ optimization problems \eqref{eq:def_cert_ouq_bounds} are extremely large, we show in Section \ref{sec:Reduction} that an important subclass enjoys significant and practical finite-dimensional reduction properties.  More precisely, although the optimization variables $(f, \mu)$ live in a product space of functions and probability measures, for OUQ problems governed by linear inequality constraints on generalized moments, we demonstrate in Theorem \ref{thm:baby_measure} and Corollary \ref{cor:ouqreduce} that the search can be reduced to one over probability measures that are products of finite convex combinations of Dirac masses with explicit upper bounds on the number of Dirac masses.  Moreover, all the results in this paper can be extended to sets of extreme points (extremal measures) more general than Dirac masses, such as those described by  Dynkin \cite{Dynkin:1978}; we have phrased the results in terms of Dirac masses for simplicity.

Furthermore,  when all constraints are generalized moments of functions of $f$,
the search over admissible functions reduces to a search over functions on an $m$-fold product of finite discrete spaces, and the search over $m$-fold products of finite convex combinations of Dirac masses reduce  to the products of probability measures on this $m$-fold product of finite discrete spaces.  This latter reduction, presented in Theorem \ref{thm:valuereduce}, completely eliminates dependency on the coordinate positions of the Dirac masses. With this result, the optimization variables of $\mathcal{U}(\mathcal{A}_{\mathrm{McD}})$ can be reduced to functions and  products of probability measures on $\{ 0, 1 \}^3$.

\subsubsection{Optimal concentration inequalities.}\label{subsec:sphirex}
	Concentration-of-measure inequalities can be used to obtain upper bounds on $\mathcal{U}(\mathcal{A}_{H})$ and $\mathcal{U}(\mathcal{A}_{\mathrm{McD}})$;  in that sense, they lead to sub-optimal methods. Indeed, according to McDiarmid's inequality \cite{McDiarmid:1989, McDiarmid:1998}, for all  functions $f$ of $m$ independent variables, one must have
\begin{equation}
	\label{eq:McDintro}
	\mu\big[ f(X_1,\ldots,X_m)-\E_\mu[f]\geq a\big] \leq \exp \left( -2\frac{a^2}{\sum_{i=1}^m (\operatorname{Osc}_{i}(f))^2 } \right).
\end{equation}
Application of this inequality  to \eqref{eq:PSAAP_SPHIR_Admissible_McD} (using  $\E_\mu[f]\geq m_1= 5.5 \, \mathrm{mm}^2$) yields
the bound
\begin{equation}\label{eq:664}
	 \mathcal{U}(\mathcal{A}_{\mathrm{McD}}) \leq \exp \left( - \frac{2 m_{1}^{2}}{\sum_{i = 1}^{3} \operatorname{Osc}_{i}(H)^{2}} \right) = 66.4\%.
\end{equation}
Note that $\mathcal{U}(\mathcal{A}_{\mathrm{McD}}):=\sup_{(f,\mu)\in \mathcal{A}_{\mathrm{McD}}}\mu[f\leq 0]$ is the least upper bound on the probability of non-perforation $\P[H=0]$ given the information contained in the admissible set \eqref{eq:PSAAP_SPHIR_Admissible_McD}.

In Section \ref{sec:Reduction-mcd}, the reduction techniques of Section \ref{sec:Reduction} are applied to obtain optimal McDiarmid and Hoeffding inequalities, i.e.  optimal concentration-of-measure inequalities with the assumptions of McDiarmid's inequality \cite{McDiarmid:1989} or Hoeffding's inequality \cite{Hoeffding:1963}.  In particular, Theorems \ref{thm:m1}, \ref{thm:m2} and \ref{thm:m3} provide analytic solutions to the McDiarmid problem for dimension $m = 1, 2, 3$, and  Proposition \ref{prop:McD_reduced_explicit_allm} provides a recursive formula for general $m$, thereby providing an optimal McDiarmid inequality in these cases.  In Theorems \ref{thm:Hfdm2} and \ref{thm:Hfdm3}, we give analytic solutions under Hoeffding's assumptions. A noteworthy result is that the optimal bounds associated with McDiarmid's and Hoeffding's assumptions are the same for $m=2$ but may be distinct for $m=3$, and so, in some sense, information about linearity or non-linearity of the response function has a different effect depending upon the dimension $m$ of the problem.

\paragraph{Non-propagation of uncertainties.}
For $m=2$, define $\mathcal{A}_2$ to be the space of all functions $f$ and measure $\mu$ such that $\mu=\mu_1 \otimes \mu_2$ and $\operatorname{Osc}_{i}(f)\leq D_i$. The optimal  concentration-of-measure inequality with the assumptions of McDiarmid's inequality, Theorem \ref{thm:m2}, states that
\begin{equation}
	\label{eq:McDintroreduced_explicit_2d}
	\sup_{(f,\mu) \in \mathcal{A}_2}  \mu\big[ f(X_1,X_2)-\E_\mu[f]\geq a\big] =
	\begin{cases}
		0, & \text{if } D_{1} + D_{2} \geq a, \\
		\dfrac{(D_{1} + D_{2} -a)^{2}}{4 D_{1} D_{2}}, & \text{if } | D_{1} - D_{2} | \leq a \leq  D_{1} + D_{2}, \\
		1 - \dfrac{a}{\max (D_{1}, D_{2})}, & \text{if } 0\leq  a \leq | D_{1} - D_{2} |.
	\end{cases}
\end{equation}
Observe that if $D_{2}+a\leq D_1$, then the optimal bound does not depend on
 $D_{2}$, and therefore, any decrease in $D_{2}$ does not improve the inequality.  These explicit bounds show that, although uncertainties may propagate for the true values of $G$ and $\P$ (as expected from the sensitivity analysis paradigm), they may fail to do so when the information is incomplete on $G$ and $\P$ and the objective is the maximum of $\mu[f\geq a]$ compatible with the given information.  The non-propagation of input uncertainties is a non-trivial observation related to the fact that some of the constraints defining the range of the input variables may not be realized by the worst-case scenario (extremum of the OUQ problem).
We have further illustrated this point in Section \ref{Sec:porous} and shown that for systems characterized by multiple scales or hierarchical structures, information or uncertainties may not propagate across scales. Note that the $m=2$ case does not apply to the SPHIR example  (since \eqref{eq:PSAAP_SPHIR_Admissible_McD} involves three variables, i.e. it requires $m=3$).

\paragraph{Application to the SPHIR facility admissible set \eqref{eq:PSAAP_SPHIR_Admissible_McD}.}
For $m=3$, the ``optimal McDiarmid inequality'' of Theorem \ref{thm:m3} and Remark \ref{rmk:baby_measure_indexing} provides the upper bound
\begin{equation}\label{eq:664b}
	\mathcal{U}(\mathcal{A}_{\mathrm{McD}}) = 43.7\%.	
\end{equation}
Remark \ref{rmk:deihdehdu} also shows that reducing the uncertainty in obliquity (described by the constraint $\operatorname{Osc}_{i}(f) \leq \operatorname{Osc}_{i}(H)$ in \eqref{eq:PSAAP_SPHIR_Admissible_McD} for $i=\text{obliquity}$)
does not affect the \emph{least}
 upper bound \eqref{eq:664b}. Recall that $\mathcal{U}(\mathcal{A}_{\mathrm{McD}})$ is the least upper bound on the probability that the perforation is zero given that the mean perforation area is in between $5.5 \,\mathrm{mm}^{2}$ and $7.5\, \mathrm{mm}^{2}$ and the constraints imposed by the independence, ranges and component-wise oscillations associated with the input random variables.

The difference between \eqref{eq:664} and \eqref{eq:664b} lies in the fact that $66.4\%$ is non-optimal whereas $43.7\%$ is the \emph{least} upper bound on the probability of non perforation given the information contained in
$\mathcal{A}_{\mathrm{McD}}$. $43.7\%$ is a direct function of
the information contained in $\mathcal{A}_{\mathrm{McD}}$ and Section \ref{sec:OUQ} shows how admissible sets with higher information content lead to smaller least upper bounds on the probability of non perforation.  Section \ref{sec:OUQ} also shows how such admissible sets can be constructed, in the OUQ framework, via the optimal selection of experiments.

\begin{figure}[tp]
	\begin{center}
		\subfigure[support points at iteration 0]{
			\includegraphics[width=0.45\textwidth]{./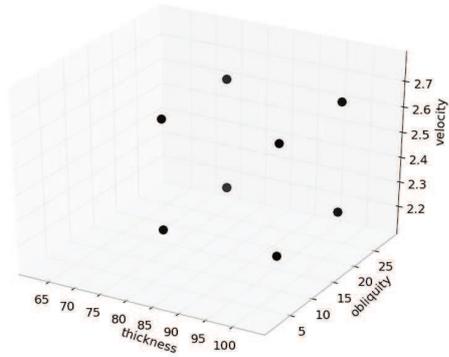}
		}
		\subfigure[support points at iteration 150]{
			\includegraphics[width=0.45\textwidth]{./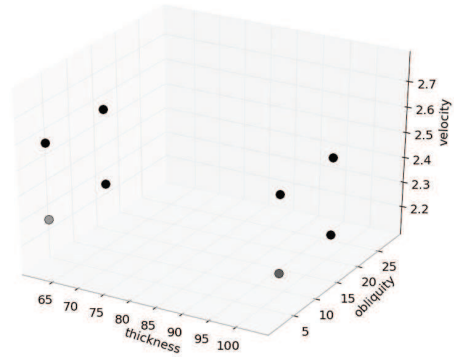}
		}
		\subfigure[support points at iteration 200]{
			\includegraphics[width=0.45\textwidth]{./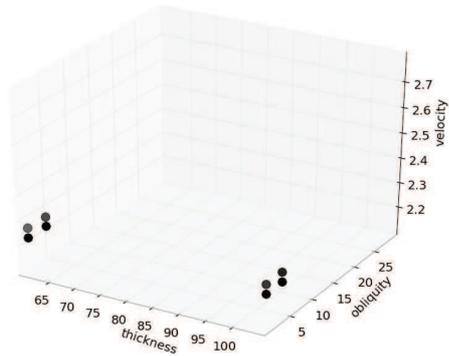}
		}
		\subfigure[support points at iteration 1000]{
			\includegraphics[width=0.45\textwidth]{./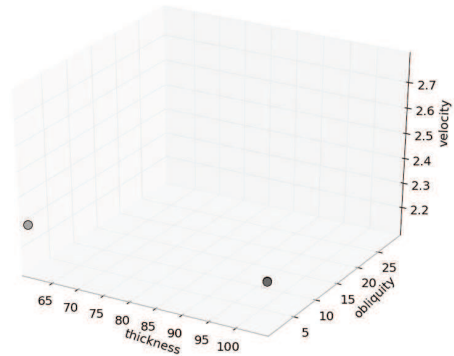}
		}
		\caption{
For  $\# \mathrm{supp}(\mu_{i}) \le 2, \, i = 1, 2, 3$,  the maximizers of the OUQ problem \eqref{eq:Hdeltarecduced} associated with the information set
\eqref{eq:PSAAP_SPHIR_Admissible} collapse to two-point (as opposed to eight-point) support. Velocity and obliquity marginals each collapse to a single Dirac mass, while the plate thickness marginal collapses to have support on the extremes of its range.  Note the perhaps surprising result that the probability of non-perforation is maximized by a distribution supported on the minimal, not maximal, impact obliquity.}
		\label{fig:CollapseSupport2}
	\end{center}
\end{figure}

\begin{figure}[tp]
	\begin{center}
		\caption{Time evolution of the genetic algorithm search for the OUQ problem \eqref{eq:Hdeltarecduced} associated with the information set
\eqref{eq:PSAAP_SPHIR_Admissible} (\eqref{eq:PSAAP_SPHIR_Admissible_Reduced} after reduction) for $\# \mathrm{supp}(\mu_{i}) \leq 2$ for $i = 1, 2, 3$, as optimized by \emph{mystic}. Thickness quickly converges to the extremes of its range, with a weight of 0.621 at $60 \, \mathrm{mils}$ and a weight of 0.379 at $105 \, \mathrm{mils}$. The degeneracy in obliquity at 0 causes the fluctuations seen in the convergence of obliquity weight.  Similarly, velocity converges to a single support point at 2.289 $\mathrm{km \cdot s}^{-1}$, the ballistic limit velocity for thickness $105 \, \mathrm{mils}$ and obliquity $0$ (see \eqref{eq:ballistic_limit}).}
		\label{fig:CollapseConverge2}
		\subfigure[convergence for thickness]{
			\includegraphics[width=0.40\textwidth]{./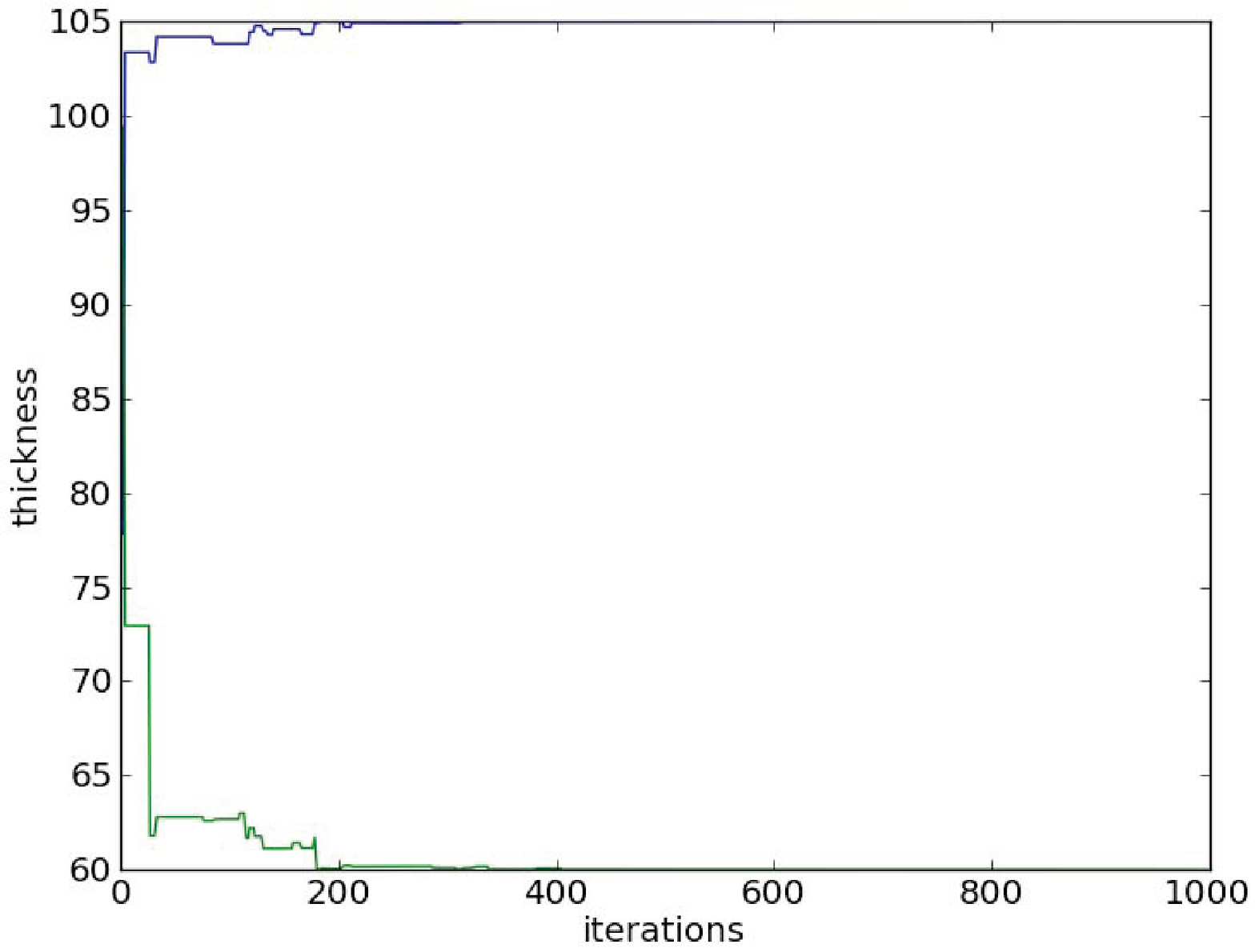}
		}
		\subfigure[convergence for thickness weight]{
			\includegraphics[width=0.40\textwidth]{./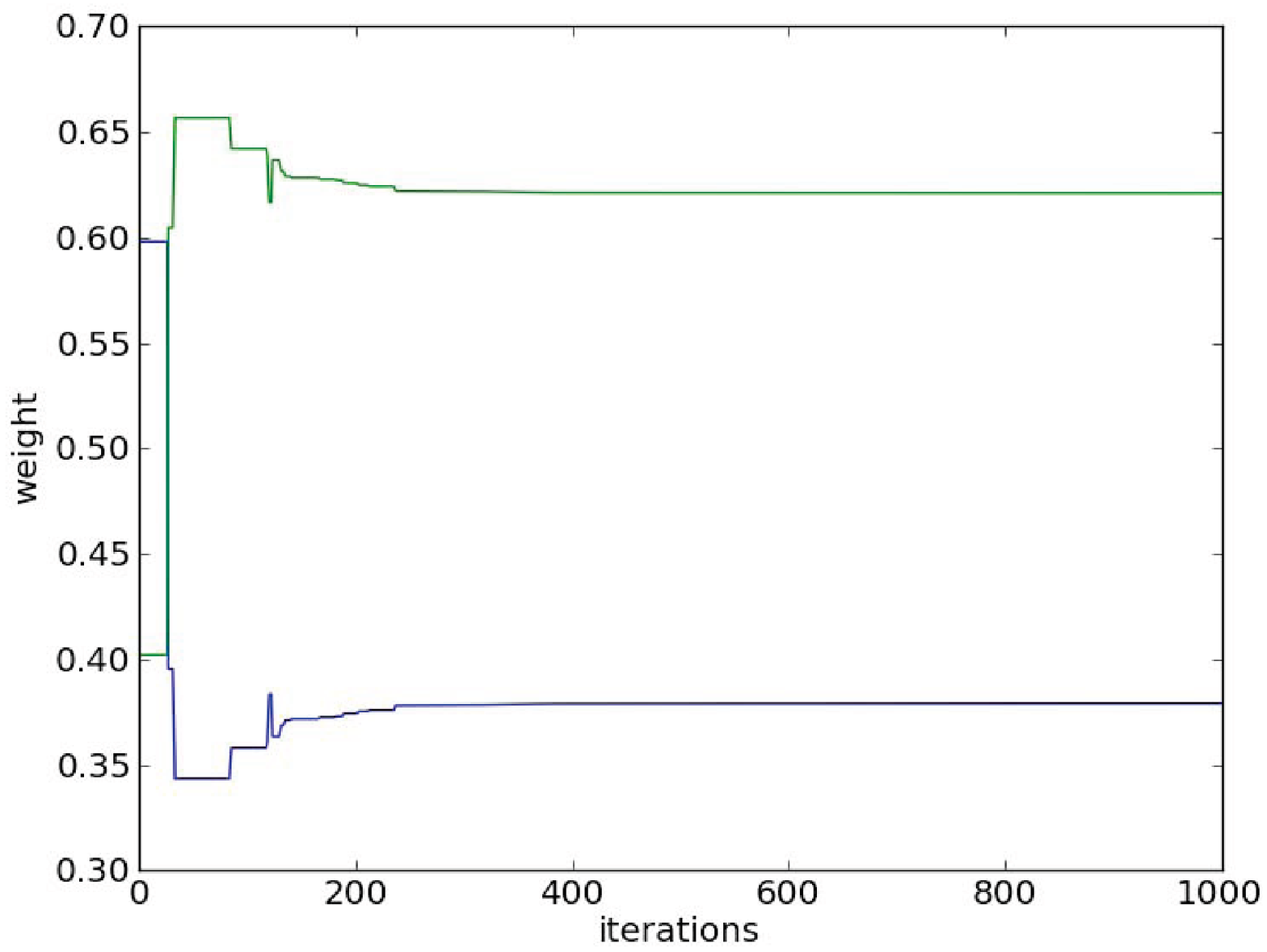}
		}
		\subfigure[convergence for obliquity]{
			\includegraphics[width=0.40\textwidth]{./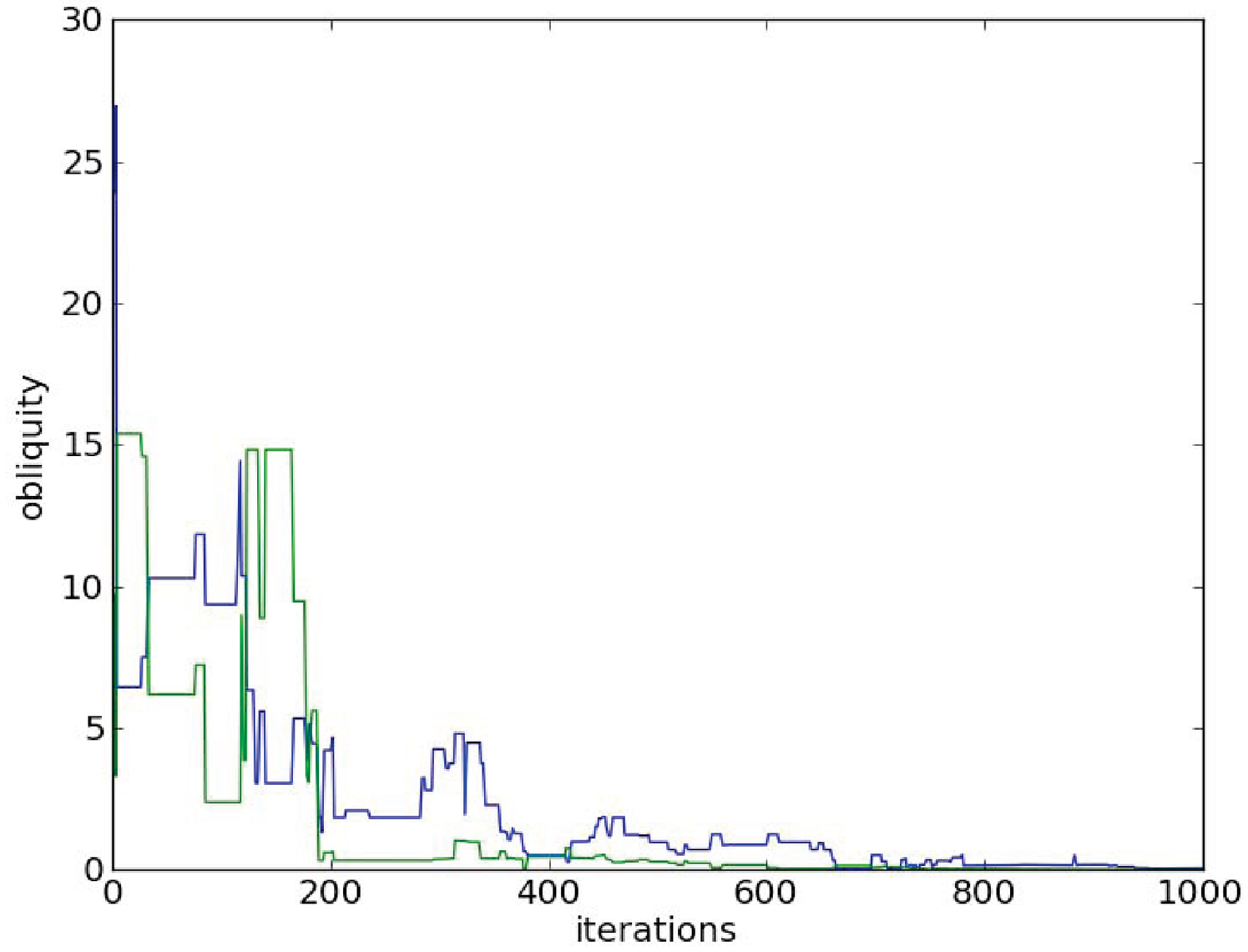}
		}
		\subfigure[convergence for obliquity weight]{
			\includegraphics[width=0.40\textwidth]{./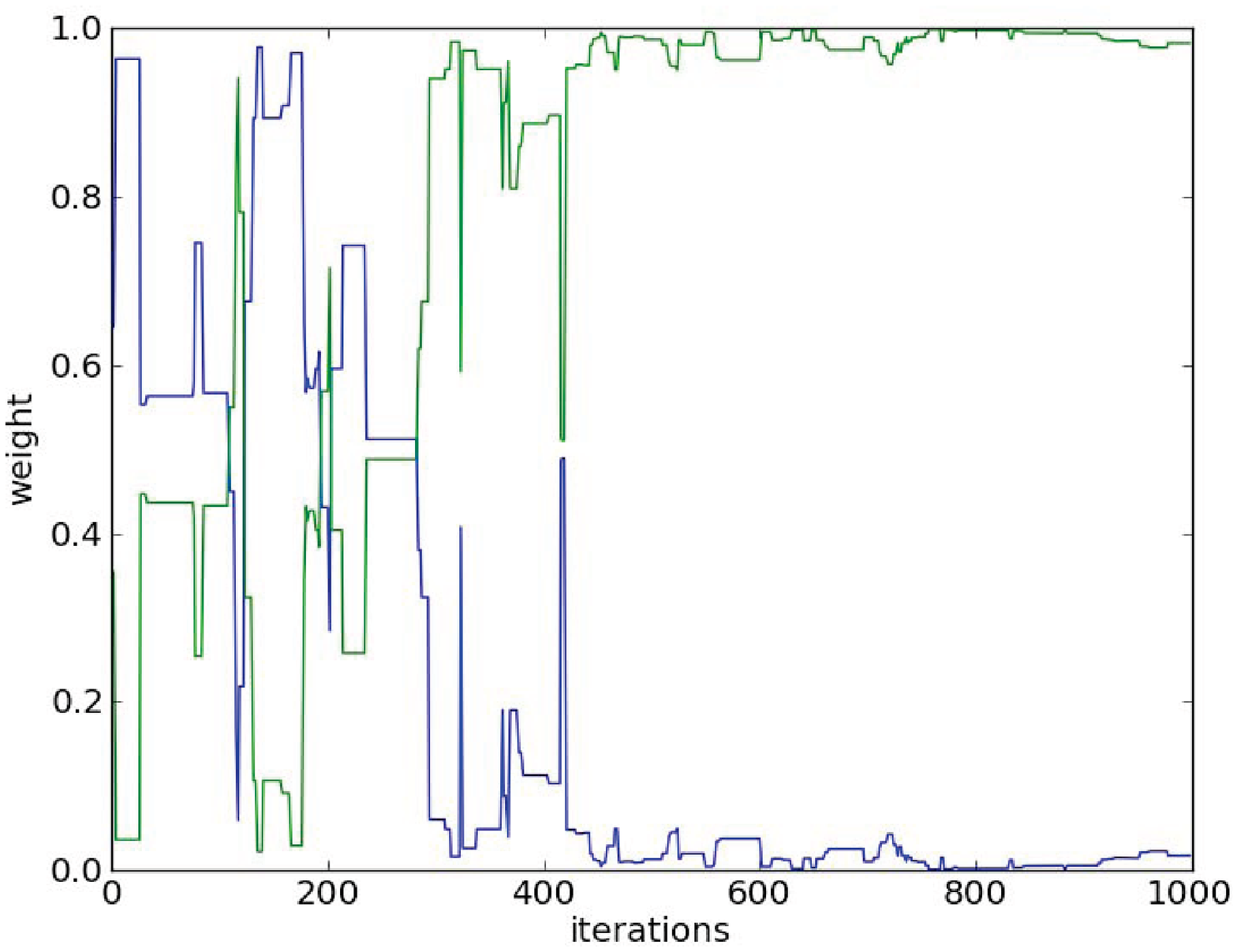}
		}
		\subfigure[convergence for velocity]{
			\includegraphics[width=0.40\textwidth]{./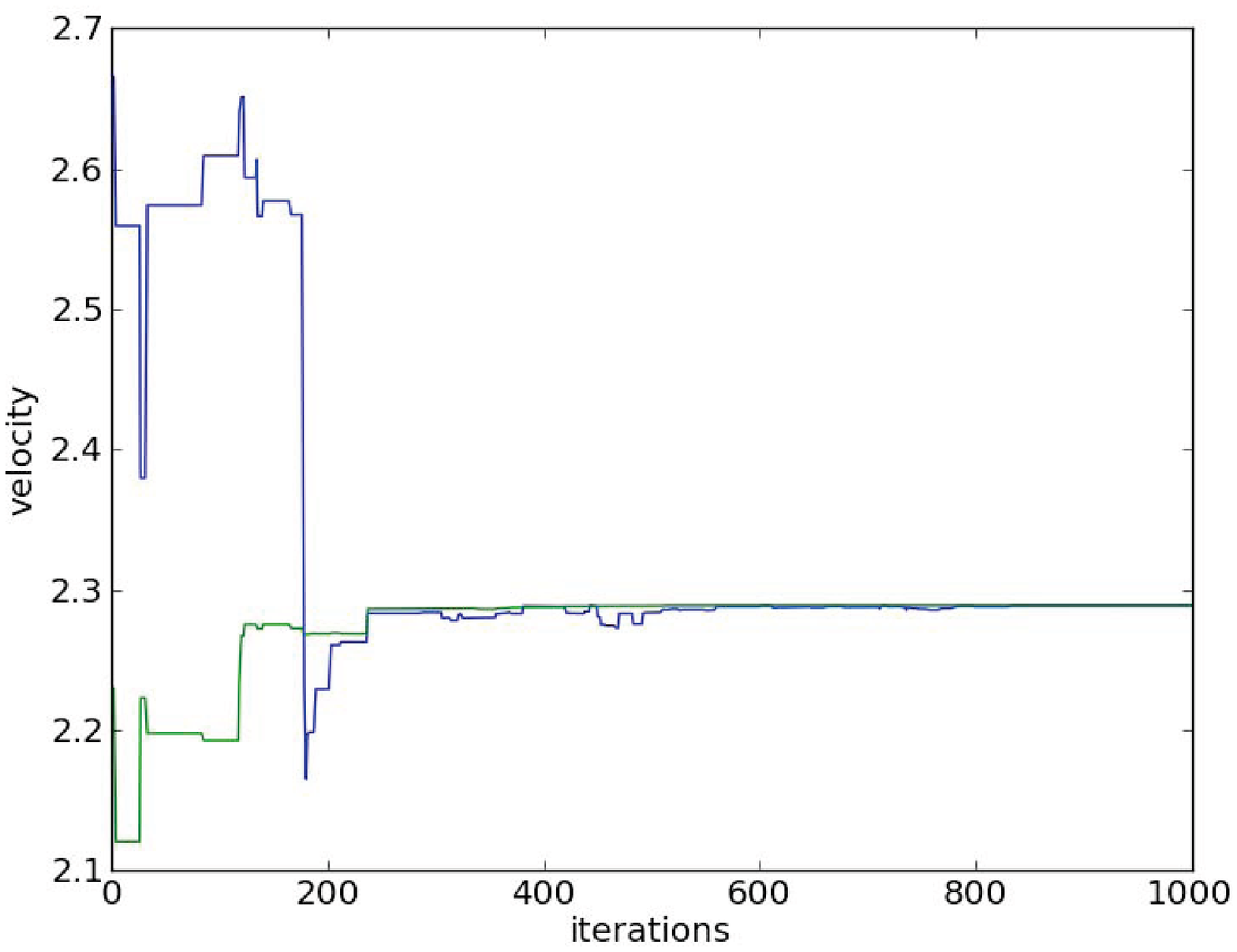}
		}
		\subfigure[convergence for velocity weight]{
			\includegraphics[width=0.40\textwidth]{./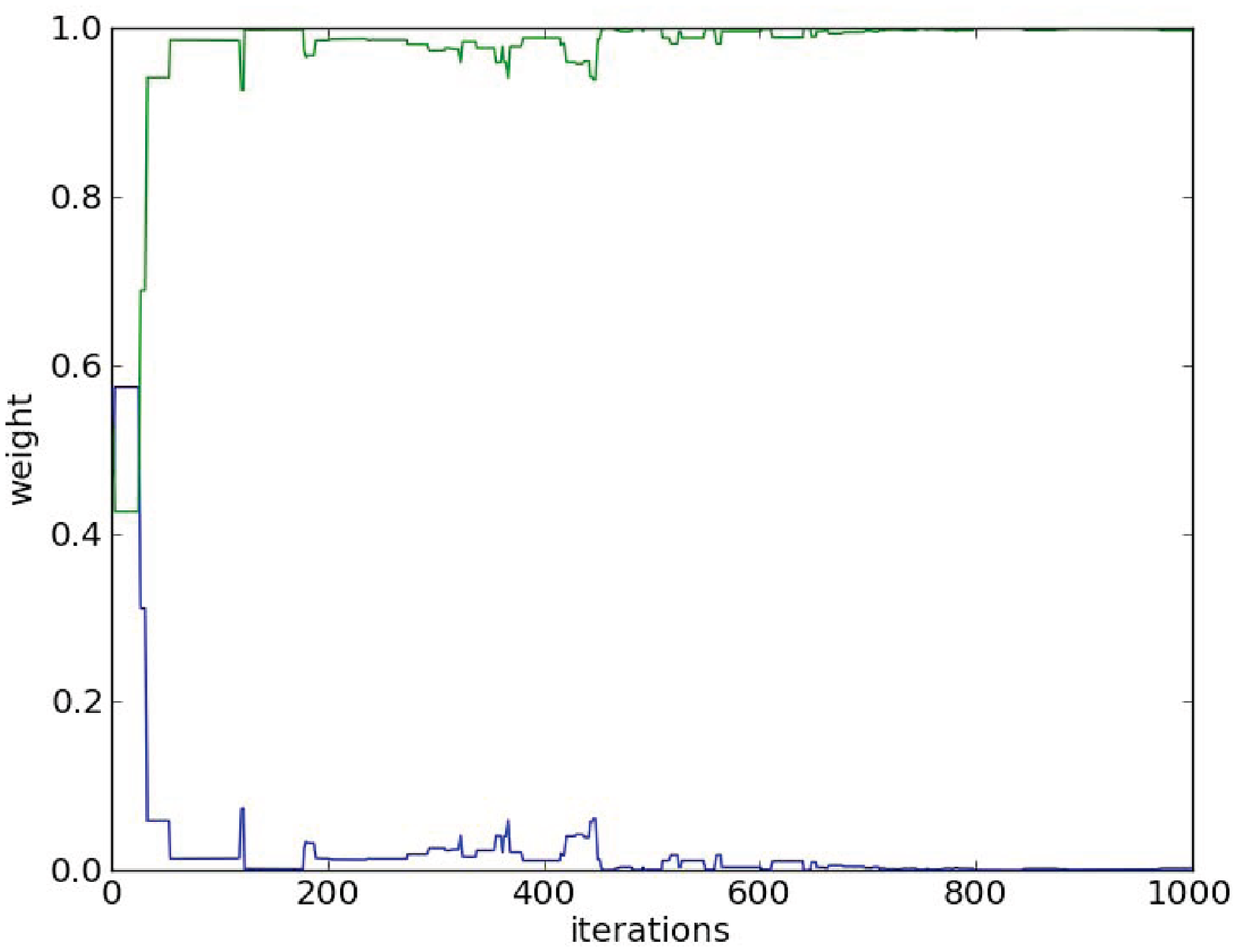}
		}
	\end{center}
\end{figure}

\subsubsection{Computational framework.}
With access to $H$, not just its componentwise oscillations, even sharper bounds on the probability of non-perforation can be calculated.  Although we do not have an analytical formula for $\mathcal{U}(\mathcal{A}_{H})$, its calculation is made possible by the identity \eqref{eq:Hdeltarecduced} derived from  the reduction results of Section \ref{sec:Reduction}.  A numerical optimization over the finite-dimensional reduced feasible set $\mathcal{A}_{\Delta}$ using a Differential Evolution \cite{PriceStornLampinen:2005} optimization algorithm  implemented in the \emph{mystic framework} \cite{McKernsOwhadiScovelSullivanOrtiz:2010}  (see Subsection \ref{sec:Mystic}) yields the following optimal upper bound on the probability of non-perforation:
\[
	\P[H = 0] \leq \mathcal{U}(\mathcal{A}_{H}) = \mathcal{U}(\mathcal{A}_{\Delta}) \numeq 37.9\%.
\]
Observe that ``$\P[H = 0] \leq \mathcal{U}(\mathcal{A}) = \mathcal{U}(\mathcal{A}_{\Delta})$'' is a theorem, whereas ``$\mathcal{U}(\mathcal{A}_{\Delta}) \numeq 37.9\%$'' is the output of an algorithm (in this case, a genetic algorithm implemented in the \emph{mystic framework} described in Subsection \ref{sec:Mystic}). In particular, its validity is correlated with the efficiency of the specific algorithm.  We have introduced the symbol $\numeq$ to emphasize the distinction between mathematical (in)equalities and numerical outputs.

Although we do not have a theorem associated with the convergence of the numerical optimization algorithm,   we have a robust control over its efficiency because  it is applied to the finite dimensional problem $\mathcal{U}(\mathcal{A}_{\Delta})$ instead of the infinite optimization problem associated with $\mathcal{U}(\mathcal{A}_{H})$ (thanks to the reduction theorems obtained in Section \ref{sec:Reduction}).

We also observe that the maximizer $\mathcal{U}(\mathcal{A}_{H})$ can be of significantly smaller dimension than that of the elements of $\mathcal{U}(\mathcal{A}_{\Delta})$. Indeed, for  $\# \mathrm{supp}(\mu_{i}) \le 2, \, i = 1, 2, 3$ (where $\# \mathrm{supp}(\mu_{i})$ is the number of points forming the support of $\mu_i$), Figure \ref{fig:CollapseSupport2} shows that numerical simulations collapse to two-point support:  the velocity and obliquity marginals each collapse to a single Dirac mass, and the plate thickness marginal collapses to have support on the two extremes of its range.  See Figure \ref{fig:CollapseConverge2} for plots of the locations and weights of the Dirac masses forming each marginal $\mu_i$ as  functions of the number of iterations. Note that the lines for \emph{thickness} and \emph{thickness weight} are of the same color if they correspond to the same support point for the measure.

In Section \ref{sec:ComputationalExamples} we observe that, even when a wider search is performed (i.e.\ over measures with more than two-point support per marginal), the calculated maximizers for these problems maintain two-point support.   This observation suggests that the extreme points of the reduced  OUQ problems are, in some sense, attractors  and that adequate numerical implementation of OUQ problems can detect and use ``hidden'' reduction properties even in the absence of theorems proving them to be true.  Based on these observations, we propose, in Section \ref{sec:ComputationalExamples}, an OUQ optimization algorithm for arbitrary constraints based on  a coagulation/fragmentation of probability distributions.

The simulations of Figures \ref{fig:CollapseSupport2} and \ref{fig:CollapseConverge2} show that extremizers are singular and that their support points identify key players, i.e. weak points of the system. In particular, for $\mathcal{U}(\mathcal{A}_{H})$, the location of the two-point support extremizer shows that reducing maximum obliquity and the range of velocity will not decrease the optimal bound on the probability of non perforation, and suggests that reducing the uncertainty in thickness will decrease this bound.  In Section \ref{sec:OUQ}, we will show that the OUQ framework allows the development of an OUQ loop that can be used for experimental design. In particular, we will show that the problem of predicting optimal bounds on the results of experiments under the assumption that the system is safe (or unsafe) is well-posed and benefits from similar reduction properties as the certification problem. Best experiments are then naturally identified as those whose predicted ranges have minimal overlap between safe and unsafe systems.

\subsubsection{Outline of the paper.} Section \ref{sec:OUQ}  formally describes the \emph{Optimal Uncertainty Quantification} framework and what it means to give optimal bounds on the probability of failure in \eqref{eq:def_cert} given (limited) information/assumptions about the system of interest, and hence how to rigorously certify or de-certify that system.  Section \ref{sec:comparisons} shows that many important UQ problems, such as prediction, verification and validation, can be formulated as certification problems. Section \ref{sec:comparisons} also gives a comparison of OUQ with other widely used UQ methods. Section \ref{sec:Reduction} shows that although  OUQ optimization problems \eqref{eq:def_cert_ouq_bounds} are (a priori)
infinite-dimensional, they can (in general) be reduced to equivalent finite-dimensional problems in which the optimization is over the extremal scenarios of $\mathcal{A}$ and that the dimension of the reduced problem is a function of the number of probabilistic inequalities that describe $\mathcal{A}$. Just as a linear program finds its extreme value at the extremal points of a convex domain in $\R^n$, OUQ problems reduce to searches over finite-dimensional families of extremal scenarios.
Section \ref{sec:Reduction-mcd} applies the results of Section \ref{sec:Reduction}  to obtain optimal concentration inequalities under the assumptions of McDiarmid's inequality and Hoeffding's inequality. Those inequalities show that, although uncertainties may propagate for the true values of $G$ and $\P$,  they might not when the information is incomplete on $G$ and $\P$.
Section \ref{sec:ComputationalExamples} discusses the numerical implementation of OUQ algorithms for the analytical surrogate model \eqref{eq:PSAAP_SPHIR_surr} for hypervelocity impact.
Section  \ref{Sec:seismic} assesses the feasibility of the OUQ formalism  by means of an application to the safety assessment of truss structures subjected to ground motion excitation. This application contains many of the features that both motivate and challenge UQ, including imperfect knowledge of random inputs of high dimensionality, a time-dependent and complex response of the system, and the need to make high-consequence decisions pertaining to the safety of the system.  Section \ref{Sec:porous}
applies the OUQ framework and reduction theorems of sections \ref{sec:Reduction} and \ref{sec:Reduction-mcd} to divergence form elliptic PDEs. A striking observation of  Section \ref{Sec:porous} is that with incomplete information on the probability distribution of the microstructure, uncertainties or information do not necessarily propagate across scales.
Section \ref{sec:Conclusions} emphasizes  that the ``UQ problem''(as it is treated in common practice today) appears to have all the symptoms of an ill-posed problem; and that, at the very least, it lacks a coherent general presentation, much like the state of probability theory before its rigorous formulation by Kolmogorov in the 1930s.
It also stresses that  OUQ is not the definitive answer to the UQ problem, but an opening gambit.
Section \ref{sec-appendixproofs} gives the proofs of our main results.

\section{Optimal Uncertainty Quantification}
\label{sec:OUQ}

In this section, we describe more formally the \emph{Optimal Uncertainty Quantification} framework.  In particular, we describe what it means to give optimal bounds on the probability of failure in \eqref{eq:def_cert} given information/assumptions about the system of interest, and hence how to rigorously certify or de-certify that system.

For the sake of clarity, we will start the description of OUQ with deterministic information and assumptions (when $\mathcal{A}$ is a deterministic set of functions and probability measures).

\subsection{First description}
In the OUQ paradigm, information and assumptions lie at the core of UQ:
     the available information and assumptions describe sets of admissible scenarios over which optimizations will be performed.
 As noted by Hoeffding \cite{Hoeffding:1956}, assumptions about the system of interest play a central and sensitive role in any statistical decision problem, even though the assumptions are often only approximations of reality.

A simple example of an information/assumptions set is given by constraining the mean and range of the response function.  For example, let $\mathcal{M}(\mathcal{X})$ be the set of probability measures on the set $\mathcal{X}$, and let $\mathcal{A}_{1}$  be the set of pairs of probability measures $\mu \in \mathcal{M}(\mathcal{X})$ and real-valued measurable functions $f$ on $\mathcal{X}$ such that the mean value of $f$ with respect to $\mu$ is $b$ and the diameter of the range of $f$ is at most $D$;
\begin{equation}
	\label{eq:simple_assumptions}
	\mathcal{A}_{1} := \left\{ (f, \mu) \,\middle|\,
	\begin{matrix}
		f \colon \mathcal{X} \to \R, \\
	  \mu \in \mathcal{M}(\mathcal{X}), \\
	 	\mathbb{E}_{\mu}[f] = b, \\
		(\sup f - \inf f) \leq D
	\end{matrix} \right\}.
\end{equation}
Let us assume that \emph{all} that we know about the ``reality'' $(G, \mathbb{P})$ is that $(G, \mathbb{P}) \in \mathcal{A}_{1}$.  Then any other
pair $(f, \mu) \in \mathcal{A}_{1}$ constitutes an admissible scenario representing a valid possibility for the ``reality'' $(G, \P)$.  If asked to bound $\P[G(X) \geq  a]$,  should we apply different methods and obtain different bounds on $\P[G(X) \geq a]$?  Since some methods will distort this information set and
 others are only using part of it, we instead view set $\mathcal{A}_{1}$ as a feasible set for an optimization problem.

\paragraph{The general OUQ framework.}  In the general case, we regard the response function $G$ as an
unknown measurable function, with some possibly  known characteristics, from one measurable space $\mathcal{X}$ of inputs to a second measurable space $\mathcal{Y}$ of values.  The input variables are generated randomly according to an unknown random variable $X$ with values in $\mathcal{X}$ according to a law $\P \in \mathcal{M}(\mathcal{X})$, also with some possibly  known characteristics.  We let a  measurable subset $\mathcal{Y}_{0} \subseteq \mathcal{Y}$ define the \emph{failure region};  in the example given above, $\mathcal{Y} = \R$ and $\mathcal{Y}_{0} = [a, +\infty)$.  When there is no danger of confusion, we shall simply write $[G \text{ fails}]$ for the event $[G(X) \in \mathcal{Y}_{0}]$.

Let $\epsilon \in [0, 1]$ denote the \emph{greatest acceptable probability of failure}.  We say that the system is  \emph{safe}  if $\P[G \text{ fails}] \leq \epsilon$ and the system is \emph{unsafe} if $\P[G \text{ fails}] > \epsilon$.  By \emph{information}, or a \emph{set of assumptions}, we mean a  subset
\begin{equation}
	\label{eq:A}
	\mathcal{A} \subseteq \left\{ (f, \mu) \,\middle|\, \begin{matrix}
		f \colon \mathcal{X} \to \mathcal{Y} \text{ is measurable,} \\
		\mu \in \mathcal{M}(\mathcal{X})
 \end{matrix} \right\}
\end{equation}
that contains, at the least, $(G, \P)$.  The set $\mathcal{A}$ encodes all the information that we have about the real system $(G, \P)$,   information that may come from known physical laws, past experimental data, and expert opinion.  In the example $\mathcal{A}_{1}$ above, the only information that we have is that the mean response  of the system is $b$ and that the diameter of its range is at most $D$;  any pair $(f, \mu)$ that satisfies these two criteria is an \emph{admissible scenario} for the unknown reality $(G, \P)$. Since some admissible scenarios may be safe (i.e.\ have $\mu[f \text{ fails}] \leq \epsilon$) whereas other admissible scenarios may be unsafe (i.e.\ have $\mu[f \text{ fails}] > \epsilon$), we decompose
   $\mathcal{A}$ into the disjoint union $\mathcal{A} = \mathcal{A}_{\mathrm{safe}, \epsilon} \uplus \mathcal{A}_{\mathrm{unsafe}, \epsilon}$, where
\begin{subequations}
	\label{eq:A_safe_and_unsafe}
	\begin{align}
		\label{eq:A_safe}
		\mathcal{A}_{\mathrm{safe}, \epsilon} & := \{ (f, \mu) \in \mathcal{A} \mid \mu [ f \text{ fails} ] \leq \epsilon \}, \\
		\label{eq:A_unsafe}
		\mathcal{A}_{\mathrm{unsafe}, \epsilon} & := \{ (f, \mu) \in \mathcal{A} \mid \mu [ f \text{ fails} ] > \epsilon \}.
	\end{align}
\end{subequations}

Now observe that, given such an information/assumptions set $\mathcal{A}$, there exist  upper and lower bounds on $\P[ G(X) \geq a ]$ corresponding to the scenarios compatible with assumptions, i.e.\ the values $\mathcal{L}(\mathcal{A})$ and  $\mathcal{U}(\mathcal{A})$ of the optimization problems:
\begin{subequations}
	\label{eq:def_ouq}
	\begin{align}	
		\mathcal{L}(\mathcal{A}) & := \inf_{(f, \mu) \in \mathcal{A}} \mu [f \text{ fails}] \\
		\mathcal{U}(\mathcal{A}) & := \sup_{(f, \mu) \in \mathcal{A}} \mu [f \text{ fails}].
	\end{align}
\end{subequations}
Since $\mathcal{L}(\mathcal{A})$ and $\mathcal{U}(\mathcal{A})$ are well-defined in $[0, 1]$, and approximations
are sufficient for most purposes and are necessary in general, the difference between $\sup$ and $\max$ should not be much of an issue.  Of course, some of the work that follows is concerned with the attainment of maximizers, and whether those maximizers have any simple structure that can be exploited for the sake of computational efficiency, and this is the topic of Section \ref{sec:Reduction}.  For the moment, however, simply assume that $\mathcal{L}(\mathcal{A})$ and $\mathcal{U}(\mathcal{A})$ can indeed be computed on demand.  Now, since $(G, \P) \in \mathcal{A}$, it follows that
\[
	\mathcal{L}(\mathcal{A}) \leq \P[G \text{ fails}] \leq \mathcal{U}(\mathcal{A}).
\]
Moreover, the upper bound $\mathcal{U}(\mathcal{A})$ is  \emph{optimal}  in the sense that
\[
	\mu[f \text{ fails}] \leq \mathcal{U}(\mathcal{A}) \text{ for all } (f,\mu) \in \mathcal{A}
\]
and, if $U' < \mathcal{U}(\mathcal{A})$, then there is an admissible scenario $(f, \mu) \in \mathcal{A}$ such that
\[
	U' <\mu[f \text{ fails}] \leq	 \mathcal{U}(\mathcal{A}).
\]
That is, although $\P[G \text{ fails}]$ may be much smaller than $\mathcal{U}(\mathcal{A})$, there is a pair $(f, \mu)$ which satisfies the same assumptions as $(G, \P)$ such that $\mu[f \text{ fails}]$ is approximately equal to $ \mathcal{U}(\mathcal{A})$.  Similar remarks apply for the lower bound $\mathcal{L}(\mathcal{A})$.

Moreover, the values $\mathcal{L}(\mathcal{A})$ and $\mathcal{U}(\mathcal{A})$, defined in \eqref{eq:def_ouq} can be used to construct a solution to the certification problem.  Let the certification problem be defined by an error function that gives an error whenever  1) the certification process produces ``safe'' and there exists an admissible scenario that is unsafe,  2) the certification  process produces ``unsafe'' and there exists an admissible scenario that is safe,  or 3) the certification process produces ``cannot decide'' and all admissible scenarios are safe or all admissible points are unsafe;  otherwise, the  certification process produces no error.  The following proposition demonstrates that, except in the special case $\mathcal{L}(\mathcal{A})=\epsilon$, that these values determine an optimal solution to this certification problem.

\begin{prop}
	\label{prop_opt}
	If $(G,\P)\in \mathcal{A}$ and
	\begin{itemize}
		\item $\mathcal{U}(\mathcal{A})\leq \epsilon$ then $\P[G \text{ fails}]\leq \epsilon$.
		\item $\epsilon < \mathcal{L}(\mathcal{A})$ then $\P[G \text{ fails}]> \epsilon$.
		\item $ \mathcal{L}(\mathcal{A})<\epsilon <\mathcal{U}(\mathcal{A})$ the there exists $(f_1,\mu_1)\in \mathcal{A}$ and  $(f_2,\mu_2)\in \mathcal{A}$ such that $\mu_1[f_1 \text{ fails} ]<\epsilon <\mu_2[f_2 \text{ fails} ]$.
	\end{itemize}
\end{prop}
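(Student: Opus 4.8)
The plan is to obtain all three conclusions directly from the definitions of $\mathcal{L}(\mathcal{A})$ and $\mathcal{U}(\mathcal{A})$ as an infimum and a supremum over $\mathcal{A}$, combined with the standing hypothesis $(G,\P)\in\mathcal{A}$. The key structural fact I would invoke is the sandwich
\[
	\mathcal{L}(\mathcal{A}) \leq \P[G\text{ fails}] \leq \mathcal{U}(\mathcal{A}),
\]
which is immediate because $(G,\P)$ is one of the pairs over which the infimum and supremum in \eqref{eq:def_ouq} are taken. This inequality is already recorded in the text preceding the proposition, so I would simply cite it.

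For the first bullet, if $\mathcal{U}(\mathcal{A})\leq\epsilon$ then the right-hand inequality of the sandwich gives $\P[G\text{ fails}]\leq\mathcal{U}(\mathcal{A})\leq\epsilon$. For the second bullet, if $\epsilon<\mathcal{L}(\mathcal{A})$ then the left-hand inequality gives $\P[G\text{ fails}]\geq\mathcal{L}(\mathcal{A})>\epsilon$. Both are one-line deductions requiring nothing beyond the sandwich.

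For the third bullet I would invoke the approximation characterization of the infimum and supremum rather than the sandwich itself. Since $\mathcal{L}(\mathcal{A})=\inf_{(f,\mu)\in\mathcal{A}}\mu[f\text{ fails}]<\epsilon$, the defining property of the infimum furnishes a scenario $(f_1,\mu_1)\in\mathcal{A}$ with $\mu_1[f_1\text{ fails}]<\epsilon$; symmetrically, since $\mathcal{U}(\mathcal{A})=\sup_{(f,\mu)\in\mathcal{A}}\mu[f\text{ fails}]>\epsilon$, there is a scenario $(f_2,\mu_2)\in\mathcal{A}$ with $\mu_2[f_2\text{ fails}]>\epsilon$, giving the claimed chain $\mu_1[f_1\text{ fails}]<\epsilon<\mu_2[f_2\text{ fails}]$. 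The only point deserving care is that $\mathcal{L}(\mathcal{A})$ and $\mathcal{U}(\mathcal{A})$ need not be attained, so I would phrase this step through the defining property of $\inf$/$\sup$ (there exists an element strictly below any bound exceeding the infimum, and strictly above any bound below the supremum) rather than through maximizers. I do not expect a substantive obstacle anywhere: the content of the proposition is essentially a repackaging of the definitions, and the interpretive weight lies instead in its translation into the certification error function described just before the statement.
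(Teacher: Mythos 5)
Your proof is correct and takes essentially the same route as the paper: the paper gives no separate proof of Proposition~\ref{prop_opt}, treating it as immediate from the sandwich inequality $\mathcal{L}(\mathcal{A}) \leq \P[G \text{ fails}] \leq \mathcal{U}(\mathcal{A})$ (valid because $(G,\P)\in\mathcal{A}$) together with the near-attainment property of the supremum and infimum, both of which are recorded in the text immediately preceding the statement (``if $U' < \mathcal{U}(\mathcal{A})$, then there is an admissible scenario $(f,\mu)\in\mathcal{A}$ such that $U' < \mu[f \text{ fails}] \leq \mathcal{U}(\mathcal{A})$,'' and the symmetric remark for $\mathcal{L}(\mathcal{A})$). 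Your care in phrasing the third bullet through the defining property of $\inf$/$\sup$ rather than through maximizers matches the paper's own treatment, since attainment of extremizers is deferred there to Section~\ref{sec:Reduction}.
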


In other words, provided that the information set $\mathcal{A}$ is valid (in the sense that  $(G,\P)\in \mathcal{A}$) then if $\mathcal{U}(\mathcal{A}) \leq \epsilon $, then, the system is provably safe;  if $\epsilon < \mathcal{L}(\mathcal{A})$, then  the system is provably unsafe;  and if $\mathcal{L}(\mathcal{A}) < \epsilon < \mathcal{U}(\mathcal{A})$, then the safety of the system cannot be decided due to lack of information.   The corresponding certification process and its optimality are represented in Table \ref{tab:cons_cert_criteria}. Hence,  solving the optimization problems \eqref{eq:def_ouq} determines an optimal solution to the certification problem, under the condition that $\mathcal{L}(\mathcal{A})\neq \epsilon$. When $\mathcal{L}(\mathcal{A})=\epsilon$  we can still produce an optimal solution if we obtain further information.  That is, when $\mathcal{L}(\mathcal{A})=\epsilon = \mathcal{U}(\mathcal{A})$, then the optimal process produces ``safe''.   On the other hand, when $\mathcal{L}(\mathcal{A})=\epsilon <\mathcal{U}(\mathcal{A})$, the optimal solution depends on whether or not there exists a minimizer $(f,\mu)\in \mathcal{A}$ such that $\mu[f \text{ fails} ]=\mathcal{L}(\mathcal{A})$; if so, the optimal process should declare ``cannot decide'', otherwise, the optimal process should declare ``unsafe''.  Observe that, in Table \ref{tab:cons_cert_criteria}, we have classified $\mathcal{L}(\mathcal{A}) = \epsilon < \mathcal{U}(\mathcal{A})$ as ``cannot decide''.  This ``nearly optimal'' solution appears natural and conservative without the knowledge of the existence or non-existence of optimizers.

\begin{table}[tp]
  \begin{center}
		\begin{tabular}{|c||c|c|}
			\hline
			 & $\mathcal{L}(\mathcal{A}) := \displaystyle \inf_{(f, \mu)\in \mathcal{A}} \mu \big[f(X) \geq a \big]$ & $\mathcal{U}(\mathcal{A}) := \displaystyle \sup_{(f, \mu)\in \mathcal{A}} \mu \big[f(X) \geq a \big]$ \\
			\hline \hline
			$\leq \epsilon$ & $\begin{matrix} \textbf{Cannot
 decide} \\ \text{Insufficient Information}  \end{matrix}$ & $\begin{matrix} \textbf{Certify} \\ \text{{\bf Safe} even in the Worst Case} \end{matrix}$ \\
			\hline
			$> \epsilon$ & $\begin{matrix} \textbf{De-certify} \\ \text{{\bf Unsafe} even in the Best Case} \end{matrix}$ & $\begin{matrix} \textbf{Cannot decide} \\ \text{Insufficient Information}  \end{matrix}$ \\
			\hline
		\end{tabular}
		\caption{The OUQ certification process provides a rigorous certification criterion whose outcomes are of three types: ``Certify'', ``De-certify'' and ``Cannot decide''.}
		\label{tab:cons_cert_criteria}
	\end{center}
\end{table}

\begin{eg}
	The bounds $\mathcal{L}(\mathcal{A})$ and $\mathcal{U}(\mathcal{A})$ can be computed exactly --- and are non-trivial --- in the case of the simple example $\mathcal{A}_{1}$ given in \eqref{eq:simple_assumptions}.  Indeed, writing $x_+:=\max(x,0)$, the optimal upper bound is given by
	\begin{equation}
		\label{eq:seesaw_exact_ouq}
		\mathcal{U}(\mathcal{A}_{1}) = p_{\max} := \left( 1-\frac{(a - b)_{+}}{D}\right)_{+},
	\end{equation}
where the maximum is achieved by taking the measure of probability of the random variable $f(X)$ to be the weighted sum of two weighted Dirac delta masses\footnote{$\delta_z$ is the Dirac delta mass on $z$, i.e.\ the measure of probability on Borel subsets $A \subset \mathbb{R}$ such that $\delta_z(A)=1$ if $z\in A$ and $\delta_z(A)=0$ otherwise.  The first Dirac delta mass is located at the minimum of the interval $[a,\infty]$ (since we are interested in maximizing the probability of the event $\mu[f(X)\geq a]$). The second Dirac delta mass is located at $z = a - D$ because we seek to maximize $p_{\max}$ under the constraints $p_{\max} a + (1 - p_{\max}) z \leq b$ and $a - z \leq D$.}
\[
	p_{\max} \delta_{a} + (1 - p_{\max}) \delta_{a - D}.
\]
This simple example demonstrates an extremely important point:  even if the function $G$ is extremely expensive to evaluate, certification can be accomplished without recourse to the expensive evaluations of $G$.  Furthermore, the simple structure of the maximizers motivates the reduction theorems later in Section \ref{sec:Reduction}.
\end{eg}

\begin{eg}
	\label{Ex:subdiameters}
	As shown in Equation \eqref{eq:McDintro}, concentration-of-measure inequalities lead to sub-optimal methods in the sense that they can be used to obtain upper bounds on $\mathcal{U}(\mathcal{A})$ and lower bounds on $\mathcal{L}(\mathcal{A})$. Observe that McDiarmid's inequality \eqref{eq:McDintro} required an information/assumptions set $\mathcal{A}_{\text{McD}}$ where the space $\mathcal{X}$ is a product space with $X = (X_{1}, X_{2}, \dots, X_{m})$, the mean performance satisfies $\mathbb{E}[G(X)] \leq b$, the $m$ inputs $X_{1}, \dots, X_{m}$ are independent, and the component-wise oscillations of $G$, (see \eqref{eq:defOsc}) are bounded $\operatorname{Osc}_i(G)\leq D_i$. It follows from McDiarmid's inequality \eqref{eq:McDintro} that, under the assumptions $\mathcal{A}_{\text{McD}}$,
	\[
		\mathcal{U}(\mathcal{A}_{\text{McD}}) \leq \exp \left( - 2 \frac{(a - b)_{+}^2}{\sum_{i=1}^m D_i^2} \right).
	\]
	This example shows how existing techniques such as concentration-of-measure inequalities can be incorporated into OUQ.  In Section \ref{sec:Reduction}, we will show how to reduce $\mathcal{U}(\mathcal{A}_{\text{McD}})$ to a finite dimensional optimization problem. Based on this reduction, in Section \ref{sec:Reduction-mcd}, we provide analytic solutions to the optimization problem $\mathcal{U}(\mathcal{A}_{\text{McD}})$ for $m=1,2,3$.  In practice, the computation of the bounds $D_i$ require the resolution of an optimization problem, we refer to \cite{LucasOwhadiOrtiz:2008, PSAAPI:2012, PSAAPII:2012} for practical methods.  We refer to \cite{LucasOwhadiOrtiz:2008, PSAAPI:2012, PSAAPII:2012, ScovelSteinwart:2010} for illustrations of UQ through concentration of measure inequalities.  In particular, since $\operatorname{Osc}_i(G)$ is a semi-norm, a (possibly numerical) model can be used to compute bounds on component-wise oscillations of $G$ via the triangular inequality $\operatorname{Osc}_i(G)\leq \operatorname{Osc}_i(F)+\operatorname{Osc}_i(G-F)$ (we refer to \cite{LucasOwhadiOrtiz:2008, PSAAPI:2012, PSAAPII:2012}  for details, the idea here is that an accurate model leads to a reduced number of experiments for the computation of $\operatorname{Osc}_i(G-F)$, while the computation of $\operatorname{Osc}_i(F)$ does not involve experiments). In the sequel we will refer to $D_{i,G}:=\operatorname{Osc}_i(G)$ (for $i=1,\dots,m$) as the \emph{sub-diameters} of $G$ and to
	\begin{equation}
		D_{G}:=\sqrt{\sum_{i=1}^m D_{i,G}^2}
	\end{equation}
	as the \emph{diameter} of $G$. Bounds on $\operatorname{Osc}_i(G)$
are useful because they constitute a form of non-linear sensitivity analysis and, combined with independence constraints, they lead to the concentration of measure phenomenon.
 The OUQ methodology can also handle  constraints of the type $\| G - F \| < C$ (which are not sufficient to observe take advantage of the concentration of measure effect) and $G(x_i)=z_i$ \cite{Sullivan:2012}.
\end{eg}

\begin{table}
	\begin{center}
		\begin{tabular}{|c|c|c|}
			\hline
			Admissible scenarios, $\mathcal{A}$ & $\mathcal{U}(\mathcal{A})$ & Method \\
			\hline \hline
			$\mathcal{A}_{\mathrm{McD}}$:  independence, oscillation and mean & $\leq 66.4 \%$ & McDiarmid's inequality \\
			constraints as given by \eqref{eq:PSAAP_SPHIR_Admissible_McD} & $= 43.7 \%$ & Theorem \ref{thm:m3} \\
			\hline
			$\mathcal{A}_{H}$	as given by \eqref{eq:PSAAP_SPHIR_Admissible} & $\numeq 37.9 \%$ & \emph{mystic}, $H$ known \\
			\hline
			$\mathcal{A}_{H} \cap \left\{ (H, \mu) \,\middle|\, \begin{matrix} \text{$\mu$-median velocity} \\ \text{$= 2.45 \, \mathrm{km \cdot s}^{-1}$} \end{matrix} \right\} $ & $\numeq 30.0 \%$ & \emph{mystic}, $H$ known \\
			\hline
			$\mathcal{A}_{H} \cap \left\{ (H, \mu) \,\middle|\, \text{$\mu$-median obliquity $= \frac{\pi}{12}$} \right\} $ & $\numeq 36.5 \%$ & \emph{mystic}, $H$ known \\
			\hline
			$\mathcal{A}_{H} \cap \left\{ (H, \mu) \,\middle|\, \text{obliquity $= \frac{\pi}{6}$ $\mu$-a.s.} \right\} $ & $\numeq 28.0 \%$ & \emph{mystic}, $H$ known \\
			\hline
		\end{tabular}
		\caption{Summary of the upper bounds on the probability of non-perforation for Example \eqref{eq:PSAAP_SPHIR_Admissible}  obtained by various methods and assumptions.  Note that OUQ calculations using \emph{mystic} (described in Section \ref{sec:ComputationalExamples}) involve evaluations of the function $H$, whereas McDiarmid's inequality and the optimal bound given the assumptions of McDiarmid's inequality use only the mean of $H$ and its McDiarmid subdiameters, not $H$ itself.  Note also that the incorporation of additional information/assumptions, e.g. on impact obliquity, always reduces the OUQ upper bound on the probability of failure, since this corresponds to a restriction to a subset of the original feasible set $\mathcal{A}_{H}$ for the optimization problem.}
		\label{tab:PSAAP_support_collapse}
	\end{center}
\end{table}

\begin{eg}
	For the  set $\mathcal{A}_{H}$  given in Equation \eqref{eq:PSAAP_SPHIR_Admissible}, the inclusion of additional information further reduces the upper bound  $\mathcal{U}(\mathcal{A}_{H})$. Indeed, the addition of assumptions lead to a smaller admissible set $\mathcal{A}_{H} \mapsto \mathcal{A}' \subset \mathcal{A}_{H}$, therefore $\mathcal{U}$ decreases and $\mathcal{L}$ increases.  For example, if the median of the third input parameter (velocity) is known to lie at the midpoint of its range, and this information is used to provide an additional constraint, then the least upper bound on the probability of non-perforation drops to $30.0 \%$.  See Table \ref{tab:PSAAP_support_collapse}  for a summary of the bounds presented in the hypervelocity impact example introduced in Subsection \ref{Subsec:motivatingex}, and for further examples of the effect of additional information/constraints. The bounds given in Table \ref{tab:PSAAP_support_collapse} have been obtained by using the reduction theorems of Section \ref{sec:Reduction} and the computational framework described in Section \ref{sec:ComputationalExamples}.
\end{eg}

\begin{rmk}
The number of iterations and evaluations of $H$ associated with Table \ref{tab:PSAAP_support_collapse} are:
$600$ iterations and $15300$ $H$-evaluations (second row),
 $822$ iterations and  $22700$ $H$-evaluations (third row),  $515$ iterations and $14550$ $H$-evaluations (fourth row), $760$ iterations and $18000$ $H$-evaluations (fifth row). Half of these numbers of iterations are usually sufficient to obtain the extrema with $4$ digits of accuracy (for the third row, for instance, $365$ iterations and $9350$ $H$-evaluations are sufficient
 to obtain the first $4$ decimal points of the optimum).
\end{rmk}

\paragraph{On the selectiveness of the information set $\mathcal{A}$.}
Observe that, except for the bound obtained from McDiarmid's inequality, the bounds obtained in Table \ref{tab:PSAAP_support_collapse} are the best possible given the information contained in $\mathcal{A}$.  If the unknown distribution $\P$ is completely specified, say by restricting to the feasible set $\mathcal{A}_{\mathrm{unif}}$ for which the only admissible measure is the uniform probability measure on the cube $\mathcal{X}$ (in which case the mean perforation area is $\mathbb{E}[H] = 6.58 \, \mathrm{mm}^{2}$), then the probability of non-perforation is $\mathcal{U}(\mathcal{A}_{\mathrm{unif}}) = \mathcal{L}(\mathcal{A}_{\mathrm{unif}}) \numeq 3.8\%$.  One may argue that there is a large gap between the fifth ($28\%$) row of Table \ref{tab:PSAAP_support_collapse} and $3.8\%$  but observe that $3.8\%$  relies on the exact knowledge of  $G$ (called $H$ here) and $\P$ whereas $28\%$ relies on the limited knowledge contained in $\mathcal{A}_{H} \cap \left\{ (H, \mu) \,\middle|\, \text{obliquity $= \frac{\pi}{6}$ $\mu$-a.s.} \right\} $ with respect to which $28\%$ is optimal. In particular, the gap between those two values is not caused by a lack of tightness of the method, but by a lack of selectiveness of the information contained in $\mathcal{A}_{H} \cap \left\{ (H, \mu) \,\middle|\, \text{obliquity $= \frac{\pi}{6}$ $\mu$-a.s.} \right\} $.  The (mis)use of the terms ``tight'' and ``sharp'' without reference to available information (and in presence of asymmetric information) can be the source of much confusion, something that we hope is cleared up by the present work. Given prior knowledge of $G$ and $\P$, it would be an easy task to construct a set $\mathcal{A}_{\P,G}$ containing $(G,\P)$ such that $\mathcal{U}(\mathcal{A}_{\P,G})\approx 4\%$ (if the probability of failure under $(G,\P)$ is $3.8\%$), but doing so would be delaying a honest discussion on one of the issues at the core of UQ: \emph{How to construct $\mathcal{A}$ without prior knowledge of $G$ and $\P$?} In other words, how to improve the ``selectiveness'' of $\mathcal{A}$ or how to design experiments leading to ``narrow'' $\mathcal{A}$s?  We will now show how this question can be addressed within the OUQ framework.

\subsection{The Optimal UQ loop}
\label{Subsec:ouqloop}

In the previous subsection we discussed how the basic inequality
\[
	\mathcal{L}(\mathcal{A}) \leq \P[G\geq a] \leq \mathcal{U}(\mathcal{A})
\]
provides rigorous optimal certification criteria.  The certification process should not be confused with its three possible outcomes  (see Table \ref{tab:cons_cert_criteria}) which we call ``certify'' (we assert that the system is safe), ``de-certify'' (we assert that the system is unsafe) and ``cannot decide'' (the safety or un-safety of the system is undecidable given the information/assumption set $\mathcal{A}$).  Indeed, in the case
\[
	\mathcal{L}(\mathcal{A}) \leq \epsilon < \mathcal{U}(\mathcal{A})
\]
there exist admissible scenarios under which the system is safe, and other admissible scenarios under which it is unsafe.  Consequently, it follows that we can make no definite certification statement for $(G, \P)$ without introducing further information/assumptions.  If no further information can be obtained,  we conclude that we ``cannot decide'' (this state could also be called ``do not decide'', because we could (arbitrarily) decide that the system is unsafe due to lack of information, for instance,  but do not).

However, if sufficient resources exist to gather additional information, then we enter what may be called the \emph{optimal uncertainty quantification loop}, illustrated in Figure \ref{fig:OUQ-Loop}.
\begin{figure}[t]
	\begin{center}
		\scalebox{1}{
			
\begin{pspicture}(0.0,-6.67)(11.0,5.67)
	\psframe[linewidth=0.04,framearc=0.2,dimen=outer](9.0,5.5)(2.0,4.5)
	\rput(5.5,5.0){$\text{Selection of New Experiments}$}
	\psline[linewidth=0.04,arrowsize=0.2 2.0,arrowlength=1.4,arrowinset=0.4]{->}(4.0,4.5)(4.0,3.5)
	
	\psframe[linewidth=0.04,framearc=0.2,dimen=outer](0.75,2.5)(5.25,3.5)
	\rput(3.0,3.0){$\begin{matrix} \text{Experimental Data} \\ \text{(Legacy / On-Demand)} \end{matrix} $}
	\psline[linewidth=0.04,arrowsize=0.2 2.0,arrowlength=1.4,arrowinset=0.4]{->}(5.25,3.0)(6.0,3.0)
	\psline[linewidth=0.04,arrowsize=0.2 2.0,arrowlength=1.4,arrowinset=0.4]{->}(4.0,2.5)(4.0,1.5)
	
	\psframe[linewidth=0.04,framearc=0.2,dimen=outer](6.0,3.5)(10.0,2.5)
	\rput(8.0,3.0){$\text{Expert Judgement}$}
	\psline[linewidth=0.04,arrowsize=0.2 2.0,arrowlength=1.4,arrowinset=0.4]{->}(7.0,4.5)(7.0,3.5)
	\psline[linewidth=0.04,arrowsize=0.2 2.0,arrowlength=1.4,arrowinset=0.4]{<-}(8.0,4.5)(8.0,3.5)
	\psline[linewidth=0.04,arrowsize=0.2 2.0,arrowlength=1.4,arrowinset=0.4]{->}(7.0,2.5)(7.0,1.5)
	
	\psframe[linewidth=0.04,framearc=0.2,dimen=outer](0.67,0.5)(2.67,1.5)
	\rput(1.67,1.0){$\begin{matrix} \text{Physical} \\ \text{Laws} \end{matrix} $}
	\psline[linewidth=0.04,arrowsize=0.2 2.0,arrowlength=1.4,arrowinset=0.4]{->}(2.67,1.0)(3.67,1.0)

	\psframe[linewidth=0.04,framearc=0.2,dimen=outer](3.67,0.5)(9.67,1.5)
	\rput(6.67,1.0){$\text{Assumptions / Admissible Set, $\mathcal{A}$} $}
	\psline[linewidth=0.04,arrowsize=0.2 2.0,arrowlength=1.4,arrowinset=0.4]{->}(5.5,0.5)(5.5,-0.17)
	
	\psframe[linewidth=0.04,framearc=0.2,dimen=outer](2.0,-1.83)(9.0,-0.17)
	\rput(5.5,-1.0){$\begin{matrix} \text{Extreme Scale Optimizer:  Calculate  } \\ \text{$\mathcal{L}(\mathcal{A}):= \inf \{ \mu[f \text{ fails}] \mid (f, \mu) \in \mathcal{A} \}$} \\ \text{$\mathcal{U}(\mathcal{A}):= \sup \{ \mu[f \text{ fails}] \mid (f, \mu) \in \mathcal{A} \}$} \end{matrix} $}
	\psline[linewidth=0.04,arrowsize=0.2 2.0,arrowlength=1.4,arrowinset=0.4]{->}(3.0,-1.83)(3.0,-2.5)
	\psline[linewidth=0.04,arrowsize=0.2 2.0,arrowlength=1.4,arrowinset=0.4]{->}(8.0,-1.83)(8.0,-2.5)
	
	\psframe[linewidth=0.04,framearc=0.2,dimen=outer](0.75,-3.5)(4.75,-2.5)
	\rput(2.75,-3.0){$\begin{matrix} \text{Certification} \\ \text{Process} \end{matrix} $}
	\psline[linewidth=0.04cm]{-}(3.0,-3.5)(3.0,-4.0)
	\psline[linewidth=0.04,arrowsize=0.2 2.0,arrowlength=1.4,arrowinset=0.4]{->}(3.0,-4.0)(2.17,-4.0)(2.17,-4.67)
	\psline[linewidth=0.04,arrowsize=0.2 2.0,arrowlength=1.4,arrowinset=0.4]{->}(5.5,-4.0)(5.5,-4.67)
	\psline[linewidth=0.04,arrowsize=0.2 2.0,arrowlength=1.4,arrowinset=0.4]{->}(3.0,-4.0)(8.83,-4.0)(8.83,-4.67)
	
	\psframe[linewidth=0.04,framearc=0.2,dimen=outer](5.5,-3.5)(10.5,-2.5)
	\rput(8.0,-3.0){$\begin{matrix} \text{Sensitivity / Robustness} \\ \text{Analysis w.r.t.~} \mathcal{A} \end{matrix} $}
	\psline[linewidth=0.04,arrowsize=0.2 2.0,arrowlength=1.4,arrowinset=0.4]{->}(10.5,-3.17)(11.0,-3.17)(11.0,5.0)(9.0,5.0)
	\psline[linewidth=0.04,arrowsize=0.2 2.0,arrowlength=1.4,arrowinset=0.4]{->}(10.5,-2.83)(10.67,-2.83)(10.67,1.0)(9.67,1.0)
	\psline[linewidth=0.04,arrowsize=0.2 2.0,arrowlength=1.4,arrowinset=0.4]{->}(11.0,3.0)(10.0,3.0)
	
	\definecolor{darkgreen}{rgb}{0.0,0.5,0.0}
	\definecolor{amber}{rgb}{1.0,0.75,0.0}
	\definecolor{darkgray}{rgb}{0.33,0.33,0.33}
	
	\psframe[linewidth=0.04,framearc=0.2,dimen=outer](0.83,-4.67)(3.5,-6.17)
	\rput(2.17,-5.42){$\begin{matrix} \text{De-Certify} \\ \text{(\emph{i.e.}\ System is} \\ \text{Unsafe)} \end{matrix} $}
	
	\psframe[linewidth=0.04,framearc=0.2,dimen=outer](4.0,-4.67)(7.0,-6.17)
	\rput(5.5,-5.42){$\begin{matrix} \text{Cannot} \\ \text{Decide} \end{matrix} $}
	\psline[linewidth=0.04,arrowsize=0.2 2.0,arrowlength=1.4,arrowinset=0.4]{->}(5.5,-6.17)(5.5,-6.67)(0.0,-6.67)(0.0,5.0)(2.0,5.0)

	\psframe[linewidth=0.04,framearc=0.2,dimen=outer](7.5,-4.67)(10.17,-6.17)
	\rput(8.83,-5.42){$\begin{matrix} \text{Certify} \\ \text{(\emph{i.e.}\ System is} \\ \text{Safe)} \end{matrix} $}

\end{pspicture}

		}
		\caption{Optimal Uncertainty Quantification Loop.}
		\label{fig:OUQ-Loop}
	\end{center}
\end{figure}
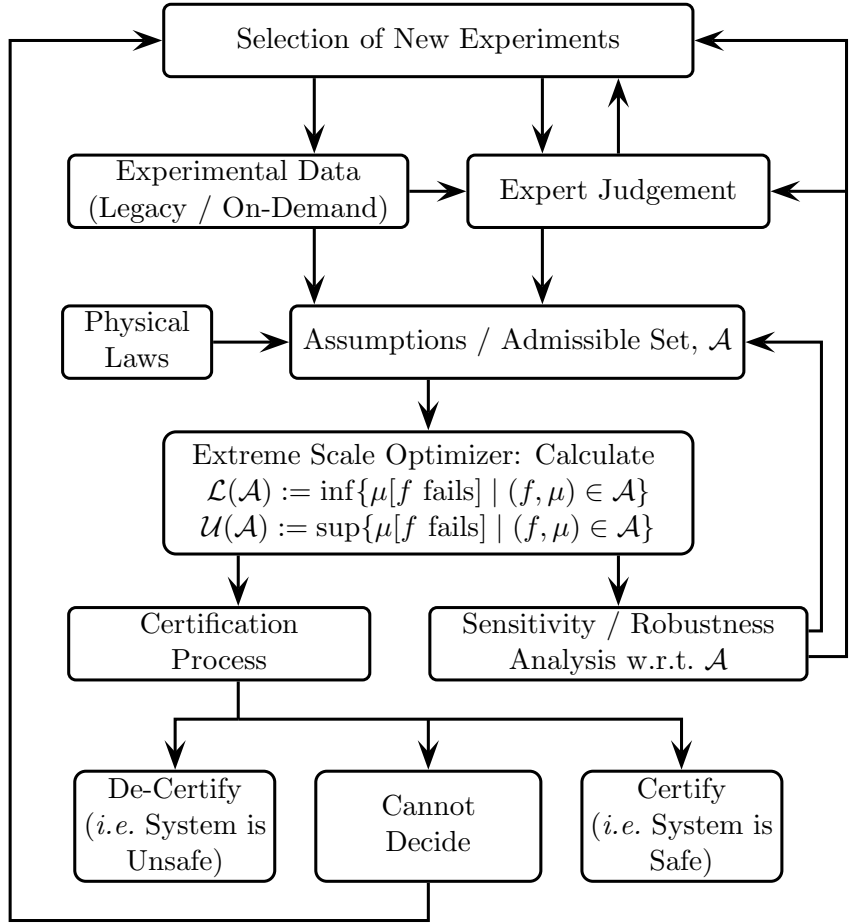
The admissible set $\mathcal{A}$ draws on three principal sources of information:  known physical laws, expert opinion, and experimental data.  Once the set $\mathcal{A}$ has been constructed, the calculation of the lower and upper bounds $\mathcal{L}(\mathcal{A})$ and $\mathcal{U}(\mathcal{A})$ are well-posed optimization problems.  If the results of these optimization problems lead to certification or de-certification, then we are done;  if not, then new experiments should be designed and expert opinion sought in order to validate or invalidate the extremal scenarios that cause the inequality
\[
	\mathcal{L}(\mathcal{A}) \leq \epsilon < \mathcal{U}(\mathcal{A})
\]
to hold.  The addition of information means further constraints on the collection of admissible scenarios; that is,  the original admissible set $\mathcal{A}$ is reduced to a smaller one $\mathcal{A}' \subset \mathcal{A}$, thereby providing sharper bounds on the probability of failure:
\[
	\mathcal{L}(\mathcal{A}) \leq \mathcal{L}(\mathcal{A}') \leq \P[ G(X) \geq a] \leq \mathcal{U}(\mathcal{A}') \leq \mathcal{U}(\mathcal{A}).
\]
The sharper bounds may meet the ``certify'' or ``decertify'' criteria of Table \ref{tab:cons_cert_criteria}.
 If not, and there are still resources for gathering additional information, then the loop should be repeated.  This process is the feedback arrow on the left-hand side of Figure \ref{fig:OUQ-Loop}.

The right-hand side of Figure \ref{fig:OUQ-Loop} constitutes  another aspect of the OUQ loop.  The bounds $\mathcal{L}(\mathcal{A})$ and $\mathcal{U}(\mathcal{A})$ are only useful insofar as the assumptions $\mathcal{A}$ are accurate.  It is possible that the sources of information that informed $\mathcal{A}$ may have been in error:  physical laws may have been extended outside their range of validity (e.g. Newtonian physics may have been applied in the relativistic regime),   there may have been difficulties with the experiments or the results misinterpreted, or  expert opinion may have been erroneous.  Therefore, a vital part of OUQ is to examine the sensitivity and robustness of the bounds $\mathcal{L}(\mathcal{A})$ and $\mathcal{U}(\mathcal{A})$ with respect to the assumption set $\mathcal{A}$.  If the bounds $\mathcal{L}(\mathcal{A})$ and $\mathcal{U}(\mathcal{A})$ are found to depend  sensitively on one particular assumption  (say, the mean performance of one component of the system), then it would be advisable to expend resources investigating this assumption.

The loop illustrated in Figure \ref{fig:OUQ-Loop} differs from the loop used to  solve the numerical optimization problem as described  in Sub-section \ref{sec:Mystic} and Remark \ref{rmk:nestedopt}.

\paragraph{Experimental design and selection of the most decisive experiment.}

An important aspect of the OUQ loop is the selection of new experiments.  Suppose that a number of possible experiments $E_{i}$ are proposed, each of which will determine some functional $\Phi_{i}(G, \P)$ of $G$ and $\P$.  For example, $\Phi_{1}(G, \P)$ could be $\E_{\P}[G]$, $\Phi_{2}(G, \P)$ could be $\P[X \in A]$ for some subset $A \subseteq \mathcal{X}$ of the input parameter space, and so on.  Suppose that there are insufficient experimental resources to run all of these proposed experiments. Let us now consider which experiment should be run for the certification problem.  Recall that the admissible set $\mathcal{A}$ is partitioned into safe and unsafe subsets as in \eqref{eq:A_safe_and_unsafe}.  Define $J_{\mathrm{safe}, \epsilon}(\Phi_{i})$ to be the closed interval spanned by the possible values for the functional $\Phi_{i}$ over the safe admissible scenarios (i.e.\ the closed convex hull of the range of $\Phi_{i}$ on $\mathcal{A}_{\mathrm{safe}, \epsilon}$):  that is, let
\begin{subequations}
	\label{eq:J_intervals}
	\begin{align}
	J_{\mathrm{safe}, \epsilon}(\Phi_{i}) & := \left[ \inf_{(f, \mu) \in \mathcal{A}_{\mathrm{safe}, \epsilon}} \Phi_{i}(f, \mu) ,  \sup_{(f, \mu) \in \mathcal{A}_{\mathrm{safe}, \epsilon}} \Phi_{i}(f, \mu) \right] \\
	J_{\mathrm{unsafe}, \epsilon}(\Phi_{i}) & := \left[ \inf_{(f, \mu) \in \mathcal{A}_{\mathrm{unsafe}, \epsilon}} \Phi_{i}(f, \mu) ,  \sup_{(f, \mu) \in \mathcal{A}_{\mathrm{unsafe}, \epsilon}} \Phi_{i}(f, \mu) \right].
	\end{align}
\end{subequations}
Note that, in general, these two intervals may be disjoint or may have non-empty intersection;  the size of their intersection provides a measure of usefulness of the proposed experiment $E_{i}$.  Observe that if experiment $E_{i}$ were run, yielding the value $\Phi_{i}(G, \P)$, then the following conclusions could be drawn:
\begin{align*}
        \Phi_{i}(G, \P) \in J_{\mathrm{safe}, \epsilon}(\Phi_{i}) \cap J_{\mathrm{unsafe}, \epsilon}(\Phi_{i}) & \implies \text{no  conclusion,} \\
        \Phi_{i}(G, \P) \in J_{\mathrm{safe}, \epsilon}(\Phi_{i}) \setminus J_{\mathrm{unsafe}, \epsilon}(\Phi_{i}) & \implies \text{the system is safe,} \\
        \Phi_{i}(G, \P) \in J_{\mathrm{unsafe}, \epsilon}(\Phi_{i}) \setminus J_{\mathrm{safe}, \epsilon}(\Phi_{i}) & \implies \text{the system is unsafe,} \\
        \Phi_{i}(G, \P) \notin J_{\mathrm{safe}, \epsilon}(\Phi_{i}) \cup J_{\mathrm{unsafe}, \epsilon}(\Phi_{i}) &  \implies \text{faulty assumptions,}
\end{align*}
where the last assertion (\emph{faulty assumptions}) means that $(G, \P) \notin \mathcal{A}$ and follows from the fact that $\Phi_{i}(G, \P) \notin J_{\mathrm{safe}, \epsilon}(\Phi_{i}) \cup J_{\mathrm{unsafe}, \epsilon}(\Phi_{i})$ is a contradiction. The validity of the first three assertions is based on the supposition that $(G, \P) \in \mathcal{A}$.

In this way, the computational optimization exercise of finding $J_{\mathrm{safe}, \epsilon}(\Phi_{i})$ and $J_{\mathrm{unsafe}, \epsilon}(\Phi_{i})$ for each proposed experiment $E_{i}$ provides an objective assessment of which experiments are worth performing:  those for which $J_{\mathrm{safe}, \epsilon}(\Phi_{i})$ and $J_{\mathrm{unsafe}, \epsilon}(\Phi_{i})$ are nearly disjoint intervals are worth performing since they are likely to yield conclusive results \emph{vis-{\`a}-vis} (de-)certification and  conversely, if the intervals $J_{\mathrm{safe}, \epsilon}(\Phi_{i})$ and $J_{\mathrm{unsafe}, \epsilon}(\Phi_{i})$ have a large overlap, then experiment $E_{i}$ is not worth performing since it is unlikely to yield conclusive results.  Furthermore, the fourth possibility above shows how experiments can rigorously establish that one's assumptions $\mathcal{A}$ are incorrect.  See Figure \ref{fig:J_intervals} for an illustration.

\begin{figure}[t]
	\begin{center}
		\scalebox{1}{
			\begin{pspicture}(0,0)(10,8)
	\psline[linewidth=0.04]{->}(0.5,7)(9,7)
	\rput(9.5,7){$\R$}
	\psline[linewidth=0.04,linecolor=red]{*-*}(3,7.2)(4,7.2)
	\rput(3.5,7.5){$J_{\mathrm{unsafe}, \epsilon}(\Phi_{1})$}
	\psline[linewidth=0.04,linecolor=blue]{*-*}(6,6.8)(8,6.8)
	\rput(7,6.5){$J_{\mathrm{safe}, \epsilon}(\Phi_{1})$}
	
	\psline[linewidth=0.04]{->}(0.5,5)(9,5)
	\rput(9.5,5){$\R$}
	\psline[linewidth=0.04,linecolor=red]{*-*}(2.5,5.2)(6,5.2)
	\rput(4.25,5.5){$J_{\mathrm{unsafe}, \epsilon}(\Phi_{2})$}
	\psline[linewidth=0.04,linecolor=blue]{*-*}(5.5,4.8)(8.5,4.8)
	\rput(7,4.5){$J_{\mathrm{safe}, \epsilon}(\Phi_{2})$}
	
	\psline[linewidth=0.04]{->}(0.5,3)(9,3)
	\rput(9.5,3){$\R$}
	\psline[linewidth=0.04,linecolor=red]{*-*}(1,3.2)(7,3.2)
	\rput(4,3.5){$J_{\mathrm{unsafe}, \epsilon}(\Phi_{3})$}
	\psline[linewidth=0.04,linecolor=blue]{*-*}(3,2.8)(8,2.8)
	\rput(5.5,2.5){$J_{\mathrm{safe}, \epsilon}(\Phi_{3})$}
	
	\psline[linewidth=0.04]{->}(0.5,1)(9,1)
	\rput(9.5,1){$\R$}
	\psline[linewidth=0.04,linecolor=red]{*-*}(3,1.2)(7,1.2)
	\rput(5,1.5){$J_{\mathrm{unsafe}, \epsilon}(\Phi_{4})$}
	\psline[linewidth=0.04,linecolor=blue]{*-*}(3,0.8)(7,0.8)
	\rput(5,0.5){$J_{\mathrm{safe}, \epsilon}(\Phi_{4})$}
\end{pspicture}
		}
		\caption{A schematic representation of the intervals $J_{\mathrm{unsafe}, \epsilon}(\Phi_{i})$ (in red) and $J_{\mathrm{safe}, \epsilon}(\Phi_{i})$ (in blue) as defined by \eqref{eq:J_intervals} for four functionals $\Phi_{i}$ that might be the subject of an experiment.  $\Phi_{1}$ is a good candidate for experiment effort, since the intervals do not overlap and hence experimental determination of $\Phi_{1}(G, \P)$ will certify or de-certify the system;  $\Phi_{4}$ is not worth investigating, since it cannot distinguish safe scenarios from unsafe ones;  $\Phi_{2}$ and $\Phi_{3}$ are intermediate cases, and $\Phi_{2}$ is a better prospect than $\Phi_{3}$.}
		\label{fig:J_intervals}
	\end{center}
\end{figure}
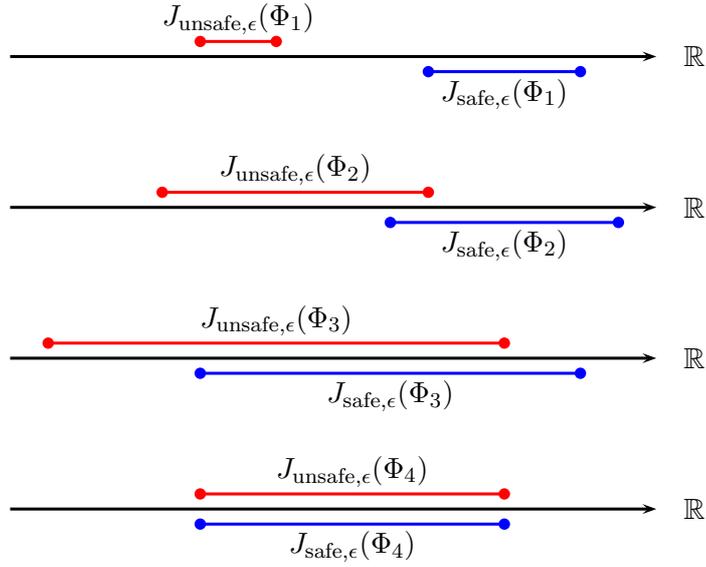

\begin{rmk}
	For the sake of clarity, we have started this description by defining \emph{experiments} as \emph{functionals} $\Phi_i$ of $\P$ and $G$.  In practice, some experiments may not be functionals of $\P$ and $G$  but of related objects. Consider, for instance, the situation where $(X_1, X_2)$ is a two-dimensional Gaussian vector with zero mean  and covariance matrix $C$, $\P$ is the probability distribution of $X_1$, the experiment $E_2$ determines the variance of $X_2$ and the information set $\mathcal{A}$ is $C \in B$, where $B$ is a subset of symmetric positive definite $2 \times 2$ matrices. The outcome of the experiment $E_2$ is not a function of the probability distribution $\P$;  however, the knowledge of $\P$ restricts the range of possible outcomes of $E_2$.  Hence, for some experiments $E_i$, the knowledge of $(G, \P)$ does not determine the outcome of the experiment, but only the set of possible outcomes.  For those experiments, the description given above can be generalized to situations where $\Phi_i$ is a \emph{multivalued} functional of $(G,\P)$ determining the set of possible outcomes of the experiment $E_i$.  This picture can be generalized further by introducing measurement noise, in which case $(G, \P)$ may not determine a deterministic set of possible outcomes, but instead a measure of probability on a set of possible outcomes.
\end{rmk}

\begin{eg}[Computational solution of the experimental selection problem]
We will now consider  again the admissible set $\mathcal{A}_{H}$	as given by \eqref{eq:PSAAP_SPHIR_Admissible}.
The following example shows that the notion of ``best'' experiment depends
on the admissible safety threshold $\epsilon$ for $\P[G\geq a]$.
	Suppose that an experiment $E$ is proposed that will determine the probability mass of the upper half of the velocity range, $[2.45, 2.8] \, \mathrm{km \cdot s}^{-1}$;  the corresponding functional $\Phi$ of study is
	\[
		\Phi(\mu) := \mu[ v \geq 2.45 \, \mathrm{km \cdot s}^{-1} ],
	\]
	and the proposed experiment $E$ will determine $\Phi(\P)$ (approximate determinations including measurement and sampling errors can also be handled, but the exact determination is done here for simplicity). The corresponding intervals $J_{\mathrm{safe}, \epsilon}(\Phi)$ and $J_{\mathrm{unsafe}, \epsilon}(\Phi)$ as defined by \eqref{eq:J_intervals} and \eqref{eq:A_safe_and_unsafe} are reported in Table \ref{tab:PSAAP_exp_selection} for various acceptable probabilities of failure $\epsilon$.  Note that, for larger values of $\epsilon$, $E$ is a ``better'' experiment in the sense that it can distinguish safe scenarios from unsafe ones (see also Figure \ref{fig:J_intervals});  for smaller values of $\epsilon$, $E$ is a poor experiment.  In any case, since the intersection $J_{\mathrm{safe}, \epsilon}(\Phi) \cap J_{\mathrm{unsafe}, \epsilon}(\Phi)$ is not empty, $E$ is not an ideal experiment.
	
	It is important to note that the optimization calculations necessary to compute the intervals $J_{\mathrm{safe}, \epsilon}(\Phi)$ and $J_{\mathrm{unsafe}, \epsilon}(\Phi)$ are simplified by the application of Theorem \ref{thm:baby_measure}:  in this case, the objective function of $\mu$ is $\mu[v \geq 2.45]$ instead of $\mu[H = 0]$, but the constraints are once again linear inequalities on generalized moments of the optimization variable $\mu$.
	
	\begin{table}
		\begin{center}
			\begin{tabular}{|c|c|c|c|c|}
				\hline
				 & \multicolumn{2}{|c|}{$J_{\mathrm{safe}, \epsilon}(\Phi)$} & \multicolumn{2}{|c|}{$J_{\mathrm{unsafe}, \epsilon}(\Phi)$} \\
				 & $\inf$ & $\sup$ & $\inf$ & $\sup$ \\
				\hline \hline
				$\epsilon = 0.100$ & $0.000$ & $1.000$ & $0.000$ & $0.900$ \\
				iterations until numerical convergence & $40$ & $40$ & $40$ & $300$ \\
				total evaluations of $H$ & $1,000$ & $1,000$ & $1,000$ & $8,000$ \\
				\hline
				$\epsilon = 0.200$ & $0.000$ & $1.000$ & $0.000$ & $0.800$ \\
				iterations until numerical convergence & $40$ & $40$ & $40$ & $400$ \\
				total evaluations of $H$ & $1,000$ & $1,000$ & $1,000$ & $12,000$ \\
				\hline
				$\epsilon = 0.300$ & $0.000$ & $1.000$ & $0.000$ & $0.599$ \\
				iterations until numerical convergence & $40$ & $40$ & $40$ & $1000$ \\
				total evaluations of $H$ & $1,000$ & $1,000$ & $1,000$ & $33,000$ \\
				\hline
			\end{tabular}
			\caption{The results of the calculation of the intervals $J_{\mathrm{safe}, \epsilon}(\Phi)$ and $J_{\mathrm{unsafe}, \epsilon}(\Phi)$ for the functional $\Phi(\mu) := \mu[ v \geq 2.45 \, \mathrm{km \cdot s}^{-1} ]$.  Note that, as the acceptable probability of system failure, $\epsilon$, increases, the two intervals overlap less,  so experimental determination of $\Phi(\P)$ would be more likely to yield a decisive conservative certification of the system as safe or unsafe;  the computational cost of this increased decisiveness is a greater number of function evaluations in the optimization calculations.  All computational cost figures are approximate.}
			\label{tab:PSAAP_exp_selection}
		\end{center}
	\end{table}
\end{eg}

\paragraph{On the number of total evaluations on $H$.}
Recall that, for simplicity, we have assumed the response function $G$ to be known and given by $H$.
A large number of evaluations of $H$ has been used in Table \ref{tab:PSAAP_exp_selection} to ensure convergence towards the global optimum.
It is important to observe that those evaluations of $H$ are not (actual, costly) experiments but (cheap) numerical evaluations of equation \eqref{eq:PSAAP_SPHIR_surr}.
 More precisely, the method for selecting new best experiments does not require new experiments; i.e.,
it relies entirely on the information set $\mathcal{A}$ (which contains the information gathered from previous experiments). Hence those evaluations should not be \emph{viewed as} ``information gained from Monte Carlo samples'' but as ``pure CPU processing time''.
In situations where the numerical evaluation of $H$ is expensive, one can introduce its cost in the optimization loop. An investigation of the best algorithm to perform the numerical optimization with the least number of function evaluations is a worthwhile subject but is beyond the scope of the present paper. Observe also that the method proposed in Section  \ref{sec:OUQ} does not rely on the exact knowledge of the response function $G$. More precisely, in  situations where the response function is unknown, the selection of next best experiments is still entirely computational, and based upon previous data/information gathered on $G$ enforced as constraints in a numerical optimization algorithm. More precisely, in those situations, the optimization algorithm may require the numerical evaluation of a large number of admissible functions $f$ (compatible with the prior information available on $G$) but it does not require any new evaluation of $G$.

In situations where $H$ is (numerically) expensive to evaluate, one would have to include the cost of these evaluations in the optimization loop and use fast algorithms exploiting the multi-linear structures associated with the computation of safe and unsafe intervals. Here we have used a genetic algorithm because its robustness. This algorithm typically converges at $10\%$ of the total number of evaluations of $H$ given in the last row of  Table \ref{tab:PSAAP_exp_selection} but we have increased the number of iterations tenfold to guarantee a robust result.
The investigation of efficient optimization algorithms exploiting the multi-linear structures of OUQ optimization problems is of great interest and beyond the immediate scope of this paper.

\begin{figure}[t]
	\begin{center}
			\includegraphics[width=1\textwidth]{./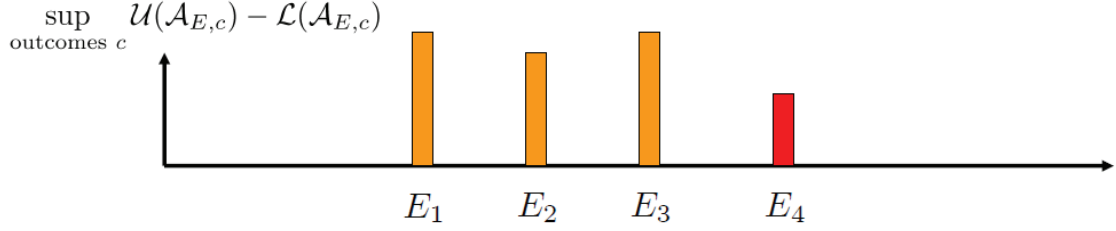}
		\caption{
A schematic representation of the size of the prediction intervals $\sup_{\text{outcomes } c} \big(\mathcal{U}(\mathcal{A}_{E,c})-\mathcal{L}(\mathcal{A}_{E,c})\big)$ in the worst case with respect to outcome $c$. $E_4$ is the most predictive experiment.
}
		\label{fig:MostPredictive}
	\end{center}
\end{figure}

\begin{figure}
	\begin{center}
       	\subfigure[Playing Chess against the universe]{\label{fig:Chess}
			\includegraphics[width=0.45\textwidth]{./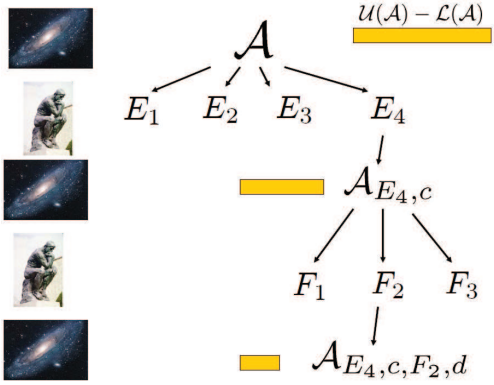}
		}
		\subfigure[Let's play Clue, Round 1]{\label{subround1}
			\includegraphics[width=0.45\textwidth]{./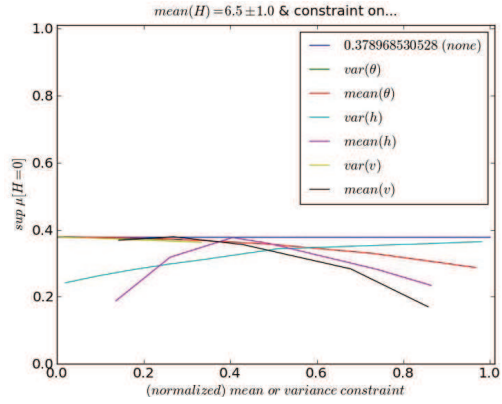}
		}
		\subfigure[Let's play Clue, Round 2]{\label{subround2}
			\includegraphics[width=0.45\textwidth]{./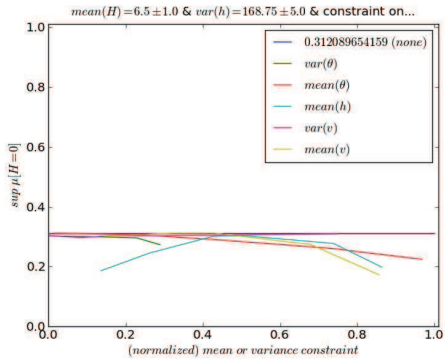}
		}
		\subfigure[Let's play Clue, Round 3]{\label{subround3}
			\includegraphics[width=0.45\textwidth]{./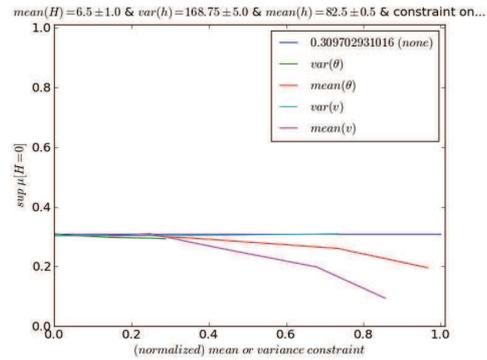}
		}
		\caption{Subfigure \subref{fig:Chess}: Playing Chess Against the Universe. We choose which experiment $E$ to perform and the universe selects the outcome $c$. Our objective is to minimize $\mathcal{U}(\mathcal{A})-\mathcal{L}(\mathcal{A}).$ In the first round our possible moves correspond to a choice between experiments $E_1, E_2, E_3$ and $E_4$. We perform experiment $E_4$, the outcome $c$ of that experiment (selected by the Universe) transforms the admissible into $\mathcal{A}_{E_4,c}$. In the second round, our possible moves correspond to a choice between experiments $F_1, F_2$ and $F_3$. As in the game of Chess, several moves can be planned in advance by solving min max optimization problems, and  the exponential increase of the number branches of the game tree can be kept under control by exploring only a subset of (best) moves. Subfigures \subref{subround1}, \subref{subround2} and \subref{subround3}: Let's play Clue.}
		\label{fig:PlayClueandChess}
	\end{center}
\end{figure}

\paragraph{Selection of the most predictive experiment.}
The computation of safe and unsafe intervals described in the previous paragraph allows of the selection of the most selective experiment. If our objective is to have an ``accurate'' prediction of $\P[ G(X) \geq a]$, in the sense that $\mathcal{U}(\mathcal{A})-\mathcal{L}(\mathcal{A})$ is small, then one can proceed as follows. Let $\mathcal{A}_{E,c}$ denote those scenarios in $\mathcal{A}$  that are
compatible with obtaining outcome $c$ from experiment $E$. An experiment $E^*$ that is most predictive, even in the worst case, is defined by a minmax criterion: we seek (see Figure \ref{fig:MostPredictive})
\begin{equation}
E^* \in \mathop{\operatorname{arg}\operatorname{min}}_{\text{experiments E}}
\Big(\sup_{\text{outcomes } c} \big(\mathcal{U}(\mathcal{A}_{E,c})-\mathcal{L}(\mathcal{A}_{E,c})\big)\Big)
\end{equation}
The idea is that, although we can not predict the precise outcome $c$ of an experiment $E$, we can compute a worst-case scenario with respect to $c$, and obtain an optimal bound for the minimum decrease in our prediction interval for $\P[ G(X) \geq a]$ based on the (yet unknown) information gained from experiment $E$.
Again, the theorems given in this paper can be applied to reduce this kind of problem. Finding $E^*$ is a bigger problem than just calculating $\mathcal{L}(\mathcal{A})$ and $\mathcal{U}(\mathcal{A})$, but the presumption is that computer time is cheaper than experimental effort.

\paragraph{Planning of campaigns of experiments.}
The idea of experimental selection can be extended to plan several
experiments in advance, i.e.\ to plan campaigns of experiments.
This aspect can be used to
assess the safety or the design of complex systems in a minimal number of
experiments (and also to predict bounds on the total number of required
experiments).  Just as a
good chess player thinks several moves ahead, our framework allows for the design of increasingly sophisticated and optimal sequences of experiments that can be performed to measure key system variables. The implementation of this strategy corresponds to a min max game ``played against the Universe'' (Subfigure \ref{fig:Chess}). The well-known games of Clue/Cluedo and Twenty Questions are better analogies than chess for this kind of information
game. In that sense, the planning of campaigns of experiments is an infinite-dimensional Clue, played on spaces of admissible scenarios, against our lack of perfect information about reality, and made tractable by the reduction theorems.
This aspect  calls for  more
investigation since it has the potential to provide a new approach to the
current scientific investigation paradigm, which is based on intuition, expert
judgment, and guessing.

\begin{eg}[Let's play Clue.]
In Subfigures \ref{subround1}, \ref{subround2} and \ref{subround3} we consider  again the admissible set $\mathcal{A}_{H}$	as given by \eqref{eq:PSAAP_SPHIR_Admissible} and select three most predictive experiments, sequentially, choosing the second one after having observed the outcome of the first one.
The list of possible experiments corresponds to measuring the mean or variance of thickness $h$, obliquity $\theta$ or velocity $v$.  Subfigures \ref{subround1}, \ref{subround2} and \ref{subround3} show $\mathcal{U}(\mathcal{A}_{H, E, c})$ for each of these experiments as a function of the re-normalized outcome value $c$. Since, in this example, we always have $\mathcal{L}(\mathcal{A}_{H, E, c})=0$, $\mathcal{U}(\mathcal{A}_{H, E, c})$ corresponds to the size of the prediction interval for the probability of non-perforation given the information that the outcome of experiment $E$ is $c$.
Given the results shown in Subfigure \ref{subround1} we select to measure the variance of thickness as our first best experiment.
 Note that this selection does not require the knowledge of what the experiment \emph{will} yield but only the knowledge of what the experiment \emph{can} yield: we identify as the optimal next experiment the one that is most informative in the minimax sense, i.e.\ of all its possible outcomes (this does not require the knowledge of what the experiment will yield), its least informative outcome is more informative than the least informative outcome of any other candidate experiment. Although not necessary, this selection can, possibly, be guided by a model of reality (i.e.\ in this case a model for the probability distributions of $h,\theta,v$). Used in this manner, an accurate model will reduce the number of experiments required for certification and an inaccurate model will lead to a relatively greater number of experiments (but not to erroneous bounds).
Subfigure \ref{subround2} is based on the information contained in $\mathcal{A}_{H}$ and bounds on the variance of thickness  (obtained from the first experiment). Our selection as  second experiment is to measure the mean of thickness (leading to Subfigure \ref{subround3}).
\end{eg}

\section{Generalizations and Comparisons}
\label{sec:comparisons}

\subsection{Prediction, extrapolation, verification and validation}

In the previous section, the OUQ framework was described as it applies to the the certification problem \eqref{eq:def_cert}. We will now show that many important UQ problems, such as prediction, verification and validation, can be formulated as certification problems.  This is similar to the point of view of \cite{BarlowProshchan:1996}, in which formulations of many problem objectives in reliability are shown to be representable in a unified framework.

A \textbf{prediction problem} can be formulated as, given $\epsilon$ and (possibly incomplete) information on $\P$ and $G$, finding a smallest $b-a$ such that
\begin{equation}
	\label{eq:def_prediction}
	\P[a\leq G(X)\leq b] \geq 1 - \epsilon,
\end{equation}
which, given the admissible set $\mathcal{A}$, is equivalent to solving
\begin{equation}
\inf \left\{ b-a \,\middle|\, \inf_{(f, \mu) \in \mathcal{A}} \mu [a\leq f(X) \leq b]\geq 1 - \epsilon \right\}.
\end{equation}
Observe that $[a,b]$ can be interpreted as an optimal interval of confidence for $G(X)$ (although $b-a$ is minimal, $[a,b]$ may not be unique), in particular,
with probability at least $1-\epsilon$, $G(X)\in [a,b]$.

In many applications the regime where experimental
data can be taken is different than the  deployment regime where prediction or certification is sought, and this is commonly referred to as the \textbf{extrapolation problem}.
 For example, in materials modeling, experimental tests are performed on materials, and the model run for
comparison, but the desire is that these results tell us something where experimental tests are
impossible, or extremely expensive to obtain.

In most applications, the response function $G$ may be approximated via a (possibly numerical) model $F$. Information on the relation between the model $F$ and the response function $G$ that it is designed to represent (i.e.\ information on $(x, F(x), G(x))$) can be used to restrict (constrain) the set $\mathcal{A}$ of admissible scenarios $(G,\P)$. This information may take the form of  a bound on some distance between $F$ and $G$ or a bound on some complex functional of $F$ and $G$ \cite{LucasOwhadiOrtiz:2008,ScovelSteinwart:2010}. Observe that, in the context of the certification problem \eqref{eq:def_cert}, the value of the model can be measured by
changes induced on the optimal bounds   $\mathcal{L}(\mathcal{A})$ and $\mathcal{U}(\mathcal{A})$. The problem of quantifying the relation (possibly the distance) between $F$ and $G$ is commonly referred to as the \textbf{validation problem}. In some situations $F$ may be a numerical model involving millions of lines of code and (possibly) space-time discretization.  The quantification of the uncertainty associated with the possible presence of bugs and discretization approximations is commonly referred to as the \textbf{verification problem}. Both, the validation and the verification problem, can be addressed in the OUQ framework by introducing information sets describing  relations between $G$, $F$ and the code.

\subsection{Comparisons with other UQ methods}

We will now compare OUQ with other widely used UQ methods and consider the certification problem \eqref{eq:def_cert} to be specific.

\begin{itemize}

\item Assume that $n$ independent samples $Y_1,\dots,Y_n$ of the random variable $G(X)$ are available (i.e.\ $n$ independent observations of the random variable $G(X)$, all distributed according to the measure of probability $\P$).  If $\eins[ Y_i \geq a ]$ denotes the random variable equal to one if $Y_i \geq a$ and equal to zero otherwise, then
\begin{equation}\label{eq:nsamples}
p_n:=\frac{\sum_{i=1}^n \eins[ Y_i \geq a ]}{n}
\end{equation}
is an unbiased estimator of $\P[G(X)\geq a]$. Furthermore, as a result of Hoeffding's concentration inequality \cite{Hoeffding:1956b}, the probability that $p_n$ deviates from $\P[G(X)\geq n]$  (its mean) by at least $\epsilon/2$ is bounded from above by  $\exp(-\frac{n}{2} \epsilon^2)$. It follows that if the number of samples $n$ is large
enough (of the order of $\frac{1}{\epsilon^{2}} \log \frac{1}{\epsilon}$), then the certification of \eqref{eq:def_cert} can be obtained through a Monte Carlo estimate (using $p_n$). As this example shows, \textbf{Monte Carlo strategies} \cite{Liu:2008} are simple to implement and do not necessitate  prior information on the response function $G$ and the measure $\P$ (other than the i.i.d.\ samples). However, they require a large number of (independent) samples of $G(X)$ which is a severe limitation for the certification of rare events (the $\epsilon=10^{-9}$ of the aviation industry would \cite{Soekkha:1997, Boeing:2010} necessitate $O(10^{18})$ samples).
Additional information  on $G$ and $\P$ can, in principle, be included (in a limited fashion) in Monte Carlo strategies via
importance and weighted sampling \cite{Liu:2008} to reduce the number of required samples.

\item The number of required samples can also be reduced to $\frac{1}{\epsilon}(\ln \frac{1}{\epsilon})^d$ using \textbf{Quasi-Monte Carlo Methods}. We refer in particular to  the Koksma--Hlawka inequality \cite{Niederreiter:1992}, to \cite{SloanJoe:1994} for multiple integration based on lattice rules and to \cite{Sloan2010} for a recent review. We observe that these methods require some regularity (differentiability) condition  on the response function $G$ and the possibility of sampling $G$ at pre-determined points $X$. Furthermore, the number of required samples blows-up at an exponential rate with the dimension $d$ of the input vector $X$.

\item If $G$ is regular enough and can be sampled at  at pre-determined points, and if  $X$ has a known distribution, then \textbf{stochastic expansion methods} \cite{GhanemDham:1998, Ghanem:1999, Xiu:2009, BabuskaNobRa:2010, EldredWebsterConstantine:2008, DoostanOwhadi:2010} can reduce the number of required samples even further (depending on the regularity of $G$) provided that the dimension of $X$ is not too high \cite{Todor07a, Bieri09a}.   However, in most  applications, only incomplete information on $\P$ and $G$ is available and the number of available samples on $G$ is small or zero.  $X$ may be of high dimension, and may include uncontrollable variables and unknown unknowns (unknown input parameters of the response function $G$).  $G$ may not be the solution of a PDE and may involve interactions between singular and complex processes such as (for instance) dislocation, fragmentation, phase transitions, physical phenomena in untested regimes, and even human decisions. We observe that in many applications of Stochastic Expansion methods
 $G$ and $\P$ are assumed to be perfectly known and UQ reduces to  computing the push forward of the measure $\P$ via the response (transfer) function $I_{\geq a}\circ G$ (to a measure on two points, in those situations $\mathcal{L}(\mathcal{A})=\P[G\geq a]=\mathcal{U}(\mathcal{A})$).

\item The investigation of variations of the response function $G$ under variations of the input parameters $X_i$, commonly referred to as
\textbf{sensitivity analysis} \cite{Saltelli:2000, Saltelli:2008}, allows for the identification of  critical input parameters. Although helpful in estimating the robustness of conclusions made based on specific assumptions on input parameters, sensitivity analysis, in its most general form, has not been targeted at obtaining rigorous upper bounds on probabilities of failures associated with certification problems \eqref{eq:def_cert}.
However, single parameter oscillations of the function $G$ (as defined by \eqref{eq:defOsc}) can be seen as a form of non-linear sensitivity analysis leading to bounds on $\P[G\geq a]$ via McDiarmid's concentration inequality \cite{McDiarmid:1989, McDiarmid:1998}. These bounds can be made sharp by partitioning the input parameter space along maximum oscillation directions and computing sub-diameters on sub-domains \cite{Sullivan:2010}.

\item If $\mathcal{A}$ is expressed probabilistically through a prior (an a priori measure of probability) on the set possible scenarios $(f,\mu)$  then \textbf{Bayesian inference} \cite{Leonard:1999, BeckKatafygiotis:1998} could in principle be used to estimate $\P[G\geq a]$ using the posterior measure of probability on $(f,\mu)$. This combination between OUQ and Bayesian methods avoids the necessity to solve the possibly large optimization problems \eqref{eq:def_ouq} and it  also greatly simplifies the incorporation of sampled data thanks to the Bayes rule.
However, oftentimes, priors are not available or their choice involves some degree of arbitrariness that is incompatible with the certification of rare events.  Priors may become asymptotically irrelevant (in the limit of large data sets) but, for small $\epsilon$, the number of required samples can be of the same order as the number required by Monte-Carlo methods \cite{ShenWasserman:2001}.\\
When unknown parameters are estimated using priors and sampled data, it is important to observe that the convergence of the Bayesian method may fail if the underlying probability mechanism allows an infinite number of possible outcomes (e.g., estimation of an unknown probability on $\mathbb{N}$, the set of all natural numbers) \cite{DiaconisFreedman:1986}.  In fact, in these infinite-dimensional situations, this lack of convergence (commonly referred to as inconsistency) is the rule rather than the exception \cite{DiaconisFreedman:1998}. As emphasized in \cite{DiaconisFreedman:1986}, \emph{as more data comes in, some Bayesian statisticians will become more and more convinced of the wrong answer}.\\
We also observe that, for complex systems, the computation of posterior probabilities has been made possible thanks to advances in computer science. We refer to \cite{Stuart:2010} for a (recent) general (Gaussian) framework for  Bayesian inverse problems and \cite{Beck:2010} for a rigorous UQ framework based on probability logic with Bayesian updating.
Just as Bayesian methods would have been considered computationally infeasible 50 years ago but are now common practice, OUQ methods are now becoming feasible and will only increase in feasibility with the passage of time and advances in computing.

\item The combination of structural optimization (in various fields of engineering) to produce an optimal design given the (deterministic) worst-case scenario has been referred to as \textbf{Optimization and Anti-Optimization} \cite{ElishakofOhsaki:2010} (we also refer to \textbf{critical excitation} in  seismic engineering \cite{Drenick:1973}). The main difference between OUQ and Anti-optimization lies in the fact that the former is based on an optimization over (admissible) functions and measures $(f,\mu)$, while the latter only involves an optimization over $f$.
Because of its robustness, many engineers have adopted the (deterministic) worst-case scenario approach to UQ (we refer to chapter 10 of \cite{ElishakofOhsaki:2010}) when a high  reliability is required. It is noted in \cite{ElishakofOhsaki:2010} that the reason why \emph{probabilistic methods do not find appreciation among theoreticians and practitioners alike} lies in the fact that ``probabilistic reliability studies involve assumptions on the probability densities, whose knowledge regarding relevant input quantities is central''. It is also observed that strong assumptions on $\P$ may lead to GIGO (``garbage in--garbage out'') situations for small certification thresholds $\epsilon$ when reliability estimates and probabilities of failure are very sensitive to small deviations in probability densities. On the other hand, UQ methods based on deterministic worst-case scenarios are oftentimes ``too pessimistic to be practical'' \cite{Drenick:1973, ElishakofOhsaki:2010}. We suggest that by allowing for very weak assumptions on probability measures, OUQ methods can lead to bounds on probabilities of failure that are both reliable and practical. Indeed, when applied to complex systems involving a large number of variables, deterministic worst-case methods do not take into account the \emph{improbability} that these (possibly independent or weakly correlated) variables conspire to produce a failure event.

\end{itemize}

The certification  problem \eqref{eq:def_cert} exhibits one of the main difficulties that face UQ practitioners:  many theoretical methods are available, but they require assumptions or conditions that, oftentimes, are not satisfied by the application.  More precisely, the characteristic elements distinguishing these different methods are the assumptions upon which they are based, and some methods will be more efficient than others depending on the validity of those assumptions.  UQ applications are also characterized by a set of assumptions/information on the response function $G$ and measure $\P$, which varies from application to application.  Hence, on the one hand, we have a list of theoretical methods that are applicable or efficient under very specific assumptions;  on the other hand, most applications are characterized by an information set or assumptions that, in general, do not match those required by these theoretical methods. It is hence natural to pursue the development of a rigorous framework  that does not add inappropriate assumptions or discard information.

We also observe that the effectiveness of different UQ methods cannot be compared without reference to the available information (some methods will be more efficient than others depending on those assumptions).
For the hypervelocity impact example of Subsection \ref{Subsec:motivatingex}, none of the methods mentioned above can be used without adding (arbitrary) assumptions on probability densities or discarding information on the mean value or independence of the input parameters.
We also observe that it is by placing information at the center of UQ that the proposed OUQ framework allows for the identification of best experiments.
Without focus on the available information, UQ methods are faced with the risk of propagating inappropriate assumptions and
producing a sophisticated answer to the wrong question. These distortions of the information set may be of limited impact on certification of common events but they are also of critical importance for the certification of rare events.

\subsection{OUQ with random sample data}

For the sake of clarity, we have started the description of OUQ with deterministic information and assumptions (i.e.\ when $\mathcal{A}$ is a deterministic set of functions and probability measures).  In many applications, however, some of the information arrives in the form of random samples.  The addition of such sample data to the available information and assumptions leads to non-trivial theoretical questions that are of practical importance beyond their fundamental connections with information theory and nonparametric statistics.  In particular, while the notion of an optimal bound \eqref{eq:def_ouq} is transparent and unambiguous, the notion of an optimal bound on $\P[G(X) \geq a]$ in presence of sample data is not immediately obvious and should be treated with care.  This is a very delicate topic, a full treatment of which we shall defer to a future work.

\section{Reduction of OUQ Optimization Problems}
\label{sec:Reduction}

In general, the  lower and upper values
\begin{eqnarray*}
	\mathcal{L}(\mathcal{A})& :=& \inf_{(f,\mu) \in \mathcal{A}} \mu [f(X) \geq a] \\
 \mathcal{U}(\mathcal{A})& :=& \sup_{(f,\mu) \in \mathcal{A}} \mu [f(X) \geq a]
\end{eqnarray*}
are each defined by a non-convex and infinite-dimensional optimization problem, the solution of which poses significant computational challenges.  These optimization problems can be considered to be a generalization of Chebyshev inequalities. The history of the classical inequalities can be found in \cite{KarlinStudden:1966}, and some generalizations in \cite{BertsimasPopescu:2005} and \cite{VandenbergheBoydComanor:2007};  in the latter works, the connection between Chebyshev inequalities and optimization theory is developed  based on the work of Mulholland and Rogers \cite{MulhollandRogers:1958}, Godwin \cite{Godwin:1955}, Isii \cite{Isii:1959, Isii:1960, Isii:1963}, Olkin and Pratt \cite{OlkinPratt:1958}, Marshall and Olkin \cite{MarshallOlkin:1960}, and the classical Markov--Krein Theorem \cite[pages 82 \&~157]{KarlinStudden:1966}, among others.  The Chebyshev-type inequalities defined by $\mathcal{L}(\mathcal{A})$ and $\mathcal{U}(\mathcal{A})$ are a further generalization to independence assumptions, more general domains, more general systems of moments, and the inclusion of classes of functions, in addition to the probability measures, in the optimization problem.  Moreover, although our goal is the computation of these values, and not an analytic expression for them, the study of probability inequalities should be useful in the reduction and approximation of these values.  Without providing a survey of this large body of work, we mention the field of majorization, as discussed in Marshall and Olkin \cite{MarshallOlkin:1979}, the inequalities of Anderson \cite{Anderson:1955}, Hoeffding \cite{Hoeffding:1956b}, Joe \cite{Joe:1987}, Bentkus \emph{et al.}\ \cite{BentkusGeuzeZuijlen:2006}, Bentkus \cite{Bentkus:2002, Bentkus:2004}, Pinelis \cite{Pinelis:2007, Pinelis:2008}, and Boucheron, Lugosi and Massart \cite{BoucheronLugosiMassart:2000}. Moreover, the solution of the resulting nonconvex optimization problems should benefit from duality theories for nonconvex optimization problems such as Rockafellar \cite{Rockafellar:1974} and the development of convex envelopes for them, as can be found, for example, in Rikun \cite{Rikun:1997} and Sherali \cite{Sherali:1997}. Finally, since Pardalos and Vavasis \cite{ParVav:1991} show that  quadratic
programming with one negative eigenvalue is NP-hard, we expect that some OUQ problems may be difficult to solve.

Let us now return to the earlier simple example of an admissible set $\mathcal{A}_{1}$ in \eqref{eq:simple_assumptions}:  the (non-unique) extremizers of the OUQ problem with the admissible set $\mathcal{A}_{1}$ all have the property that the support of the push-forward measure $f_{\ast} \mu$ on $\R$ contains at most two points, i.e.\ $f_{\ast} \mu$ is a convex combination of at most two Dirac delta measures (we recall that  $\# \mathrm{supp}(f_{\ast} \mu)$ is the number of points forming the support of $f_{\ast} \mu$):
\[
	\sup_{(f,\mu) \in \mathcal{A}_{1}} \mu[f(X) \geq a] = \sup_{\substack{ (f,\mu) \in \mathcal{A}_{1} \\ \# \mathrm{supp}(f_{\ast} \mu) \leq 2}} \mu[f(X) \geq a].
\]
The optimization problem on the left-hand side is an infinite-dimensional one, whereas the optimization problem on the right-hand side is amenable to finite-dimensional parametrization for each $f$.
Furthermore, for each $f$, only the two values of $f$ at the support points of the two Dirac measures are relevant to the problem.
The aim of this section is to show that a large class of OUQ problems --- those governed by independence and linear inequality constraints on the moments, --- are amenable to a similar finite-dimensional reduction, and that a priori upper bounds can be given on the number of Dirac delta masses that the reduction requires.

To begin with, we first show that an important class of optimization problems over the space of $m$-fold product measures can be reduced to optimization over products of finite convex combinations of Dirac masses ($m$ is the number of random input variables).  Consequently, we then show in Corollary \ref{cor:ouqreduce} that OUQ optimization problems where the admissible set is defined as a subset of function-measure pairs $(f,\mu)$ that satisfy generalized moment constraints $\G_{f}(\mu) \leq 0$ can also be reduced from the space of measures to the products of finite convex combinations of Dirac masses.  Theorem \ref{thm:valuereduce} shows that, when all the constraints are generalized moments of functions of $f$,  the search space $\mathcal{G}$ of functions can be further reduced to a search over functions on an $m$-fold product of finite discrete spaces, and the search over $m$-fold products of finite convex combinations of Dirac masses can be reduced to a search over the products of probability measures on this $m$-fold product of finite discrete spaces.  This latter reduction completely eliminates dependency on the coordinate positions in $\mathcal{X}$.  Theorem \ref{thm:valuereduce} is then used in Proposition \ref{prop:mcd} to obtain an optimal McDiarmid inequality through the formulation of an appropriate OUQ optimization problem followed by the above-mentioned reductions to an optimization problem on the product of functions on $\{ 0, 1 \}^{m}$ with the $m$-fold products of measures  on $\{ 0, 1 \}^{m}$. This problem is then further reduced, by Theorem \ref{thm:C}, to an optimization problem on the product of the space of subsets (power set) of $\{ 0, 1 \}^{m}$ with the product measures on $\{ 0, 1 \}^{m}$.  Finally, we obtain analytic solutions to this last problem for $m = 1, 2, 3$, thereby obtaining an optimal McDiarmid inequality in these cases.  We also obtain an asymptotic formula for general $m$.  Moreover, the solution for $m = 2$ indicates important information regarding the diameter parameters $D_{1}$ and $D_{2}$ (we refer to Example \ref{Ex:subdiameters}).  For example, if $D_{2}$ is sufficiently smaller than $D_{1}$, then the optimal bound only depends on $D_{1}$ and therefore, any decrease in $D_{2}$ does not improve the inequality.
See Subsection \ref{sec-reduceproofs} for the proofs of the results in this section.

\subsection{Reduction of OUQ}

For a topological space $\mathcal{X}$, let $\mathcal{F}_{\mathcal{X}}$ (or simply $\mathcal{F}$) denote the space of real-valued (Borel) measurable functions on $\mathcal{X}$, and let $\mathcal{M}(\mathcal{X})$ denote the set of Borel probability measures on $\mathcal{X}$.  Denote the process of integration with respect to a measure $\mu$ by $\E_{\mu}$, and let
\[
	\Delta_{k}(\mathcal{X}) := \left\{\sum_{j = 0}^{k} \alpha_{j} \delta_{x^{j}} \,\middle|\, x^{j} \in \mathcal{X}, \alpha_{j} \geq 0 \text{ for } j = 0, \dots, k \text{ and } \sum_{j = 0}^{k} \alpha_{j} = 1 \right\}
\]
denote the set of $(k+1)$-fold convex combinations of Dirac masses.  When $\mathcal{X} = \prod_{i=1}^{m} \mathcal{X}_{i}$ is a product of topological spaces, and we speak of measurable functions on the product $\mathcal{X}$, we mean measurable with respect to the product $\sigma$-algebra and not the Borel $\sigma$-algebra of the product.  For more discussion of this delicate topic, see e.g.~\cite{Johnson:1982}.  The linear equality and inequality constraints on our optimization problems will be encoded in the following measurable functions:
\[
	g'_{j} \colon \mathcal{X} \to \R \text{ for } j = 1, \dots, n',
\]
and, for each $i = 1, \dots, m$,
\[
	g^{i}_{j} \colon \mathcal{X}_{i} \to \R \text{ for } j = 1, \dots, n_{i}.
\]
Let $\mathcal{M}^{\G} \subseteq \mathcal{M}^{m}(\mathcal{X})$ denote the set of products of Borel measures for which these all these functions are integrable with finite integrals.  We use the compact notation $\G(\mu) \leq 0$ to indicate that $\mu \in \mathcal{M}^{\G}$ and that
\begin{align*}
 \E_{\mu}[g'_{j}] \leq 0 ,&\quad \text{ for all } j = 1, \dots, n', \\
 \E_{\mu}[g^{i}_{j}] \leq 0 , &\quad  j = 1, \dots, n_{i},  \text{ for all } i = 1, \dots, m \,.
\end{align*}
Moreover, let $r \colon \mathcal{X} \to \R$ be integrable for all  $\mu \in \mathcal{M}^{\G}$  (possibly with values $+\infty$ or $-\infty$).  For any set $\mathcal{M} \subseteq \mathcal{M}^{\G}$, let
\[
	\mathcal{U}(\mathcal{M}) := \sup_{\mu \in \mathcal{M}} \E_{\mu}[r],
\]
with the convention that the supremum of the empty set is $-\infty$.

For a measurable function $f$, the map $\mu \mapsto \E_{\mu}[f]$ may not be defined, since $f$ may not be absolutely integrable with respect to $\mu$.  If it is defined, then it is continuous in the strong topology on measures;  however, this topology is too strong to provide any compactness.  Moreover, although \cite[Theorem 14.5]{AliprantisBorder:2006} shows that if $f$ is a bounded upper semi-continuous function on a metric space, then integration is upper semi-continous in the weak-$\ast$ topology, we consider the case in which $\mathcal{X}$ may not be metric or compact, and the functions $f$ may be unbounded and lack continuity properties.  The following results heavily use results of Winkler \cite{Winkler:1988} --- which follow from an extension of Choquet Theory (see e.g.~\cite{Phelps:2001}) by von Weizs\"{a}cker and Winkler \cite[Corollary 3]{WeizsackerWinkler:1979} to sets of probability measures with generalized moment constraints --- and a result of Kendall \cite{Kendall:1962} characterizing cones, which are lattice cones in their own order.  These results generalize a result of Karr \cite{Karr:1983} that requires $\mathcal{X}$ to be compact, the constraint functions be bounded and continuous, and the constraints to be equalities.  The results that follow are remarkable in that they make extremely weak assumptions on $\mathcal{X}$ and no assumptions on the functions $f$. Recall that a Suslin space is the continuous image of a Polish space.

\begin{thm}
	\label{thm:baby_measure}
	Let $\mathcal{X} = \prod_{i=1}^{m}\mathcal{X}_{i}$ be a product of Suslin spaces and let
	\[
		\mathcal{M}^{m}(\mathcal{X}) := \bigotimes_{i=1}^{m}{\mathcal{M}(\mathcal{X}_{i}})
	\]
	denote the set of products of Borel probability measures on the spaces $\mathcal{X}_{i}$.  As above, consider the generalized moment functions $\G$ and the corresponding finite moment set $\mathcal{M}^{\G}$.  Suppose that $r \colon \mathcal{X} \to \R$ is  integrable for all $\mu \in \mathcal{M}^{\G}$ (possibly with values $+\infty$ or $-\infty$).  Define the reduced admissible set
	\[
		\mathcal{M}_{\Delta} := \left\{ \mu \in \bigotimes_{i=1}^{m} \Delta_{n_{i}+n'}(\mathcal{X}_{i}) \,\middle|\, \G(\mu) \leq 0 \right\}.
	\]
	Then, it holds that
	\[
		\mathcal{U}(\mathcal{M}^{\G}) = \mathcal{U}(\mathcal{M}_{\Delta}).
	\]
\end{thm}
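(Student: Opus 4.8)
The plan is to reduce one marginal at a time, converting the multilinear objective and constraints into linear functionals of a single marginal via Fubini, and then invoking Winkler's extreme-point characterization of moment sets to replace that marginal by a finitely supported measure without decreasing the objective. The inclusion $\mathcal{M}_{\Delta} \subseteq \mathcal{M}^{\G}$ is immediate (both the moment constraints and integrability are inherited), giving $\mathcal{U}(\mathcal{M}_{\Delta}) \leq \mathcal{U}(\mathcal{M}^{\G})$ for free; the whole content lies in the reverse inequality. If $\mathcal{M}^{\G} = \emptyset$ both values equal $-\infty$ by the stated convention, so I may assume $\mathcal{M}^{\G}$ nonempty.

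First I would fix an arbitrary $\mu = \mu_{1} \otimes \cdots \otimes \mu_{m} \in \mathcal{M}^{\G}$ and process the coordinates $i = 1, \dots, m$ in turn, reducing each marginal in place. At the step treating coordinate $i$, I freeze the other (possibly already-reduced) marginals and integrate them out, setting $\tilde{r}(x_{i}) := \E_{\bigotimes_{\ell \neq i} \mu_{\ell}}[r]$ and $\tilde{g}'_{j}(x_{i}) := \E_{\bigotimes_{\ell \neq i} \mu_{\ell}}[g'_{j}]$. Because the current product measure lies in $\mathcal{M}^{\G}$, the integrability hypotheses together with Fubini guarantee that $\tilde{r}$ and each $\tilde{g}'_{j}$ are measurable and $\mu_{i}$-integrable, with $\E_{\mu_{i}}[\tilde{r}] = \E_{\mu}[r]$ and $\E_{\mu_{i}}[\tilde{g}'_{j}] = \E_{\mu}[g'_{j}]$. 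The $n_{i}$ local functions $g^{i}_{j}$ already depend only on $x_{i}$, while the local constraints $g^{\ell}_{j}$ for $\ell \neq i$ are constants with respect to $\mu_{i}$ and remain satisfied. Thus, with the other marginals held fixed, varying $\mu_{i}$ is exactly a generalized moment problem on $\mathcal{X}_{i}$ with the $n_{i} + n'$ linear constraints $\E_{\mu_{i}}[g^{i}_{j}] \leq 0$ and $\E_{\mu_{i}}[\tilde{g}'_{j}] \leq 0$ and the linear objective $\mu_{i} \mapsto \E_{\mu_{i}}[\tilde{r}]$.

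The heart of the argument is the single-coordinate reduction. The feasible set of such $\mu_{i} \in \mathcal{M}(\mathcal{X}_{i})$ is convex and cut out by $n_{i} + n'$ moment inequalities, so by Winkler's extreme-point theory \cite{Winkler:1988} --- which rests on the Choquet-type representation of von Weizs\"{a}cker and Winkler \cite{WeizsackerWinkler:1979}, valid on Suslin spaces and requiring no continuity or boundedness of the constraint functions --- every such measure is the barycenter of a probability measure carried by the extreme points of this feasible set, and those extreme points are convex combinations of at most $n_{i} + n' + 1$ Dirac masses, i.e.\ elements of $\Delta_{n_{i}+n'}(\mathcal{X}_{i})$. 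Since $\mu_{i} \mapsto \E_{\mu_{i}}[\tilde{r}]$ is affine, its value equals the average of its values over the representing measure, so some extreme point $\nu_{i} \in \Delta_{n_{i}+n'}(\mathcal{X}_{i})$ satisfies $\E_{\nu_{i}}[\tilde{r}] \geq \E_{\mu_{i}}[\tilde{r}]$ while still meeting every constraint. Replacing $\mu_{i}$ by $\nu_{i}$ therefore does not decrease $\E_{\mu}[r]$, preserves feasibility, and leaves the already-reduced marginals untouched; iterating over $i = 1, \dots, m$ yields a measure in $\mathcal{M}_{\Delta}$ with objective at least $\E_{\mu}[r]$, and taking the supremum over $\mu$ gives $\mathcal{U}(\mathcal{M}^{\G}) \leq \mathcal{U}(\mathcal{M}_{\Delta})$.

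I expect the main obstacle to be the rigorous invocation of this extreme-point/barycenter machinery under the stated weak hypotheses: on a general Suslin space with merely measurable, possibly unbounded constraint functions one cannot appeal to weak-$\ast$ compactness, continuity, or a Bauer maximum principle, so the argument must route entirely through the von Weizs\"{a}cker--Winkler representation. A secondary point needing care is the degenerate regime where $r$ is permitted to integrate to $\pm\infty$: there I must check that the barycenter selection still produces an extreme point whose objective dominates $\E_{\mu}[r]$, and that the $\sup \emptyset = -\infty$ convention remains consistent throughout the coordinatewise iteration.
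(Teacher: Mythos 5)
Your proposal is correct and follows essentially the same route as the paper's proof: a coordinate-by-coordinate reduction in which Fubini's theorem turns the objective and the global constraints into linear functionals of a single marginal, followed by the von Weizs\"{a}cker--Winkler Choquet representation and Winkler's characterization of the extreme points of the constrained marginal set as elements of $\Delta_{n_{i}+n'}(\mathcal{X}_{i})$. The only divergence is the final selection step: where you extract a single dominating extreme point directly from the barycentric formula --- which requires the evaluation map $\mu_{i} \mapsto \E_{\mu_{i}}[\tilde{r}]$ to be \emph{measure affine} (Winkler's Proposition 3.1), not merely affine, and requires the separate treatment of the $\pm\infty$ case you flag --- the paper instead invokes Winkler's identity $\sup_{\mu'} K(\mu') = \sup_{\nu \in \mathrm{ex}} K(\nu)$ for measure-affine $K$ and takes an $\varepsilon$-suboptimal extreme point at each coordinate, accumulating an error of $m\varepsilon$ that is then sent to zero; this sidesteps both subtleties at once.
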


Theorem \ref{thm:baby_measure} says that, on a product $\mathcal{X}$ of very general spaces $\mathcal{X}_{i}$, optimization problems constrained by $n'$ linear moment constraints on $\mathcal{X}$ and $n_{i}$ linear moment constraints on each factor space $\mathcal{X}_{i}$ achieve their optima among those product measures whose $i^{\mathrm{th}}$ marginal has support on at most $n' + n_{i} + 1$ points of $\mathcal{X}_{i}$.

\begin{rmk}
	\label{rmk:baby_measure_indexing}
	Using \cite[Corollary 3]{WeizsackerWinkler:1979}, this theorem and its consequences below easily generalize from the situation where $\E_{\mu}[g_{k}] \leq 0$ for each $k$ to that in which  $\E_{\mu}[g_{k}] \in I_{k}$  for each $k$, where $k$ indexes the constraint functions, and where each $I_{k}$ is a closed interval.  Consequently, such pairs of linear constraints introduce a requirement for only one Dirac mass, not the two masses that one might expect.  Moreover, observe that the condition that the function $r$ is integrable (possibly with values $+\infty$ or $-\infty$) for all $\mu \in \mathcal{M}^{\G}$ is satisfied if $r$ is non-negative.  In particular, this holds when $r$ is an indicator function of a set, which is our main application in this paper.
\end{rmk}
\begin{rmk}
	\label{rmk:other_extreme_measures}
	Theorem \ref{thm:baby_measure} and its consequents below can be expressed more generally in terms of extreme points of sets of measures, whereas in the above case, the extreme points are the Dirac masses. To that end, Dynkin \cite{Dynkin:1978} describes more general sets of measures and their extreme points, which can be useful in applications.  In particular, one could consider
	\begin{compactenum}
		\item sets of measures that are invariant under a transformation (the extreme points are the ergodic measures);
		\item symmetric measures on an infinite product space (the extreme points are the simple product measures);
		\item the set of stationary distributions for a given Markov transition function;
		\item the set of all Markov processes with a given transition function.
	\end{compactenum}
\end{rmk}

We now apply Theorem \ref{thm:baby_measure} to obtain the same type of reduction for an admissible set $\mathcal{A} \subseteq \mathcal{F} \times \mathcal{M}^{m}(\mathcal{X})$ consisting of pairs of functions and product measures --- this is the case for the OUQ optimization problems $\mathcal{L}(\mathcal{A})$ and $\mathcal{U}(\mathcal{A})$.  Let $\mathcal{G} \subseteq \mathcal{F}$ denote a subset of real-valued measurable functions on $\mathcal{X}$ and consider an admissible set $\mathcal{A} \subseteq \mathcal{G} \times \mathcal{M}^{m}(\mathcal{X})$ defined in the following way.  For each $f \in \mathcal{G}$, let $\G(f, \cdot)$ denote a family of constraints as in Theorem \ref{thm:baby_measure} and Remark \ref{rmk:baby_measure_indexing}.  For each $f \in \mathcal{G}$, let $\mathcal{M}^{\G_{f}} \subseteq \mathcal{M}^{m}(\mathcal{X})$ denote those product probability measures $\mu$ such that the moments $\G(f, \mu)$ are well-defined.  Moreover, for each $f \in \mathcal{G}$, let $r_{f} \colon \mathcal{X} \to \R$ be integrable for all $\mu \in \mathcal{M}^{\G_{f}}$ (possibly with values $+\infty$ or $-\infty$).  Define the admissible set
\begin{equation}\label{eq:firstAtobereduced}
	\mathcal{A} := \left\{ (f, \mu) \in \mathcal{G} \times \mathcal{M}^{m}(\mathcal{X}) \,\middle|\, \G(f,\mu) \leq 0 \right\}
\end{equation}
and define the OUQ optimization problem to be
\begin{equation}
	\label{eq:def_ouq_for_reduction}
	\mathcal{U}(\mathcal{A}) := \sup_{(f,\mu) \in \mathcal{A}} \E_{\mu}[r_{f}].
\end{equation}

\begin{cor}
	\label{cor:ouqreduce}
	Consider the OUQ optimization problem \eqref{eq:def_ouq_for_reduction} and define the reduced admissible set $\mathcal{A}_{\Delta} \subseteq \mathcal{A}$ by
	\begin{equation}\label{eq:reducedtoAdelta}
		\mathcal{A}_{\Delta} := \left\{ (f, \mu) \in \mathcal{G} \times \bigotimes_{i=1}^{m} \Delta_{n_{i}+n'}(\mathcal{X}_{i}) \,\middle|\, \G(f, \mu) \leq 0 \right\}.
	\end{equation}
	Then, it holds that
	\[
		\mathcal{U}(\mathcal{A}) = \mathcal{U}(\mathcal{A}_{\Delta}).
	\]
\end{cor}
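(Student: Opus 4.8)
The plan is to deduce the corollary from Theorem~\ref{thm:baby_measure} by slicing the optimization along the function variable $f$. The key observation is that once $f\in\mathcal{G}$ is held fixed, the data $(\G(f,\cdot),r_f)$ are exactly a family of linear generalized moment constraints together with an integrand of the kind treated in Theorem~\ref{thm:baby_measure}: by the standing hypotheses of the corollary, $\G(f,\cdot)$ consists of $n'$ moment functions on $\mathcal{X}$ and $n_i$ moment functions on each factor $\mathcal{X}_i$, and $r_f$ is integrable for every admissible $\mu$ (possibly with values $+\infty$ or $-\infty$). Thus Theorem~\ref{thm:baby_measure} applies verbatim to each fixed $f$.

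First I would dispose of the easy inequality. Since $\bigotimes_{i=1}^m\Delta_{n_i+n'}(\mathcal{X}_i)\subseteq\mathcal{M}^m(\mathcal{X})$, we have $\mathcal{A}_\Delta\subseteq\mathcal{A}$, and hence $\mathcal{U}(\mathcal{A}_\Delta)\le\mathcal{U}(\mathcal{A})$ directly from the definition of the supremum in \eqref{eq:def_ouq_for_reduction}.

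For the reverse inequality I would write each value as an iterated supremum. For fixed $f\in\mathcal{G}$, denote the corresponding fibers
\[
	\mathcal{A}(f):=\{\mu\in\mathcal{M}^m(\mathcal{X})\mid\G(f,\mu)\le 0\},\qquad
	\mathcal{A}_\Delta(f):=\Big\{\mu\in\textstyle\bigotimes_{i=1}^m\Delta_{n_i+n'}(\mathcal{X}_i)\;\Big|\;\G(f,\mu)\le 0\Big\}.
\]
Since $\mathcal{A}=\bigcup_{f\in\mathcal{G}}\{f\}\times\mathcal{A}(f)$ and $\mathcal{A}_\Delta=\bigcup_{f\in\mathcal{G}}\{f\}\times\mathcal{A}_\Delta(f)$, the definition of the supremum gives
\[
	\mathcal{U}(\mathcal{A})=\sup_{f\in\mathcal{G}}\;\sup_{\mu\in\mathcal{A}(f)}\E_\mu[r_f],
	\qquad
	\mathcal{U}(\mathcal{A}_\Delta)=\sup_{f\in\mathcal{G}}\;\sup_{\mu\in\mathcal{A}_\Delta(f)}\E_\mu[r_f],
\]
where the inner supremum is taken to be $-\infty$ whenever the relevant fiber is empty. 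Now, for each fixed $f$, Theorem~\ref{thm:baby_measure} (with constraints $\G(f,\cdot)$ and integrand $r_f$) identifies $\mathcal{A}(f)$ with its $\mathcal{M}^{\G}$ and $\mathcal{A}_\Delta(f)$ with its $\mathcal{M}_\Delta$, and therefore asserts
\[
	\sup_{\mu\in\mathcal{A}(f)}\E_\mu[r_f]=\sup_{\mu\in\mathcal{A}_\Delta(f)}\E_\mu[r_f].
\]
Taking the supremum of both sides over $f\in\mathcal{G}$ yields $\mathcal{U}(\mathcal{A})=\mathcal{U}(\mathcal{A}_\Delta)$.

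The argument carries no analytic weight of its own: every nontrivial ingredient (the Choquet-type extreme-point representation of Winkler and von Weizs\"{a}cker--Winkler and the Kendall lattice-cone characterization) is already internal to Theorem~\ref{thm:baby_measure}. The only points needing care are bookkeeping ones. One must check that, fiberwise, the corollary's hypotheses on $\G(f,\cdot)$ and $r_f$ match those of the theorem, and that the empty-fiber case is treated consistently: since $\mathcal{A}_\Delta(f)\subseteq\mathcal{A}(f)$, one fiber is empty exactly when the other carries value $-\infty$, so the $\sup\emptyset=-\infty$ convention is compatible on both sides. The passage from the joint supremum over $(f,\mu)$ to the iterated supremum is elementary and, in particular, requires no measurability or regularity of the map $f\mapsto\sup_\mu\E_\mu[r_f]$.
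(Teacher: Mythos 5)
Your proposal is correct and follows essentially the same route as the paper: the paper's proof also writes $\mathcal{U}(\mathcal{A})$ as the iterated supremum $\sup_{f \in \mathcal{G}} \sup_{\mu : \G(f,\mu) \leq 0} \E_{\mu}[r_{f}]$ and applies Theorem \ref{thm:baby_measure} to the inner supremum for each fixed $f$. Your additional bookkeeping (the trivial inclusion $\mathcal{A}_{\Delta} \subseteq \mathcal{A}$ and the $\sup \emptyset = -\infty$ convention on empty fibers) is sound and merely makes explicit what the paper leaves implicit.
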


\begin{rmk} Corollary \ref{cor:ouqreduce} is easily generalized to the case where for each $f \in \mathcal{G}$,  $i$,  and fixed $\mu_{j}, j\neq i$, $ \G(f,\mu_{1},..,\mu_{i},..,\mu_{m})$ has affine dimension at most $m_{i}$ as $\mu_{i}$ varies. In this case
\[
                \mathcal{A}_{\Delta} := \left\{ (f,\mu)  \in \mathcal{G} \times  \bigotimes_{i=1}^{m}
 \Delta_{m_{i}}(\mathcal{X}_{i}) \,\middle|\, \G(f,\mu) \leq 0 \right\}.        \]
\end{rmk}
\begin{rmk}
Linear moment constraints on the factor spaces $\mathcal{X}_i$ allow to consider information sets with independent random variables $X_1,\ldots,X_m$ and weak constraints on the probability measure of the variables $X_i$. An example of such an admissible set is the one associated with Bernstein inequalities \cite{Bernstein:1964}, in which a priori bounds are given on the variances of the variables $X_i$.
\end{rmk}

\subsection{Generalized moments of the response function}

We now consider the case where the function $r_{f} := r \circ f$ is defined through composition with a measurable function $r$, and all $n$ constraints are determined by compositions $g'_{j} := g_{j} \circ f$, with $j = 1, \dots, n$, of the function $f$.  Hence, the symbol $\G(f, \mu)$ will mean that all functions $g_{j} \circ f$ are $\mu$ integrable and will represent the values $\E_{\mu}[g_{j} \circ f]$ for $j = 1, \dots, n$.  That is, we have the admissible set
\begin{equation}
	\label{eq:def_amoments}
	\mathcal{A} := \left\{ (f,\mu) \in \mathcal{G} \times \mathcal{M}^{m}(\mathcal{X}) \,\middle|\, \G(f,\mu) \leq 0 \right\}
\end{equation}
and the optimization problem
\begin{equation}
	\label{eq:def_ouqmoments}
	\mathcal{U}(\mathcal{A}) := \sup_{(f,\mu) \in \mathcal{A}} \E_{\mu}[r\circ f]
\end{equation}
as in \eqref{eq:def_ouq_for_reduction}.  However, in this case, the fact that the criterion function $r \circ f$ and the constraint functions $g_{j} \circ f$ are compositions of the function $f$ permits a finite-dimensional reduction of the space of functions $\mathcal{G}$ to a space of functions on $\{ 0, \dots, n \}^{m}$ and a reduction of the space of $m$-fold products of finite convex combinations of Dirac masses to the space of product measures on $\{ 0, \dots, n \}^{m}$.  This reduction completely eliminates dependency on the coordinate positions in $\mathcal{X}$.

Formulating this result precisely will require some additional notation.
By the Well-Ordering Theorem,  there exists a well-ordering of each $\mathcal{X}_{i}$.  Suppose that a total ordering of the elements of the spaces $\mathcal{X}_{i}$ for $i = 1, \dots, m$ is specified.  Let $\mathcal{N} := \{ 0, \dots, n \}$ and $\mathcal{D} := \{0, \dots, n\}^{m} = \mathcal{N}^{m}$.  Every element $\mu\in \bigotimes_{i = 1}^{m} \Delta_{n}(\mathcal{X}_{i})$ is a product $\mu = \bigotimes_{i=1}^{m} \mu_{i}$ where each factor $\mu_{i}$ is a convex sum of $n+1$ Dirac masses indexed according to the ordering;  that is,
\[
	\mu_{i} = \sum_{k=0}^{n} \alpha^{i}_{k} \delta_{x_{i}^{k}}
\]
for some $\alpha^{i}_{1}, \dots, \alpha^{i}_{n} \geq 0$ with unit sum and some $x_{i}^{1}, \dots, x_{i}^{n} \in \mathcal{X}_{i}$ such that, with respect to the given ordering of $\mathcal{X}_{i}$,
\[
	x_{i}^{1} \leq x_{i}^{2} \leq \dots \leq x_{i}^{n}.
\]
Let $\mathcal{F}_\mathcal{D}$ denote the real linear space of real functions on $\mathcal{D} = \{ 0, \dots, n \}^{m}$ and consider the mapping
\[
	\mathbb{F} \colon \mathcal{F} \times \bigotimes_{i=1}^{m} \Delta_{n}(\mathcal{X}_{i}) \to \mathcal{F}_\mathcal{D}
\]
defined by
\[
	\left( \mathbb{F}(f, \mu) \right) (i_{1}, i_{2}, \dots, i_{m}) = f(x_{1}^{i_{1}}, x_{2}^{i_{2}}, \dots, x_{m}^{i_{m}}), \quad i_{k} \in \mathcal{N}, k = 1, \dots, m.
\]
$\mathbb{F}$ represents the values of the function $f$ at the Dirac masses in $\mu$, but does not carry information regarding the positions of the Dirac masses or their weights.

\begin{thm}
	\label{thm:valuereduce}
	Consider the admissible set $\mathcal{A}$ and optimization problem $\mathcal{U}(\mathcal{A})$ defined in \eqref{eq:def_amoments} and \eqref{eq:def_ouqmoments} where $r \circ f$ is integrable (possibly with values $+\infty$ or $-\infty$) for all product measures.  For a subset $\mathcal{G}_\mathcal{D} \subseteq \mathcal{F}_\mathcal{D}$, define the admissible set
	\begin{equation}\label{eq:adgen}
		\mathcal{A}_\mathcal{D} = \left\{ (h, \alpha) \in \mathcal{G}_\mathcal{D} \times \mathcal{M}^{m}(\mathcal{D}) \,\middle|\, \E_{\alpha}[g_{i} \circ h] \leq 0 \text{ for all } j = 1, \dots, n \right\}
	\end{equation}
	and the optimization problem
	\[
	\mathcal{U}(\mathcal{A}_\mathcal{D}) := \sup_{(h,\alpha) \in \mathcal{A}_\mathcal{D}} \E_{\alpha}[r\circ h].
	\]
	If
	\[
		\mathbb{F} \left( \mathcal{G} \times \bigotimes_{i=1}^{m}{\Delta_{n}(\mathcal{X}_{i})} \right) = \mathcal{G}_\mathcal{D},
	\]
	then it holds that
	\[
		\mathcal{U}(\mathcal{A}) = \mathcal{U}(\mathcal{A}_\mathcal{D}).
	\]
\end{thm}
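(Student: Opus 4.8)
The plan is to prove the identity in two stages: first reduce the measures to finite support using the preceding corollary, and then transport the resulting finite-dimensional problem onto the discrete space $\mathcal{D}$ via the map $\mathbb{F}$. First I would note that, in the present setting, every constraint $g_j \circ f$ is a function on the whole product $\mathcal{X}$, so in the notation of Corollary~\ref{cor:ouqreduce} we have $n' = n$ and $n_i = 0$; the corollary then yields $\mathcal{U}(\mathcal{A}) = \mathcal{U}(\mathcal{A}_\Delta)$, where $\mathcal{A}_\Delta$ collects the pairs $(f,\mu) \in \mathcal{G} \times \bigotimes_{i=1}^m \Delta_n(\mathcal{X}_i)$ with $\G(f,\mu) \le 0$. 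It would then remain to show $\mathcal{U}(\mathcal{A}_\Delta) = \mathcal{U}(\mathcal{A}_\mathcal{D})$, which is where the composition structure of $r \circ f$ and $g_j \circ f$ is decisive.

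The engine of the argument would be a change-of-variables identity. Given $(f,\mu) \in \mathcal{A}_\Delta$, write $\mu = \bigotimes_{i=1}^m \mu_i$ with $\mu_i = \sum_{k=0}^n \alpha^i_k \delta_{x_i^k}$ (ordered support points), put $h := \mathbb{F}(f,\mu)$, and let $\alpha := \bigotimes_{i=1}^m \alpha_i \in \mathcal{M}^m(\mathcal{D})$ with $\alpha_i := \sum_{k=0}^n \alpha^i_k \delta_k$ on $\mathcal{N}$. For any measurable $\phi \colon \R \to \R$ I would verify
\[
\E_\mu[\phi \circ f] = \sum_{(i_1,\dots,i_m) \in \mathcal{N}^m} \Bigl(\textstyle\prod_{\ell=1}^m \alpha^\ell_{i_\ell}\Bigr)\, \phi\bigl(f(x_1^{i_1},\dots,x_m^{i_m})\bigr) = \sum_{(i_1,\dots,i_m) \in \mathcal{N}^m} \Bigl(\textstyle\prod_{\ell=1}^m \alpha^\ell_{i_\ell}\Bigr)\, \phi\bigl(h(i_1,\dots,i_m)\bigr) = \E_\alpha[\phi \circ h],
\]
the middle step being just the definition of $\mathbb{F}$. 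Taking $\phi = r$ shows the two objectives coincide, and taking $\phi = g_j$ for each $j$ shows that $(f,\mu)$ and $(h,\alpha)$ have the same constraint values, so feasibility is preserved in both directions. Since $\mu$ is finitely supported, each expression is a finite weighted sum and no integrability issue arises beyond those already assumed.

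With this identity in hand I would prove the two inequalities separately. For $\mathcal{U}(\mathcal{A}_\Delta) \le \mathcal{U}(\mathcal{A}_\mathcal{D})$, I send $(f,\mu) \in \mathcal{A}_\Delta$ to $(h,\alpha)$; the hypothesis $\mathbb{F}\bigl(\mathcal{G} \times \bigotimes_i \Delta_n(\mathcal{X}_i)\bigr) = \mathcal{G}_\mathcal{D}$ ensures $h \in \mathcal{G}_\mathcal{D}$, the constraint identity gives $(h,\alpha) \in \mathcal{A}_\mathcal{D}$, and the objective identity gives $\E_\mu[r\circ f] = \E_\alpha[r\circ h] \le \mathcal{U}(\mathcal{A}_\mathcal{D})$. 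For the reverse inequality I start from $(h,\alpha) \in \mathcal{A}_\mathcal{D}$ with $\alpha_i = \sum_{k=0}^n a^i_k \delta_k$, use surjectivity of $\mathbb{F}$ to choose $f \in \mathcal{G}$ and $\nu = \bigotimes_i \nu_i \in \bigotimes_i \Delta_n(\mathcal{X}_i)$ with ordered support points $x_i^k$ and $\mathbb{F}(f,\nu) = h$, and then \emph{keep these support points but re-attach the weights of $\alpha$}, setting $\mu_i := \sum_{k=0}^n a^i_k \delta_{x_i^k}$ and $\mu := \bigotimes_i \mu_i$. Because $\mathbb{F}$ ignores weights and the ordered support points are unchanged, $\mathbb{F}(f,\mu) = h$ still, so the identity applied to $(f,\mu)$ reproduces the objective and constraint values of $(h,\alpha)$; hence $(f,\mu) \in \mathcal{A}_\Delta$ and the objectives match. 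Combining both inequalities with the first stage would give $\mathcal{U}(\mathcal{A}) = \mathcal{U}(\mathcal{A}_\mathcal{D})$.

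The hard part will be conceptual bookkeeping rather than analysis: one must see that $\mathbb{F}$ encodes only the \emph{pattern of values} of $f$ on the support grid, forgetting both the coordinate positions and the weights of the Dirac masses, and that the objective and all constraints depend on $(f,\mu)$ solely through this pattern together with the weights --- exactly the data $(h,\alpha)$. The most delicate point is the weight-swapping step in the reverse inequality, where I lift $h$ to a support realization and then substitute the weights of $\alpha$; here I would check that the new $\mu$ still lies in $\bigotimes_i \Delta_n(\mathcal{X}_i)$ (the support points remain ordered, and any coincidences among them are harmless because $\mathbb{F}$ is insensitive to the labelling of equal points).
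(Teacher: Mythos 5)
Your proposal is correct and follows essentially the same route as the paper's proof: first invoke Corollary~\ref{cor:ouqreduce} to pass to $\mathcal{A}_\Delta$, then use the fact that $\mathbb{F}(g\circ f,\mu)=g\circ\mathbb{F}(f,\mu)$ to transport objective and constraint values to $(\mathcal{G}_\mathcal{D},\mathcal{M}^m(\mathcal{D}))$, with the reverse inequality obtained by lifting $h$ via surjectivity of $\mathbb{F}$ and re-attaching the weights of $\alpha$ (the paper's ``adjust the weights on $\mu$ so that $\mathbb{A}(\mu)=\alpha$''). Your explicit change-of-variables display and the closing remark on coincident support points are just a more detailed writing-out of the paper's identities \eqref{eq:r}--\eqref{eq:g}.
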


When the constraint set also includes functions which are not compositions with $f$, then Theorem \ref{thm:valuereduce} does not apply.  Although it does appear that results similar to Theorem \ref{thm:valuereduce} can be obtained, we leave that as a topic for future work.

\subsection{Application to McDiarmid's inequality}
\label{sec_appmcd}

Theorem \ref{thm:valuereduce} can be applied to the situation of McDiarmid's inequality in order to obtain an optimal solution for that problem.  Let $D_{i} \geq 0$ for $i = 1, \dots, m$ and define
\begin{equation}
	\label{eq:diamparameters}
	\mathcal{G} := \{ f\in \mathcal{F} \mid \operatorname{Osc}_i(f) \leq D_{i} \text{ for each } i=1, \dots, m \},
\end{equation}
where
\[
	\operatorname{Osc}_i(f) := \sup_{(x_1,\ldots,x_{m}) \in \mathcal{X}} \sup_{x_i' \in \mathcal{X}_{i}} \left| f(\ldots,x_i,\ldots)- f(\ldots,x_i',\ldots) \right|.
\]
We have a product probability measure $\P$ on $\mathcal{X}$ and a measurable function $H \colon \mathcal{X} \to \R$ such that $H \in \mathcal{G}$.  Suppose that we have an upper bound
\begin{equation}
	\label{eq:bound}
	\P[H - \E_{\P}[H] \geq a] \leq \mathbb{H}(a, \mathcal{G}) \text{ for all } H \in \mathcal{G}.
\end{equation}
It follows that if $H \in \mathcal{G}$ and $\E_{\P}[H] \leq 0$, then
\[
	\P[H \geq a] \leq \P[H - \E_{\P}[H] \geq a] \leq \mathbb{H}(a,\mathcal{G}) \text{ for all } H \in \mathcal{G} \text{ with }
\E_{\P}[H] \leq 0.
\]
On the other hand, suppose that
\begin{equation}
	\label{eq:bound2}
	\P[H \geq a] \leq \mathbb{H}'(a,\mathcal{G}) \text{ for all } H \in \mathcal{G} \text{ with } \E_{\P}[H] \leq 0.
\end{equation}
It follows that
\[
	\P[H \geq a] \leq \mathbb{H}'(a,\mathcal{G}) \text{ for all } H \in \mathcal{G} \text{ with } \E_{\P}[H] = 0 .
\]
Since the constraints $\mathcal{G}$ and the event $H-\E_{\P}[H] \geq a$ are invariant under scalar translation $H \mapsto H + c$ it follows that
\[
	\P[H-\E_{\P}[H] \geq a] \leq \mathbb{H}'(a,\mathcal{G}) \text{ for all } H \in \mathcal{G}.
\]
That is, the inequalities \eqref{eq:bound} and \eqref{eq:bound2} are equivalent.

McDiarmid's inequality \cite{McDiarmid:1989, McDiarmid:1998} provides the bound $\mathbb{H}(a,\mathcal{G}) := \exp ( -\frac{2a^{2}}{D^2} )$ for \eqref{eq:bound} and its equivalent \eqref{eq:bound2}, with
\begin{equation}
	\label{defD2}
	D^2:=\sum_{i=1}^m D_i^2.
\end{equation}
Define the admissible set corresponding to the assumptions of McDiarmid's inequality:
\begin{equation}
	\label{eq:MDassumptions}
	\mathcal{A}_{\mathrm{McD}} = \left\{ (f,\mu) \in \mathcal{G} \times \mathcal{M}^{m}(\mathcal{X}) \,\middle|\, \E_{\mu}[f] \leq 0 \right\},
\end{equation}
and define the optimization problem
\begin{equation}\label{eq:sjjshgdjhgejhge}
	\mathcal{U}(\mathcal{A}_{\mathrm{McD}}) := \sup_{(f,\mu) \in \mathcal{A}_{\mathrm{McD}}} \mu[f \geq a].
\end{equation}
Since $(H,\P) \in \mathcal{A}_{\mathrm{McD}}$ and McDiarmid's inequality $\mu[f \geq a] \leq \exp ( -\frac{2a^{2}}{D^{2}} )$ is satisfied for all $(f, \mu) \in \mathcal{A}_{\mathrm{McD}}$, it follows that
\[
	\P[H \geq a] \leq \mathcal{U}(\mathcal{A}_{\mathrm{McD}}) \leq \exp \left( - \frac{2a^{2}}{D^{2}} \right).
\]
Moreover, the inequality on the left is optimal in the sense that, for every $\varepsilon > 0$, there exists a McDiarmid-admissible scenario $(f, \mu)$ satisfying the same assumptions as $(H, \P)$ such that $\mu[f \geq a] \geq \mathcal{U}(\mathcal{A}_{\mathrm{McD}}) - \varepsilon$.

To apply the previous results to computing $\mathcal{U}(\mathcal{A}_{\mathrm{McD}})$, let $\mathcal{D} := \{ 0, 1 \}^{m}$ and define
\[
	\mathcal{G}_\mathcal{D} := \{ h \in \mathcal{F}_\mathcal{D} \mid \operatorname{Osc}_{k}(h) \leq D_{k} \text{ for each }  k = 1, \dots, m \},
\]
where the inequality $\operatorname{Osc}_{k}(h) \leq D_{k}$ for $h \in \mathcal{F}_\mathcal{D}$ means that
\begin{equation}\label{eq:Osc}
	| h(s_{1},\dots,s_{k},\dots,s_{m}) -h(s_{1},\dots,s_{k'},\dots,s_{m}) | \leq D_{i},
\end{equation}
for all $s_{j} \in \{ 0, 1 \}$, $j = 1,\dots, m$, and all $s_{k'} \in \{ 0, 1 \}$.  Define the corresponding admissible set
\begin{equation}\label{eq:adfromcd}
	\mathcal{A}_\mathcal{D} = \left\{ (h,\alpha) \in \mathcal{G}_\mathcal{D} \times \mathcal{M}(\{ 0, 1 \})^{m} \,\middle|\, \E_{\alpha}[h] \leq 0 \right\}
\end{equation}
and the optimization problem
\begin{equation}
	\mathcal{U}(\mathcal{A}_\mathcal{D}) := \sup_{(h,\alpha) \in \mathcal{A}_{\mathcal{D}}} \alpha[h \geq a].
\end{equation}

\begin{prop}
	\label{prop:mcd}
	It holds that
	\begin{equation}
		\label{eq:jgdjshghege}
		\mathcal{U}(\mathcal{A}_{\mathrm{McD}}) = \mathcal{U}(\mathcal{A}_\mathcal{D}).
	\end{equation}
\end{prop}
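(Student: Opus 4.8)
The plan is to obtain Proposition \ref{prop:mcd} as a direct application of Theorem \ref{thm:valuereduce}, after recognizing $\mathcal{U}(\mathcal{A}_{\mathrm{McD}})$ as an instance of the generalized-moment problem \eqref{eq:def_ouqmoments}. The admissible set \eqref{eq:MDassumptions} is of the form \eqref{eq:def_amoments} with exactly one constraint function ($n = 1$), namely $g_{1} := \mathrm{id}_{\R}$, so that $\E_{\mu}[g_{1} \circ f] \leq 0$ is precisely $\E_{\mu}[f] \leq 0$; and the objective is recovered by taking $r := \eins_{[a,\infty)}$, since then $r \circ f = \eins[f \geq a]$ and $\E_{\mu}[r \circ f] = \mu[f \geq a]$. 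With $n = 1$ one has $\mathcal{N} = \{0,1\}$ and $\mathcal{D} = \{0,1\}^{m}$, matching \eqref{eq:adfromcd}; the reduced constraint $\E_{\alpha}[g_{1}\circ h] \leq 0$ becomes $\E_{\alpha}[h] \leq 0$ and the reduced objective becomes $\alpha[h \geq a]$, so that $\mathcal{A}_{\mathcal{D}}$ and $\mathcal{U}(\mathcal{A}_{\mathcal{D}})$ coincide with those in the statement. The integrability hypothesis of Theorem \ref{thm:valuereduce} is automatic here: $r \circ f$ is a bounded indicator, and any $f \in \mathcal{G}$ has range of diameter at most $\sum_{i} \operatorname{Osc}_{i}(f) \leq \sum_{i} D_{i} < \infty$ (telescoping over coordinates), hence $f$ is integrable against every product probability measure.

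With these identifications, the only hypothesis of Theorem \ref{thm:valuereduce} that remains to be checked is the surjectivity condition
\[
	\mathbb{F}\left( \mathcal{G} \times \bigotimes_{i=1}^{m} \Delta_{1}(\mathcal{X}_{i}) \right) = \mathcal{G}_{\mathcal{D}},
\]
and this is the crux of the argument. The inclusion ``$\subseteq$'' is immediate: if $f \in \mathcal{G}$ and $\mu = \bigotimes_{i} \mu_{i}$ has $\mu_{i}$ supported on the ordered pair $x_{i}^{0} \leq x_{i}^{1}$, then for $h := \mathbb{F}(f,\mu)$ and each $k$, and all $s \in \{0,1\}^{m}$, $s_{k}' \in \{0,1\}$,
\[
	| h(\dots,s_{k},\dots) - h(\dots,s_{k}',\dots) | = | f(\dots,x_{k}^{s_{k}},\dots) - f(\dots,x_{k}^{s_{k}'},\dots) | \leq \operatorname{Osc}_{k}(f) \leq D_{k},
\]
so $\operatorname{Osc}_{k}(h) \leq D_{k}$ and $h \in \mathcal{G}_{\mathcal{D}}$.

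For the reverse inclusion ``$\supseteq$'' I would realize any prescribed $h \in \mathcal{G}_{\mathcal{D}}$ by extension from the grid. Fix two distinct points in each $\mathcal{X}_{i}$, labelled $x_{i}^{0}, x_{i}^{1}$ so that $x_{i}^{0} \leq x_{i}^{1}$ in the chosen well-ordering, and define coordinate projections $\pi_{i} \colon \mathcal{X}_{i} \to \{0,1\}$ by $\pi_{i}(x_{i}^{1}) = 1$ and $\pi_{i} = 0$ elsewhere. Set $f(x_{1},\dots,x_{m}) := h(\pi_{1}(x_{1}),\dots,\pi_{m}(x_{m}))$ and $\mu_{i} := \tfrac12 \delta_{x_{i}^{0}} + \tfrac12 \delta_{x_{i}^{1}}$. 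Then $f(x_{1}^{i_{1}},\dots,x_{m}^{i_{m}}) = h(i_{1},\dots,i_{m})$, so $\mathbb{F}(f,\mu) = h$; and since each $\pi_{j}(x_{j})$ lies in $\{0,1\}$, the supremum defining $\operatorname{Osc}_{k}(f)$ ranges over values of $h$ differing only in the $k$-th argument, giving $\operatorname{Osc}_{k}(f) \leq \operatorname{Osc}_{k}(h) \leq D_{k}$ and thus $f \in \mathcal{G}$.

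The genuine technical obstacle lies in this reverse inclusion, specifically in the measurability of the extended $f$ for the product $\sigma$-algebra: this needs each $\pi_{i}$ measurable, i.e.\ the singleton $\{x_{i}^{1}\}$ measurable in $\mathcal{X}_{i}$. I would justify this from the hypothesis that each $\mathcal{X}_{i}$ is Suslin, hence Hausdorff, so points are closed and therefore Borel; then $\pi_{i}$ is measurable, the map $x \mapsto (\pi_{1}(x_{1}),\dots,\pi_{m}(x_{m}))$ is measurable into $\mathcal{D}$, and $f$ is measurable for the product $\sigma$-algebra. (If some $\mathcal{X}_{i}$ were a single point the construction degenerates, but then $\operatorname{Osc}_{i}$ vanishes identically and that coordinate may be dropped; in the intended applications each $\mathcal{X}_{i}$ is a nondegenerate interval, so this does not occur.) Once the set equality is established, Theorem \ref{thm:valuereduce} applies verbatim and yields $\mathcal{U}(\mathcal{A}_{\mathrm{McD}}) = \mathcal{U}(\mathcal{A}_{\mathcal{D}})$, which is \eqref{eq:jgdjshghege}.
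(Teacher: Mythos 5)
Your proposal is correct and follows essentially the same route as the paper: both cast $\mathcal{U}(\mathcal{A}_{\mathrm{McD}})$ as an instance of Theorem \ref{thm:valuereduce} with $r = \eins_{[a,\infty)}$ and the single constraint $g_{1} = \mathrm{id}$, and both verify the key condition $\mathbb{F}\left( \mathcal{G} \times \bigotimes_{i=1}^{m} \Delta_{1}(\mathcal{X}_{i}) \right) = \mathcal{G}_{\mathcal{D}}$ by pulling back a given $h \in \mathcal{G}_{\mathcal{D}}$ through a two-cell product partition of $\mathcal{X}$, your singleton-based partition being a particular instance of the paper's construction. Your extra remarks on measurability of the projections and on integrability of $f$ via telescoping the oscillations are sound details that the paper leaves implicit.
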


We now provide a further reduction of $\mathcal{U}(\mathcal{A}_{\mathrm{McD}})$ by reducing $\mathcal{U}(\mathcal{A}_\mathcal{D})$.  To that end, for two vertices $s$ and $t$ of $\mathcal{D} = \{ 0, 1 \}^{m}$, let $I(s,t)$ be the set of indices $i$ such that $s_i \not= t_i$.  For $s \in \mathcal{D}$, define the function $h^s \in \mathcal{F}_\mathcal{D}$ by
\[
	h^s(t) = a - \sum_{i \in I(s,t)} D_i.
\]
For $C \subseteq \mathcal{D}$, define $h^C \in \mathcal{F}_\mathcal{D}$ by
\begin{equation}
	\label{eq:jhdsjdgjhsgdgh}
	h^C(t) := \max_{s \in C} h^s(t) = a - \min_{s\in C} \sum_{i \in I(s,t)} D_i.
\end{equation}
Let $\mathcal{C} := \{ C \mid C \subseteq \mathcal{D} \}$ be the power set of $\mathcal{D}$ (the set of all subsets of $\mathcal{D}$), define the admissible set $\mathcal{A}_{\mathcal{C}}$ by
\begin{equation}\label{eq:acequ}
	\mathcal{A}_{\mathcal{C}} := \left\{ (C, \alpha) \in  \mathcal{C} \times \mathcal{M}(\{ 0, 1 \})^{m} \,\middle|\, \E_{\alpha}[h^{C}] \leq 0 \right\}
\end{equation}
and consider the optimization problem
\begin{equation}
	\label{eq:jkshdjshdjheer}
	\mathcal{U}(\mathcal{A}_{\mathcal{C}}) := \sup_{(C,\alpha) \in \mathcal{A}_\mathcal{C}} \alpha(h^{C} \geq a).
\end{equation}

\begin{thm}
	\label{thm:C}
	It holds that
	\begin{equation}
		\label{eq:equivMcDC}
		\mathcal{U}(\mathcal{A}_\mathcal{D}) = \mathcal{U}(\mathcal{A}_\mathcal{C}).
	\end{equation}
\end{thm}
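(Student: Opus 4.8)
The plan is to prove the equality \eqref{eq:equivMcDC} by establishing the two inequalities separately, each via an explicit map between the feasible sets that preserves or improves the objective. The bridge between the two problems is the elementary observation that, for any $C \subseteq \mathcal{D}$, the failure set of $h^C$ contains $C$: indeed $h^C(t) \geq h^t(t) = a$ for every $t \in C$ (take $s = t$ in the maximum \eqref{eq:jhdsjdgjhsgdgh} defining $h^C$, so that $I(s,t) = \emptyset$ and the subtracted sum vanishes), whence $C \subseteq \{ t \in \mathcal{D} \mid h^C(t) \geq a \}$ and therefore $\alpha(h^C \geq a) \geq \alpha(C)$ for every product measure $\alpha \in \mathcal{M}(\{0,1\})^m$.

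For the inequality $\mathcal{U}(\mathcal{A}_\mathcal{C}) \leq \mathcal{U}(\mathcal{A}_\mathcal{D})$, I would show that the assignment $(C, \alpha) \mapsto (h^C, \alpha)$ maps $\mathcal{A}_\mathcal{C}$ into $\mathcal{A}_\mathcal{D}$ without changing the value. Two things must be checked. First, $h^C \in \mathcal{G}_\mathcal{D}$, i.e.\ $\operatorname{Osc}_k(h^C) \leq D_k$ for each $k$: flipping the $k$-th coordinate of $t$ inserts or removes the index $k$ from $I(s,t)$, so each single-vertex function $h^s$ satisfies $\operatorname{Osc}_k(h^s) \leq D_k$, and since a pointwise maximum of functions with a common oscillation bound inherits that bound (because $|\max_s f_s(t) - \max_s f_s(t')| \leq \max_s |f_s(t) - f_s(t')|$), we obtain $\operatorname{Osc}_k(h^C) \leq D_k$. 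Second, the constraint $\E_{\alpha}[h^C] \leq 0$ defining $\mathcal{A}_\mathcal{C}$ is literally the constraint placing $(h^C, \alpha)$ in $\mathcal{A}_\mathcal{D}$, and the functionals $\alpha(h^C \geq a)$ and $\alpha[h^C \geq a]$ coincide. Hence every feasible point of $\mathcal{A}_\mathcal{C}$ yields a feasible point of $\mathcal{A}_\mathcal{D}$ of equal value, giving the claimed inequality.

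The reverse inequality $\mathcal{U}(\mathcal{A}_\mathcal{D}) \leq \mathcal{U}(\mathcal{A}_\mathcal{C})$ is the heart of the argument. Given an arbitrary feasible $(h, \alpha) \in \mathcal{A}_\mathcal{D}$, I would take the canonical candidate $C := \{ t \in \mathcal{D} \mid h(t) \geq a \}$, so that $\alpha(C) = \alpha[h \geq a]$ already matches the objective. The key lemma is the telescoping oscillation bound
\[
	h(s) - h(t) \leq \sum_{i \in I(s,t)} D_i \qquad \text{for all } s, t \in \mathcal{D},
\]
obtained by transforming $s$ into $t$ one differing coordinate at a time and invoking $\operatorname{Osc}_i(h) \leq D_i$ at each step. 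Applying this with $s \in C$ (so $h(s) \geq a$) gives $h(t) \geq a - \sum_{i \in I(s,t)} D_i = h^s(t)$; maximizing over $s \in C$ yields $h \geq h^C$ pointwise. Consequently $\E_{\alpha}[h^C] \leq \E_{\alpha}[h] \leq 0$, so $(C, \alpha) \in \mathcal{A}_\mathcal{C}$, and by the bridge observation $\alpha(h^C \geq a) \geq \alpha(C) = \alpha[h \geq a]$. Thus each admissible value of $\mathcal{A}_\mathcal{D}$ is dominated by an admissible value of $\mathcal{A}_\mathcal{C}$, and taking suprema completes the proof.

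The only step demanding care, and the main obstacle, is establishing $h^C \leq h$ through the telescoping bound: one must verify that the sequence of single-coordinate flips connecting $s$ to $t$ remains inside $\mathcal{D}$ (automatic on the hypercube) and that each increment is controlled by the correct $D_i$, including the degenerate cases $D_i = 0$ in which $h$ is independent of coordinate $i$. Everything else --- the oscillation estimate for $h^C$ and the bookkeeping of the two objective functionals --- is routine.
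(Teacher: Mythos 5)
Your proposal is correct and follows essentially the same route as the paper's own proof: both arguments rest on replacing an admissible $h$ by $h^C$ with $C$ the level set of $h$ at $a$, on the membership $h^C \in \mathcal{G}_\mathcal{D}$, and on the key inequality $h^C \leq h$ coming from the oscillation constraints, which preserves feasibility ($\E_\alpha[h^C] \leq \E_\alpha[h] \leq 0$) without hurting the objective. The only differences are organizational: the paper first clips $h$ to $\min(h,a)$ using the lattice structure of $\mathcal{G}_\mathcal{D}$ so that $C = \{h = a\}$ and the objective is exactly preserved, whereas you skip the clipping, take $C = \{h \geq a\}$, and settle for the one-sided comparison $\alpha(h^C \geq a) \geq \alpha(C) = \alpha[h \geq a]$ (which suffices for suprema); moreover you spell out the telescoping proof of $h^C \leq h$ and the easy reverse inequality $\mathcal{U}(\mathcal{A}_\mathcal{C}) \leq \mathcal{U}(\mathcal{A}_\mathcal{D})$, both of which the paper leaves implicit.
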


\begin{rmk}
	The proof of this reduction theorem utilizes the standard lattice structure of the space of functions
	$\mathcal{F}_{\mathcal{D}}$ in a substantial way.  To begin with, the reduction to  $\max h = a$  is attained through lattice invariance.  Moreover, we have a lattice $\mathcal{F}_{\mathcal{D}}$, with sub-lattice $\mathcal{G}_{\mathcal{D}}$, and for each $C \in \mathcal{C}$,  the set $C_{\mathcal{D}} := \{ h \in \mathcal{F}_{\mathcal{D}} \mid \{ s \mid h(s) = a \} = C \} \}$ of functions with value $a$ precisely on the set $C$ is a sub-lattice.  For a clipped $h$, let $C(h) := \{ s \in \mathcal{D} \mid h(s) = a \}$ be the set where $h$ has the value $a$.  If for each $C$ the set
	\[
		\bigcap_{h : C(h) = C} \left\{ f \leq h \right\}  \cap  C_{\mathcal{D}} \cap \mathcal{G}_{\mathcal{D}}
	\]
	is nonempty, then we obtain a reduction.  However, not only is the set nonempty, but the map $C \mapsto h^{C}$ is a simple algorithm that produces a point in this intersection, and therefore an explicit reduction. We suspect that the existence of a simple reduction algorithm in this case is due to the lattice structures, and that such structures may be useful in the more general case.  Indeed, the condition $f \leq h$ implies that $\E_{\alpha}[f] \leq \E_{\alpha}[h]$ for any $\alpha$, and the
	the condition that $\E_{\alpha}[f] \leq \E_{\alpha}[h]$ for all $\alpha$ implies that $f \leq h$, so that the above condition is equivalent to the non-emptiness of
	\[
		\bigcap_{h : C(h)=C} \left\{ \bigcap_{\alpha} \left\{ \E_{\alpha}[f] \leq \E_{\alpha}[h] \right\}\right\}
	\cap  C_{\mathcal{D}} \cap \mathcal{G}_{\mathcal{D}}.
	\]
	For the more general constraints, we would instead have to solve (i.e.\ find an element of)
	\[
		\bigcap_{h:C(h)=C} \left\{ \bigcap_{\alpha} \left\{\G(f,\alpha) \leq \G(h,\alpha) \right\} \right\} \cap C_{\mathcal{D}} \cap \mathcal{G}_{\mathcal{D}}.
	\]
\end{rmk}

\begin{rmk}
We refer to the following diagram for a summary of the relationships between admissible sets $\mathcal{A}$, $\mathcal{A}_{\Delta}$, $\mathcal{A}_{\mathcal{D}}$, $\mathcal{A}_{\mathcal{C}} $, $\mathcal{A}_{\mathrm{McD}}$, the reduction theorems and their assumptions.

  \begin{center}
 \xymatrix{
\mathcal{A}_{\mathrm{McD}} \eqref{eq:MDassumptions}\ar@{->}[dd]  & \mathcal{A} \eqref{eq:def_amoments} \ar@{->}[d]|-{\txt{\footnotesize
Corollary \ref{cor:ouqreduce} }} &  \txt{\footnotesize $(f,\mu)  \in \mathcal{G} \times  \bigotimes_{i=1}^{m}
 \mathcal{M}(\mathcal{X}_{i}),\, \mu=\otimes_{i=1}^m \mu_i$}\ar@{.>}[d]|-{\txt{\footnotesize
$n$ and $n_i$ generalized moment constraints on $\mu$ and $\mu_i$ }} \\
& \mathcal{A}_{\Delta} \eqref{eq:reducedtoAdelta} \ar@{->}[d]|-{\txt{\footnotesize
Theorem \ref{thm:valuereduce} }} & \txt{\footnotesize $\mu_i$ reduces to the weighed sum of $n+n_i+1$ Diracs \\ \footnotesize $(f, \mu) \in \mathcal{G} \times \bigotimes_{i=1}^{m} \Delta_{n_{i}+n'}(\mathcal{X}_{i})$}\ar@{.>}[d]|-{\txt{\footnotesize
Quantity of interest $r\circ f$, $n$ constraints $\E_{\mu}[g_j \circ f]\leq 0$ }}   \\
\txt{\footnotesize Proposition \ref{prop:mcd} }\ar@{->}[r]   & \eqref{eq:adgen} \mathcal{A}_{\mathcal{D}} \eqref{eq:adfromcd}\ar@{->}[d]|-{\txt{\footnotesize
Theorem \ref{thm:C} }} & \txt{\footnotesize  $f$ and $\mu$ reduce to a function and a measure on a finite set\\ \footnotesize $(h, \alpha) \in \mathcal{G}_\mathcal{D} \times \mathcal{M}^{m}(\mathcal{D}),\, \mathcal{D}=\{ 0, \dots, n \}^{m}$ } \ar@{.>}[d]|-{\txt{\footnotesize
The space of functions $\mathcal{G}_\mathcal{D}$ has a lattice structure  }} \\
& \mathcal{A}_{\mathcal{C}} \eqref{eq:acequ} &  \txt{\footnotesize Functions $h$ can be parameterized by a finite set\\ \footnotesize  $(C, \alpha) \in  \mathcal{C} \times \mathcal{M}(\{ 0, 1 \})^{m}$,\, $\mathcal{C}=\{C \mid C \subseteq \{0,1\}^m\}$}
  }
\end{center}

\end{rmk}

\section{Optimal Concentration Inequalities}
\label{sec:Reduction-mcd}

In this section, the results of Section \ref{sec:Reduction} will be applied to obtain optimal concentration inequalities under the assumptions of McDiarmid's inequality and Hoeffding's inequality.  The following subsection gives explicit concentration results under the assumptions of  McDiarmid's inequality, and Subsection \ref{subsecHoeffding} gives explicit concentration results under the assumptions of Hoeffding's inequality.

Surprisingly, these explicit results show that, although uncertainties may propagate for the true value of $G$ and $\P$,  they might not when the information is incomplete on $G$ and $\P$.

We refer to Subsection \ref{sec-mcdproofs} for the proofs of the results in this section.

\subsection{Explicit solutions under the assumptions of McDiarmid's inequality}

In this subsection, we will apply Theorem \ref{thm:C} to obtain explicit formulae for the OUQ problem $\mathcal{U}(\mathcal{A}_{\mathrm{McD}})$ (defined in Equation \eqref{eq:sjjshgdjhgejhge}) under the assumptions of McDiarmid's inequality  \eqref{eq:MDassumptions}.  More precisely, we will compute $\mathcal{U}(\mathcal{A}_{\mathcal{C}})$ defined by equation \eqref{eq:jkshdjshdjheer} and use equalities \eqref{eq:equivMcDC} and \eqref{eq:jgdjshghege} to obtain $\mathcal{U}(\mathcal{A}_{\mathrm{McD}})=\mathcal{U}(\mathcal{A}_{\mathcal{C}})$.  Observe that all the following optimization problems possess solutions because they involve the optimization of a continuous function (with respect to $\alpha$) in a compact space.

Since the inequalities \eqref{eq:bound} and \eqref{eq:bound2} are equivalent, it follows that
\[
	\mathcal{U}(\mathcal{A}_{\mathrm{McD}}) = \sup_{(f, \mu) \in \mathcal{G} \times \mathcal{M}_{m} } \mu\big[f \geq a + \E_{\mu}[f]\big].
\]
In particular, if $\E_{\mu}[f] \leq 0$ is replaced by $\E_{\mu}[f] \leq b$ or $\E_{\mu}[f] = b$ in McDiarmid's inequality assumptions \eqref{eq:MDassumptions}, then the results given in this section remain valid by replacing $a$ by $M:=a-b$ (observe that $M$ plays the role of a margin).

Those results should be compared with McDiarmid's inequality \cite{McDiarmid:1989, McDiarmid:1998}, which provides the bound
\begin{equation}
	\label{eeihuhcussumptions}
	\sup_{(f, \mu) \in \mathcal{G} \times \mathcal{M}_{m} } \mu\big[f \geq a + \E_{\mu}[f]\big]\leq \exp \left( - \frac{2a^{2}}{\sum_{i=1}^m D_i^{2}} \right).
\end{equation}

The statements of the theorem will be given assuming that $a\geq 0$;  in the complementary case of $a < 0$, the solution is simply $\mathcal{U}(\mathcal{A}_{\mathrm{McD}}) = 1$.

To the best of the authors' knowledge, the optimal bounds given here are new.  There is a substantial literature relating to optimization of concentration bounds and de-randomization algorithms (see for instance \cite{StrivastavStangier:1995} and references therein) but, as far as the authors know, those bounds were suboptimal because they were obtained through the moment generating function technique.

\subsubsection{Explicit solutions in dimensions one and two}

\begin{thm}[Explicit solution for $m = 1$]
	\label{thm:m1}
	For $m = 1$, $\mathcal{U}(\mathcal{A}_{\mathrm{McD}})$ is given by
	\begin{equation}
		\label{eq:McD_reduced_explicit_1d}
		\mathcal{U}(\mathcal{A}_{\mathrm{McD}}) =
		\begin{cases}
			0, & \text{if }  D_{1} \leq a, \\
			1 - \dfrac{a}{D_1}, & \text{if } 0 \leq a \leq D_{1}.
		\end{cases}
	\end{equation}
\end{thm}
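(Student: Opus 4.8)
The plan is to bypass the infinite-dimensional optimization entirely by invoking the reductions already established in Section~\ref{sec:Reduction}, and then to solve the resulting finite combinatorial problem by hand, which for $m=1$ is essentially trivial. First I would apply Proposition~\ref{prop:mcd} followed by Theorem~\ref{thm:C} to replace $\mathcal{U}(\mathcal{A}_{\mathrm{McD}})$ by $\mathcal{U}(\mathcal{A}_{\mathcal{C}})$. For $m=1$ the discrete space is $\mathcal{D}=\{0,1\}$, so its power set $\mathcal{C}$ consists of exactly the four sets $\emptyset$, $\{0\}$, $\{1\}$ and $\{0,1\}$, and a product measure $\alpha \in \mathcal{M}(\{0,1\})$ is a single Bernoulli measure, which I parametrize by $p := \alpha(\{1\}) \in [0,1]$, writing $\alpha = (1-p)\delta_0 + p\,\delta_1$.

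Next, using the defining formula \eqref{eq:jhdsjdgjhsgdgh}, I would compute the four candidate functions explicitly. Since $I(s,t)=\{1\}$ precisely when $s\neq t$, listing the values at $0$ and $1$ one gets $h^{\{0\}}=(a,\,a-D_1)$, $h^{\{1\}}=(a-D_1,\,a)$, $h^{\{0,1\}}\equiv a$, and $h^{\emptyset}\equiv-\infty$. For each $C$ the constraint $\E_{\alpha}[h^{C}]\leq 0$ is then an affine inequality in $p$, and the objective $\alpha(h^{C}\geq a)$ is simply the $\alpha$-mass of the (explicitly identified) vertices at which $h^{C}=a$. Concretely, assuming $D_1>0$ so that $a-D_1<a$, the set $C=\{1\}$ gives objective $p$ under the constraint $a-(1-p)D_1\leq 0$, i.e. $p\leq 1-a/D_1$; symmetrically $C=\{0\}$ gives objective $1-p$ under $p\geq a/D_1$; the set $C=\{0,1\}$ forces the constraint $a\leq 0$; and $C=\emptyset$ yields objective $0$.

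Then I would solve each one-variable linear program and take the maximum over the four sets. For $C=\{1\}$ (and, by symmetry, $C=\{0\}$) the optimum is attained at $p=1-a/D_1$, giving value $1-a/D_1$ whenever this is nonnegative, i.e. $a\leq D_1$; when $a>D_1$ the feasible set for these two $C$ is empty, $C=\{0,1\}$ is infeasible for $a>0$, and only $C=\emptyset$ survives with value $0$. Assembling the cases yields $\mathcal{U}(\mathcal{A}_{\mathcal{C}})=1-\tfrac{a}{D_1}$ for $0\leq a\leq D_1$ and $\mathcal{U}(\mathcal{A}_{\mathcal{C}})=0$ for $a\geq D_1$, which is exactly \eqref{eq:McD_reduced_explicit_1d}; the two branches agree at $a=D_1$, and at $a=0$ the value is $1$, as it must be.

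The genuine mathematical content all resides in the reductions of Section~\ref{sec:Reduction} (Proposition~\ref{prop:mcd} and Theorem~\ref{thm:C}), which are assumed here, so the only real care needed is bookkeeping: correctly identifying the super-level set $\{h^{C}\geq a\}$ for each $C$ and checking the feasibility of each linear program, together with the degenerate boundary values ($a=0$, $a=D_1$, and the trivial $D_1=0$ case). I therefore expect no serious obstacle. In particular, attainment of the supremum is automatic, since after reduction we are maximizing a continuous (indeed affine) function of $p$ over the compact set $[0,1]$ intersected with a closed half-line, exactly as noted in the remark preceding the theorem.
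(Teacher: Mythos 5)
Your proposal is correct and takes essentially the same route as the paper: the paper's proof of Theorem \ref{thm:m1} likewise invokes the reduction to $\mathcal{U}(\mathcal{A}_{\mathcal{C}})$ via Proposition \ref{prop:mcd} and Theorem \ref{thm:C}, and then simply notes that the $m=1$ computation is ``trivial'' (alternatively citing the $C_0=\{(1)\}$ case of Proposition \ref{prop:McD_reduced_explicit_allm}). Your enumeration of the four subsets of $\{0,1\}$ with their one-variable linear programs is exactly that trivial computation carried out in full.
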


\begin{thm}[Explicit solution for $m = 2$]
\label{thm:m2}
	For $m = 2$, $\mathcal{U}(\mathcal{A}_{\mathrm{McD}})$ is given by
	\begin{equation}
		\label{eq:McD_reduced_explicit_2d}
		\mathcal{U}(\mathcal{A}_{\mathrm{McD}}) =
		\begin{cases}
			0, & \text{if } D_{1} + D_{2} \leq a, \\
			\dfrac{(D_{1} + D_{2} -a)^{2}}{4 D_{1} D_{2}}, & \text{if } | D_{1} - D_{2} | \leq a \leq  D_{1} + D_{2}, \\
			1 - \dfrac{a}{\max (D_{1}, D_{2})}, & \text{if } 0\leq  a \leq | D_{1} - D_{2} |.
		\end{cases}
	\end{equation}
\end{thm}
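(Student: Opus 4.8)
The plan is to evaluate the reduced problem $\mathcal{U}(\mathcal{A}_\mathcal{C})$ of \eqref{eq:jkshdjshdjheer} directly, since Proposition \ref{prop:mcd} together with Theorem \ref{thm:C} give $\mathcal{U}(\mathcal{A}_{\mathrm{McD}}) = \mathcal{U}(\mathcal{A}_\mathcal{D}) = \mathcal{U}(\mathcal{A}_\mathcal{C})$. For $m=2$ the lattice $\mathcal{D} = \{0,1\}^2$ has only four vertices, so $\mathcal{C}$ consists of just $2^4 = 16$ subsets $C$, and a product measure $\alpha = \alpha_1 \otimes \alpha_2$ is parametrized by the two numbers $p_i := \alpha_i(\{1\}) \in [0,1]$. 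I may assume $D_1, D_2 > 0$, the degenerate case reducing to Theorem \ref{thm:m1}. First I would rewrite the objective and constraint in geometric terms: writing $g^C(t) := \min_{s \in C} \sum_{i \in I(s,t)} D_i$ for the weighted Hamming distance from $t$ to $C$, so that $h^C = a - g^C$, the event $\{ h^C \geq a \}$ equals $\{ g^C = 0 \} = C$ (using $D_i > 0$). Hence the objective is $\alpha(h^C \geq a) = \alpha(C)$ and the constraint $\E_\alpha[h^C] \leq 0$ becomes $\E_\alpha[g^C] \geq a$; thus $\mathcal{U}(\mathcal{A}_\mathcal{C})$ is the largest $\alpha$-mass of $C$ subject to the expected distance to $C$ being at least $a$.

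Next I would exploit the symmetry group of the square (coordinate flips preserve $D_1,D_2$, while the transposition swaps $D_1 \leftrightarrow D_2$) to reduce the sixteen subsets to six inequivalent shapes: the empty set, a single vertex, an edge (two vertices sharing one coordinate), a diagonal (two antipodal vertices), a triple, and the full set. For each shape $g^C$ is piecewise constant and explicitly computable, so maximizing $\alpha(C)$ subject to $\E_\alpha[g^C] \geq a$ is an elementary problem in $(p_1, p_2)$. The single-vertex case $C = \{00\}$ gives the separable constraint $\E_\alpha[g^C] = D_1 p_1 + D_2 p_2 \geq a$ with objective $(1-p_1)(1-p_2)$; its Lagrange-optimal interior point yields $\tfrac{(D_1+D_2-a)^2}{4D_1 D_2}$ and lies in $[0,1]^2$ exactly when $|D_1 - D_2| \leq a \leq D_1 + D_2$, while outside this range the optimum falls on a boundary face (say $p_2 = 0$) and gives $1 - a/\max(D_1,D_2)$. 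The best edge gives precisely $1 - a/\max(D_1,D_2)$ (its constraint is $\max(D_1,D_2)\,p \geq a$), and I would show the diagonal and the triple are always dominated by the edge, since their constraints involve $\min(D_1,D_2)$ rather than $\max(D_1,D_2)$.

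The crux is the comparison between the single-vertex and edge values on the middle regime $|D_1-D_2| \leq a \leq D_1+D_2$. Assuming without loss of generality $D_1 \geq D_2$, this reduces to the algebraic identity
\[
	(D_1 + D_2 - a)^2 - 4 D_2 (D_1 - a) = (D_1 - D_2 - a)^2 \geq 0,
\]
which shows $\tfrac{(D_1+D_2-a)^2}{4 D_1 D_2} \geq 1 - \tfrac{a}{\max(D_1,D_2)}$ throughout this regime, with equality exactly at $a = |D_1 - D_2|$. Assembling the three regimes then gives the claim: for $a \geq D_1 + D_2$ the bound $g^C \leq D_1 + D_2$ forces every feasible scenario to place zero mass on $C$, so the value is $0$; on $|D_1-D_2| \leq a \leq D_1+D_2$ the single vertex wins and produces the quadratic branch; and on $0 \leq a \leq |D_1-D_2|$ the single-vertex and edge optima coincide at $1 - a/\max(D_1,D_2)$.

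I expect the main obstacle to be the bookkeeping of the case analysis over the subset shapes $C$, and in particular confirming via the displayed identity that the single-vertex configuration is optimal precisely on the middle interval while being matched by the edge on $[0,|D_1-D_2|]$. The continuity of the three branches at $a = |D_1-D_2|$ (both equal to $D_2/D_1$ when $D_1 \geq D_2$) and at $a = D_1 + D_2$ (both equal to $0$) provides a useful consistency check that the regime boundaries have been placed correctly.
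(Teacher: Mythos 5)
Your proposal is correct and follows essentially the same route as the paper: after the reduction $\mathcal{U}(\mathcal{A}_{\mathrm{McD}}) = \mathcal{U}(\mathcal{A}_{\mathcal{D}}) = \mathcal{U}(\mathcal{A}_{\mathcal{C}})$, the paper likewise enumerates the subsets $C \subseteq \{0,1\}^2$ up to the symmetries of the square, shows every configuration is dominated by the single vertex $C_1 = \{(1,1)\}$ (edges via Lemma \ref{lem:symmetry}, the triple and diagonal via the same $\min(D_1,D_2)$ computation and the square $(D_1 - D_2 + a)^2 \geq 0$), and evaluates the singleton by the Lagrange/boundary analysis of Proposition \ref{prop:McD_reduced_explicit_allm}. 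The only cosmetic differences are that you recompute the singleton value directly for $m=2$ instead of invoking that proposition, and that you phrase the problem via the weighted Hamming distance $g^C = a - h^C$ and compare vertex against edge (identity $(D_1 - D_2 - a)^2 \geq 0$) rather than triple against the final formula.
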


See Sub-figures \ref{fig:mcd1}, \ref{fig:mcd2} and \ref{fig:mcd3} for illustrations comparing the McDiarmid and OUQ bounds for $m=2$ (as functions of $(D_1,D_2)$, with mean performance $0$ and failure threshold $a = 1$, the OUQ bound is calculated using the explicit solution \eqref{eq:McD_reduced_explicit_2d}).
Observe that
\begin{itemize}
	\item If $a \leq  D_{1} - D_{2}$,  then a decrease in $D_2$ does not lead to a decrease in the OUQ bound $\mathcal{U}(\mathcal{A}_{\mathrm{McD}})$.  In other words, if most of the uncertainty is contained in the first variable ($a+D_{2} \leq  D_{1}$), then the uncertainty associated with the second variable does not affect the global uncertainty;  a reduction of the global uncertainty requires a reduction in $D_1$.
	\item For $D_1+D_2=2a$, the ratio between the OUQ bound and the  McDiarmid bound is minimized near the diagonal.
\end{itemize}
\begin{rmk}
The maximum of \eqref{eq:McD_reduced_explicit_2d} over $D_1,D_2$ under the constraints $D_1+D_2=D$ and $D_1\geq D_2$ is achieved at $D_2=0$ and is equal to $1-a/D$.
The minimum of \eqref{eq:McD_reduced_explicit_2d} over $D_1,D_2$ under the constraints $D_1+D_2=D$ and $D_1\geq D_2$ is achieved on the diagonal $D_1=D_2$ and is equal to $(1-a/D)^2$.
\end{rmk}

\begin{figure}[tp]
	\begin{center}
		\caption{Comparison of the McDiarmid and OUQ bounds with zero mean performance and
failure threshold $a = 1$.}
		\label{fig:McDiarmidVsOUQm2}
		\subfigure[McDiarmid upper bound, $m=2$]{\label{fig:mcd1}
			\includegraphics[width=0.45\textwidth]{./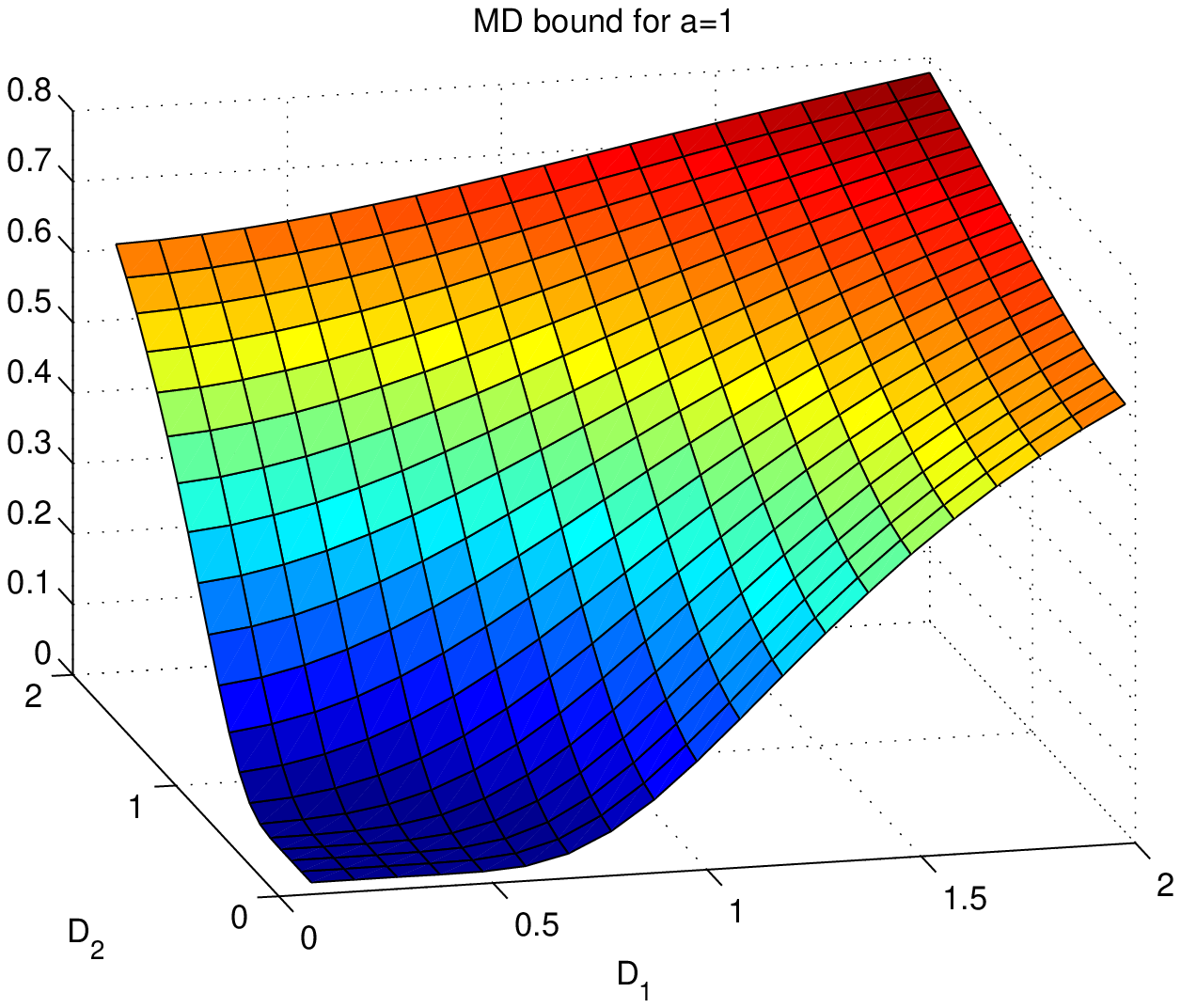}
		}
		\subfigure[OUQ upper bound, $m=2$]{\label{fig:mcd2}
			\includegraphics[width=0.45\textwidth]{./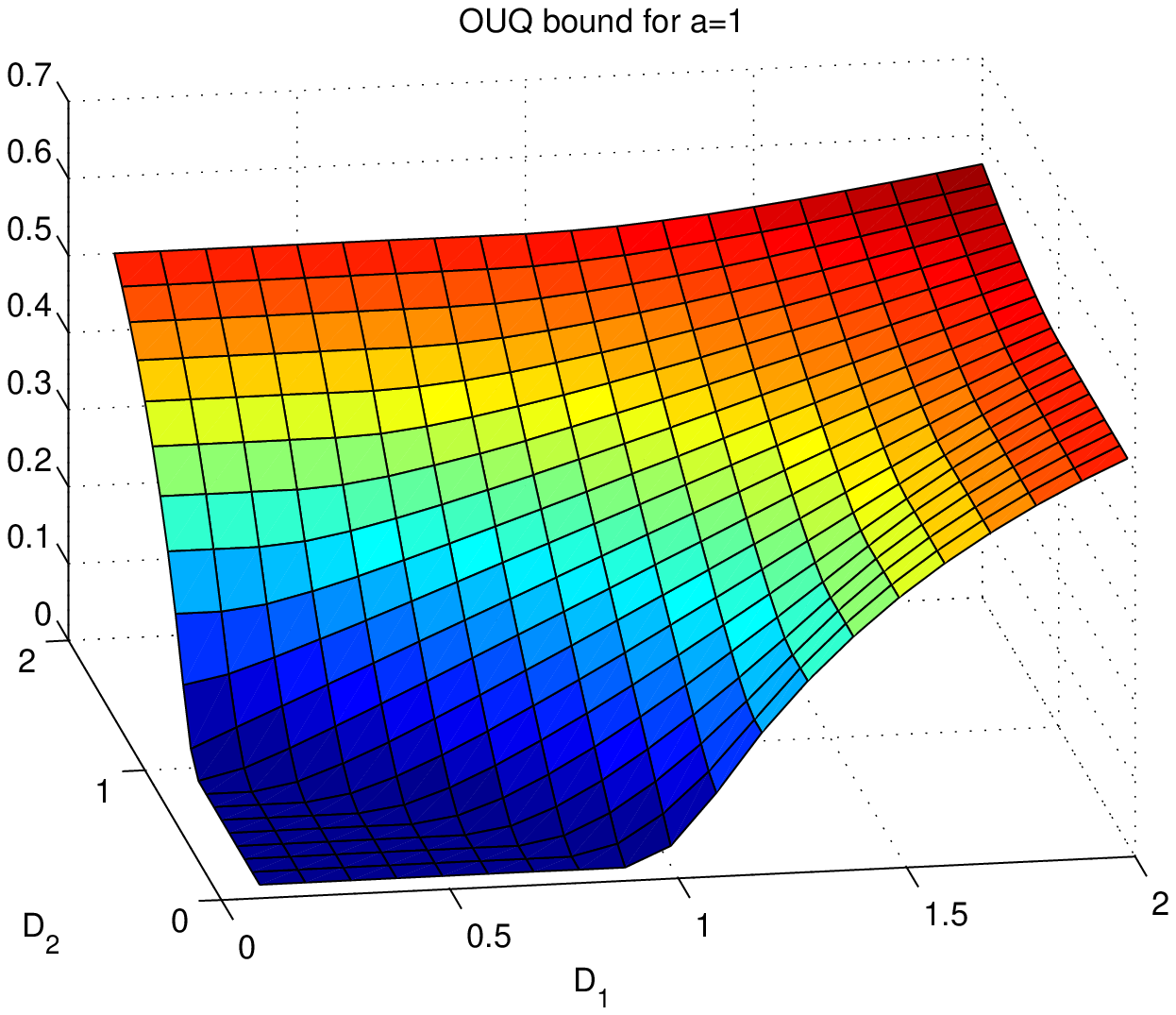}
		}
		\subfigure[Ratio of the two bounds:  OUQ bound divided by McDiarmid bound, $m=2$]{\label{fig:mcd3}
			\includegraphics[width=0.45\textwidth]{./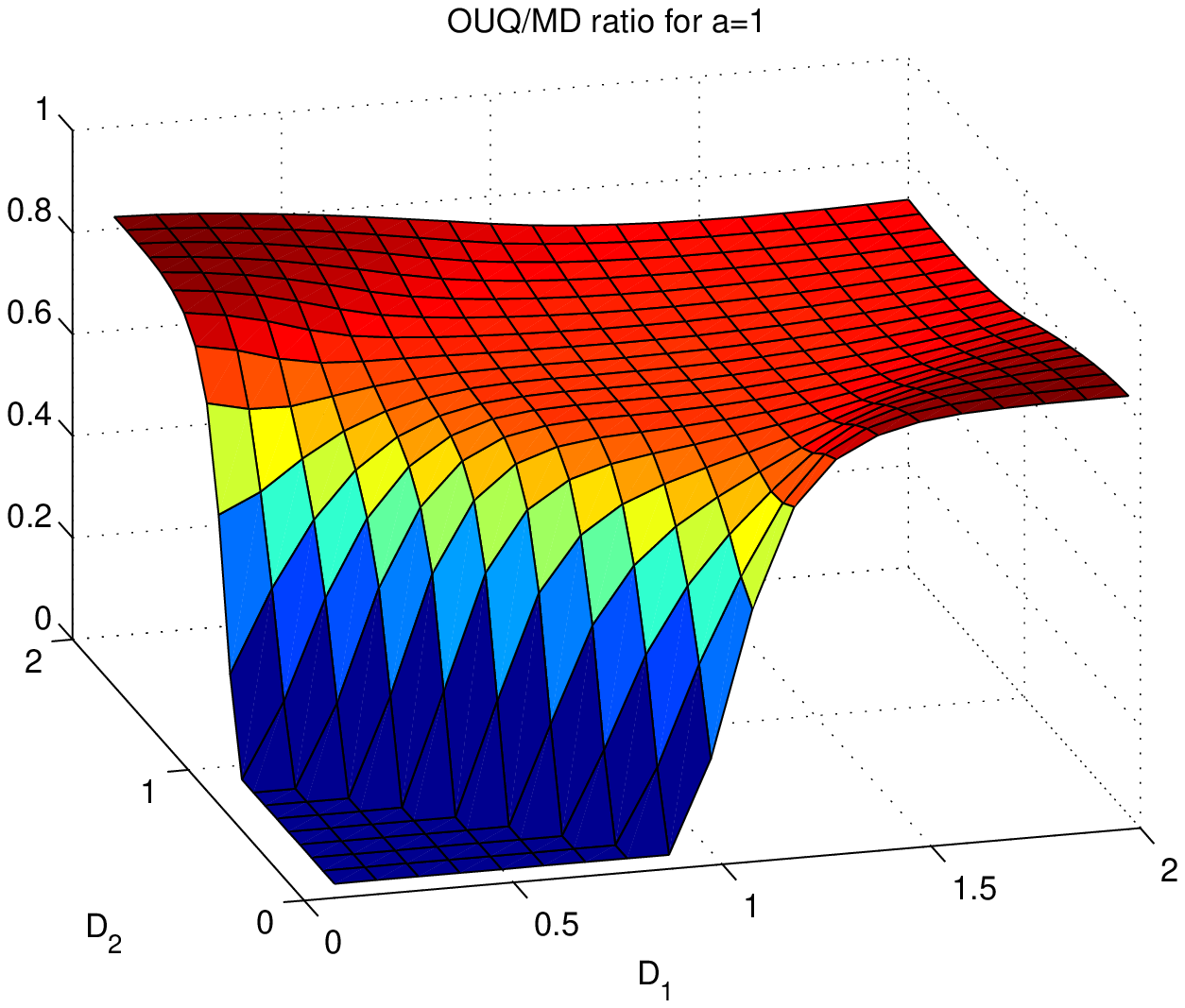}
		}
		\subfigure[McDiarmid vs OUQ bound, $m=3$ and $D_1=D_2=D_3$]{\label{fig:mcd4}
			\includegraphics[width=0.45\textwidth]{./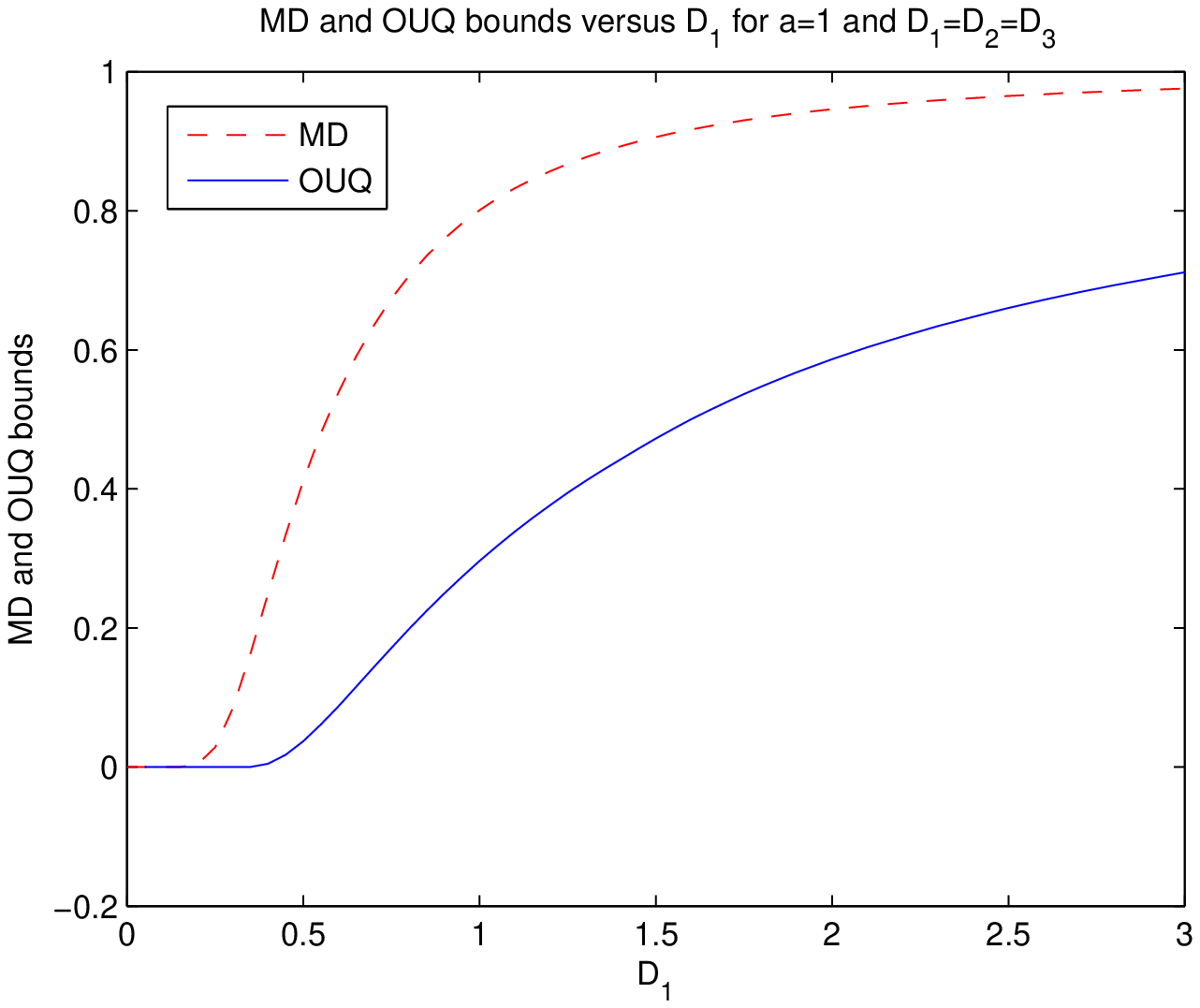}
		}
		\subfigure[$\mathcal{F}_1$ vs $\mathcal{F}_2$, $D_1=D_2=D_3$]{\label{fig:mcd5}
			\includegraphics[width=0.45\textwidth]{./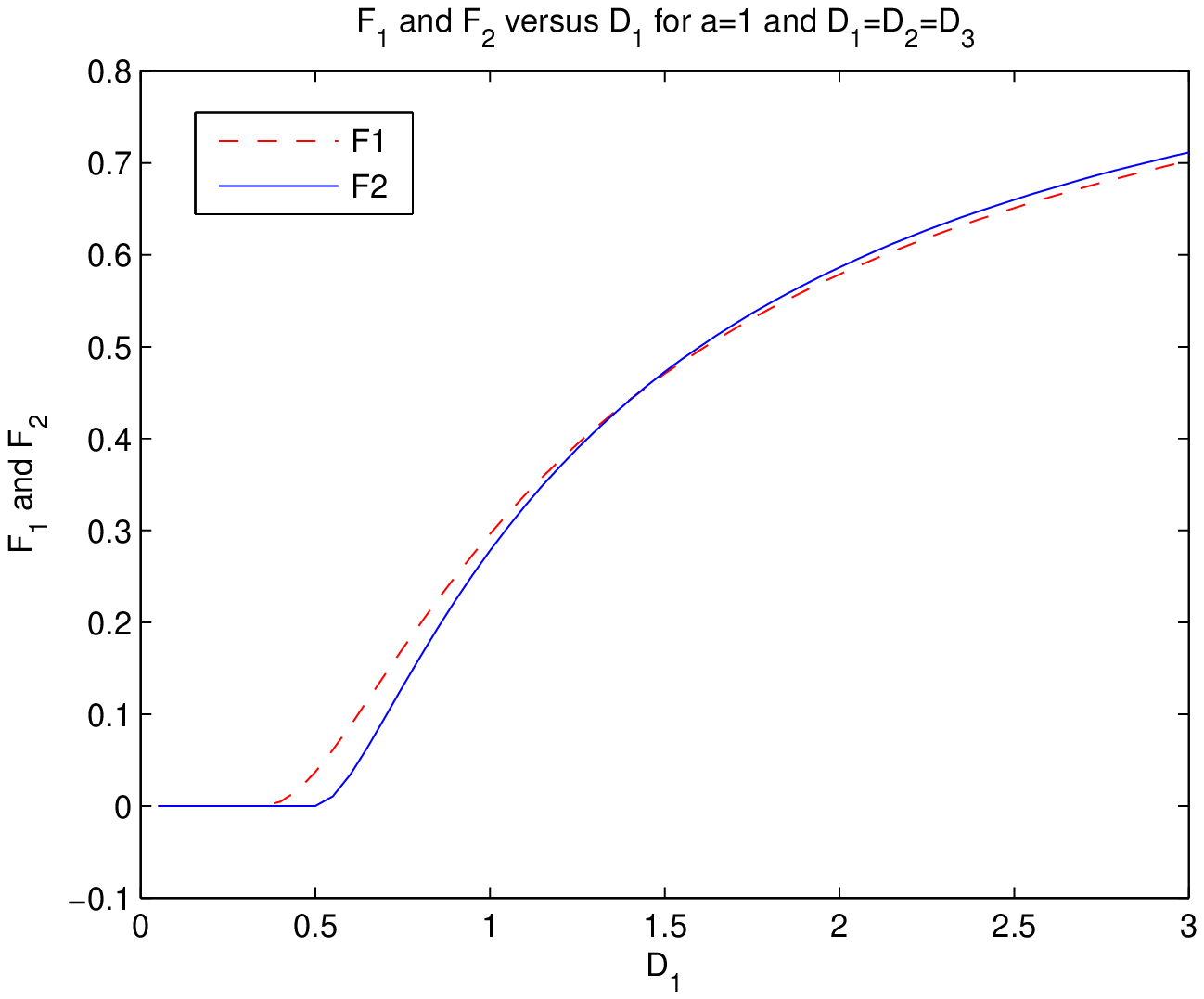}
		}
		\subfigure[$\mathcal{F}_1$ vs $\mathcal{F}_2$, $D_1=D_2=\frac{3}{2}D_3$]{\label{fig:mcd6}
			\includegraphics[width=0.45\textwidth]{./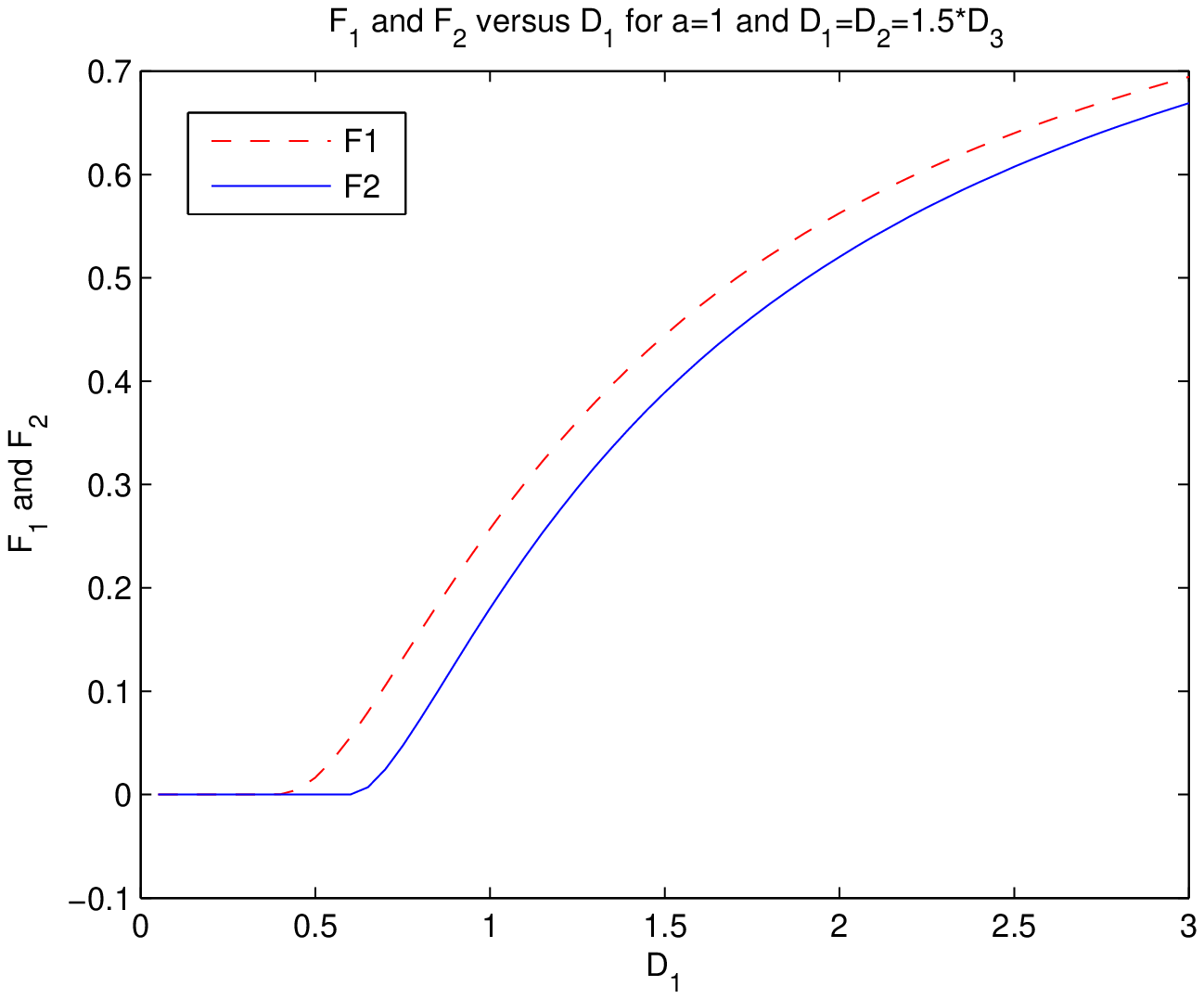}
		}
	\end{center}
\end{figure}

\subsubsection{Explicit solution in dimension three}

Assume that $D_1 \geq D_2 \geq D_3$.  Write
\begin{equation}
	\label{eq:McD_reduced_explicit_n3_1}
	\mathcal{F}_1 :=
	\begin{cases}
		0, & \text{if } D_1+D_2+D_3 \leq a, \\
		\dfrac{(D_1+D_2+D_3-a)^{3}}{27 D_1 D_2 D_3}, & \text{if } D_1+D_2-2D_3 \leq a\leq D_1+D_2+D_3, \\
		\dfrac{(D_1+D_2-a)^{2}}{4 D_1 D_2}, & \text{if } D_1-D_2 \leq a\leq D_1+D_2-2D_3,\\
         1 - \dfrac{a}{D_1}, & \text{if } 0\leq  a \leq D_{1} - D_{2}.
	\end{cases}
\end{equation}
and
\begin{equation}
	\label{eq:McD_reduced_explicit_n3_2}
	\mathcal{F}_2:=\max_{i\in \{1,2,3\}} \phi(\gamma_i)\psi(\gamma_i)
\end{equation}
where
\[
	\psi(\gamma) := \gamma^2 \left( 2 \frac{D_2}{D_3} - 1 \right) - 2 \gamma \left( 3 \frac{D_2}{D_3} - 1 \right) + \frac{\gamma}{1+\gamma} \left( 8 \frac{D_2}{D_3} - 2 \frac{a}{D_3} \right)
\]
and $\gamma_1, \gamma_2, \gamma_3$ are the roots (in $\gamma$) of the  cubic polynomial
\begin{equation}
	\label{eq:magic_cubic}
	(1 + \gamma)^3 - A (1 + \gamma)^2 + B = 0,
\end{equation}
where
\[
	A := \frac{5 D_2 - 2 D_3}{2 D_2 - D_3} \quad \text{and}\quad B := \frac{4 D_2 -a}{2 D_2 - D_3}.
\]
Define a function $\phi$ by
\[
	\phi(\gamma) :=
	\begin{cases}
		1, & \text{if $\gamma \in (0,1)$ and $\theta(\gamma)\in (0,1)$,} \\
		0, & \text{otherwise,}
	\end{cases}
\]
where
\[
	\theta(\gamma) := 1-\frac{a}{D_3 (1 - \gamma^2)} + \frac{D_2}{D_3} \frac{1 - \gamma}{1 + \gamma}.
\]

By the standard formula for the roots of a cubic polynomial, the roots of \eqref{eq:magic_cubic} are given by
\[
	 \gamma_1 := - 1 - \frac{1}{3} \left( - A + \kappa_1 + \kappa_2 \right),
\]
\[
	 \gamma_2 := - 1 - \frac{1}{3} \left( - A + \omega_2 \kappa_1 + \omega_1 \kappa_2 \right),
\]
\[
	 \gamma_3 := - 1 - \frac{1}{3} \left( - A + \omega_1 \kappa_1 + \omega_2 \kappa_2 \right),
\]
where
\[
	\omega_1 := - \frac{1}{2} + \frac{\sqrt{3}}{2} i, \quad \omega_2 := - \frac{1}{2} - \frac{\sqrt{3}}{2} i,\quad \kappa_1 := \left( \frac{\beta_1 + \sqrt{\beta_2}}{2} \right)^\frac{1}{3},
\]
\[
\kappa_2 := \left( \frac{\beta_1 - \sqrt{\beta_2}}{2} \right)^\frac{1}{3},\quad \beta_1 := - 2 A^3 + 27 B \quad \text{and}\quad \beta_2 := \beta_1^2 - 4 A^6.
\]
Since there are 3 possible values for each cube root, $\kappa_1$ and $\kappa_2$ must be taken so that they satisfy $\kappa_1 \kappa_2 = A^2$.

\begin{thm}[Explicit solution for $m = 3$]
	\label{thm:m3}
	For $m = 3$ with $D_1 \geq D_2 \geq D_3$, $\mathcal{U}(\mathcal{A}_{\mathrm{McD}})$ is given by
	\begin{equation}
		\label{eq:McD_reduced_explicit_3d}
		\mathcal{U}(\mathcal{A}_{\mathrm{McD}}) =\max(\mathcal{F}_1, \mathcal{F}_2).
	\end{equation}
\end{thm}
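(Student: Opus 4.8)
The plan is to start from the reductions $\mathcal{U}(\mathcal{A}_{\mathrm{McD}}) = \mathcal{U}(\mathcal{A}_\mathcal{D}) = \mathcal{U}(\mathcal{A}_\mathcal{C})$ supplied by Proposition \ref{prop:mcd} and Theorem \ref{thm:C}, which convert the original infinite-dimensional problem into the finite problem \eqref{eq:jkshdjshdjheer} over subsets $C \subseteq \mathcal{D} = \{0,1\}^3$ and product measures $\alpha = \alpha_1 \otimes \alpha_2 \otimes \alpha_3$. I would first record that, when all $D_i > 0$ (the cases with some $D_i = 0$ degenerate to lower dimension and are covered by Theorems \ref{thm:m1} and \ref{thm:m2}), the definition \eqref{eq:jhdsjdgjhsgdgh} gives $\{t : h^C(t) \geq a\} = C$, since $\sum_{i \in I(s,t)} D_i = 0$ forces $s = t$. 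Hence the objective is simply $\alpha(C)$ and the constraint $\E_\alpha[h^C] \leq 0$ reads $\E_\alpha\big[\min_{s \in C} \sum_{i \in I(s,t)} D_i\big] \geq a$. Writing $p_i := \alpha_i(\{1\})$, the whole problem is an optimization of $\alpha(C)$ over $(C, p_1, p_2, p_3) \in \mathcal{C} \times [0,1]^3$. The extreme regimes are disposed of at once: $a < 0$ gives value $1$, and $a \geq D_1 + D_2 + D_3$ makes the constraint infeasible for any $\alpha$ with $\alpha(C) > 0$, yielding value $0$, which is the top branch of $\mathcal{F}_1$.

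Next I would cut the $2^8$ candidate subsets $C$ down to a few representatives. The cube is vertex-transitive and the problem is invariant under the coordinate reflections $t_i \mapsto 1 - t_i$ (which merely relabel each $\alpha_i$), so only the combinatorial type of $C$ relative to the ordering $D_1 \geq D_2 \geq D_3$ matters. Enlarging $C$ can only increase the objective $\alpha(C)$, but since $h^C = \max_{s \in C} h^s$ grows pointwise with $C$, it also tightens the moment constraint; this trade-off, together with a domination argument comparing $h^C$ with $h^{C'}$ for shifted or nested $C'$, lets me eliminate all but two candidate families. The first is a single vertex, $C = \{s\}$. The second is a genuinely three-dimensional configuration in which the set $C$ is arranged so that displacements in the largest-diameter direction never leave $C$; this is precisely why the resulting value $\mathcal{F}_2$ in \eqref{eq:McD_reduced_explicit_n3_2}, through $\psi$ and $\theta$, depends only on $D_2$, $D_3$ and $a$, and not on $D_1$.

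For the single-vertex family I would take $C = \{(0,0,0)\}$ and set $q_i := 1 - p_i$, so that $\alpha(C) = q_1 q_2 q_3$ and the constraint becomes $\sum_i D_i q_i \leq D_1 + D_2 + D_3 - a$. Maximizing the product $q_1 q_2 q_3$ under this single linear inequality and the box constraints $q_i \in [0,1]$ is a standard Lagrange/AM--GM computation: at an interior optimum $D_i q_i$ is constant in $i$, giving $q_1 q_2 q_3 = (D_1+D_2+D_3-a)^3/(27 D_1 D_2 D_3)$, valid while the smallest-diameter coordinate remains feasible, i.e.\ $q_3 \leq 1$, equivalently $a \geq D_1 + D_2 - 2 D_3$. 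As $a$ decreases past this threshold one activates $q_3 = 1$, then $q_2 = 1$, re-optimizing at each stage; the successive activations at $a = D_1 + D_2 - 2 D_3$ and $a = D_1 - D_2$ produce exactly the remaining three branches of $\mathcal{F}_1$ in \eqref{eq:McD_reduced_explicit_n3_1}, identifying the single-vertex value with $\mathcal{F}_1$.

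For the second family I would parametrize the two-point marginals and write the remaining free mass ratio as $\gamma \in (0,1)$, then impose stationarity of $\alpha(C)$ subject to the binding constraint $\E_\alpha[h^C] = a$ via a Lagrange multiplier. Eliminating the multiplier and the saturated weights yields, after simplification, the cubic \eqref{eq:magic_cubic} in $1+\gamma$ with the coefficients $A$ and $B$ stated there; the objective evaluated at a stationary point equals $\phi(\gamma)\psi(\gamma)$, where the indicator $\phi$ is the bookkeeping that retains only those roots $\gamma \in (0,1)$ for which the induced weight $\theta(\gamma)$ also lies in $(0,1)$, the remaining roots corresponding to candidate measures that leave the feasible box. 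Maximizing over the three roots gives $\mathcal{F}_2$, and the theorem follows by taking $\max(\mathcal{F}_1, \mathcal{F}_2)$. The main obstacle, demanding by far the most care, is the combinatorial reduction of the second paragraph: proving rigorously that among all $256$ subsets $C$ none beats $\max(\mathcal{F}_1,\mathcal{F}_2)$, and then tracking which of the three cubic roots are simultaneously feasible across the various orderings of $a$ relative to the $D_i$. The Lagrange and AM--GM computations themselves are routine once the correct $C$ and the correct pattern of active box constraints have been pinned down.
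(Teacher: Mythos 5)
Your overall architecture is the paper's own: reduce via Proposition \ref{prop:mcd} and Theorem \ref{thm:C} to the finite problem \eqref{eq:jkshdjshdjheer}, show the single-vertex family yields $\mathcal{F}_1$ (your AM--GM/active-set computation with $q_i = 1-p_i$ is essentially the proof of Proposition \ref{prop:McD_reduced_explicit_allm}, which the paper simply cites), compute a second family by Lagrange multipliers to reach the cubic \eqref{eq:magic_cubic} and $\mathcal{F}_2$, and dispose of the remaining subsets by symmetry and domination. However, two genuine gaps remain, and one is an outright error. Your description of the second extremal family --- ``displacements in the largest-diameter direction never leave $C$'' --- characterizes sets invariant under the flip $t_1 \mapsto 1-t_1$; by Lemma \ref{lem:symmetry} any such $C$ collapses to the two-dimensional problem and can never produce $\mathcal{F}_2$. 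The true second extremizer is $C_2 = \{(1,1,1),(0,1,1),(1,0,1),(1,1,0)\}$, a vertex together with its three neighbors, and it is \emph{not} closed under direction-$1$ flips (e.g.\ $(1,0,1)\in C_2$ but $(0,0,1)\notin C_2$). The reason $D_1$ drops out is different: with $D_1\geq D_2\geq D_3$, the nearest-point sums $\min_{s\in C_2}\sum_{i\in I(s,t)}D_i$ at the four outside vertices are $D_2$, $D_3$, $D_3$ and $D_2+D_3$, so $h^{C_2}$ itself never involves $D_1$. Without the correct $C_2$ you cannot even write the constraint $\E_\alpha[h^{C_2}]\leq 0$ that feeds the Lagrange system, so the cubic does not emerge from your sketch.

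Second, the elimination of all other subsets --- which you yourself flag as ``the main obstacle'' --- is the actual content of the theorem, and you supply no mechanism that would carry it out: the nested/shifted ``domination argument'' runs head-on into the trade-off you correctly identify (enlarging $C$ raises $\alpha(C)$ but tightens $\E_\alpha[h^C]\leq 0$), and nothing in the proposal resolves it. The paper's device is concrete and different: it proves the \emph{stronger} inequality $\mathcal{U}(\mathcal{A}_{C_0}) \leq \mathcal{U}(\mathcal{A}_{C_1}) = \mathcal{F}_1$ for every $C_0 \neq C_2$ (equation \eqref{eq:hfgfhfhhtf}), using Lemmas \ref{lem:symmetry} and \ref{lem:trivial} to cut down the cases and, for the representative case $C_3 = \{(1,1,1),(0,1,1),(1,0,1)\}$, a conditioning trick: since $\P[h^{C_3}= a]$ and $\E[h^{C_3}]$ are affine in $\alpha_3$, the optimal $\alpha_3$ saturates the constraint and can be eliminated exactly, leaving a two-dimensional problem with a slack parameter $E$ for the conditional mean, which is then compared against the already-proved $m=2$ formula of Theorem \ref{thm:m2}. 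This reduction-to-lower-dimension step, not any monotonicity in $C$, is what makes the case analysis finite and tractable, and it is exactly what is missing from your proposal.
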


\begin{rmk}
Sub-figure \ref{fig:mcd4} compares the McDiarmid and OUQ bounds for $m = 3$, with zero mean performance,
$D_1=D_2=D_3$, and failure threshold $a = 1$.  Sub-figure \ref{fig:mcd5} shows that
 that $\mathcal{F}_2>\mathcal{F}_1$ for $D_1$ large enough. Sub-figure \ref{fig:mcd6} shows that if
 $D_1=D_2=\frac{3}{2}D_3$, then $\mathcal{F}_2<\mathcal{F}_1$ for all $D_1$. Therefore,
	Sub-figures \ref{fig:mcd5} and \ref{fig:mcd6} suggest that the inequality $\mathcal{F}_2 > \mathcal{F}_1$ holds only if $D_3 \approx D_2$, and $D_2$ is large enough relative to $D_{1}$.
\end{rmk}

\begin{rmk}\label{rmk:deihdehdu}
For the application to the (SPHIR facility) admissible set \eqref{eq:PSAAP_SPHIR_Admissible_McD} (described in Subsection \ref{subsec:sphirex}), the sub-diameters of the surrogate $H$ are: $8.86 \, \mathrm{mm}^2$ for thickness ($D_1$),  $7.20 \, \mathrm{mm}^2$ for velocity ($D_2$), and $4.17 \, \mathrm{mm}^2$ for obliquity ($D_3$). These values have been obtained by solving the optimization problems defined by \eqref{eq:defOsc} with $f=H$ and $i=1,2,3$.
The application of Theorem \ref{thm:m3} with these sub-diameters and $a=5.5\, \mathrm{mm}^{2}$ leads to $\mathcal{F}_2=0.253$ and $\mathcal{F}_1=0.437$ (see \eqref{eq:McD_reduced_explicit_n3_1} and \eqref{eq:McD_reduced_explicit_n3_2} for the definition and interpretation of $\mathcal{F}_1$ and $\mathcal{F}_2$). In particular,  since $D_1-D_2\leq a \leq D_1+D_2-2 D_3$, it follows from  \eqref{eq:McD_reduced_explicit_n3_1} that
the obliquity sub-diameter does not impact $\mathcal{F}_1$ (decreasing $D_3$ down to zero does not change the optimal bound $43.7\%$ obtained from the third line of \eqref{eq:McD_reduced_explicit_n3_1}).
\end{rmk}

\subsubsection{Solution in dimension $m$}

For $C_0 \in \mathcal{C}$, write
\begin{equation}
	\label{eq:sup_over_AC0}
	\mathcal{U}(\mathcal{A}_{C_0}) = \sup_{\alpha\,:\, (C_0,\alpha) \in \mathcal{A}_\mathcal{C}} \alpha[h^{C_0} \geq a],
\end{equation}
where $h^{C_0}$ is defined by equation \eqref{eq:jhdsjdgjhsgdgh}.

\begin{prop}
	\label{prop:McD_reduced_explicit_allm}
	Assume that $D_1\geq \cdots \geq D_{m-1}\geq D_m$. For $C_0:=\{(1,1,\ldots,1,1)\}$, it holds that
	\begin{equation}
		\label{eq:McD_reduced_explicit_allm}
		\mathcal{U}(\mathcal{A}_{C_0}) =
		\begin{cases}
			0, & \text{if } \sum_{j=1}^m D_j \leq a, \\
			\dfrac{(\sum_{j=1}^m D_j-a)^{m}}{m^m \prod_{j=1}^m D_j}, & \text{if } \sum_{j=1}^{m} D_j - mD_m \leq a\leq \sum_{j=1}^{m} D_j, \\
			\dfrac{(\sum_{j=1}^k D_j-a)^{k}}{k^k \prod_{j=1}^k D_j}, & \text{if, for }k\in\{1,\ldots,m-1\},\\& \sum_{j=1}^{k} D_j - kD_k \leq a\leq \sum_{j=1}^{k+1}D_j - (k+1)D_{k+1}.
		\end{cases}
	\end{equation}
\end{prop}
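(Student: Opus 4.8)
The plan is to evaluate the single-set optimization \eqref{eq:sup_over_AC0} directly, since for $C_0 = \{(1,\dots,1)\}$ both the moment constraint and the objective collapse to elementary expressions in the marginal weights. First I would compute $h^{C_0}$ explicitly: since $I((1,\dots,1),t)$ is precisely the set of coordinates where $t_i = 0$, equation \eqref{eq:jhdsjdgjhsgdgh} gives $h^{C_0}(t) = a - \sum_{i : t_i = 0} D_i$. Writing a product measure $\alpha = \bigotimes_{i=1}^m \alpha_i$ on $\{0,1\}^m$ and setting $p_i := \alpha_i(\{1\})$ and $q_i := 1 - p_i \in [0,1]$, the constraint $\E_{\alpha}[h^{C_0}] \leq 0$ becomes the single linear inequality $\sum_{i=1}^m D_i q_i \geq a$. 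Moreover, since every $D_i \geq 0$, the event $[h^{C_0} \geq a]$ is exactly $\{t : t_i = 1 \text{ whenever } D_i > 0\}$, so $\alpha[h^{C_0} \geq a] = \prod_{i : D_i > 0}(1 - q_i)$; assuming all $D_i > 0$ (degenerate coordinates contribute a trivial factor $1$ and can be dropped, or handled by continuity), this equals $\prod_{i=1}^m (1 - q_i)$.

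This reduces the proposition to the finite-dimensional program
\[
	\mathcal{U}(\mathcal{A}_{C_0}) = \max\Big\{ \textstyle\prod_{i=1}^m (1 - q_i) \ \Big|\ q \in [0,1]^m, \ \sum_{i=1}^m D_i q_i \geq a \Big\}.
\]
I would first dispose of the trivial regime: if $\sum_j D_j < a$ the feasible set is empty (consistent with value $0$), and at $\sum_j D_j = a$ the only feasible point is $q \equiv 1$ with value $0$, yielding the first line of \eqref{eq:McD_reduced_explicit_allm}. For $a < \sum_j D_j$, I would observe that increasing any $q_i$ only relaxes the constraint while decreasing the objective, so any optimizer saturates it: $\sum_i D_i q_i = a$. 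Taking logarithms, the objective $\sum_i \log(1 - q_i)$ is strictly concave and the feasible set is convex, so the problem is a convex program and the KKT conditions are both necessary and sufficient for the global maximum.

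Next I would solve the KKT system. With multiplier $\lambda > 0$ for the equality $\sum_i D_i q_i = a$, stationarity at an interior coordinate gives $1 - q_i = 1/(\lambda D_i)$, which lies in $(0,1)$ exactly when $\lambda D_i > 1$; a coordinate at the lower bound $q_i = 0$ requires $\lambda D_i \leq 1$ (and $q_i = 1$ cannot occur for $a < \sum_j D_j$, since a feasible interior point of positive value exists). Because $D_1 \geq \dots \geq D_m$, the active (interior) set is therefore a prefix $\{1, \dots, k\}$, and this monotone structure is the genuinely load-bearing step: it is what forces the answer to depend on $a$ through a single index $k$. Substituting $q_i = 1 - 1/(\lambda D_i)$ for $i \leq k$ and $q_i = 0$ for $i > k$ into the saturated constraint gives $\lambda = k/\big(\sum_{j=1}^k D_j - a\big)$, and hence
\[
	\prod_{i=1}^k (1 - q_i) = \prod_{i=1}^k \frac{\sum_{j=1}^k D_j - a}{k D_i} = \frac{\big(\sum_{j=1}^k D_j - a\big)^{k}}{k^k \prod_{j=1}^k D_j},
\]
matching the claimed value in each regime (with $k = m$ for the second line).

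Finally, I would pin down the $a$-interval attached to each $k$ by translating the threshold conditions $\lambda D_k > 1$ and $\lambda D_{k+1} \leq 1$. The first gives $a > \sum_{j=1}^k D_j - k D_k$ and the second gives $a \leq \sum_{j=1}^k D_j - k D_{k+1} = \sum_{j=1}^{k+1} D_j - (k+1) D_{k+1}$, which are exactly the interval endpoints in \eqref{eq:McD_reduced_explicit_allm}; checking that the upper endpoint of regime $k-1$ coincides with the lower endpoint of regime $k$ shows the intervals tile $[\,0, \sum_j D_j\,]$ without gaps or overlaps, completing the identification. The main obstacle I anticipate is organizational rather than deep: carefully justifying the prefix structure of the active set (including ties $\lambda D_i = 1$ at the threshold) and confirming that the boundary cases $q_i \in \{0,1\}$ line up so that the piecewise formula and its domains match exactly; the convexity of the program guarantees there are no spurious critical points to contend with.
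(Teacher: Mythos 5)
Your proof is correct, and it reaches the result by a genuinely different route than the paper's. Both arguments begin identically: you reduce \eqref{eq:sup_over_AC0} to the finite-dimensional program ``maximize $\prod_{i}(1-q_i)$ subject to $\sum_i D_i q_i \geq a$, $q \in [0,1]^m$,'' and your interior stationarity conditions $1-q_i = 1/(\lambda D_i)$ are exactly the paper's collinearity equations \eqref{eq:shkjdhghjee} written in different variables (the paper's interior optimizer \eqref{eq:ksdhdskjhdkdjhjd} is your $k=m$ case). The difference is in how the remaining regimes are handled. The paper never convexifies: when the interior candidate fails to lie in $(0,1)^m$, it argues the optimum sits on the face $\alpha_m = 1$, reduces to an $(m-1)$-dimensional copy of the same problem, and runs an induction on $m$, using the elementary inequality \eqref{eq:ksdjejrfff} (equivalently, monotonicity of $x \mapsto (1-y/x)^x$) to decide which regime wins for each $a$. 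You instead take logarithms, observe that the program becomes concave maximization over a polytope so that KKT conditions are both necessary and sufficient, and extract all $m$ regimes at once from complementary slackness: the ordering $D_1 \geq \cdots \geq D_m$ forces the active set to be a prefix $\{1,\dots,k\}$, and the thresholds $\lambda D_k > 1 \geq \lambda D_{k+1}$ produce exactly the interval endpoints in \eqref{eq:McD_reduced_explicit_allm}. What your approach buys is a one-shot, non-inductive proof in which global optimality is certified by concavity, with no spurious critical points and no separate regime-comparison inequality; what the paper's buys is elementarity --- nothing beyond calculus and a scalar inequality --- at the cost of the induction and the cross-dimensional bookkeeping. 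The concerns you flag (ties $\lambda D_i = 1$, exclusion of $q_i = 1$, tiling of the intervals) resolve exactly as you anticipate: ties give $q_i = 0$ under either branch, $q_i = 1$ is excluded because a feasible point with positive objective exists whenever $a < \sum_j D_j$, and the regime formulas agree at the common endpoints, a continuity check the paper also performs.
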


\begin{rmk}
The maximum of \eqref{eq:McD_reduced_explicit_allm} over $D_1,\ldots,D_m$ under the constraints $D_1+\cdots+D_m=D$ and $D_1\geq \cdots\geq D_m$ is achieved at $D_1=D$ and is equal to $1-a/D$.\\
The minimum of \eqref{eq:McD_reduced_explicit_allm} over $D_1,\ldots,D_m$ under the constraints $D_1+\cdots+D_m=D$ and $D_1\geq \cdots\geq D_m$ is achieved on the diagonal $D_1=\cdots=D_m$ and is equal to $(1-a/D)^m$.
\end{rmk}

\begin{prop}
	\label{jhgjhgejeerrsddeedffr}
	Assume that $D_1\geq \cdots \geq D_{m-1}\geq D_m$.  If $a\geq \sum_{j=1}^{m-2} D_j +D_m$, then $\mathcal{U}(\mathcal{A}_{\mathrm{McD}})$ is given by equation \eqref{eq:McD_reduced_explicit_allm}.
\end{prop}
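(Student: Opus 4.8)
The plan is to push the already-established reduction chain one step further and then prove that the all-ones singleton is the maximizer once $a$ is large. By Proposition \ref{prop:mcd} and Theorem \ref{thm:C} we have $\mathcal{U}(\mathcal{A}_{\mathrm{McD}})=\mathcal{U}(\mathcal{A}_{\mathcal{C}})=\sup_{C\subseteq\mathcal{D}}\mathcal{U}(\mathcal{A}_C)$, with $\mathcal{U}(\mathcal{A}_C)$ as in \eqref{eq:sup_over_AC0}. Since $C_0=\{(1,\dots,1)\}$ is one admissible choice, $\mathcal{U}(\mathcal{A}_{\mathrm{McD}})\ge\mathcal{U}(\mathcal{A}_{C_0})$, and Proposition \ref{prop:McD_reduced_explicit_allm} identifies $\mathcal{U}(\mathcal{A}_{C_0})$ with the right-hand side of \eqref{eq:McD_reduced_explicit_allm}. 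Thus the entire content is the reverse inequality: I must show $\mathcal{U}(\mathcal{A}_C)\le\mathcal{U}(\mathcal{A}_{C_0})$ for every $C\subseteq\mathcal{D}$, under the hypothesis $a\ge\sum_{j=1}^{m-2}D_j+D_m=D-D_{m-1}$, where $D:=\sum_{j=1}^{m}D_j$.

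First I would fix notation and normalize. Writing $d(s,t):=\sum_{i\in I(s,t)}D_i$ for the weighted Hamming distance, \eqref{eq:jhdsjdgjhsgdgh} reads $h^C(t)=a-\min_{s\in C}d(s,t)$, so (assuming first that all $D_i>0$; a coordinate with $D_i=0$ carries no information and may be discarded) the success event is $[h^C\ge a]=C$ and $\mathcal{U}(\mathcal{A}_C)=\sup\{\alpha(C)\mid \alpha=\bigotimes_i\alpha_i \text{ product}, \ \E_\alpha[\min_{s\in C}d(s,\cdot)]\ge a\}$. The coordinatewise flip $0\leftrightarrow1$ is a symmetry of the whole problem (it permutes $\mathcal{D}$, preserves each $D_i$, and sends product measures to product measures), so without loss of generality $(1,\dots,1)\in C$. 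Because $d(C,\cdot):=\min_{s\in C}d(s,\cdot)\le d((1,\dots,1),\cdot)=\sum_{i:\,t_i=0}D_i$, every $C$-feasible $\alpha$ already satisfies the $C_0$ constraint $\sum_i D_i(1-p_i)\ge a$ with $p_i:=\alpha_i(\{1\})$. Hence enlarging $C$ only tightens the constraint while enlarging the objective from $\alpha(\{(1,\dots,1)\})$ to $\alpha(C)$, and the task is to show that this trade is never favorable in the stated range of $a$.

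Next I would record the structural facts that dispose of the easy competitors. For a single extra vertex $s$ with difference set $J=I((1,\dots,1),s)$, one computes $d(C,t)=\sum_{i\notin J:\,t_i=0}D_i+\min(\sum_{i\in J:\,t_i=1}D_i,\sum_{i\in J:\,t_i=0}D_i)$, so the $J$-block contributes at most $\tfrac12\sum_{i\in J}D_i$ and the covering radius satisfies $R_C:=\max_t d(C,t)\le D-\tfrac12\sum_{i\in J}D_i$; adjoining further vertices only decreases $R_C$. Since the constraint forces $a\le\E_\alpha[d(C,\cdot)]\le R_C$, any $C$ with $R_C<a$ is infeasible, giving $\mathcal{U}(\mathcal{A}_C)=-\infty$. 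When $|J|=1$ the $J$-block vanishes identically, so both the constraint and the objective become independent of that coordinate and $\mathcal{U}(\mathcal{A}_C)$ equals the $(m-1)$-variable singleton value obtained by deleting that coordinate; this is dominated by $\mathcal{U}(\mathcal{A}_{C_0})$ for every $a$ (take $p_i=1$ on the deleted coordinate in the $m$-variable singleton problem). This handles all $C$ whose extra vertices differ from $(1,\dots,1)$ in exactly one coordinate.

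The remaining, and genuinely hard, case is a vertex $s\in C$ with $|I((1,\dots,1),s)|\ge2$: here $d(C,\cdot)$ is a pointwise minimum that does \emph{not} factor across coordinates, so the per-coordinate decoupling behind the closed form \eqref{eq:McD_reduced_explicit_allm} is unavailable, and (as the covering-radius estimate shows) feasibility alone does not exclude these sets. The plan is to solve the coupled optimization directly: following the KKT analysis used for Proposition \ref{prop:McD_reduced_explicit_allm} and the $\mathcal{F}_2$ computation underlying Theorem \ref{thm:m3}, I expect $\mathcal{U}(\mathcal{A}_C)$ to be governed by two-point configurations, whose sub-problem is low-dimensional and solvable in closed form, and then to verify that this value is $\le\mathcal{U}(\mathcal{A}_{C_0})$ precisely when $a\ge D-D_{m-1}$. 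The threshold enters exactly here: $a\ge D-D_{m-1}$ forces $D-a\le D_{m-1}$, capping the constraint-boost $\tfrac12\sum_{i\in J}D_i$ that an extra vertex can supply relative to the objective mass $\alpha(C\setminus\{(1,\dots,1)\})$ it costs, so the singleton wins. I expect this value comparison — reducing general $C$ to the extremal two-point competitors and then checking the inequality against the explicit singleton formula \eqref{eq:McD_reduced_explicit_allm} — to be the main obstacle, exactly as the comparison $\mathcal{F}_2\le\mathcal{F}_1$ is the crux in the case $m=3$.
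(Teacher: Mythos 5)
You correctly reduce the problem to showing $\mathcal{U}(\mathcal{A}_C)\le\mathcal{U}(\mathcal{A}_{C_0})$ for every $C$, and your treatment of the singleton (Proposition \ref{prop:McD_reduced_explicit_allm}) and of the distance-one case (symmetry, reduction to dimension $m-1$) matches the paper. The genuine gap is exactly where you declare the ``remaining, genuinely hard case'': for $C$ containing two vertices at Hamming distance $\ge 2$ you only sketch a plan (a KKT/two-point analysis in the spirit of the $\mathcal{F}_2$ computation for $m=3$) and leave it as ``the main obstacle.'' That case is in fact vacuous: under the hypothesis $a\ge\sum_{j=1}^{m-2}D_j+D_m=D-D_{m-1}$ every such $C$ is infeasible, and this is how the paper disposes of it. Concretely, with your notation $d(s,t)=\sum_{i\in I(s,t)}D_i$, if $s^1,s^2\in C$ and $J:=I(s^1,s^2)$ has $|J|\ge2$, then for every $t$ the set $J$ splits as $J_1\sqcup J_2$ according to whether $t$ disagrees with $s^1$ or with $s^2$ there, and since the coordinate carrying $\max_{i\in J}D_i$ lies in one block, the minimum is bounded by the other block:
\[
\min\bigl(d(s^1,t),d(s^2,t)\bigr)\;\le\;\sum_{i\notin J}D_i+\Bigl(\sum_{i\in J}D_i-\max_{i\in J}D_i\Bigr)\;=\;D-\max_{i\in J}D_i\;\le\;D-D_{m-1}\;\le\;a,
\]
where the penultimate inequality uses that $|J|\ge2$ forces $\max_{i\in J}D_i\ge D_{m-1}$. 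Hence $h^C\ge h^{\{s^1,s^2\}}\ge0$ pointwise (strictly positive when $a>D-D_{m-1}$), so $\E_\alpha[h^C]\le0$ fails for every $\alpha$; in the boundary case $a=D-D_{m-1}$ the constraint forces $h^C=0$ $\alpha$-a.s., hence $\alpha(C)=0$ and the value is $0$, which is still dominated by \eqref{eq:McD_reduced_explicit_allm}.

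The reason you missed this is that your covering-radius estimate is too weak: you bound the $J$-block contribution by $\tfrac12\sum_{i\in J}D_i$, giving $R_C\le D-\tfrac12\sum_{i\in J}D_i$, and since $D_{m-1}\ge D_m$ this does not fall below $a\ge D-D_{m-1}$; that is what led you to conclude that ``feasibility alone does not exclude these sets'' and to defer to an optimization that the proof does not need. Replacing $\tfrac12\sum_{i\in J}D_i$ by the sharper $\max_{i\in J}D_i$ (the minimum of two blocks of a partition is at most the total minus its largest element) closes the gap; with that one line, your remaining steps reproduce the paper's proof, and no analogue of the $\mathcal{F}_1$ versus $\mathcal{F}_2$ comparison is required.
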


\begin{rmk}
It follows from the previous proposition that, in arbitrary dimension $m$, the tail of $\mathcal{U}(\mathcal{A}_{\mathrm{McD}})$ with respect to $a$ is given by \eqref{eq:McD_reduced_explicit_allm}. Although we do not have an analytic solution  for $m\geq 4$ and $a < \sum_{j=1}^{m-2} D_j +D_m$, a numerical solution can be obtained by solving the finite-dimensional optimization problem \eqref{eq:jkshdjshdjheer} with variables $(C,\alpha)$. Observe that the range of $\alpha$ is $[0,1]^m$. Although the range of $C$ is the set of subsets of $\{0,1\}^m$, we conjecture (based on symmetry and monotonicity arguments)  that the extremum of \eqref{eq:jkshdjshdjheer} can be achieved by restricting $C$ to sets $C_q$ defined by $\{s\in [0,1]^m \mid  \sum_{i=1}^m s_i \geq q$ (with $q\in \{1,\dots,m\}$).
\end{rmk}

\subsection{Explicit solutions under the assumptions of Hoeffding's inequality}\label{subsecHoeffding}

This subsection treats a further special case of OUQ, where the assumptions
 are those of Hoeffding's inequality \cite{Hoeffding:1963}.  Define the admissible set
\begin{equation}
	\label{eq:Hfassumptions}
	\mathcal{A}_{\mathrm{Hfd}} := \left\{ (f, \mu) \,\middle|\,
	\begin{matrix}
		f = X_1 + \dots + X_m, \\
	  \mu \in \bigotimes_{i=1}^m \mathcal{M}([b_i - D_i, b_i]), \\
	 	\mathbb{E}_{\mu}[f] \leq 0
	\end{matrix}
	\right\},
\end{equation}
and define the optimization problem
\[
	\mathcal{U}(\mathcal{A}_{\mathrm{Hfd}}) := \sup_{(f,\mu) \in \mathcal{A}_{\mathrm{Hfd}}} \mu[f \geq a].
\]
By Hoeffding's inequality, for $a \geq 0$,
\[
	\mathcal{U}(\mathcal{A}_{\mathrm{Hfd}}) \leq \exp \left( - 2 \frac{a^{2}}{\sum_{i}^{m}D_i^{2}} \right).
\]

\begin{thm}
	\label{thm:Hfdm2}
	If $m=2$, then
	\begin{equation}
		\label{eq:McD_reduhdfd03}
		\mathcal{U}(\mathcal{A}_{\mathrm{Hfd}})=\mathcal{U}(\mathcal{A}_{\mathrm{McD}}).
	\end{equation}
\end{thm}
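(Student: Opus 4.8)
The plan is to prove the two inequalities $\mathcal{U}(\mathcal{A}_{\mathrm{Hfd}}) \le \mathcal{U}(\mathcal{A}_{\mathrm{McD}})$ and $\mathcal{U}(\mathcal{A}_{\mathrm{McD}}) \le \mathcal{U}(\mathcal{A}_{\mathrm{Hfd}})$ separately. Throughout I take $\mathcal{A}_{\mathrm{McD}}$ on the underlying space $\mathcal{X} = [b_1 - D_1, b_1] \times [b_2 - D_2, b_2]$; since the explicit value in Theorem \ref{thm:m2} depends only on $D_1, D_2, a$ (the reduction to $\mathcal{A}_{\mathcal{D}}$ on $\{0,1\}^m$ eliminates coordinate positions), this specialization is harmless. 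I may assume $a \ge 0$, the case $a < 0$ giving both values equal to $1$ (take $X_1, X_2$ constant with zero sum).

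The easy inclusion first. For any $(f,\mu) \in \mathcal{A}_{\mathrm{Hfd}}$, the function $f = X_1 + X_2$ is the coordinate-sum map, and changing the $i$-th variable over its interval $[b_i - D_i, b_i]$ changes $f$ by at most $D_i$, so $\operatorname{Osc}_i(f) = D_i$ and $f \in \mathcal{G}$. Since $\mu$ is a product measure with $\E_{\mu}[f] \le 0$, we get $(f,\mu) \in \mathcal{A}_{\mathrm{McD}}$, whence $\mathcal{A}_{\mathrm{Hfd}} \subseteq \mathcal{A}_{\mathrm{McD}}$ and, taking suprema, $\mathcal{U}(\mathcal{A}_{\mathrm{Hfd}}) \le \mathcal{U}(\mathcal{A}_{\mathrm{McD}})$.

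For the reverse inequality I would exhibit, in each of the three regimes of Theorem \ref{thm:m2}, a Hoeffding-admissible scenario attaining the stated value, using two-point marginals $\mu_i = p_i \delta_{b_i} + (1-p_i)\delta_{b_i - D_i}$. In the interior regime $|D_1 - D_2| \le a \le D_1 + D_2$, choose $b_1 + b_2 = a$ so that the event $\{X_1 + X_2 \ge a\}$ is exactly the top atom, of probability $p_1 p_2$; the constraint $\E_{\mu}[f] \le 0$ then reads $(1-p_1)D_1 + (1-p_2)D_2 \ge a$, and maximizing $p_1 p_2$ under this (binding) constraint is an elementary one-variable optimization whose optimum is $p_1 p_2 = \tfrac{(D_1 + D_2 - a)^2}{4 D_1 D_2}$, with the maximizing weights lying in $[0,1]$ precisely when $|D_1 - D_2| \le a \le D_1 + D_2$. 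For $0 \le a \le |D_1 - D_2|$, say $D_1 \ge D_2$, I would take $\mu_2 = \delta_{b_2}$ degenerate, reducing to the one-variable scenario on $X_1$ and yielding $1 - a/D_1 = 1 - a/\max(D_1, D_2)$. For $a \ge D_1 + D_2$ the value is $0$ and there is nothing to prove. In every case this gives $\mathcal{U}(\mathcal{A}_{\mathrm{Hfd}}) \ge \mathcal{U}(\mathcal{A}_{\mathrm{McD}})$, and combining with the inclusion yields the claimed equality.

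The main obstacle is the reverse inequality: it is not a priori obvious that enlarging the function class from linear sums to all functions of bounded oscillation leaves the optimal value unchanged, and the real content is the explicit matching construction in the interior regime. Conceptually, the reason it works is visible through the reduction to $\mathcal{A}_{\mathcal{C}}$ (Theorem \ref{thm:C}): the single-point sets $C = \{s\}$ produce functions $h^{\{s\}}(t) = a - \sum_{i : t_i \ne s_i} D_i$ that are additive in $t$, hence exactly of Hoeffding type, and for $m = 2$ the optimum is attained on such singletons (the optimal event above is a single corner of the grid). The explicit two-point product measure simply realizes this extremizer, so no genuinely non-additive (multi-point $C$) scenario is needed when $m = 2$.
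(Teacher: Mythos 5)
Your proof is correct, and its skeleton matches the paper's: the cheap inequality $\mathcal{U}(\mathcal{A}_{\mathrm{Hfd}})\leq\mathcal{U}(\mathcal{A}_{\mathrm{McD}})$ by inclusion, plus attainment of the McDiarmid value inside the Hoeffding class. The difference is in how attainment is established. The paper's proof is three lines: it cites inequality \eqref{eq:jssjhffdgjhge} from the proof of Theorem \ref{thm:m2} (the optimum of the reduced problem over $\mathcal{A}_{\mathcal{C}}$ is attained at the singleton $C_1=\{(1,1)\}$) and observes that $h^{C_1}(s)=a-D_1(1-s_1)-D_2(1-s_2)$ is linear, hence admissible as a Hoeffding scenario. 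You instead use only the \emph{statement} of Theorem \ref{thm:m2} and re-derive attainment by explicit construction: two-point marginals with the top corner placed at the threshold, weights $p_i=(D_1+D_2-a)/(2D_i)$ in the interior regime, and a degenerate second marginal when $0\leq a\leq|D_1-D_2|$; your constrained maximization of $p_1p_2$ is right, as are the regime boundaries where the weights lie in $[0,1]$. This buys self-containedness (no reliance on the internals of the proof of Theorem \ref{thm:m2} or on Theorem \ref{thm:C}) at the cost of a small computation, and your closing paragraph correctly identifies that your explicit measures are exactly realizations of the paper's extremizer $h^{C_1}$, so the two arguments are the same extremal scenario viewed from two sides. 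One shared subtlety that you make more visible than the paper: choosing $b_1+b_2=a$ treats the endpoints $b_i$ in \eqref{eq:Hfassumptions} as free. If the $b_i$ were fixed constants (e.g.\ $b_i=D_i$, so that $\E_\mu[f]\leq 0$ forces both marginals to be Dirac masses at the lower endpoints), the claimed equality would fail; the theorem must be read with the intervals free to translate, equivalently in the deviation form $\mu\big[f\geq a+\E_\mu[f]\big]$, which is how the paper renders the McDiarmid side. The paper's own proof makes the same implicit move when it declares the linear $h^{C_1}$ ``admissible under $\mathcal{U}(\mathcal{A}_{\mathrm{Hfd}})$'' --- realizing it as $X_1+X_2$ requires positioning the intervals so the top corner sits at $a$ --- so this is a shared convention, not a gap in your argument.
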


\begin{rmk}
Another proof of  Theorem \ref{thm:Hfdm2} can be obtained using entirely different methods than presented in Section \ref{sec-mcdproofs}.  Although  omitted for brevity, this method may be useful in higher dimensions, so we describe an outline of it here. We begin at the reduction obtained through Proposition \ref{prop:mcd} to the hypercube.  Whereas the proof of Theorem \ref{thm:Hfdm2} first applies the reduction of Theorem \ref{thm:C} to subsets of the hypercube, here we instead fix the oscillations in each direction to be $ 0 \leq d_{i}\leq D_{i}$, and  solve the fixed $d:=(d_{1},d_{2})$ case, not using a Langrangian-type analysis but a type of spectral reduction. We then show that the resulting value $\mathcal{U}(d)$ is increasing in $d$ with respect to the standard (lexicographic) partial order on vectors.  The result then easily follows by taking the supremum over all vectors $0 \leq d \leq D$.
\end{rmk}

\begin{thm}
	\label{thm:Hfdm3}
	Let $m=3$, and define $\mathcal{F}_1$ and $\mathcal{F}_2$ as in Theorem \ref{thm:m3}.
	If $\mathcal{F}_1\geq \mathcal{F}_2$, then
	\begin{equation}
		\label{eq:McD_reduhdm3njfd03}
		\mathcal{U}(\mathcal{A}_{\mathrm{Hfd}}) = \mathcal{U}(\mathcal{A}_{\mathrm{McD}}).
	\end{equation}
	If $\mathcal{F}_1< \mathcal{F}_2$, then
	\begin{equation}
		\label{eq:McD_reduhdm3njfd03bis}
		\mathcal{U}(\mathcal{A}_{\mathrm{Hfd}}) < \mathcal{U}(\mathcal{A}_{\mathrm{McD}}).
	\end{equation}
\end{thm}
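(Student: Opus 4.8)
The plan is to prove the single identity
\[
	\mathcal{U}(\mathcal{A}_{\mathrm{Hfd}}) = \mathcal{F}_1,
\]
where, by Proposition \ref{prop:McD_reduced_explicit_allm} applied with $C_0 = \{(1,1,1)\}$, $\mathcal{F}_1 = \mathcal{U}(\mathcal{A}_{C_0})$ is exactly the ``single-vertex'' McDiarmid value. Granting this, both assertions are immediate consequences of Theorem \ref{thm:m3}, which gives $\mathcal{U}(\mathcal{A}_{\mathrm{McD}}) = \max(\mathcal{F}_1,\mathcal{F}_2)$: if $\mathcal{F}_1 \geq \mathcal{F}_2$ then $\mathcal{U}(\mathcal{A}_{\mathrm{McD}}) = \mathcal{F}_1 = \mathcal{U}(\mathcal{A}_{\mathrm{Hfd}})$, which is \eqref{eq:McD_reduhdm3njfd03}; and if $\mathcal{F}_1 < \mathcal{F}_2$ then $\mathcal{U}(\mathcal{A}_{\mathrm{McD}}) = \mathcal{F}_2 > \mathcal{F}_1 = \mathcal{U}(\mathcal{A}_{\mathrm{Hfd}})$, which is the strict inequality \eqref{eq:McD_reduhdm3njfd03bis}. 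So the whole content is the identity $\mathcal{U}(\mathcal{A}_{\mathrm{Hfd}}) = \mathcal{F}_1$, and the intuition is that $C_0$ is precisely the subset of the hypercube produced by a \emph{linear} response, whereas the value $\mathcal{F}_2$ can only be realized by the genuinely nonlinear sets $C \neq C_0$ that are unavailable to Hoeffding scenarios.

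First I would record a translation/centering reduction. Using the equivalence of \eqref{eq:bound} and \eqref{eq:bound2} (invariance of $\mathcal{G}$ and of the event under $f \mapsto f + c$), I would rewrite $\mathcal{U}(\mathcal{A}_{\mathrm{Hfd}}) = \sup \mu[\,f - \E_\mu[f] \geq a\,]$ over $f = X_1 + X_2 + X_3$ with each $X_i$ supported in an interval of length at most $D_i$; this makes the value depend on the $b_i$ only through the lengths $D_i$, so one may normalize freely. This justifies the explicit realizing construction for the lower bound: taking $X_i \in \{b_i - D_i, b_i\}$ with $\sum_i b_i = a$ and identifying $s_i = 1 \leftrightarrow X_i = b_i$, the response becomes $f(s) = a - \sum_{i : s_i = 0} D_i = h^{C_0}(s)$, while $\E_\mu[f] \leq 0$ and $\{f \geq a\}$ turn exactly into $\E_\alpha[h^{C_0}] \leq 0$ and $\{h^{C_0} \geq a\}$. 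Hence any optimizer of $\mathcal{A}_{C_0}$ yields a feasible Hoeffding pair of the same value, giving $\mathcal{U}(\mathcal{A}_{\mathrm{Hfd}}) \geq \mathcal{U}(\mathcal{A}_{C_0}) = \mathcal{F}_1$.

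The upper bound is the crux. I would apply Theorem \ref{thm:baby_measure} (one moment constraint, $n' = 1$, $n_i = 0$) to reduce each marginal of the Hoeffding problem to at most two support points. Writing $d_i \in [0, D_i]$ for the spread of the $i$th marginal and parametrizing the two points by $\{0,1\}$, the reduced response is affine on $\{0,1\}^3$ with slopes $d_i$; after the constant translation that sets its maximum to $a$, it is precisely the McDiarmid function $h^{C_0}$ for the diameter vector $d = (d_1,d_2,d_3)$, and the constraint and objective match those of $\mathcal{A}_{C_0}$ with diameters $d$. Writing $V(d)$ for the value of Proposition \ref{prop:McD_reduced_explicit_allm} with each $D_i$ replaced by $d_i$, every reduced Hoeffding scenario of spread $d$ therefore has objective at most $V(d)$, and each $d$ is in turn realizable, so
\[
	\mathcal{U}(\mathcal{A}_{\mathrm{Hfd}}) = \sup_{0 \leq d \leq D} V(d).
\]

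Finally I would invoke monotonicity. The quantity $V(d)$ is the maximum of $\prod_{i}(1 - q_i)$ over $q \in [0,1]^3$ subject to $\sum_i d_i q_i \geq a$; increasing any $d_i$ only enlarges the feasible set and so can only increase the maximum, whence $V$ is nondecreasing in each coordinate. Thus the supremum above is attained at $d = D$, giving $\mathcal{U}(\mathcal{A}_{\mathrm{Hfd}}) = V(D) = \mathcal{F}_1$ and closing the argument. I expect the main obstacle to be the bookkeeping of the upper bound, specifically confirming that after the two-point reduction and the translation to maximum value $a$ each Hoeffding scenario is faithfully captured by the \emph{single} vertex set $C_0$ at its own spread $d$ (so the sets $C \neq C_0$ carrying $\mathcal{F}_2$ never arise), together with the translation invariance that renders the value independent of the $b_i$; the realizability and the monotonicity of $V$ are then routine.
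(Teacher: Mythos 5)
Your architecture --- evaluating $\mathcal{U}(\mathcal{A}_{\mathrm{Hfd}})$ exactly as $\sup_{0\leq d\leq D}V(d)=\mathcal{F}_1$ and reading both claims off Theorem \ref{thm:m3} --- is the fixed-spread strategy that the paper itself only sketches (for $m=2$) in the remark after Theorem \ref{thm:Hfdm2}, and it differs from the paper's actual proof, which sandwiches $\mathcal{F}_1\leq\mathcal{U}(\mathcal{A}_{\mathrm{Hfd}})\leq\max(\mathcal{F}_1,\mathcal{F}_2)$ for the first claim and proves the second by contradiction, using the lattice/clipping structure to show that a linear maximizer of the McDiarmid problem must be of single-vertex type. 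Your lower bound construction, the identification of $V(d)$ with the constrained product maximization, and the monotonicity $V(d)\leq V(D)=\mathcal{F}_1$ are all correct. The problem is the step you yourself flag as ``bookkeeping'': it is not bookkeeping, and as stated it is false.

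After the two-point reduction (Theorem \ref{thm:baby_measure} with $n'=1$, $n_i=0$), a Hoeffding scenario with spreads $d$ is the affine function $f(s)=(a+t)-\sum_i(1-s_i)d_i$ on $\{0,1\}^3$, where $t$, the height of the top support point above the threshold, is a free parameter --- nothing forces $t=0$. When $t>0$ the event $\{f\geq a\}=\{s:\sum_{i:s_i=0}d_i\leq t\}$ can contain several vertices; for instance, if $\max_i d_i\leq t<\min_{i\neq j}(d_i+d_j)$ it is exactly the four-vertex set $C_2$ that carries $\mathcal{F}_2$. So the sets $C\neq C_0$ \emph{do} arise from Hoeffding scenarios, contrary to your claim, and your translation ``to maximum value $a$'' is not value-preserving: it shrinks the event and loosens the constraint simultaneously, so the inequality $\alpha[f\geq a]\leq V(d)$ does not follow from it. What saves the theorem is that such scenarios come with a strictly tightened constraint: writing $E=\{f\geq a\}$ and letting $h^{E}$ be the McDiarmid function of \eqref{eq:jhdsjdgjhsgdgh} computed with diameters $d$, one checks $f\geq h^{E}$ pointwise (with gap $t$ at the top vertex), so $\E_\alpha[f]\leq 0$ is strictly stronger than the constraint $\E_\alpha[h^{E}]\leq 0$ of the set-$E$ problem whose value can be as large as $\mathcal{F}_2(d)$. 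The entire content of the theorem is that this trade --- a larger event purchased with constraint slack --- never beats $\mathcal{F}_1$, and that is precisely what the paper's argument supplies (every maximizer has $\E_\alpha[h_0]=0$, hence $\min(h_0,a)=h_0$ for a linear maximizer, hence single-vertex type). To complete your route you would need either that argument or a direct optimization over $t\geq 0$ at each fixed $d$ showing $t=0$ is optimal (easy to verify for $m=1,2$, but it must be done for $m=3$). As written, your proof assumes the crux rather than proving it.
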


Under the assumptions of Hoeffding's inequality, each variable $X_i$ is bounded from below and from above.
Without the upper bounds on the variables $X_i$, it is possible to use additional reduction properties and
conjecture an explicit form for the optimal inequality on $\mu[X_1+\cdots+X_m\geq a]$. Here we refer to the work and conjecture of Samuels \cite{Samuels:1966} (see also \cite[p.542]{KarlinStudden:1966}), which has been proven true for $m=1,2,3$.

\begin{rmk}
The optimal Hoeffding inequality can be used for additive models (with response functions of the form $X_1+\cdots +X_m$) but also to obtain optimal probabilities of deviations for empirical means.
Furthermore the fact that the optimal concentration inequalities corresponding to Hoeffding's or McDiarmid's assumptions are the same for $m=2$ and possibly distinct for $m=3$ is a simple but fundamental result analogous to Stein's paradox \cite{Efron:1977}.
\end{rmk}

\section{Computational Implementation}
\label{sec:ComputationalExamples}

In this section, we discuss the numerical implementation of OUQ algorithms for the analytical surrogate model for hypervelocity impact introduced in Subsection \ref{Subsec:motivatingex}.

\subsection{Extreme points of reduced  OUQ problems are attractors}
We consider again the computation of the optimal bound $\mathcal{U}(\mathcal{A}_{H})$ (where $\mathcal{A}_{H}$ is the
 information set  given by Equation \eqref{eq:PSAAP_SPHIR_Admissible}) via the identity \eqref{eq:Hdeltarecduced} derived from  the reduction results of Section \ref{sec:Reduction}.  For  $\# \mathrm{supp}(\mu_{i}) \le 2, \, i = 1, 2, 3$,  Figure \ref{fig:CollapseSupport2} has shown  that numerical simulations collapse to
 two-point support.  Figure \ref{fig:CollapseSupport5} shows that, even when a wider search is performed (i.e., over measures $\mu \in \bigotimes_{i = 1}^{3} \Delta_{k}(\mathcal{X}_{i})$ for $k > 1$), the calculated maximizers for these problems maintain two-point support:  the velocity and obliquity marginals each collapse to a single Dirac mass, and the plate thickness marginal collapses to have support on the two extremes of its range.  As expected, optimization over a larger search space is more computationally intensive and takes longer to perform.  This observation suggests that the extreme points of the reduced  OUQ problems are, in some sense, attractors --- this point will be revisited in the next subsection.

We also refer to Figures \ref{fig:CollapseConverge2} and \ref{fig:CollapseConverge5} for plots of the locations and weights of the Dirac masses forming each marginal $\mu_i$ as  functions of the number of iterations. Note that the lines for \emph{thickness} and \emph{thickness weight} are of the same color if they correspond to the same support point for the measure.
In particular, Figure \ref{fig:CollapseConverge5} shows that at iteration number $3500$ the \emph{thickness} support point at $62.5\, \mathrm{mils}$ (shown in Figure \ref{fig:CollapseSupport5}) has zero weight.

\COMMENT{
	\begin{figure}[tp]
		\subfigure[support points at iteration 0]{
			\includegraphics[width=0.45\textwidth]{./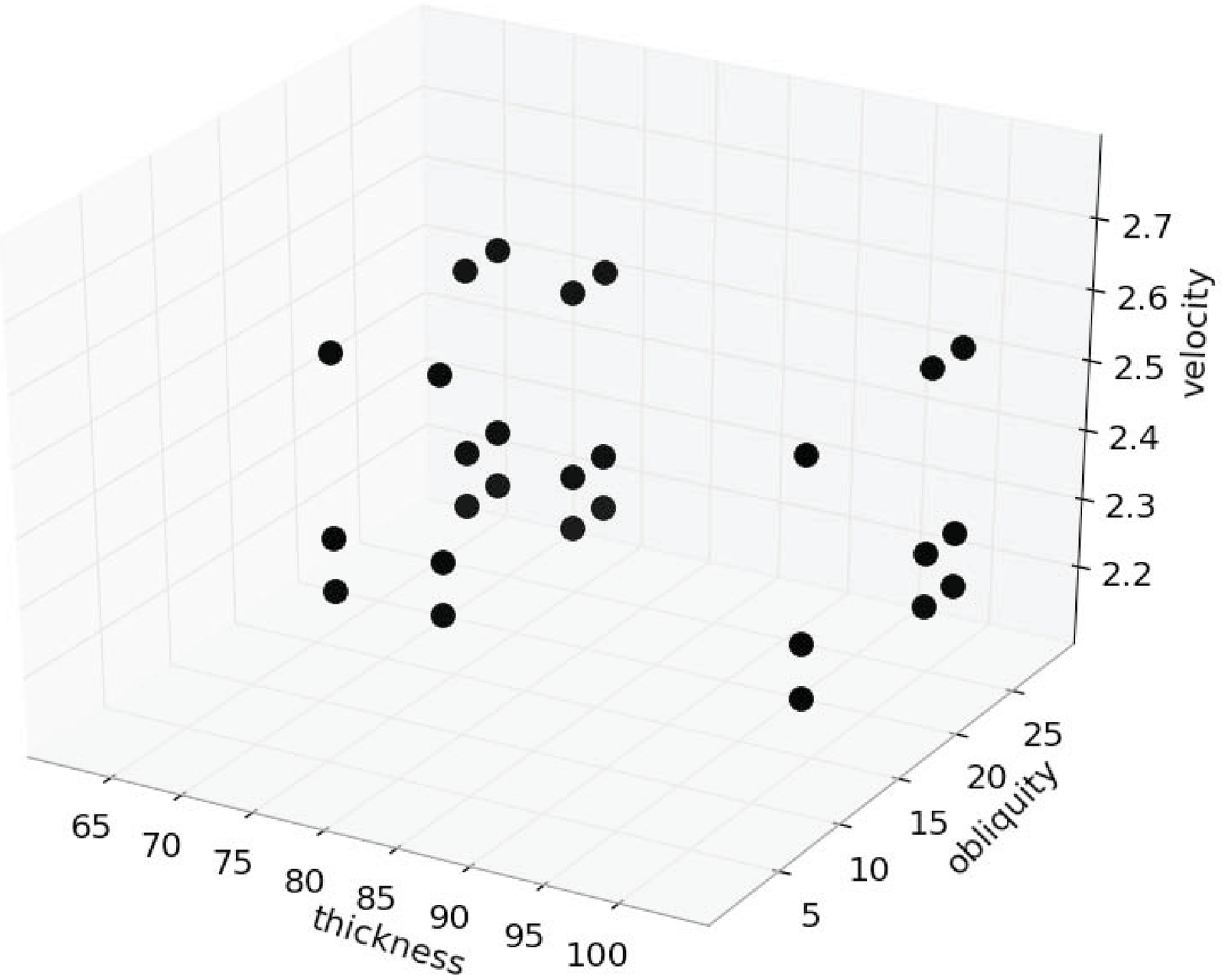}
		}
		\subfigure[support points at iteration 500]{
			\includegraphics[width=0.45\textwidth]{./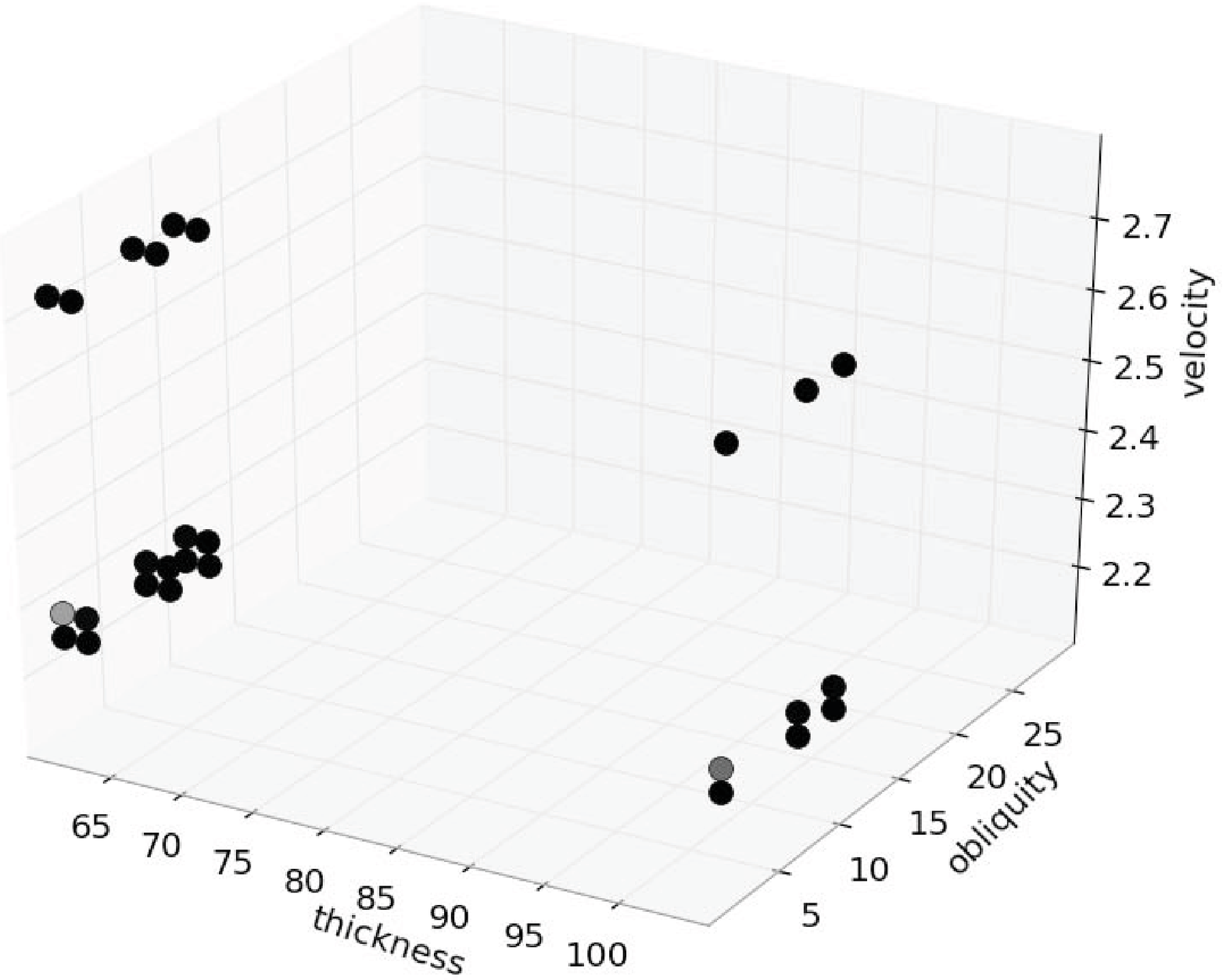}
		}
		\subfigure[support points at iteration 1000]{
			\includegraphics[width=0.45\textwidth]{./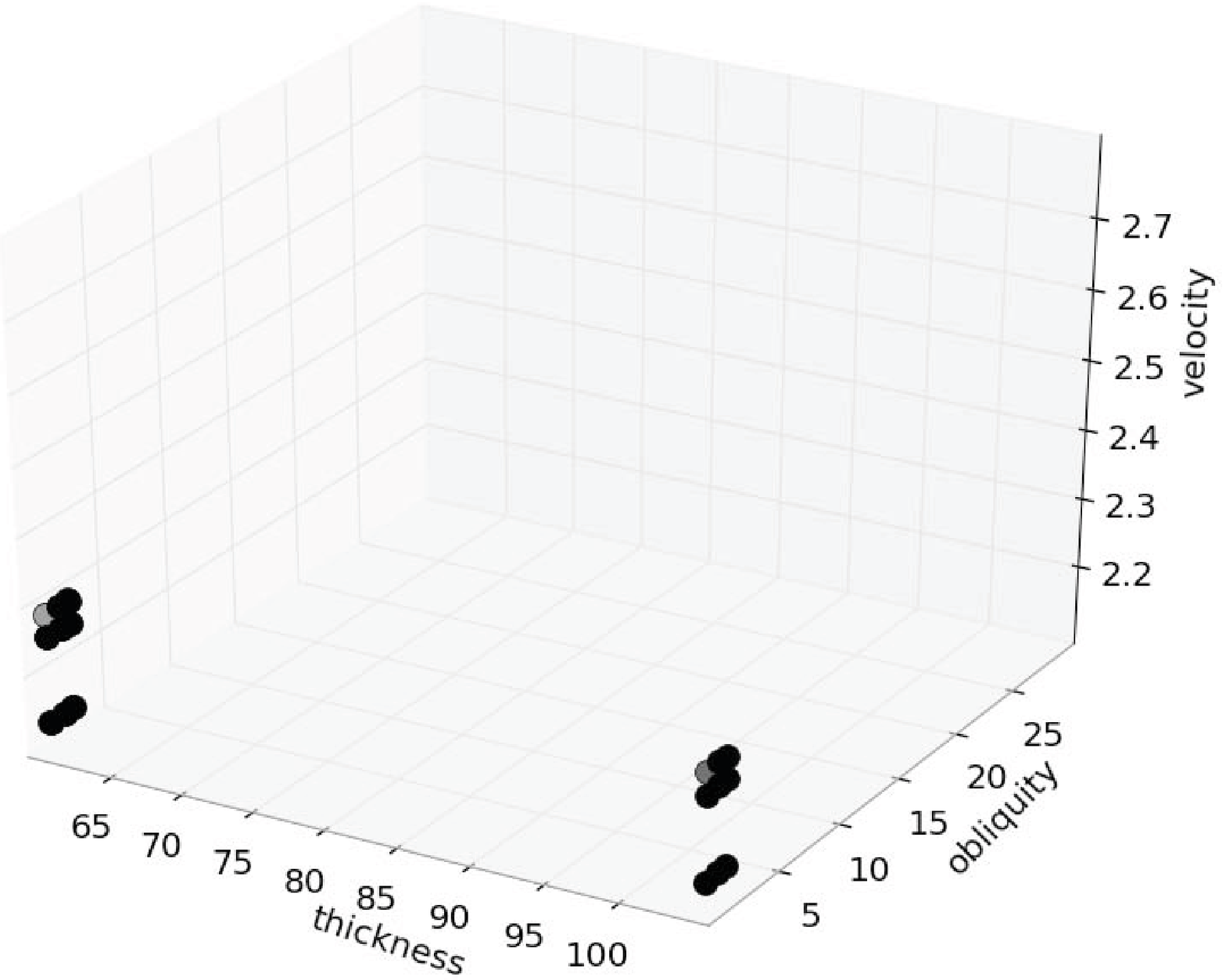}
		}
		\subfigure[support points at iteration 2155]{
			\includegraphics[width=0.45\textwidth]{./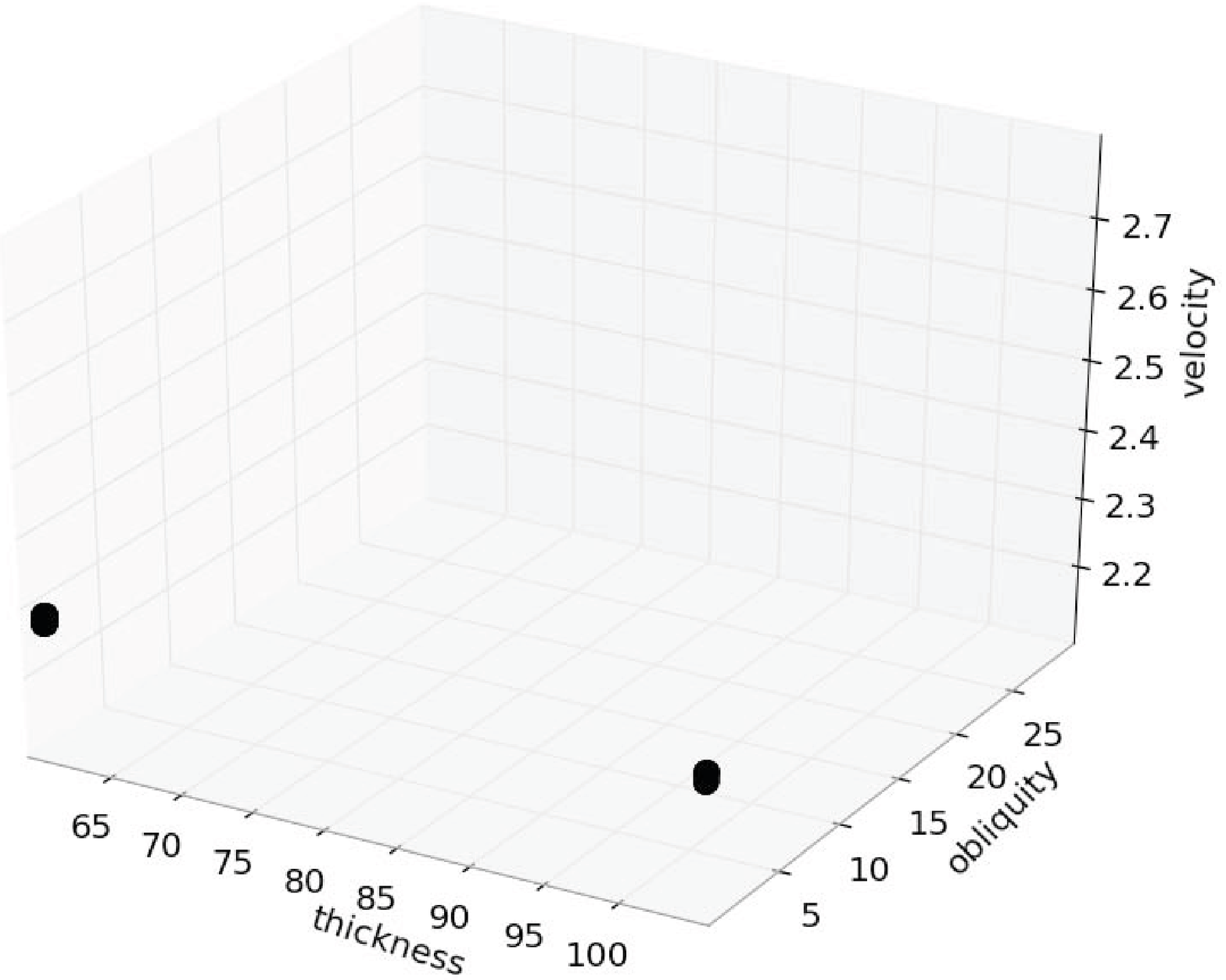}
		}
		\caption{For  $\# \mathrm{supp}(\mu_{i}) \le 3, \, i = 1, 2, 3$,  the maximizers of the OUQ problem \eqref{eq:Hdeltarecduced} associated with the information set
	\eqref{eq:PSAAP_SPHIR_Admissible} collapse to two-point support.
	Velocity, obliquity and plate thickness marginals collapse as in Figure \ref{fig:CollapseSupport2}.}
		\label{fig:CollapseSupport3}
	\end{figure}
}
	
\begin{figure}[tp]
	\subfigure[support points at iteration 0]{
		\includegraphics[width=0.45\textwidth]{./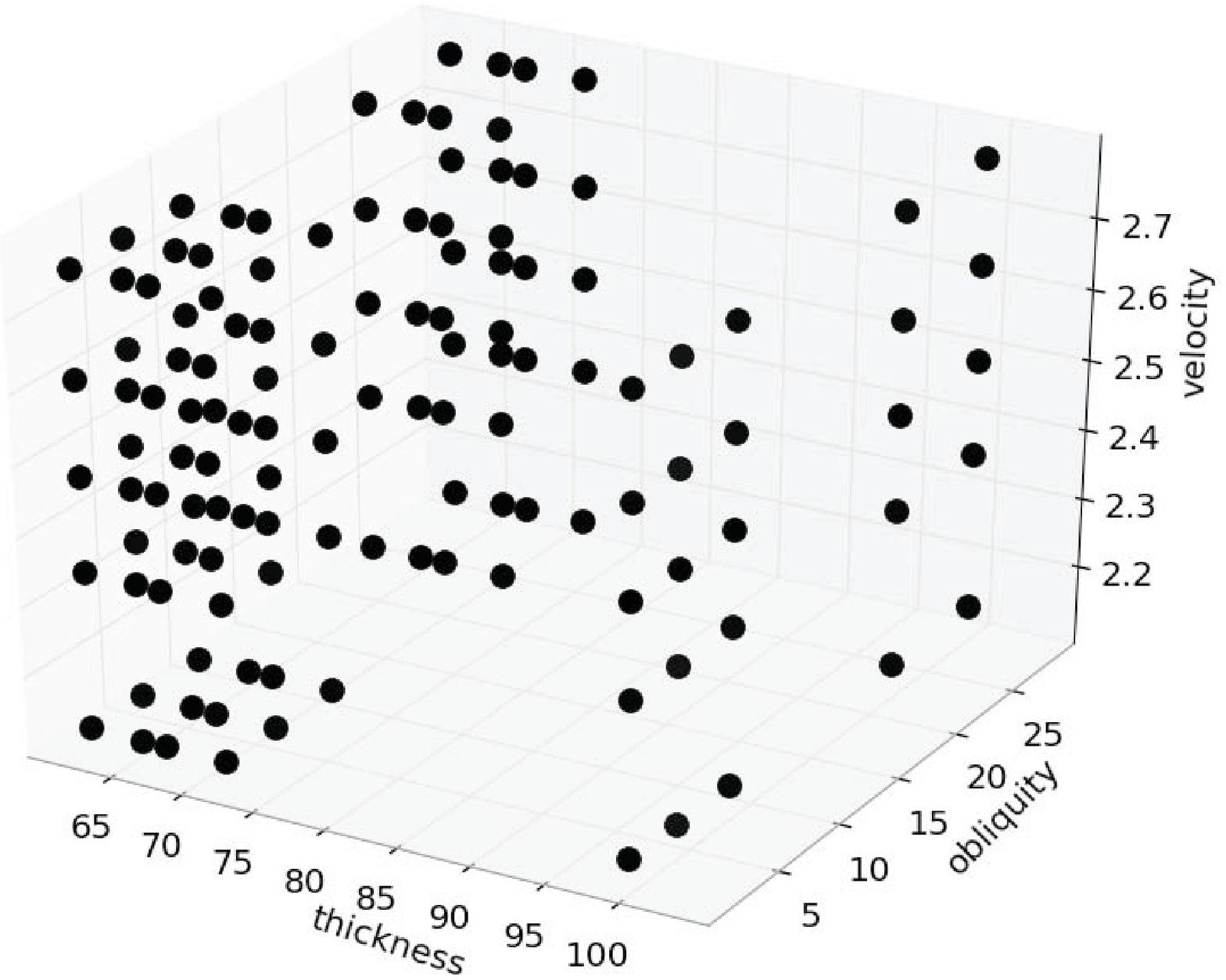}
	}
	\subfigure[support points at iteration 1000]{
		\includegraphics[width=0.45\textwidth]{./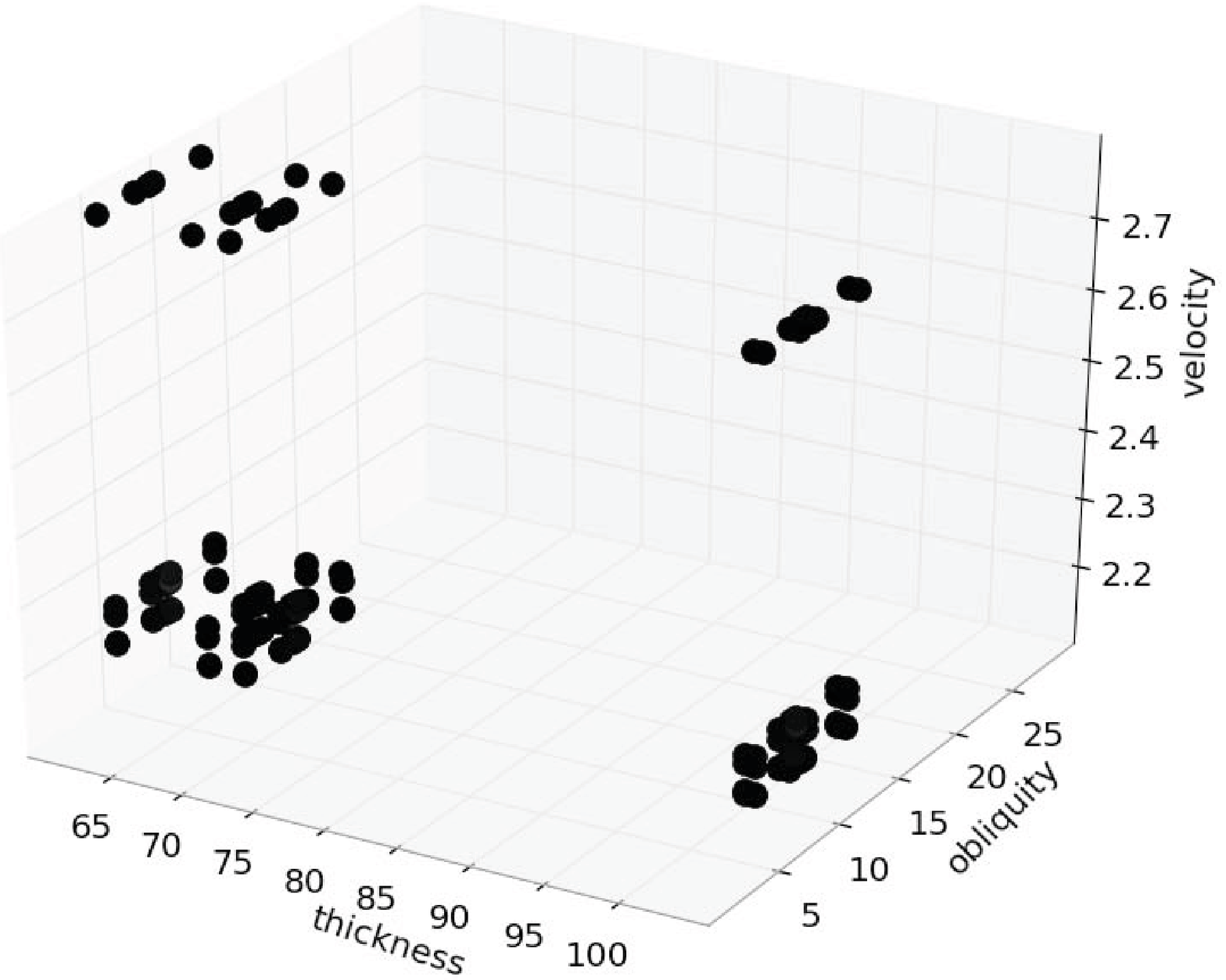}
	}
	\subfigure[support points at iteration 3000]{
		\includegraphics[width=0.45\textwidth]{./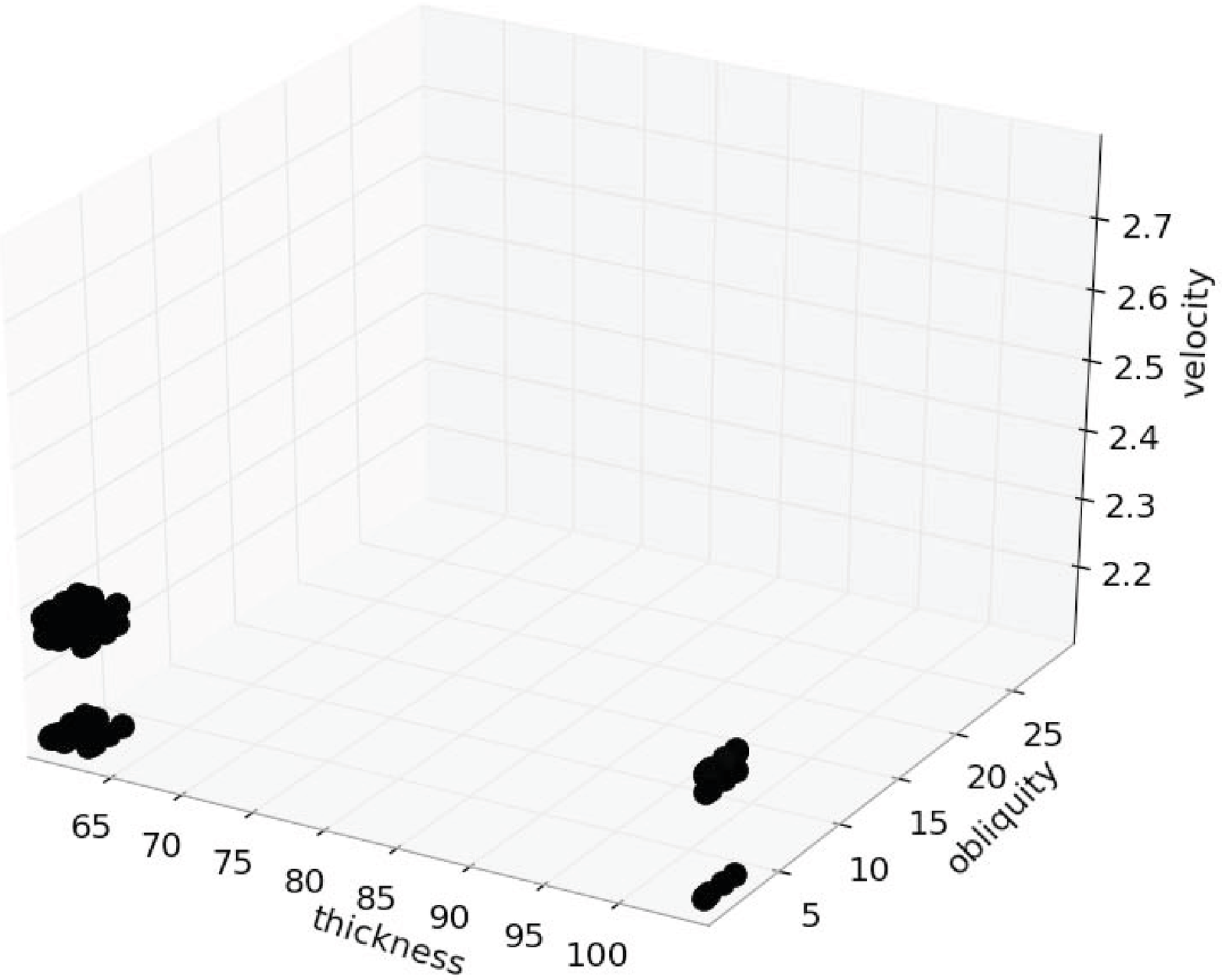}
	}
	\subfigure[support points at iteration 7100]{
		\includegraphics[width=0.45\textwidth]{./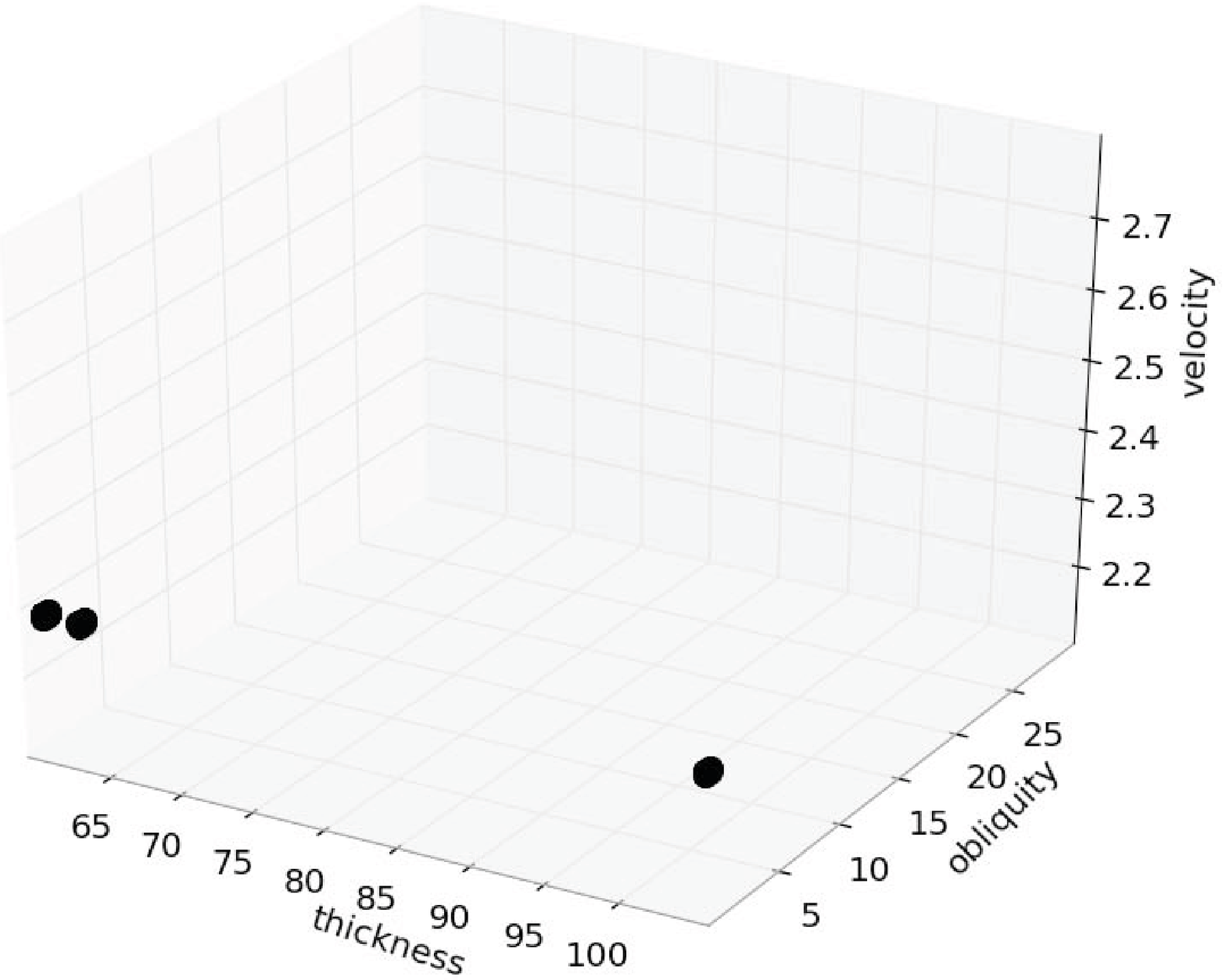}
	}
	\caption{For  $\# \mathrm{supp}(\mu_{i}) \le 5, \, i = 1, 2, 3$,  the maximizers of the OUQ problem \eqref{eq:Hdeltarecduced} associated with the information set
\eqref{eq:PSAAP_SPHIR_Admissible} collapse to two-point support. Velocity, obliquity and plate thickness marginals collapse as in Figure \ref{fig:CollapseSupport2}. At iteration 7100, the thickness support point at $62. 5 \, \mathrm{mils}$ has zero weight, as can be seen in Figure \ref{fig:CollapseConverge5}.}
	\label{fig:CollapseSupport5}
\end{figure}

\begin{figure}[tp]
\begin{center}
	\caption{Time evolution of the genetic algorithm search for the OUQ problem \eqref{eq:Hdeltarecduced} associated with the information set
\eqref{eq:PSAAP_SPHIR_Admissible} for $\# \mathrm{supp}(\mu_{i}) \leq 5$ for $i = 1, 2, 3$, as optimized by \emph{mystic}. Four of the five thickness support points quickly converge to the extremes of its range, with weights 0.024, 0.058, and 0.539 at $60 \, \mathrm{mils}$ and weight 0.379 at $105 \, \mathrm{mils}$. The thickness support point that does not converge to an extremum has zero weight.  Obliquity and velocity each collapse to a single support point, again with the corresponding weights demonstrating fluctuations due to degeneracies. }
	\label{fig:CollapseConverge5}
	\subfigure[convergence for thickness]{
		\includegraphics[width=0.4\textwidth]{./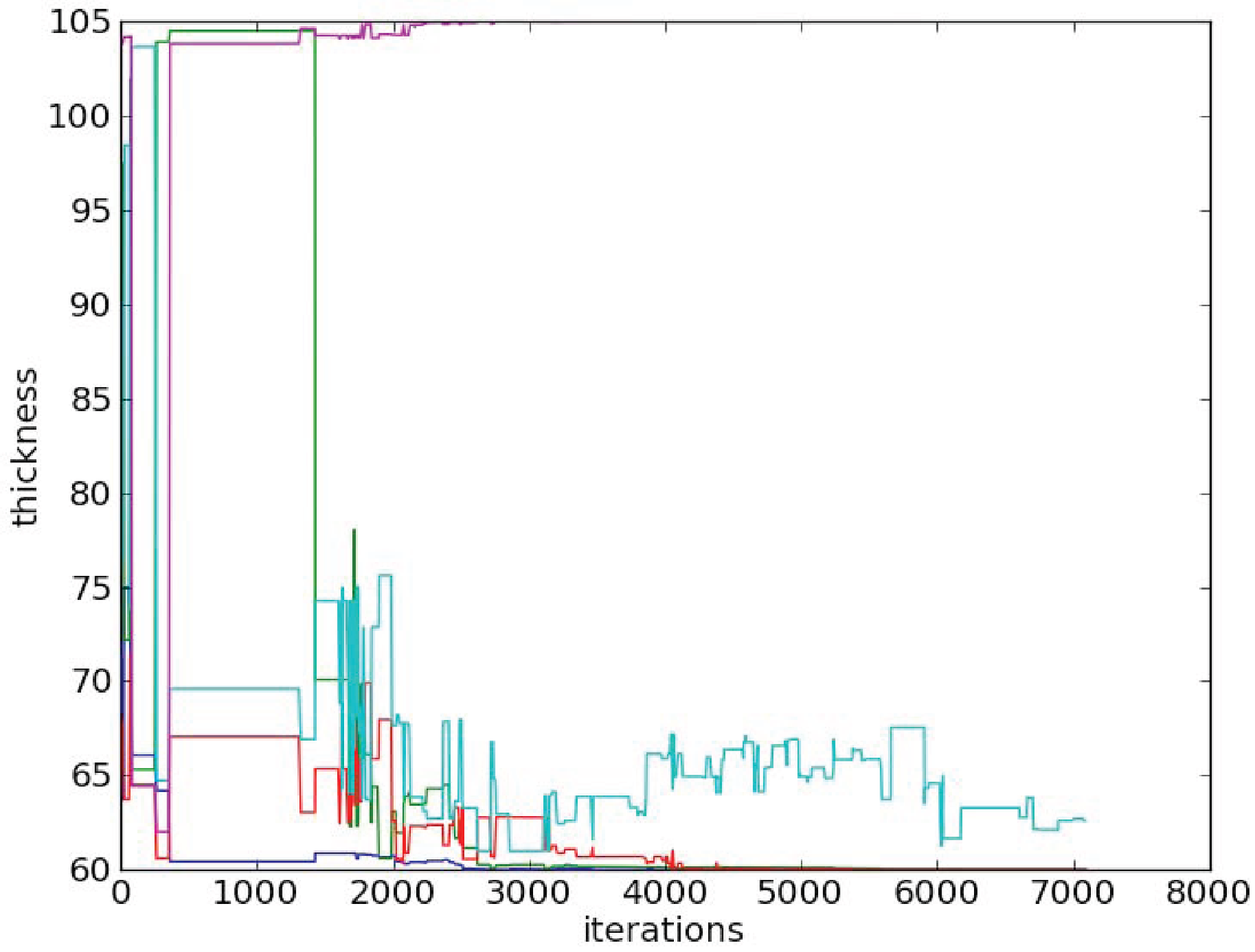}
	}
	\subfigure[convergence for thickness weight]{
		\includegraphics[width=0.4\textwidth]{./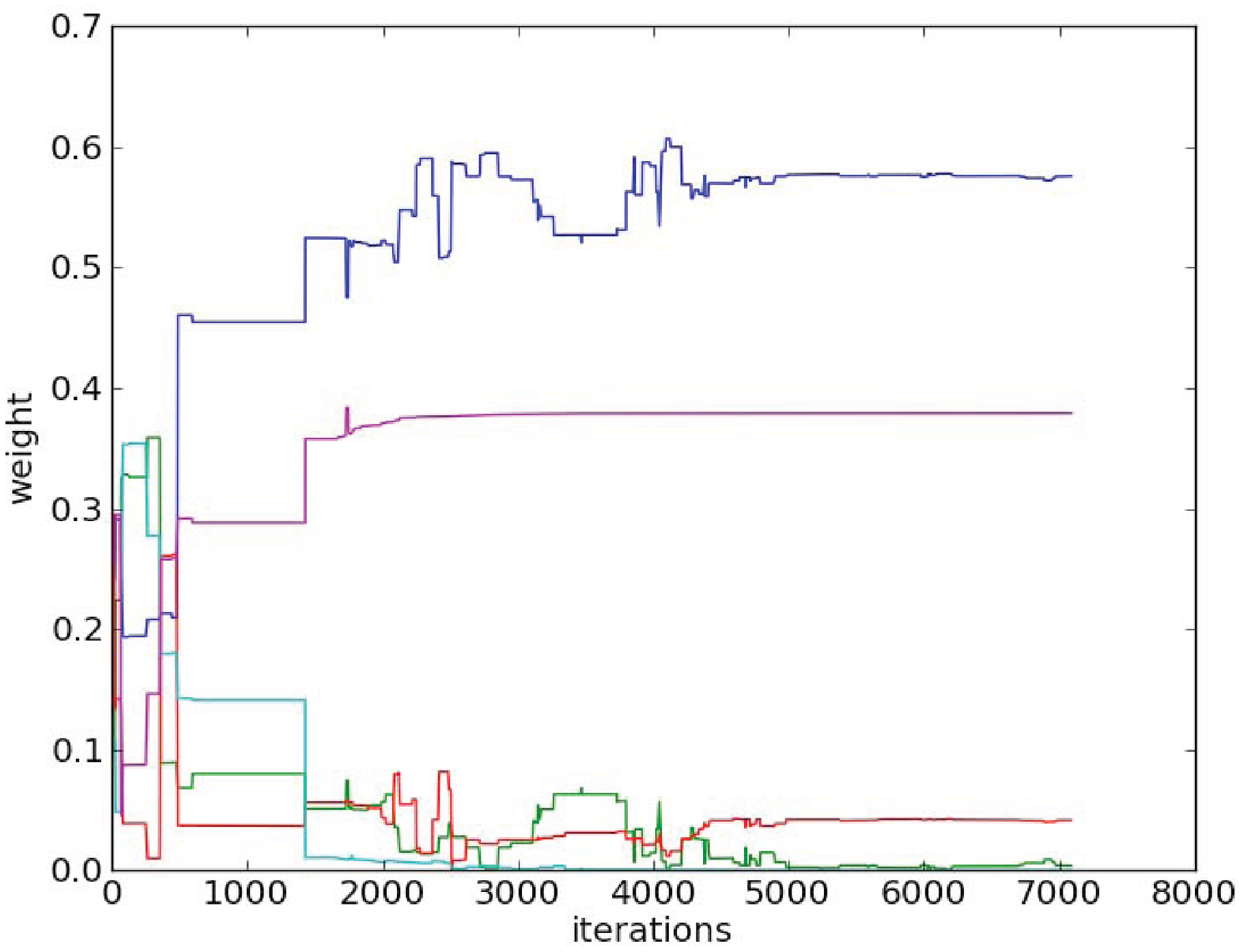}
	}
	\subfigure[convergence for obliquity]{
		\includegraphics[width=0.4\textwidth]{./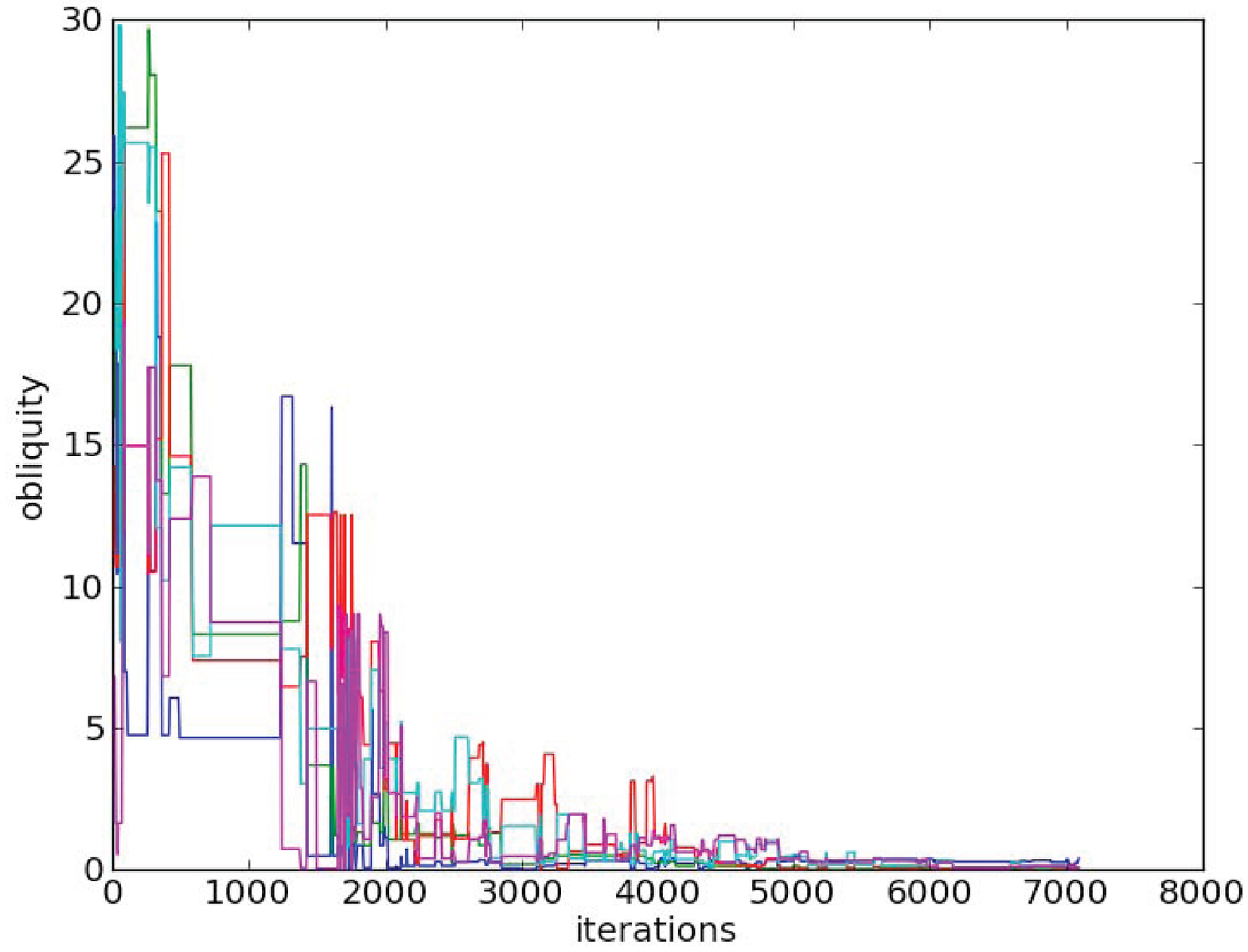}
	}
	\subfigure[convergence for obliquity weight]{
		\includegraphics[width=0.4\textwidth]{./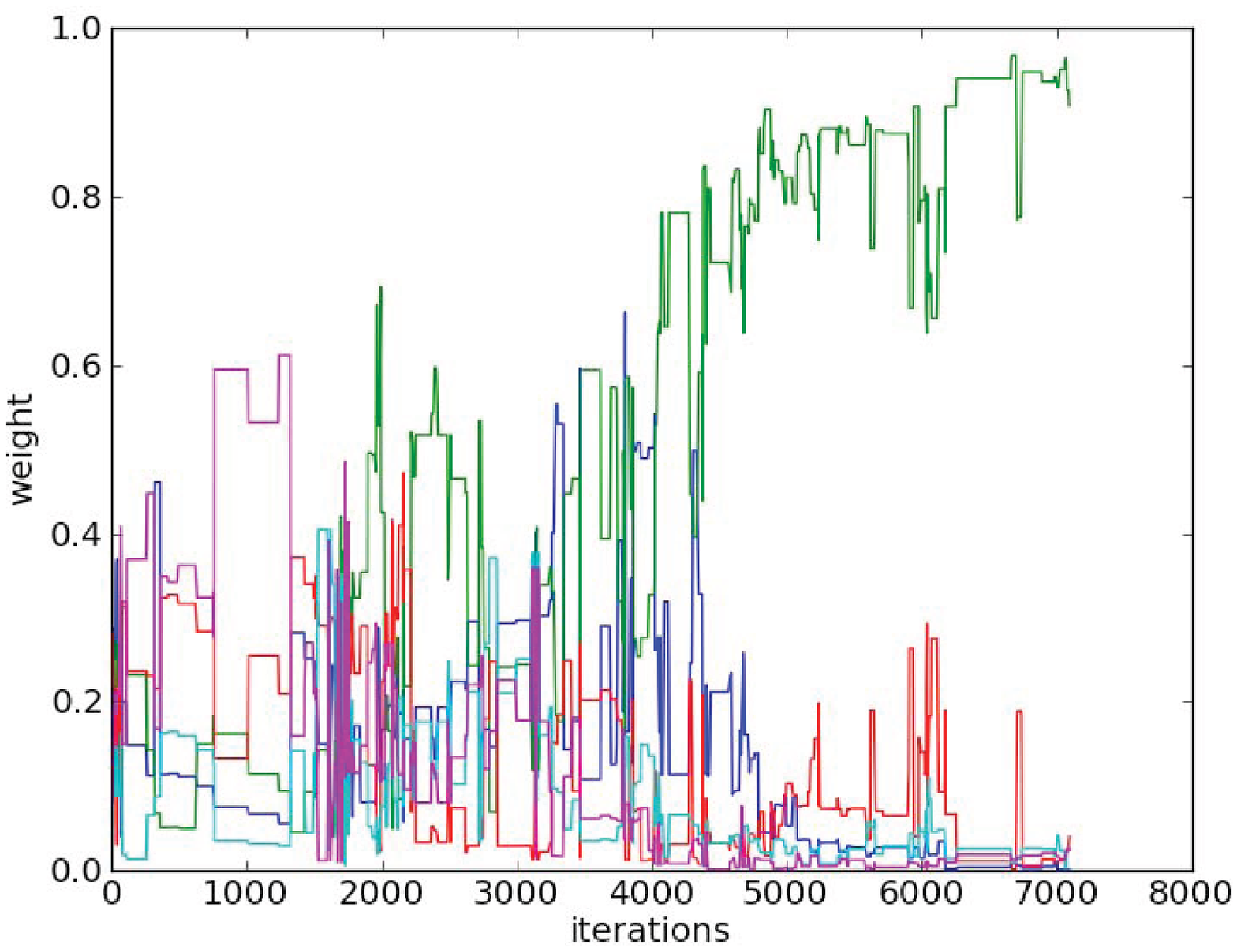}
	}
	\subfigure[convergence for velocity]{
		\includegraphics[width=0.4\textwidth]{./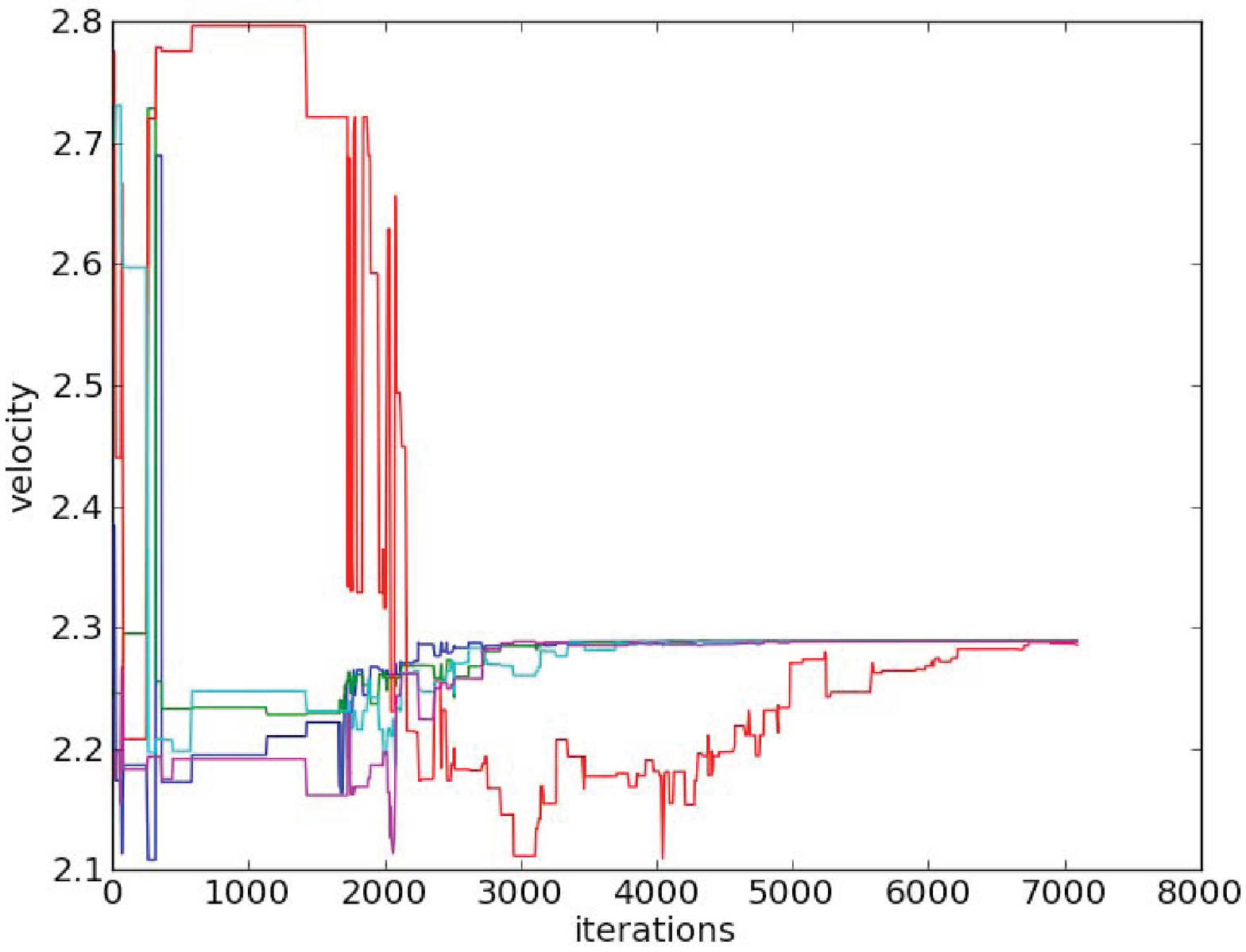}
	}
	\subfigure[convergence for velocity weight]{
		\includegraphics[width=0.4\textwidth]{./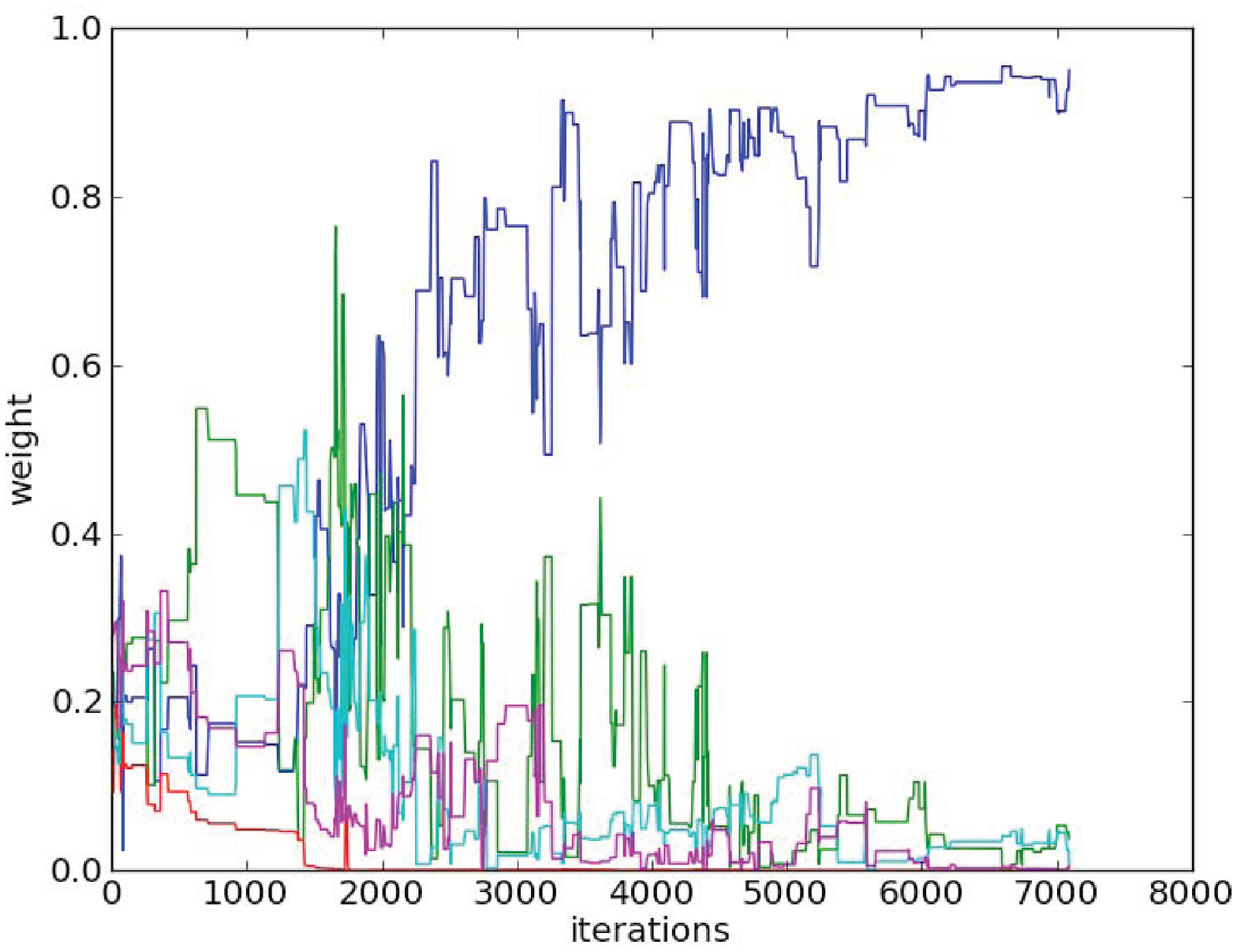}
	}
\end{center}
\end{figure}

\subsection{Coagulation--Fragmentation algorithm for OUQ}

The results of Sections \ref{sec:Reduction} and \ref{sec:Reduction-mcd} give explicit a priori bounds on the number of Dirac masses sufficient to find the lower and upper bounds $\mathcal{L}(\mathcal{A})$ and $\mathcal{U}(\mathcal{A})$ when the admissible set $\mathcal{A}$ is given by independence and linear inequality constraints.  However, it is  possible that reduction properties are present for more general admissible sets $\mathcal{A}$.  Can such ``hidden'' reduction properties be detected by computational means, even in the absence of theorems that prove their existence?

Consider again the results of the previous subsection.  Theorem \ref{thm:baby_measure} provides an a priori guarantee that, to find $\mathcal{U}(\mathcal{A})$, it is sufficient to search the reduced feasible set $\mathcal{A}_{\Delta}$, which consists of those $\mu \in \mathcal{A}$ whose marginal distributions each have support on at most two points.  However, Figure \ref{fig:CollapseSupport5} provides numerical evidence that something much stronger is true:  even if we search among measures $\mu \in \bigotimes_{i = 1}^{3} \Delta_{k}(\mathcal{X}_{i})$ for $k \geq 1$, the measures collapse to an optimizer
\[
	\mu^{\ast} \in \Delta_{1}(\mathcal{X}_{1}) \otimes \Delta_{0}(\mathcal{X}_{2}) \otimes \Delta_{0}(\mathcal{X}_{3})
\]
(that is, two-point support on the thickness axis, and one-point support on the obliquity and velocity axes).  In some sense, the (small support) optimizers are attractors for the optimization process even when the optimization routine is allowed to search over measures
with larger support than that asserted by Theorem \ref{thm:baby_measure}.

Therefore, we propose the following general algorithm for the detection of \emph{hidden} reduction properties.  Let an admissible set $\mathcal{A}$ be given; for $k \in \mathbb{N}$, let
\[
	\mathcal{A}_{k} := \{ (f, \mu) \in \mathcal{A} \mid \mu \in \Delta_{k}(\mathcal{X}) \}
\]
be the collection of admissible scenarios such that $\mu$ has support on at most $k + 1$ points of $\mathcal{X}$.
\begin{compactenum}
	\item Fix any initial value of $k \in \mathbb{N}$.
	\item Numerically calculate $\mathcal{U}(\mathcal{A}_{k})$ and obtain a numerical (approximate) maximizer $\mu^{\ast} \in \mathcal{A}_{k}$.
	\item Calculate $\# \mathop{\mathrm{supp}}(\mu^{\ast})$ and proceed as follows:
	\begin{compactitem}
		\item If $\# \mathop{\mathrm{supp}}(\mu^{\ast}) < k + 1$, then the measure has \emph{coagulated} to have less-than-maximally-sized support and we terminate the algorithm.
		\item If $\# \mathop{\mathrm{supp}}(\mu^{\ast}) = k + 1$, then no coagulation/reduction has yet been observed.  We enter a \emph{fragmentation} phase:  replace $k$ by any $k' > k$ and return to step 2.
	\end{compactitem}
\end{compactenum}

\begin{rmk}
	It would be more accurate to say that the above algorithm is a sketch of an algorithm, and that its details should be adjusted to fit the circumstances of application.  For example, if the admissible set $\mathcal{A}$ includes an independence constraint, then it would be appropriate to base decisions upon the cardinality of the support of the \emph{marginal} distributions of $\mu^{\ast}$, not on the cardinality of the support of $\mu^{\ast}$ itself.
The termination of the algorithm if $\# \mathop{\mathrm{supp}}(\mu^{\ast}) < k + 1$ is motivated by supposition that a \emph{hidden} reduction property has been found and that $\mathcal{U}(\mathcal{A})$ has an (approximate) optimizer in $\mathcal{A}_{k}$.
\end{rmk}

\begin{rmk}
	We reiterate the point made in Remark \ref{rmk:other_extreme_measures} that these methods apply to more general situations than finite convex combinations of Dirac measures;  finite convex combinations of Dirac measures are simply a well-known class of geometrically extreme probability measures (with respect to which numerical integration happens to be very easy), and can be replaced by the extremal points of any class of probability measures as required by the situation of study.  For example, if the OUQ problem of interest involved the invariant measures for some measurable transformation $T \colon \mathcal{X} \to \mathcal{X}$, then each occurence of $\Delta_{k}(\mathcal{X})$ above would be replaced by
	\[
		\mathcal{E}_{k}^{T}(\mathcal{X}) := \left\{ \sum_{j = 0}^{k} \alpha_{j} \mu_{j} \,\middle|\, \begin{matrix} \text{for each } j = 0, \dots, k, \alpha_{j} \geq 0, \\ \text{$\mu_{j} \in \mathcal{M}(\mathcal{X})$ is ergodic with respect to $T$,} \\ \text{ and } \sum_{j = 0}^{k} \alpha_{j} = 1 \end{matrix} \right\}.
	\]
\end{rmk}

\subsection{The OUQ algorithm in the \emph{mystic} framework}\label{sec:Mystic}

As posed above, OUQ at the high level is a global optimization of a cost function that satisfies a set of constraints.
This optimization is performed in \emph{mystic} using the differential evolution algorithm of Price \& Storn \cite{PriceStornLampinen:2005, StornPrice:1997}, with constraints satisfied through a modified Lagrange multiplier method \cite{McKernsOwhadiScovelSullivanOrtiz:2010}.

The \emph{mystic} optimization framework \cite{McKernsHungAivazis:2009} provides a collection of optimization algorithms and tools that lowers the barrier to solving complex optimization problems. Specifically, \emph{mystic} provides flexibility in specifying the optimization algorithm, constraints, and termination conditions.
For example, \emph{mystic} classifies constraints as either ``bounds constraints'' (linear inequality constraints that involve precisely one input variable) or ``non-bounds constraints'' (constraints between two or more parameters),
where either class of constraint modifies the cost function accordingly in attempt to maximize algorithm accuracy and efficiency.
Every \emph{mystic} optimizer provides the ability to apply bounds constraints generically and directly to the cost function, so that the difference in the speed of bounds-constrained optimization and unconstrained optimization is minimized.  \emph{Mystic} also enables the user to impose an arbitrary parameter constraint function on the input of the cost function, allowing non-bounds constraints to be generically applied in any optimization problem.

The \emph{mystic} framework was extended for the OUQ algorithm.
A modified Lagrange multiplier method was added to \emph{mystic}, where an internal optimization is used to satisfy the constraints at each iteration over the cost function \cite{McKernsOwhadiScovelSullivanOrtiz:2010}.
Since evaluation of the cost function is commonly the most expensive part of the optimization, our implementation of OUQ in \emph{mystic} attempts to minimize the number of cost function evaluations required to find an acceptable solution.
By satisfying the constraints within some tolerance at each iteration, our OUQ algorithm will (likely) numerically converge much more quickly than if we were to apply constraints by invalidating generated results (i.e.\ filtering) at each iteration.
In this way, we can use \emph{mystic} to efficiently solve for rare events, because the set of input variables produced by the optimizer at each iteration will also be an admissible point in problem space --- this feature is critical in solving OUQ problems, as tens of thousands of function evaluations may be required to produce a solution.
We refer to \cite{McKernsOwhadiScovelSullivanOrtiz:2010} for a detailed description of the implementation of the OUQ algorithm in the \emph{mystic} framework (we also refer to \cite{McKernsStrand:2011}).

\begin{rmk}\label{rmk:nestedopt}
Our implementation of the OUQ algorithm in \emph{mystic} utilizes a nested optimization (an inner loop) to solve an arbitrary set of parameter constraints at each evaluation of the cost function. We use evolutionary algorithms because they are robust and especially suited to the inner loop (i.e., at making sure that the constraints are satisfied, local methods and even some global method are usually not good enough for this). We also note that the outer loop can be relaxed to other methods (leading to a reduction in the total number of function evaluations by an order of magnitude).
Finally, although we observe approximate extremizers that are
``computationally'' distinct (Figure \ref{fig:CollapseConverge5} shows that mass is traded wildly between practically coincident points),
we have not observed yet ``mathematically'' distinct extrema.
\end{rmk}

\paragraph{Measures as data objects.} Theorem \ref{thm:baby_measure} states that a solution to an OUQ problem, with linear constraints on marginal distributions, can be expressed in terms of products of convex linear combinations of Dirac masses.
In our OUQ algorithm, the optimizer's parameter generator produces new parameters each iteration, and hence produces new product measures to be evaluated within the cost function. For instance, the response function $H$, as defined by $H(h,\theta,v)$ in \eqref{eq:PSAAP_SPHIR_surr}, requires a product measure of dimension $n=3$ for support. In Example \eqref{eq:PSAAP_SPHIR_Admissible}, the mean perforation area is limited to $[m_{1},m_{2}] = [5.5,7.5] \, \mathrm{mm}^{2}$, the parameters $h,\theta,v$ are bounded by the range provided by \eqref{eq:PSAAP_SPHIR_range}, and products of convex combinations of Dirac masses are used as the basis for support. The corresponding OUQ code parameterizes the Dirac masses by their weights and positions.

More generally, it is worth noting that our
computational implementation of OUQ is expressed in terms of methods that act on a hierarchy of parameterized measure data objects. Information is thus passed between the different elements of the OUQ algorithm code as a list of parameters (as required by the optimizer) or as a parameterized measure object. \emph{Mystic} includes methods to automate the conversion of measure objects to parameter lists and vice versa, hence the bulk of the OUQ algorithm code (i.e.\ an optimization on a product measure) is independent of the selection of basis of the product measure itself.
In particular, since the measure data objects can be decoupled from the rest of the algorithm, the product measure representation can be chosen to best provide support for the model, whether
it be a convex combination of Dirac masses as required by Example \eqref{eq:PSAAP_SPHIR_Admissible},
or measures composed of another basis such as Gaussians.
More precisely, this framework can naturally be extended to Gaussians merely by adding covariance matrices as
data object variables and by estimating integral moments equations (with a Monte Carlo method for instance).

\section{Application to the Seismic Safety Assessment of Structures}
\label{Sec:seismic}

In this section, we assess the feasibility of the OUQ formalism  by means of an application to the safety assessment of truss structures subjected to ground motion excitation. This application contains many of the features that both motivate and challenge UQ, including imperfect knowledge of random inputs of high dimensionality, a time-dependent and complex response of the system, and the need to make high-consequence decisions pertaining to the safety of the system.  The main objective of the analysis is to assess the safety of a structure knowing the maximum magnitude and focal distance of the earthquakes that it may be subjected to, with limited information and as few
assumptions as possible.

\subsection{Formulation in the time domain}

\subsubsection{Formulation of the problem}
\label{subsec:formprobseismic}

For definiteness, we specifically consider truss structures undergoing a purely elastic response, whereupon the vibrations of the structure are governed by the structural dynamics equation
\begin{equation}\label{eq:Seismo:EoM}
    M\ddot{u}(t) + C\dot{u}(t) + K u(t) = f(t),
\end{equation}
where $u(t)\in\mathbb{R}^N$ collects the displacements of the joints, $M$ is the mass matrix, $C$ is the damping matrix, $K$ is the stiffness matrix and $f(t)\in\mathbb{R}^N$ are externally applied forces, such as dead-weight loads, wind loads and others. The matrices $M$, $C$ and $K$ are of dimension $N\times N$, symmetric and strictly positive definite.  Let $T$ be an $N\times 3$ matrix such that: $T_{ij} = 1$ if the $i$th degree-of-freedom is a displacement in the $j$th coordinate direction; and $T_{ij} = 0$ otherwise. In addition, let $u_0(t) \in \mathbb{R}^3$ be a ground motion. Then, $T u_0(t)$ represents the motion obtained by translating the entire structures rigidly according to the ground motion.
We now introduce the representation
\begin{equation}
	\label{eq:Seismo:U}
  u(t) = T u_0(t) + v(t),
\end{equation}
where $v(t)$ now describes the vibrations of the structure relative to its translated position. Inserting \eqref{eq:Seismo:U} into \eqref{eq:Seismo:EoM} and using $KT  = 0$ and $CT  = 0$ (implied by translation invariance), we obtain
\begin{equation}
	\label{eq:Seismo:EoM2}
	M \ddot{v}(t) + C \dot{v}(t) + K v(t) = f(t) - M T \ddot{u}_0(t),
\end{equation}
where $- M T \ddot{u}_0(t)$ may be regarded as the effective forces induced in the structure by the ground motion (we start from rest).
We shall assume that the structure is required to remain in the elastic domain for purposes of certification. Suppose that the structure has $J$ members and that all the external loads are applied to the joints of the structure. Let $L$ be a $J\times N$ matrix such that the entries of the vector $Lv$ give the axial strains of the members. The certification condition is, therefore,
\begin{equation}\label{eq:Seismo:CC}
    \|L_i v\|_\infty < S_i, \quad i=1,\dots,J,
\end{equation}
where $S_i$ is the yield strain of the $i$th member and  $\| f \|_\infty := \mathop{\text{ess\,sup}} |f|$
is the $L^\infty$-norm of a function $f \colon \mathbb{R} \to \mathbb{R}$.
In what follows we will write
 \begin{equation}\label{eq:Seismo:Y}
    Y_i = L_i v \quad i=1,\dots,J,
\end{equation}
for the member strains. Due to the linearity of the structure, a general solution of \eqref{eq:Seismo:EoM2} may be formally obtained by means of a modal analysis. Thus, let $q_\alpha\in\mathbb{R}^N$ and $\omega_\alpha > 0$, $\alpha=1,\dots,N$, be the eigenvectors and eigenfrequencies corresponding to the symmetric eigenvalue problem  $(K - \omega_\alpha^2 M) q_\alpha = 0$,
normalized by $q_\alpha^T M q_\alpha = 1$.
Let
\begin{equation}\label{eq:Seismo:VEig}
    v(t) = \sum_{\alpha=1}^N v_\alpha(t) q_\alpha
\end{equation}
be the modal decomposition of $v(t)$. Using this representation, the equation of motion \eqref{eq:Seismo:EoM2} decomposes into the modal equations
\begin{equation}\label{eq:Seismo:EoMEig}
    \ddot{v}_\alpha(t)
    +
    2 \zeta_\alpha \omega_\alpha \dot{v}_\alpha(t)
    +
    \omega_\alpha^2 v_\alpha(t)
    =
    q_\alpha^T\big(f(t) - M T \ddot{u}_0(t)),
\end{equation}
where we have assumed that the eigenmodes $q_\alpha$ are also eigenvectors of $C$ and $\zeta_\alpha$ is the damping ratio for mode $i$.
 The solution of \eqref{eq:Seismo:EoMEig} is given by the hereditary integral
\begin{equation}\label{eq:Seismo:HI}
    v_\alpha(t)
    =
    - \int_0^t
    {\rm e}^{-\zeta_\alpha\omega_\alpha(t-\tau)}
    \sin[\omega_\alpha(t-\tau)]
    \big( q_\alpha^TM T \ddot{u}_0(\tau) \big)
    \, \frac{\mathrm{d} \tau}{\omega_\alpha},
\end{equation}
where, for simplicity, we set $f=0$ and assume that the structure starts from rest and without deformation at time $t=0$.
We can now regard structures oscillating under the action of a ground motion as systems that take the ground motion acceleration $\ddot{u}_0(t)$ as input and whose performance measures of interest are the member strains $Y_i$, $i=1,\dots,J$. The response function $F$ mapping the former to the latter is given by composing \eqref{eq:Seismo:HI}, \eqref{eq:Seismo:VEig} and \eqref{eq:Seismo:Y}.

\subsubsection{Formulation of the information set}
\label{subsec:firstform}

In order to properly define the certification problem we proceed to define constraints on the inputs, i.e.\ the information set associated with the ground motion acceleration. As in \cite{SteinWysession:1991}, we regard the ground motion at the site of the structure as a combination of two factors: the earthquake source $s$ and the earth structure  through which the seismic waves propagate;  this structure is characterized by
a transfer function $\psi$.  Let $\star$ denote the convolution operator;  the ground motion acceleration is then given by
\begin{equation}
	\label{eq:Sesmo:convolution}
	\ddot{u}_0(t) := (\psi \star s)(t).
\end{equation}

We assume that $s$ is a sum of boxcar time impulses (see \cite{SteinWysession:1991} page 230) whose amplitudes and durations  are random, independent, not identically distributed and of unknown distribution. More precisely, we assume that
\begin{equation}
	\label{eq:Sesmo:source}
	s(t) := \sum_{i=1}^{B} X_i \,s_i(t),
\end{equation}
where $X_1,\ldots,X_{B}$ are independent (not necessarily identically distributed) random variables with unknown distribution with support in $[-a_{\max},a_{\max}]^3$ ($s_i$, $B$ and $a_{\max}$ are defined below)
 and such that $\E[X_i]=0$. We also assume the components $(X_{i,1}, X_{i,2}, X_{i,3})$ of the vectors $X_i$ to be independent.
 Since we wish to bound the probability that a structure will fail when it is struck by an earthquake of magnitude $M_{\mathrm{L}}$ in the Richter (local magnitude) scale and hypocentral distance $R$, we adopt the semi-empirical expression proposed by Esteva \cite{Esteva:1970} (see also \cite{NewmarkRosenblueth:1971}) for the maximum ground acceleration
\begin{equation}\label{eq:maxgroundacc}
    a_{\rm max}
    :=
    \frac{a_0 {\rm e}^{\lambda M_{\mathrm{L}}}}{(R_0 + R)^2},
\end{equation}
where $a_0$, $\lambda$ and $R_0$ are constants. For earthquakes on firm ground, Esteva \cite{Esteva:1970} gives $a_0 = 12.3\cdot 10^6 \, \text{m}^3 \cdot \text{s}^{-2}$, $\lambda = 0.8$ and $R_0 = 25\cdot 10^3 \, \text{m}$.

The functions $s_{i}$ are step functions, with $s_{i}(t)$ equal to
 one for $\sum_{j=1}^{i-1} \tau_j \leq t < \sum_{j=1}^{i} \tau_j$ and equal to zero elsewhere, where the durations
$\tau_1,\ldots,\tau_{B}$ are independent (not necessarily identically distributed) random variables with unknown distribution with support in $[0,\tau_{\max}]$ and such that $\bar{\tau}_1 \leq \E[\tau_i]\leq \bar{\tau}_2$.
Observing that the average duration of the earthquake is  $\sum_{i=1}^B \E[\tau_i]$ and keeping in mind the significant effect
 of this duration on structural reliability \cite{LindtGoh:2004}, we select
 $\bar{\tau}_1=1\,\text{s}$, $\bar{\tau}_2=2\,\text{s}$, $\tau_{\max}=6\,\text{s}$, and $B=20$.

The propagation through the earth structure gives rise to
 focusing, de-focusing, reflection, refraction and anelastic attenuation (which is caused by
the conversion of wave energy to heat) \cite{SteinWysession:1991}. We do not assume the earth structure to be known, henceforth we assume that
$\psi$ is a random transfer function of unknown distribution. More precisely, we assume that the transfer function is given by
\begin{equation}
	\label{eq:Sesmo:transfer}
	\psi(t):=\frac{\sqrt{q}}{\tau'} \sum_{i=1}^q  c_i\, \varphi_i(t),
\end{equation}
where $q:=20$, $\tau' = 10 \, \text{s}$, $c$ is a random vector of unknown distribution with support in $\{x \in [-1,1]^q\,\mid\, \sum_{i=1}^q x_i^2 \leq 1\text{ and } \sum_{i=1}^q x_i=0\}$ and $\varphi_i$ is a piecewise linear basis nodal element on the discretization $t_1,\ldots,t_q$ of $[-\tau'/2,\tau'/2]$ with $t_{i+1}-t_i=\tau'/q$ ($\varphi_i(t_j)=\delta_{ij}$,  with $\delta_{ij}=1$ if $i=j$ and zero otherwise).  $\psi$ has the dimension of $1/\text{time}$ and the constraint  $\sum_{i=1}^q c_i^2 \leq 1$ is equivalent to the assumption that $\big(\frac{1}{\tau'} \int_{-\tau'/2}^{\tau'/2}|\psi|^2(t)\,\mathrm{d}t\big)^\frac{1}{2}$ is, with probability one, bounded by a constant of order $1/\tau'$.
Analogously to the Green function of the wave operator, $\psi$ can take both positive and negative values (in time, for a fixed site and source). Observe also that the constraint on the time integral of  $\psi^2$ leads to a bound on the Arias intensity (i.e., the time integral of $(\ddot{u}_0)^2$), which is a popular measure of ground motion strength used as a predictor of the likelihood of damage to short-period structures \cite{StaffordBP:2009}. The constraint $\sum_{i=1}^q c_i=0$ ensures that the residual velocity is zero at the end of the earthquake.
Observe also that, since the maximum amplitude of $s$ already contains the dampening factor  associated with the distance $R$ to the center of the earthquake (in $1/(R_0 + R)^2$, via \eqref{eq:maxgroundacc}), $\psi$ has to be interpreted as a normalized transfer function.
Since propagation in anisotropic structures can lead to changes in the direction of displacements, the coefficients $c_i$ should, for full generality, be assumed to be tensors. Although we have assumed those coefficients to be scalars for the clarity and conciseness of the presentation, the method and reduction theorems proposed in this paper still apply when those coefficients are tensors.

\subsubsection{The OUQ optimization problem}

The optimal bound on the probability that the structure will fail is therefore the solution of the following optimization problem
\begin{equation}
	\label{eq:Seismo:OUQopt}
	\mathcal{U}(\mathcal{A}) := \sup_{(F,\mu) \in \mathcal{A}} \mu[F\leq 0],
\end{equation}
where $\mathcal{A}$ is the set of pairs $(F,\mu)$ such that
 {\bf (1)} $F$ is mapping of the ground acceleration $t \mapsto \ddot{u}_0(t)$ onto the margin $\min_{i=1,\ldots,J}(S_i-\|Y_i\|_{\infty})$ via equations \eqref{eq:Seismo:HI}, \eqref{eq:Seismo:VEig} and \eqref{eq:Seismo:Y}.
	 {\bf (2)} $\mu$ is a probability measure on the ground acceleration $t\rightarrow \ddot{u}_0(t)$ with support on accelerations defined by \eqref{eq:Sesmo:convolution}, \eqref{eq:Sesmo:source}, \eqref{eq:Sesmo:transfer} (with $B=20$). Under this measure,  $X_1,\ldots,X_{B}$, $\tau_1,\ldots, \tau_B$, $c$ are independent (not necessarily identically distributed) random variables.
For $i=1,\ldots,B$, $X_i$ has zero mean and independent (not necessarily identically distributed) components  $(X_{i,1}, X_{i,2}, X_{i,3})$ with
  support in $[-a_{\max},a_{\max}]$, the measure of $\tau_i$ is constrained by
$\bar{\tau}_1 \leq \E[\tau_i]\leq \bar{\tau}_2$ and has support in $[0,\tau_{\max}]$. The support of the measure on $c$  is a subset of $\{x \in [-1,1]^q\,:\, \sum_{i=1}^q x_i^2 \leq 1\,\&\, \sum_{i=1}^q x_i=0\}$.

\subsubsection{Reduction of the optimization problem}

Problem \eqref{eq:Seismo:OUQopt} is  not  computationally tractable  since the optimization variables take values in infinite-dimensional  spaces of measures. However, thanks to  Corollary  \ref{cor:ouqreduce}, we know that the optimum of Problem \eqref{eq:Seismo:OUQopt} can be achieved by
 {\bf (1)}  Handling $c$ as a deterministic optimization variable taking values in $\{x \in [-1,1]^q\,:\, \sum_{i=1}^q x_i^2 \leq 1\,\&\, \sum_{i=1}^q x_i=0\}$ (since no constraints are given on the measure of $c$)
 {\bf (2)}  Assuming that the measure on each $X_{i,j}$ ($X_i=(X_{i,1}, X_{i,2}, X_{i,3})$) is the tensorization of two Dirac masses
in  $[-a_{\max},a_{\max}]$ (since $\E[X_{i,j}]=0$ is one linear constraint)
 {\bf (3)}  Assuming that the measure on each $\tau_i$ is the convex linear combination of $2$ Dirac masses in  $[0,\tau_{\max}]$ ($\bar{\tau}_1 \leq \E[\tau_i]\leq \bar{\tau}_2$ counts as one linear constraint).

Observe that this reduced problem is of finite dimension ($8B+q=180$) (counting the scalar position of the Dirac masses, their weights and subtracting the number of scalar equality constraints).

\setcounter{subfigure}{0}
\begin{figure}[tp]
	\begin{center}
		\subfigure[The truss structure]{\label{fig:tower2}
			\includegraphics[width=0.45\textwidth]{./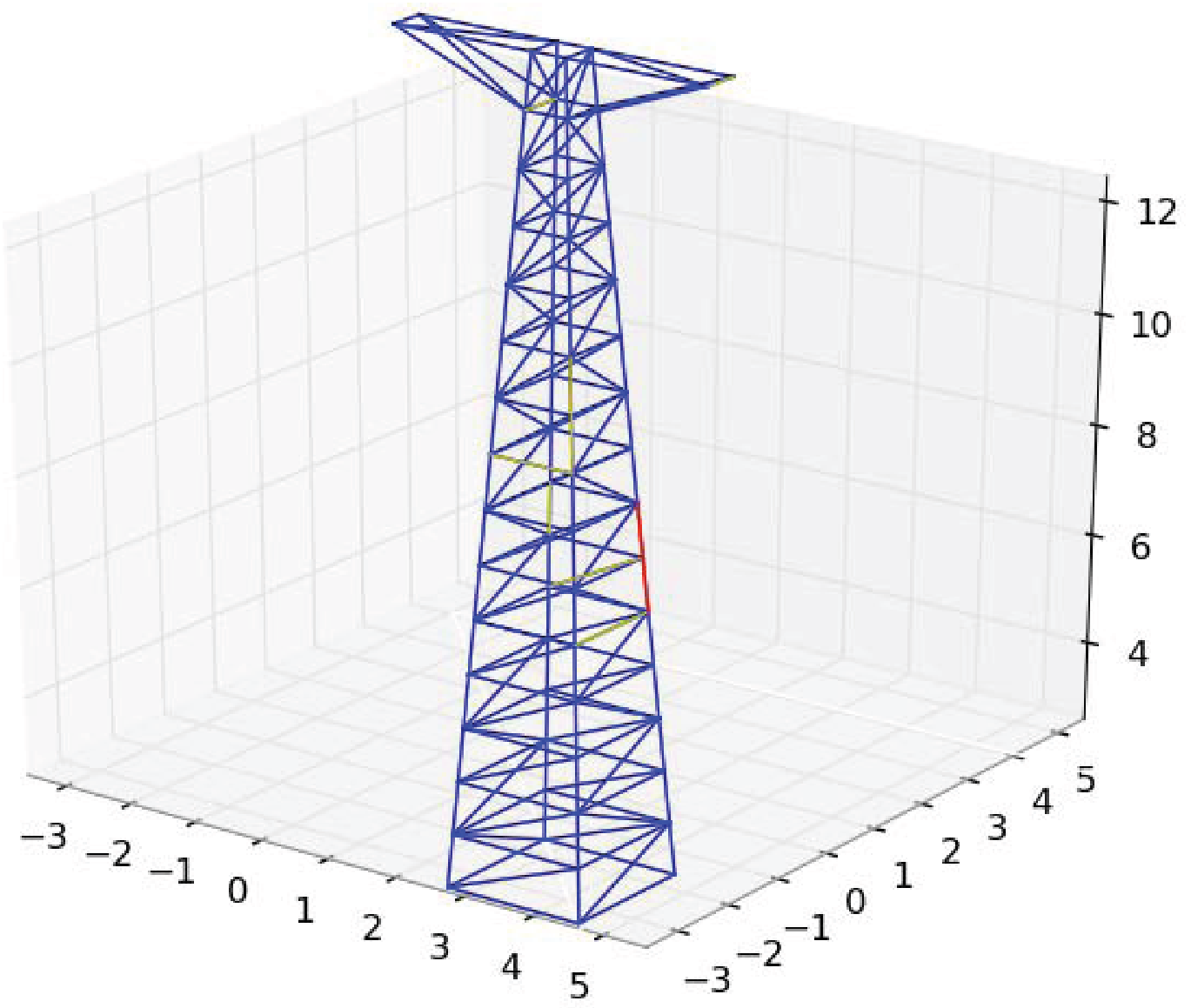}
		}
		\subfigure[Maximum PoF vs  $M_{\mathrm{L}}$]{\label{fig:PoFvsML}
			\includegraphics[width=0.45\textwidth]{./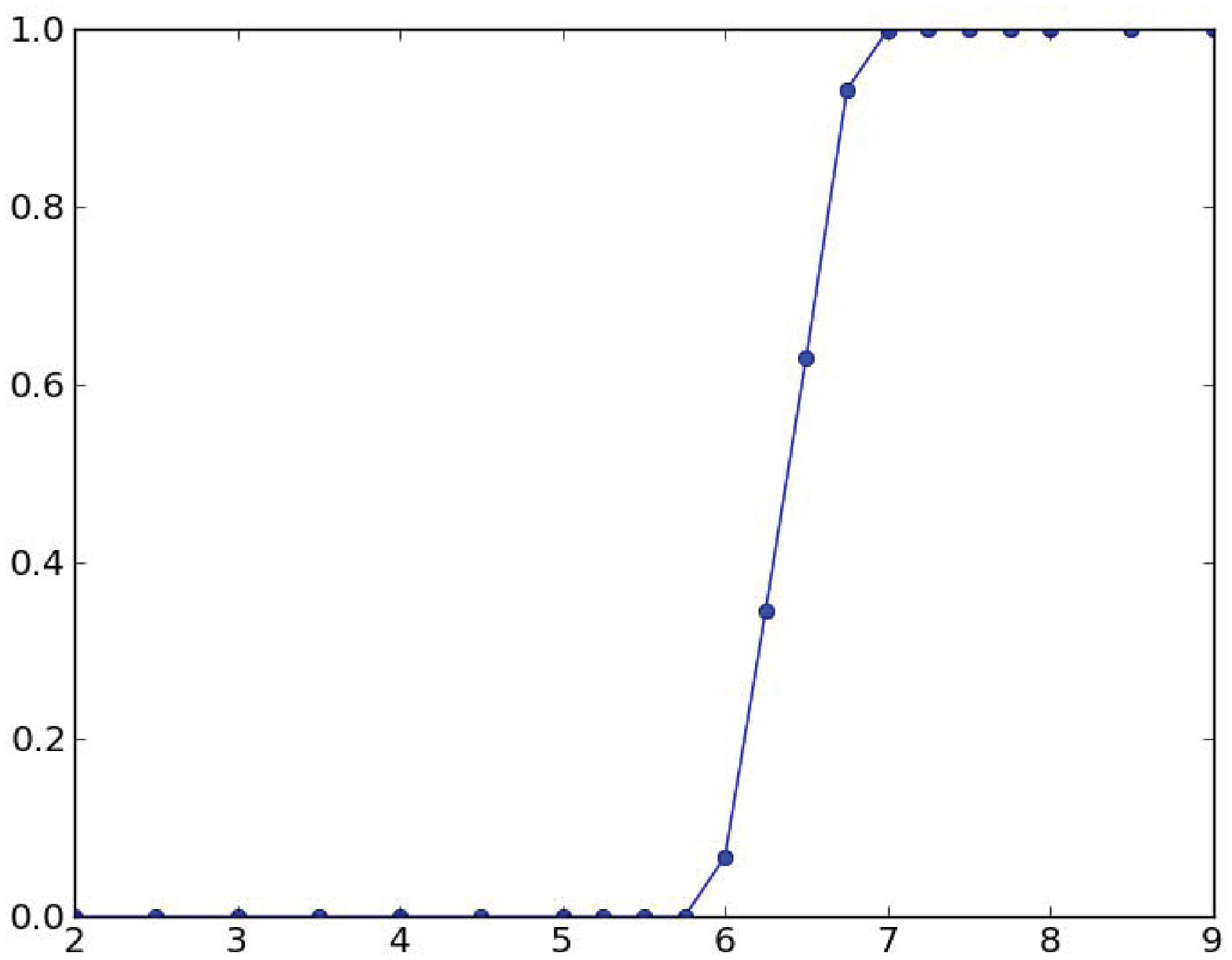}}
		\caption{Numerical results associated with the information set defined in Sub-section \ref{subsec:firstform}.}
		\label{fig:towerphysical}
	\end{center}
\end{figure}

\begin{figure}[tp]
	\begin{center}
		\subfigure[Maximum PoF vs $a_{\rm max}$]{\label{fig:PoFvsGroundAcc}
			\includegraphics[width=0.45\textwidth]{./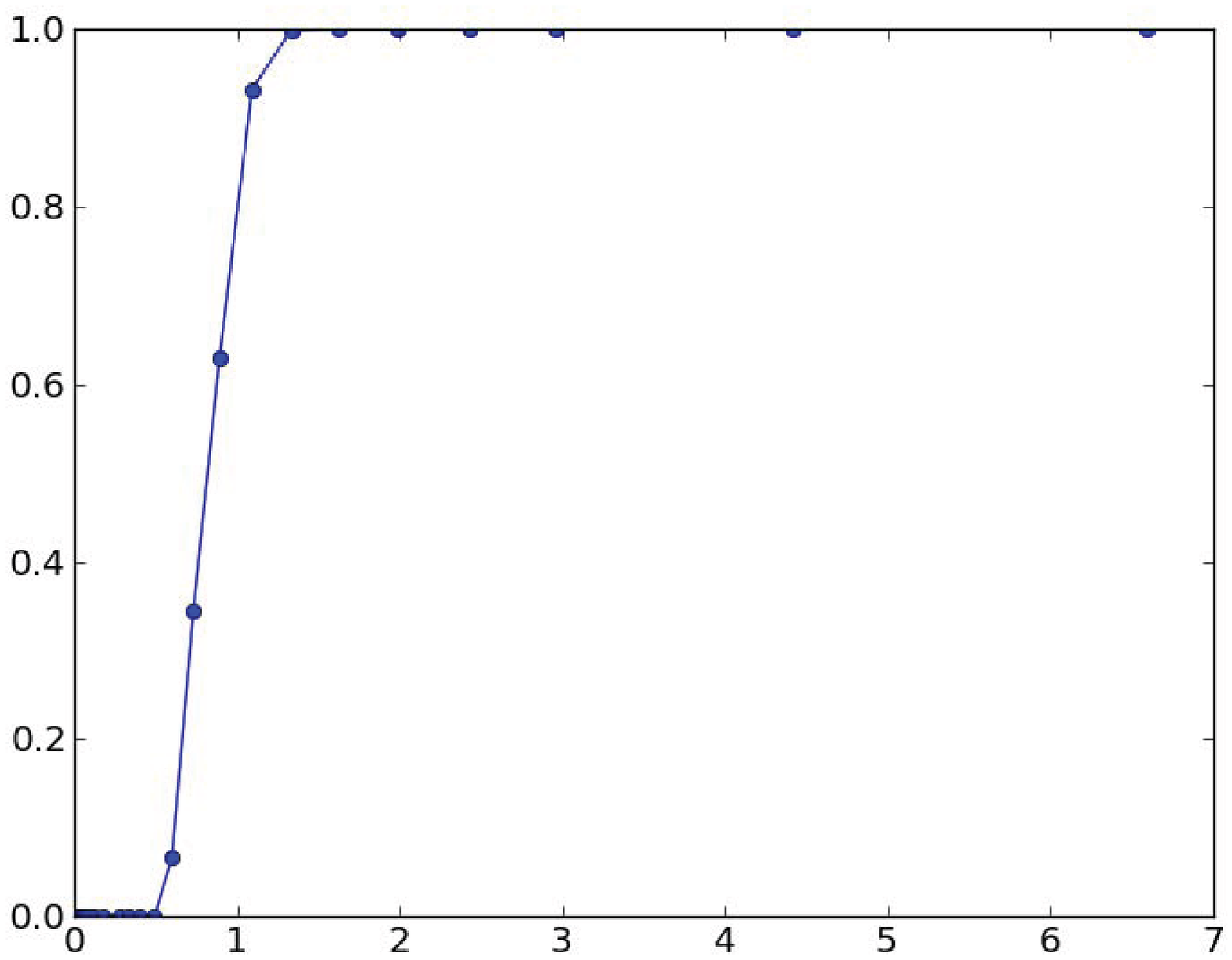}}
		\subfigure[Transfer function $\psi$]{\label{fig:transferfunction}
			\includegraphics[width=0.45\textwidth]{./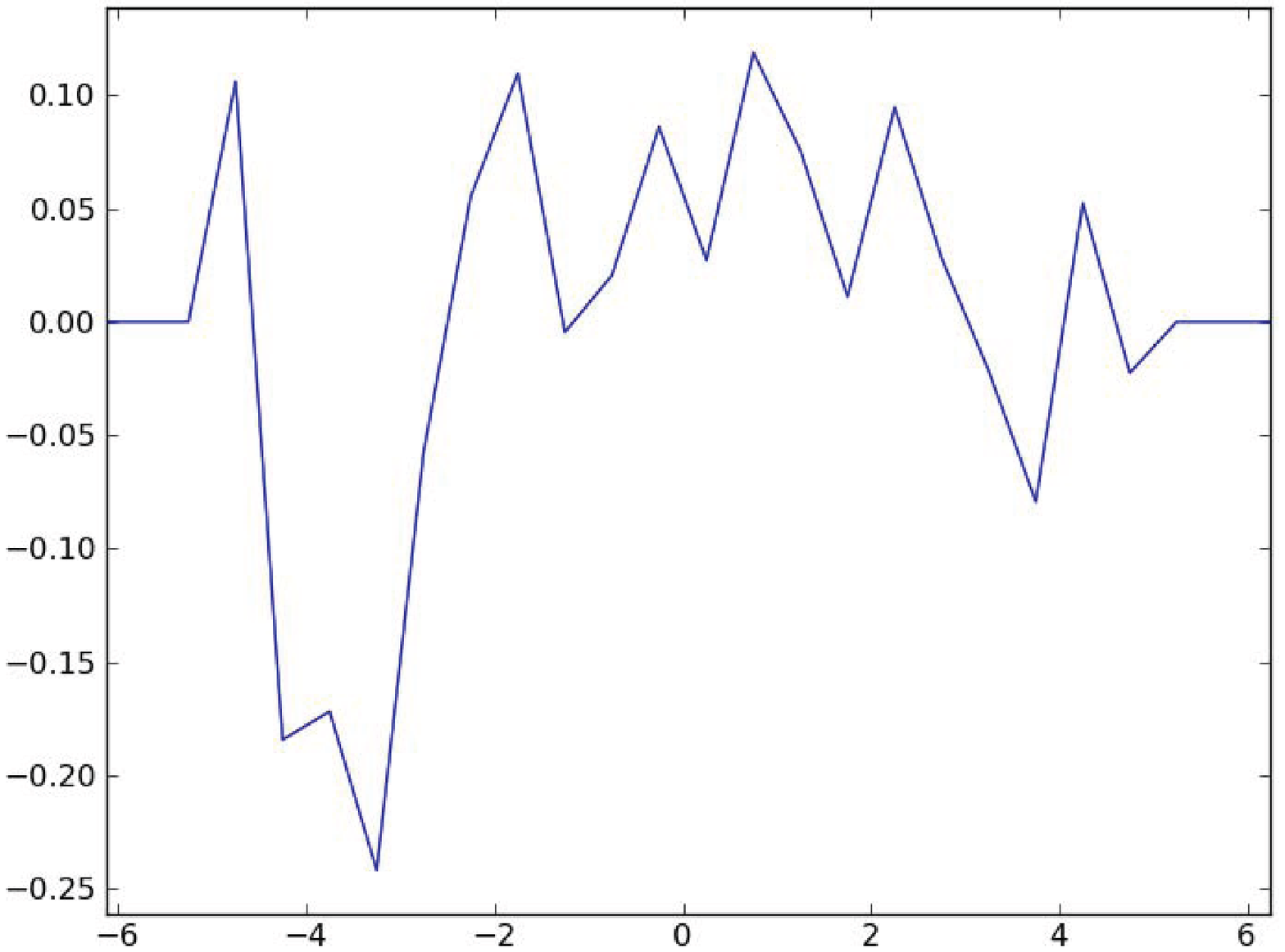}}
       \subfigure[Earthquake source $s(t)$]{\label{fig:tauidistribution}
			\includegraphics[width=0.45\textwidth]{./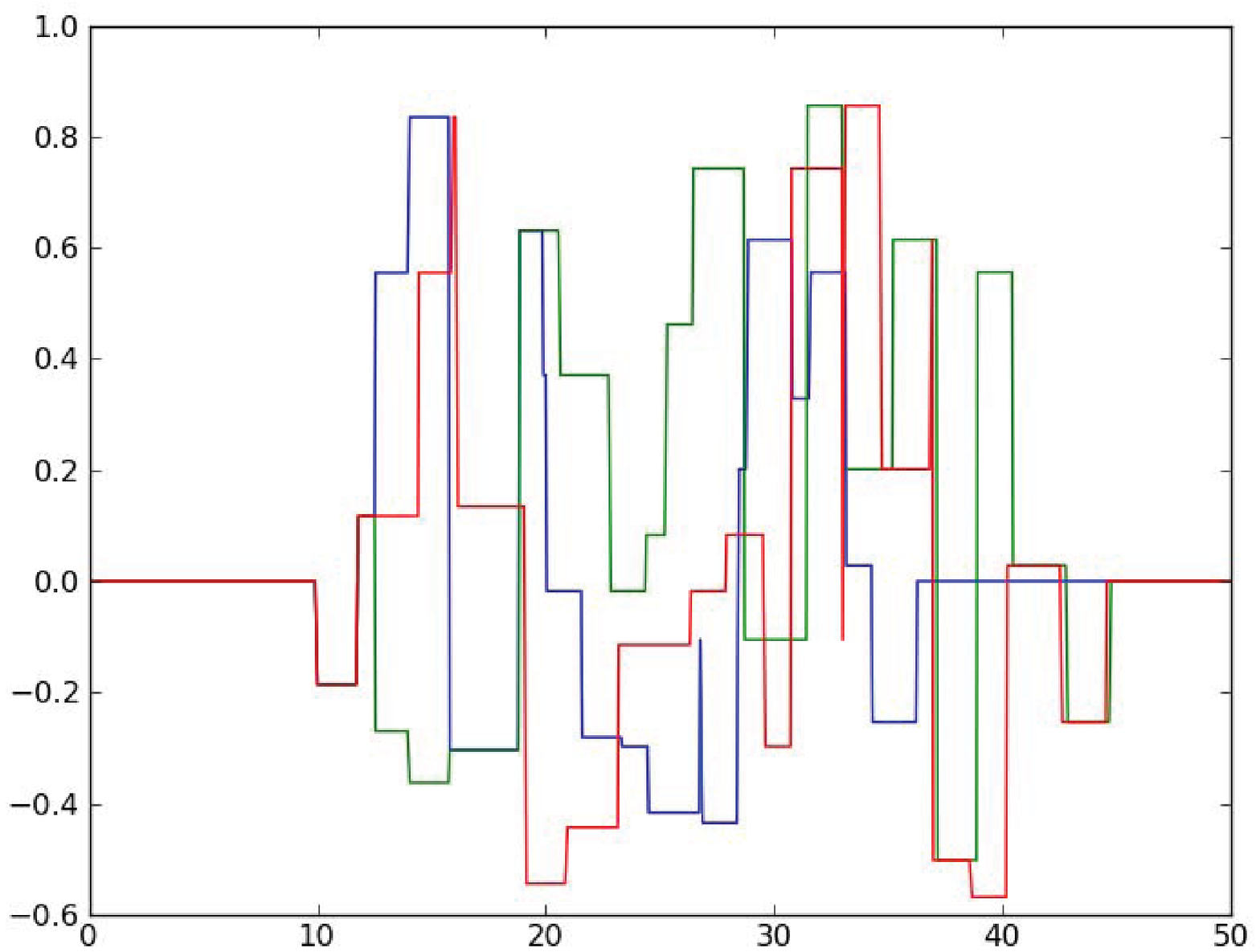}	}
        \subfigure[Ground acceleration]{\label{fig:Horizontalga}
			\includegraphics[width=0.45\textwidth]{./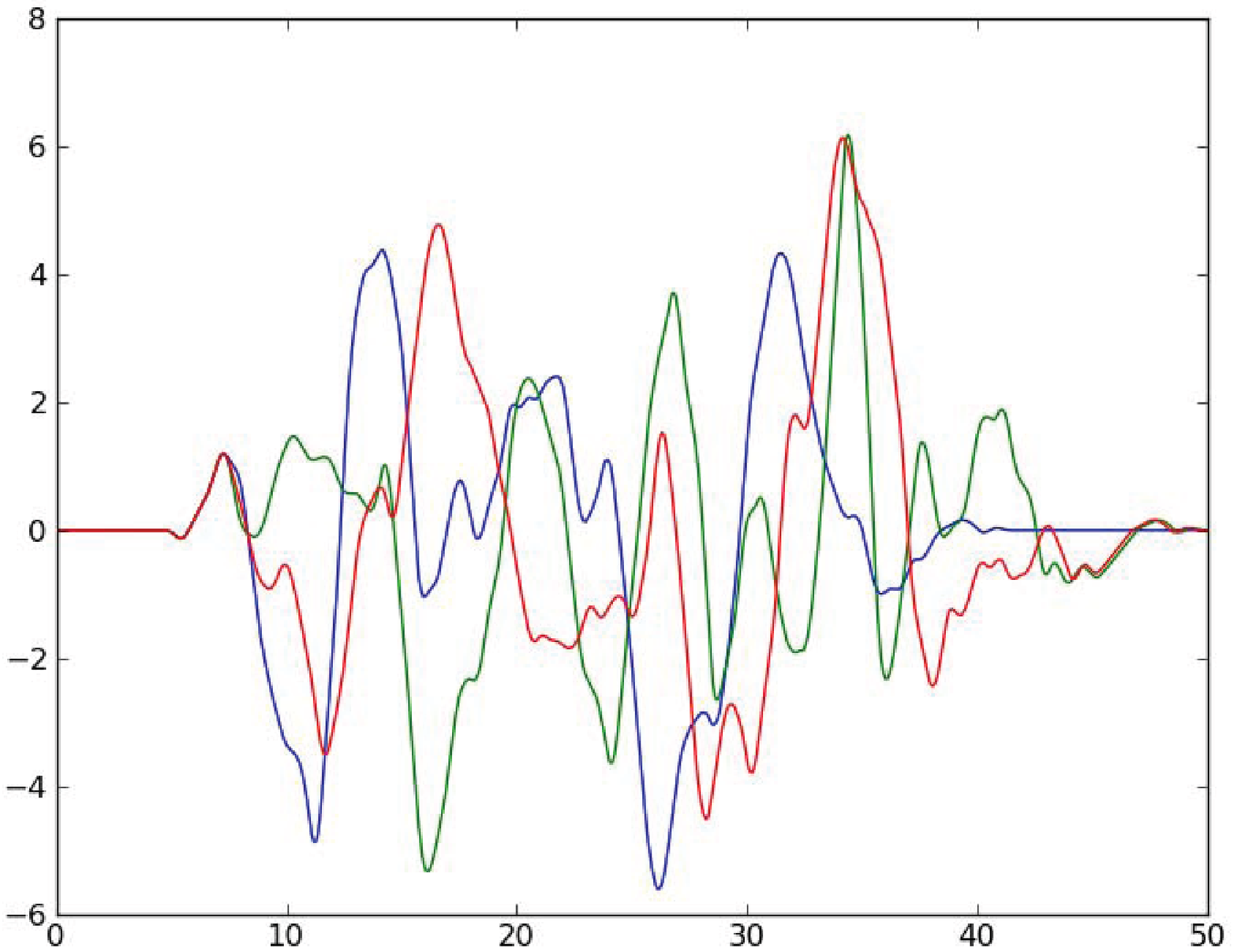}}
       \subfigure[Power Spectrum]{\label{fig:powerspec1}
			\includegraphics[width=0.45\textwidth]{./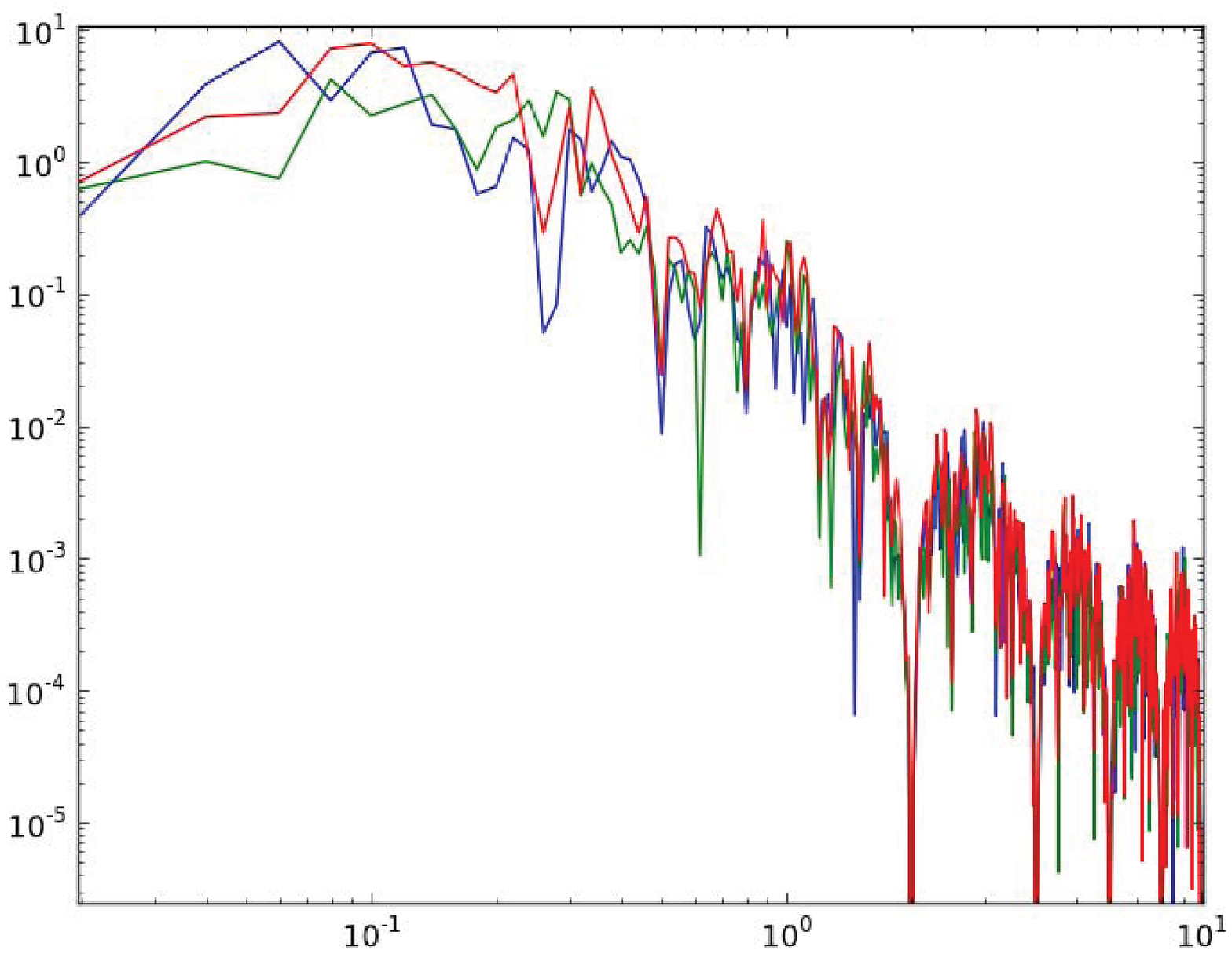}	}
       \subfigure[Elements strain]{\label{fig:elements}
			\includegraphics[width=0.45\textwidth]{./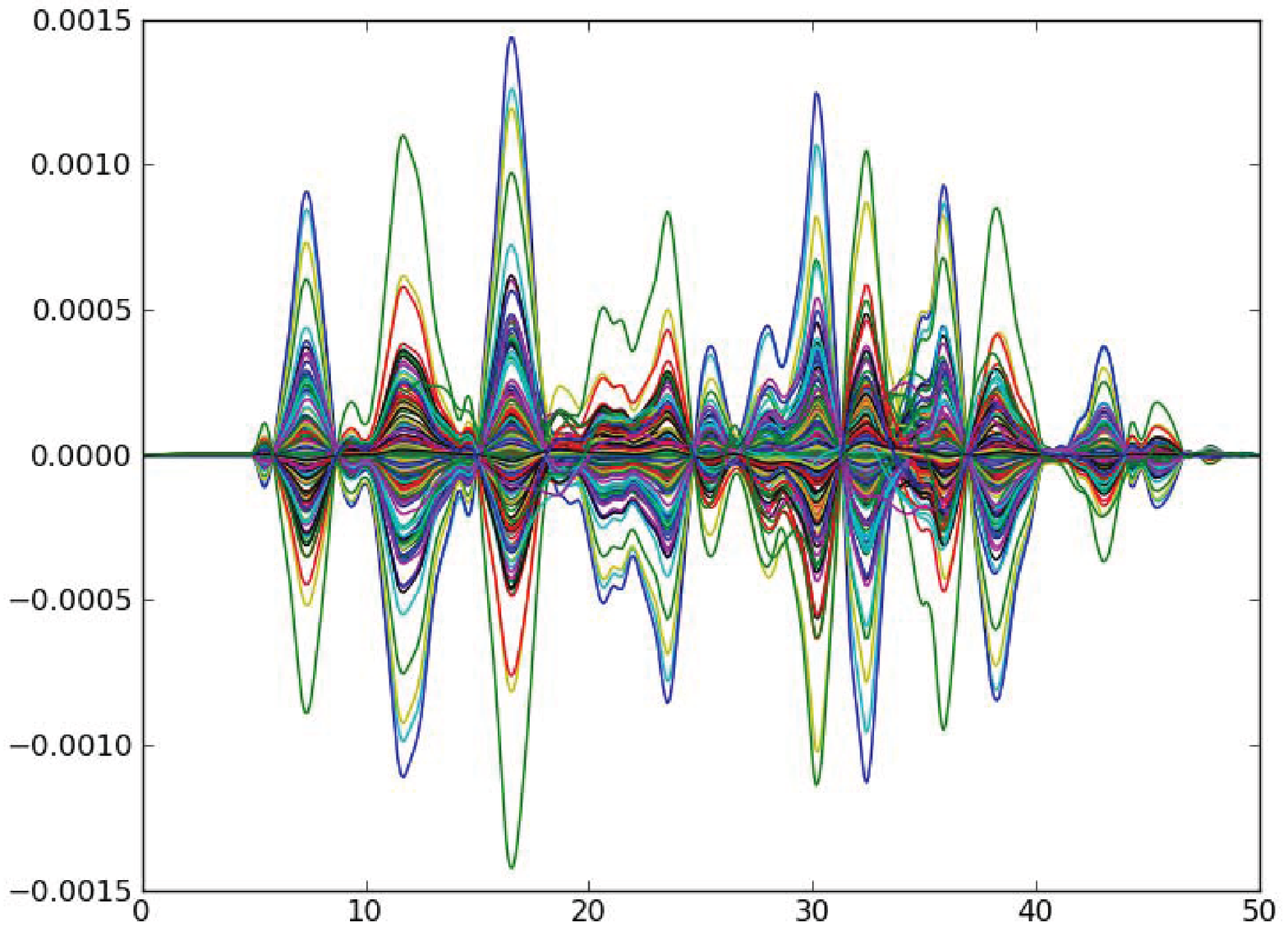}	}
		\caption{Numerical results associated with the information set defined in Sub-section \ref{subsec:firstform}.}
		\label{fig:towerphysicaldetails}
	\end{center}
\end{figure}

\subsubsection{Numerical results}\label{subsec:seismicnumresphysdom}

The truss structure is the electrical tower shown in Sub-figure \ref{fig:tower2}. This structure has $198$ elements and we refer to \cite{MATruss:2008} for precise numerical values associated with its geometry.
The material used for this structure is steel. The corresponding material properties are
 $7860\,\text{kg}/\text{m}^3$ for density,
 $2.1 \cdot 10^{11} \,\text{N}/\text{m}^2$ for the Young's modulus,  $2.5 \cdot 10^{8} \,\text{N}/\text{m}^2$ for the yield stress and $\zeta=0.07$ for the (uniform) damping ratio.  Calculations were performed with  time-step  $\Delta t := 5.0\cdot 10^{-2}\, \text{s}$.
We refer to Sub-figure \ref{fig:PoFvsGroundAcc}  for a graph of the optimal bound on the probability of failure  \eqref{eq:Seismo:OUQopt} versus
the maximum ground acceleration \ref{eq:maxgroundacc} (in $\text{m} \cdot \text{s}^{-2}$). Using Esteva's semi empirical formula  \eqref{eq:maxgroundacc} with a hypocentral distance $R$ equal to $25 \, \text{km}$ we obtain
  Sub-figure \ref{fig:PoFvsML}, the graph of the optimal bound on the probability of failure  \eqref{eq:Seismo:OUQopt} versus
the earthquake of magnitude $M_{\mathrm{L}}$ in the Richter (local magnitude) scale at hypocentral distance $R$ (the difference  $\Delta M_{\mathrm{L}}$ between two consecutive points is $0.25$).
  The ``S'' shape of the graph is typical of vulnerability curves \cite{LopezRocha:2009}.
   We select one of the points in the transition region for further analysis --- the point corresponding to a probability of failure of $0.631$, a maximum ground acceleration of $0.892 \, \text{m} \cdot \text{s}^{-2}$ and an earthquake of magnitude $6.5$.
The vulnerability curve undergoes a sharp transition (from small probabilities of failures to unitary probabilities of failures) around maximum ground acceleration $a_{\rm max} = 0.892  \, \text{m} \cdot \text{s}^{-2}$. This transition becomes smoother as the number of independent variables in the description of the admissible set is increased (results not shown).

For $a_{\rm max} = 0.892 \, \text{m} \cdot \text{s}^{-2}$ ($M_{\mathrm{L}}=6.5$),
Sub-figures \ref{fig:transferfunction} and \ref{fig:tauidistribution} show the
(deterministic) transfer function $\psi$ (the units in the x-axis are seconds) and  $3$ independent realizations of the earthquake source $s(t)$ sampled from the measure $\mu_{0.892}$ maximizing the probability of failure.
 For this measure, Sub-figure \ref{fig:elements} shows the axial strain of all elements versus time (in seconds) and
 Sub-figure \ref{fig:tower2} identifies the ten weakest elements for the most probable earthquake (the axial strain of these  elements are: $0.00142317$, $0.00125928$, $0.00099657$, $0.00081897$, $0.00076223$, $0.00075958$, $0.00072190$, $0.00068266$, $0.00062919$, and $0.00061361$) --- the weakest two elements exceed the yield strain of $0.00119048$ (shown in red in the figure).
Sub-figures \ref{fig:Horizontalga} and \ref{fig:powerspec1} show $3$ independent horizontal ground acceleration and a power spectrum
sampled from $\mu_{0.892}$. The units in Sub-figure \ref{fig:powerspec1} are cycles per seconds for the $x$ axis and $\text{m} \cdot \text{s}^{-2}$ for the $y$ axis. The units in Sub-figure \ref{fig:Horizontalga} are seconds for the $x$ axis and $\text{m} \cdot \text{s}^{-2}$ for the $y$ axis.

An quantitative analysis of the numerical results also show that all the constraints are active at the extremum (i.e. the generalized moments inequality  constraints on $\mu$ defining the information set introduced in Sub-section \ref{subsec:firstform} are equalities or near equalities at the extremum) . The positions and weights of the Dirac masses associated with durations and transfer coefficients do not appear to show any discernible trend. However, the positions and weights of the Dirac masses associated with the amplitudes $X_1,\ldots,X_M$  show a trend (as function of the earthquake magnitude $M_{\mathrm{L}}$) illustrated in Figure \ref{fig:seismicamplitudes}. This trend suggests that for strong earthquakes, probabilities of failures are maximized via (the possibility of) large amplitude impulses.

\begin{figure}[tp]
	\begin{center}
		\subfigure[$M_{\mathrm{L}}=6$]{\label{fig:seis600}
			\includegraphics[width=0.31\textwidth]{./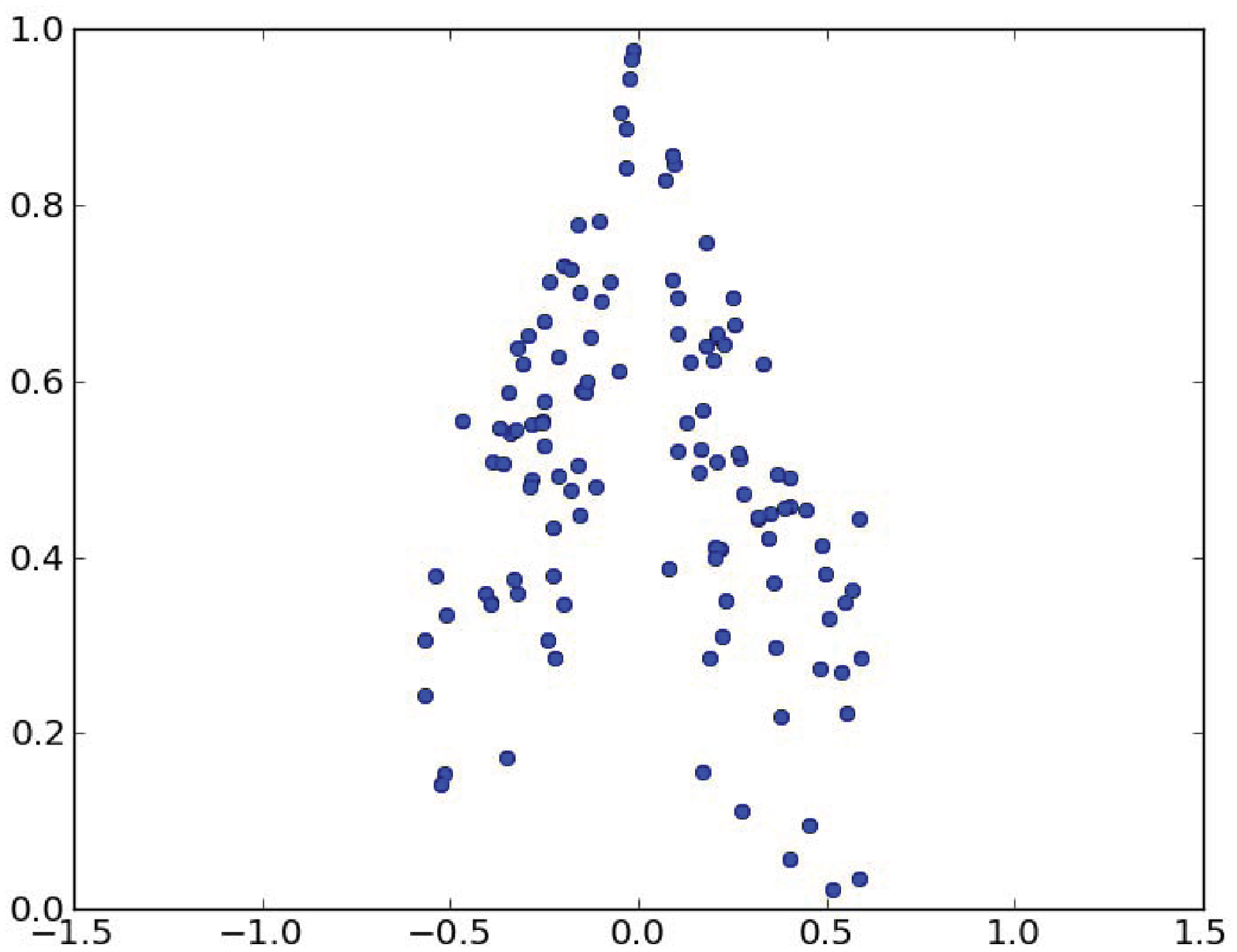}}
		\subfigure[$M_{\mathrm{L}}=6.5$]{\label{fig:seis650}
			\includegraphics[width=0.31\textwidth]{./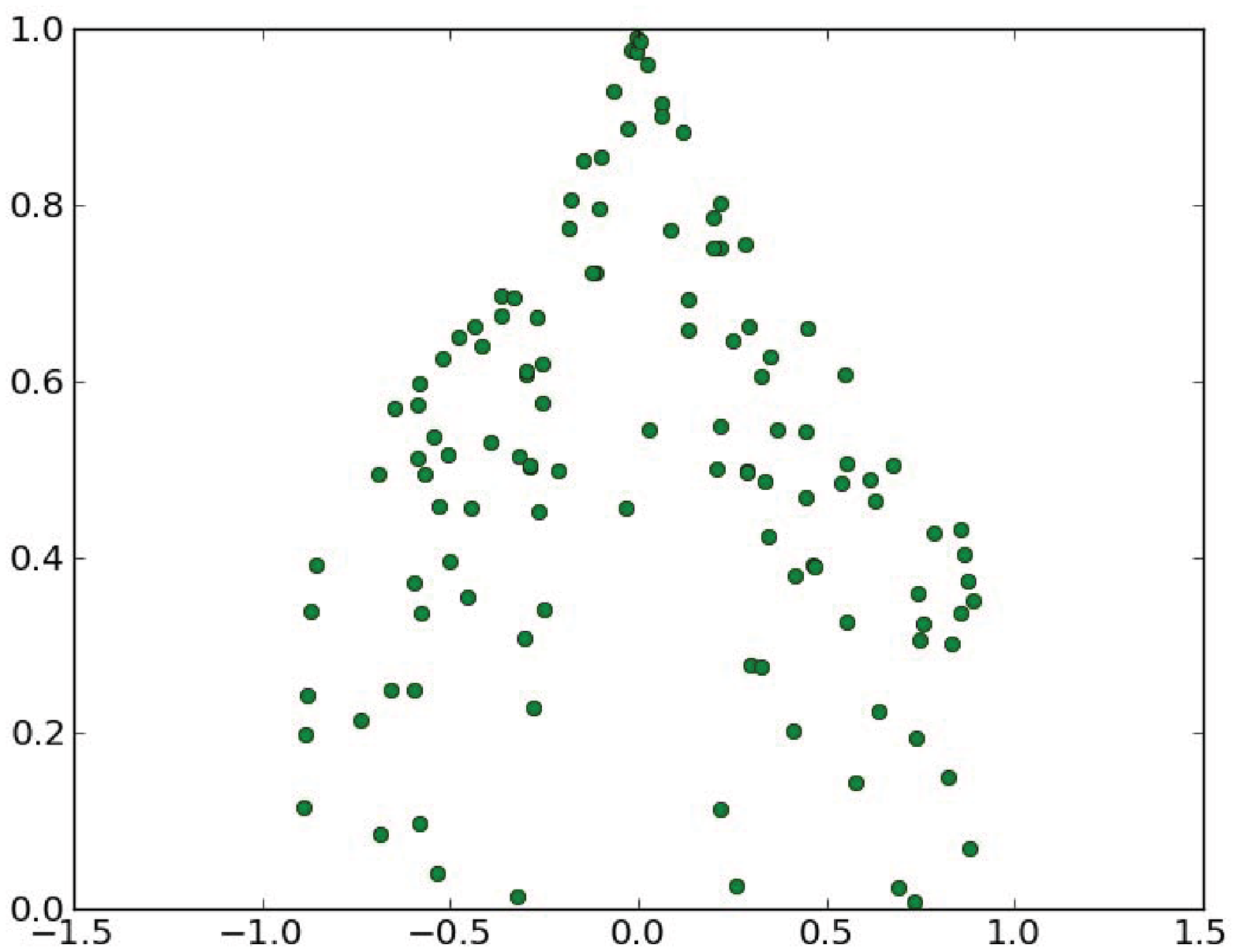}}
		\subfigure[$M_{\mathrm{L}}=7$]{\label{fig:seis700}
			\includegraphics[width=0.31\textwidth]{./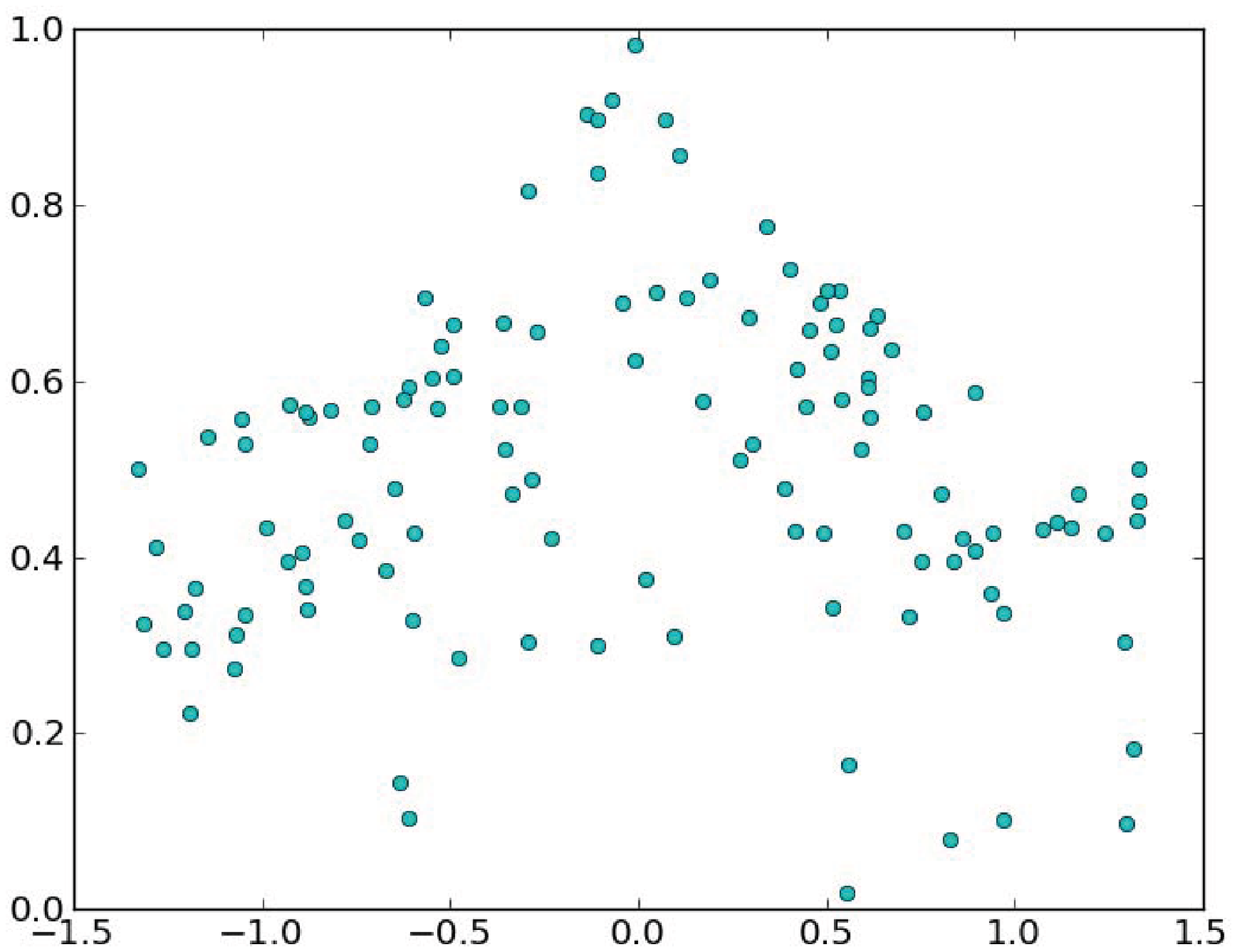}}
		\caption{Positions (abscissa, in $\text{m} \cdot \text{s}^{-2}$) and weights (ordinates) of the Dirac masses associated with the measure of probability on $X_1,\ldots,X_B$ at the extremum for earthquakes of magnitude $M_{\mathrm{L}}=6$, $M_{\mathrm{L}}=6.5$ and $M_{\mathrm{L}}=7$. Note that the positions in abscissa correspond to the possible amplitudes of the impulses $X_i$.
}
		\label{fig:seismicamplitudes}
	\end{center}
\end{figure}

\begin{figure}[tp]
	\begin{center}
		\subfigure[Estimated Maximum PoF vs iterations]{\label{fig:convpoftruss}
			\includegraphics[width=0.45\textwidth]{./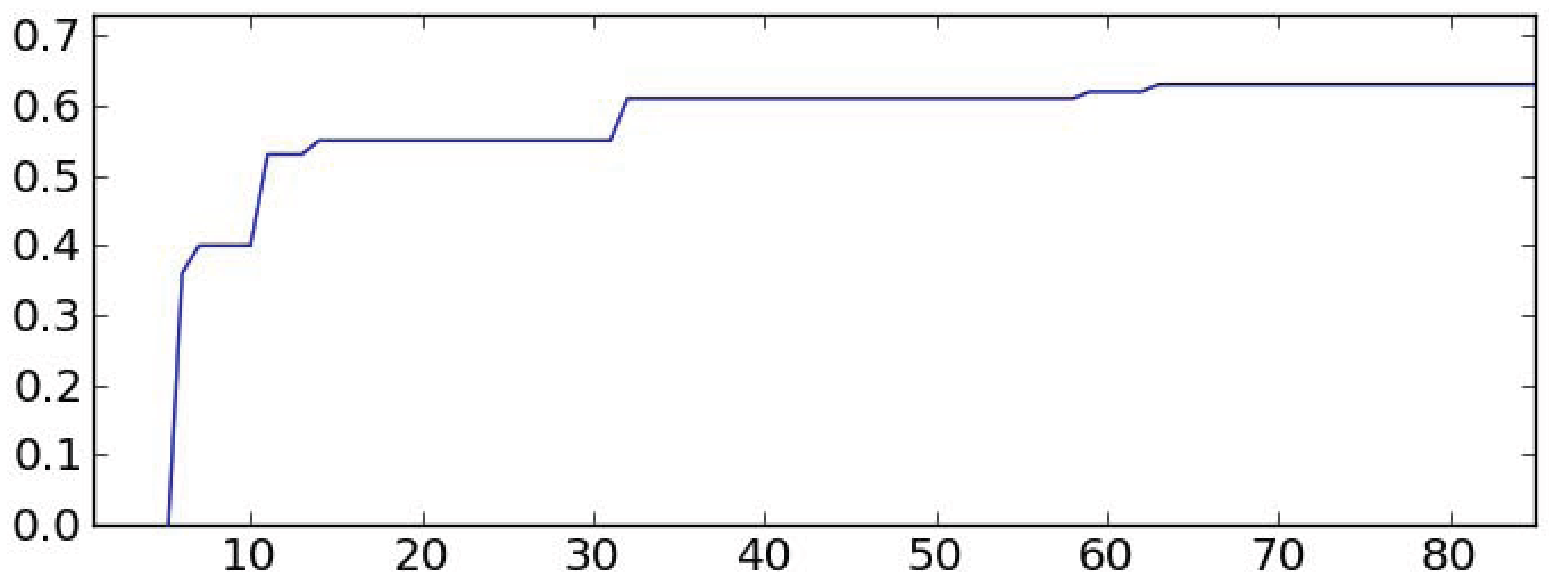}}
		\subfigure[Dirac Positions vs iterations]{\label{fig:convdiracstruss}
			\includegraphics[width=0.45\textwidth]{./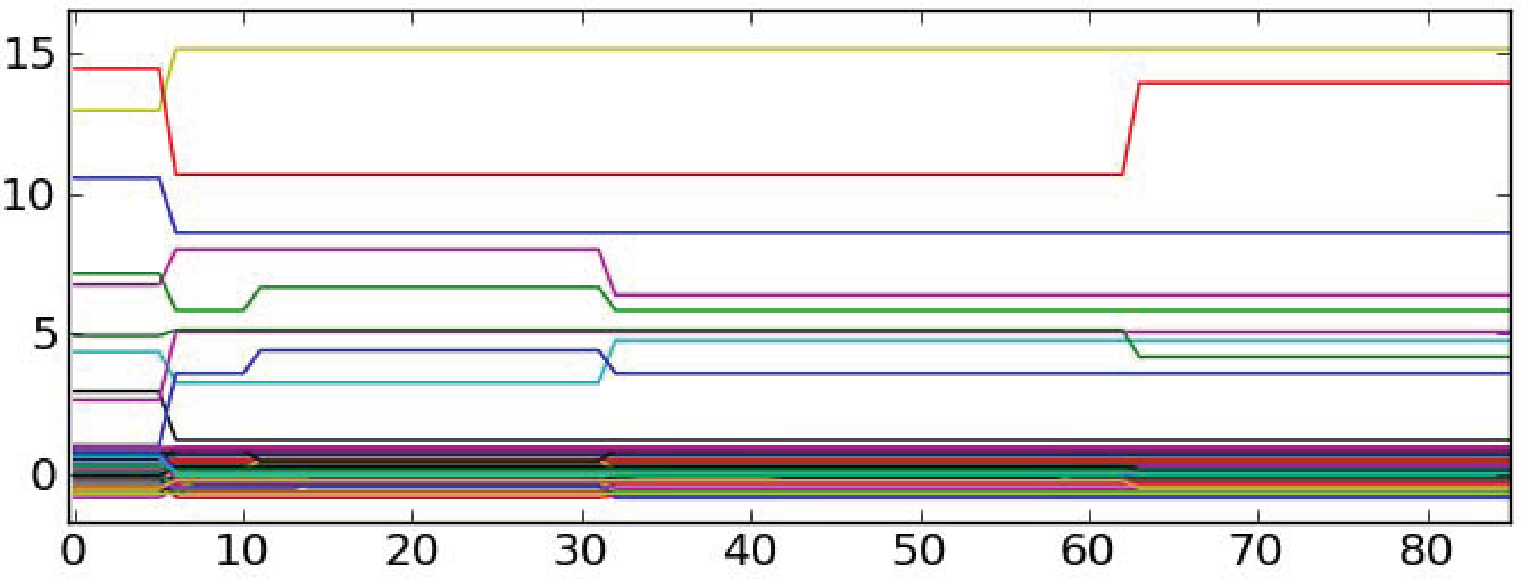}}
		\caption{\subref{fig:convpoftruss}: Estimated maximum probability of failure versus number of iterations for an earthquake of magnitude $M_{\mathrm{L}}=6.5$ (this corresponds to the point in transition region of Sub-figure \ref{fig:PoFvsML}). \subref{fig:convdiracstruss}: re-normalized positions of the Dirac masses for  $M_{\mathrm{L}}=6.5$.
}
		\label{fig:convergetrussdiracspof}
	\end{center}
\end{figure}

\paragraph{On the numerical optimization algorithm.}
Global search algorithms often require hundreds of iterations and thousands of function evaluations, due to their stochastic nature, to find a global optimum.
Local methods, like Powell's method \cite{Powell:1994}, may require orders of magnitude fewer iterations and evaluations, but do not generally converge to a global optimum in a complex parameter space.
To compute the probability of failure, we use a Differential Evolution algorithm \cite{PriceStornLampinen:2005, StornPrice:1997} that has been modified to utilize large-scale parallel computing resources \cite{McKernsHungAivazis:2009}.
Each iteration, the optimizer prepares $m$ points in parameter space, with each new point derived through random mutations from the 'best' point in the previous iteration.
We select $m = 40$, which is of modest size compared to the dimensionality of the problem --- however, we chose this modest size because populations larger than $m = 40$ only modestly increase the efficiency of the algorithm. Each of these $m$ evaluations are performed in parallel on a computer cluster, such that the time required for a single iteration equals the time required for a single function evaluation.
After $n$ iterations complete, the optimal probability of failure for the product measure is returned (convergence is observed around $n\approx 200$ and we select $n\approx 2000$ for the robustness of the result).

Only one iteration is required for values of ground acceleration on the extremes of the range (such as $M_{\mathrm{L}}= 2$ and  $M_{\mathrm{L}}=9$). The number of iterations required for convergence for points in the transition region (around $M_{\mathrm{L}}=6.5$) is between $30$ and $50$ (which corresponds to $2,400$ to $4,000$ function evaluations). We refer to Figure \ref{fig:convergetrussdiracspof} for an illustration of the convergence of the optimization algorithm for $M_{\mathrm{L}}=6.5$.

Each function evaluation  takes approximately $0.5\,\text{s}$ on a high-performance computing cluster (such as the high-performance computing clusters used at the National Labs). With each iteration utilizing $m = 40$ parallel processors, the OUQ calculation takes roughly $24\,\text{hrs}$.

Approximately $1000$ time steps are required for accuracy in the strain calculations, each function evaluation requires two convolutions over time.
Because of the size of the truss structure ($198$ elements), eigenvalues have to be computed with high accuracy.
Because of the size of the product measure associated with the numerical optimization iterates, the probability of failure (associated with these iterates) should be estimated with a controlled  (and adapted) tolerance rather than computed exactly --- we use a sampling size of $5000$ points.

\subsection{OUQ and critical excitation.}

Without  constraints on ground acceleration, the ground motion yielding the maximum peak response (maximum damage in a deterministic setting) has been referred to as the \emph{critical excitation} \cite{Drenick:1970}. Drenick himself  pointed out that a seismic design based on critical excitation could be ``far too pessimistic to be practical'' \cite{Drenick:1973}. He  later  suggested that the combination of probabilistic approaches with worst-case analysis should be employed to make the seismic resistant design robust \cite{Drenick:1977a}.  Practical
application and extension of critical excitation methods have then been made extensively and we refer to \cite{Takewaki:2002} and \cite{Takewaki:2007} for recent reviews.
The probabilities of failures obtained from stochastic approaches depend on particular choices of probability distribution functions.
Because of the scarcity of recorded time-histories, these choices involve some degree of arbitrariness \cite{SteinWysession:1991, Takewaki:2002} that may be incompatible with the certification of critical structures and rare events \cite{Drenick:1980}.
We suggest that by allowing for very weak assumption on probability measures, the reduction theorems associated with the OUQ framework could
lead to certifications methods that are both robust (reliable) and practical (not overly pessimistic).
Of course this does require the identification of a reliable and narrow information set.
The set $\mathcal{A}$ used in this paper does not include all the available information on earthquakes. We also suggest that the method of selecting next best experiments could help in this endeavor.

Observe also that without constraints, worst-case scenarios correspond to focusing the energy of the earthquake in modes of resonances of the structure.
Without correlations in the ground motion these scenarios correspond to rare events where independent random variables must conspire to strongly excite a specific resonance mode.  The lack of information on the transfer function $\psi$ and the mean values $\E[\tau_i]$ permits scenarios characterized by strong correlations in ground motion where  the energy of the earthquake can be focused in the above mentioned modes of resonance.

\subsection{Alternative formulation in the frequency domain}

A popular method for modeling and synthesizing seismic ground motion is to use (deterministic) shape functions and envelopes in the frequency domain (see  \cite{TrainssonKiremidjianWinterstein:2000} for a review).

In this sub-section, we will evaluate the safety of the electrical tower shown in Sub-figure \ref{fig:tower2} using an admissible set $\mathcal{A}_{F}$ defined from weak information on the probability distribution of the power spectrum of the seismic ground motion.

\subsubsection{Formulation of the information set}

We assume that the (three dimensional) ground motion acceleration is given by
\begin{equation}
	\label{eq:Sesmo:fourier}
	\ddot{u}_0(t) := \sum_{k=1}^{W} \big((A_{6k-5},A_{6k-4},A_{6k-3})  \cos(2\pi \omega_k t)+(A_{6k-2},A_{6k-1},A_{6k}) \sin(2\pi \omega_k t)\big),
\end{equation}
where the Fourier coefficients $A_j$ are random variables (in $\R$) of unknown distribution.
We assume that $W:=100$ and that $\omega_k:= k/\tau_d$ with $\tau_d=20\,\text{s}$. Writing $A:=(A_{1},\ldots, A_{6W})$,
 we assume that
\begin{equation}\label{eq:proboneA}
\P\left[A \in B(0, a_{\max})\setminus B(0,\tfrac{a_{\max}}{2})\right]=1,
\end{equation}
 where $a_{\max}$ is given by Esteva's semi-empirical expression \eqref{eq:maxgroundacc} and $B(0, a_{\max})\setminus B(0,\frac{a_{\max}}{2})$ is the Euclidean ball of $\R^{6W}$ of center $0$ and radius $a_{\max}$ minus the Euclidean ball of center $0$ and radius $\frac{a_{\max}}{2}$.

Although different earthquakes have different power spectral densities it is empirically observed that ``on average'', their power spectra follow specific shape functions that may depend on the ground structure of the site where the earthquake is occurring \cite{LamaWilsonaHutchinsona:2000}. Based on this observation, synthetic seismograms are produced by filtering the Fourier spectrum of white noise with these specific shape functions \cite{LamaWilsonaHutchinsona:2000}.  In this sub-section, our information on the distribution of $A$ will be limited to the shape of the mean value of its power spectrum. More precisely, we will assume that, for $k\in \{1,\ldots,W\}$ and $j\in \{0,\ldots,5\}$,
\begin{equation}
	\label{eq:Sesmo:average}
	\E[A_{6k-j}^2]=\frac{a_{\max}^2}{12} \frac{ s(\omega_k)}{s_0},
\end{equation}

where $s$ is the Matsuda--Asano shape function \cite{MatsudaAsano:2006} given by:
\begin{equation}\label{eq:matsano}
s(\omega):=\frac{\omega_g^2 \omega^2}{(\omega_g^2-\omega^2)^2+4 \xi_g^2 \omega_g^2, \omega^2},
\end{equation}
where $\omega_g$ and $\xi_g$ are the natural frequency and natural damping factor of the site and
\begin{equation}\label{jhsgsdhgdsjgd}
s_0:=\sum_{k=1}^{W} s(\omega_k).
\end{equation}
We will use the numerical values $\omega_g=6.24 \,\text{Hz}$ and $\xi_g=0.662$  associated with
the January 24, 1980 Livermore earthquake (see \cite{LiAn:2008}, observe that we are measuring frequency in cycles per seconds instead of radians per seconds).
The purpose of the normalization factor \eqref{jhsgsdhgdsjgd} is to enforce the following mean constraint:
\begin{equation}
\E\left[\frac{1}{\tau_d}\int_0^{\tau_d}|\ddot{u}_0(t)|^2 \,\mathrm{d}t \right]=\frac{1}{2}\E\left[|A|^2\right]=\frac{a_{\max}^2}{4}.
\end{equation}
Observe also that \eqref{eq:proboneA}  implies that, with probability one,
\begin{equation}
\frac{a_{\max}^2}{8} \leq \frac{1}{\tau_d}\int_0^{\tau_d}|\ddot{u}_0(t)|^2 \,\mathrm{d}t \leq \frac{a_{\max}^2}{2}.
\end{equation}
We write $\mathcal{A}_{F}$ the set of probability measures $\mu$ on $A$ satisfying \eqref{eq:proboneA} and \eqref{eq:Sesmo:average}.

\subsubsection{OUQ objectives}
Let $(Y_1,\ldots,Y_J)$ and $(S_1,\ldots,S_J)$ be the axial and yield strains
introduced in Sub-section \ref{subsec:formprobseismic}.
Writing $S:=[-S_1,S_1]\times \cdots \times [-S_J,S_J]$ (this is the safe domain for the axial strains),
we are interested in computing optimal (maximal and minimal with respect to measures $\mu\in \mathcal{A}_{F}$) bounds on the probability (under $\mu$) that
$Y(t) \not \in S$ for some $t\in [0,\tau_d]$ (defined as the probability of failure).
From the linearity of equations \eqref{eq:Seismo:EoM2},
the strain of member $i$ ($i\in \{1,\ldots,J\}$) at time $t$ can be written
\begin{equation}\label{eq:oiwohdh}
Y_i(t)=\sum_{j=1}^{6W} \Psi_{ij}(t) A_{j}.
\end{equation}
Let  $\Psi(t)$ be the $J\times (6W)$ tensor $(\Psi_{ij}(t))$ and
observe that Equation \eqref{eq:oiwohdh} can be also be written $Y(t)=\Psi(t) A$.
Let $\mathcal{F}$ be the subset of $\R^{6W}$ defined as the elements $x$ of $B(0,a_{\max})\setminus B(0,\frac{a_{\max}}{2})$ such that $\Psi(t) x \notin [-S_1,S_1]\times \cdots\times [-S_J,S_J]$ for some $t \in [0,\tau_d]$, i.e.
\begin{equation}
\mathcal{F}:=\left\{x\in B(0, a_{\max})\setminus B(0,\frac{a_{\max}}{2}) \,\middle|\, \Psi(t) x \not \in S\quad\text{for some}\quad t \in [0,\tau_d]\right\}.
\end{equation}
Observe $\mathcal{F}$ corresponds to the set of vectors $A$ (in \eqref{eq:Sesmo:fourier}) that lead to a failure of the structure.
Henceforth, our objective can be formulated as computing
\begin{equation}\label{eq:objspectrum}
\sup_{\mu \in \mathcal{A}_{F}} \mu\big[ A \in \mathcal{F} \big]\quad \text{and} \quad \inf_{\mu \in \mathcal{A}_{F}} \mu\big[A \in \mathcal{F} \big],
\end{equation}
where $\mathcal{A}_{F}$ is the set of probability measures $\mu$ such that\\	$\mu\big[A \in B(0, a_{\max})\setminus B(0,\frac{a_{\max}}{2})\big]=1$, and that
\begin{equation}\label{eq:constA}
	\E_\mu[A_j^2]= b_j\quad \text{with}\quad b_j:=\frac{a_{\max}^2}{12} \frac{ s(\omega_{\lfloor (j+5)/6 \rfloor})}{s_0}.
\end{equation}

In other words, $\mathcal{A}_F$ an infinite-dimensional polytope defined as the set of probability measures on ground acceleration that have the Matsuda--Asano average power spectra \eqref{eq:matsano}. It is important to observe that that with the filtered white noise method the safety of the structure is assessed for a single measure $\mu_0 \in \mathcal{A}_F$ whereas in the proposed OUQ framework we compute best and worst-case scenarios with respect to all measures in $\mathcal{A}_F$.

\subsubsection{Reduction of the optimization problem with Dirac masses}
\label{subsec:dirmassalt}

Since \eqref{eq:constA} corresponds to $6W$ global linear constraints on $\mu$, Theorem \ref{thm:baby_measure} implies that the extrema of problem \eqref{eq:objspectrum} can be achieved by assuming $\mu$
 to be a weighted sum of Dirac masses $\sum_{j=1}^{6W+1} p_j \delta_{Z_{.,j}}$
where $Z_{.,j}\in B(0, a_{\max})\setminus B(0,\frac{a_{\max}}{2})$, $p_j\geq 0$ and $\sum_{j=1}^{6W+1} p_j=1.$
The constraints \eqref{eq:constA} can then be written: for $i\in \{1,\ldots, 6W\}$,
$\sum_{j=1}^{6W+1} Z_{i,j}^2 p_j=b_i$.
Furthermore,
$ \mu\big[ A \in \mathcal{F}  \big]=\sum_{j\,:\,Z_{.,j}\in \mathcal{F} } p_j$.

\subsubsection{Reduction of the optimization problem based on strong duality}
\label{subsec:dirmassalt2}

Since the information contained in $\mathcal{A}_{F}$ is limited to constraints on the moments of $A$, strong duality can be employed to obtain an alternative reduction of problems \eqref{eq:objspectrum}. Indeed, Theorem 2.2 of \cite{BertsimasPopescu:2005} implies that
\begin{equation}\label{eq:maxrhssup}
\sup_{\mu \in \mathcal{A}_{F}} \mu\big[ A \in \mathcal{F} \big]
=\inf_{(H_0,H)\in \R^{6W+1}} H_0 + \sum_{i=1}^{6W} H_{i} b_i,
\end{equation}
where the minimization problem (over the vector $(H_0,H):=(H_0,H_1,\ldots,H_{6W})\in \R^{6W+1}$) in the right hand side of \eqref{eq:maxrhssup} is subject to
\begin{equation}
\sum_{i=1}^{6W} H_{i} x_i^2 +H_0 \geq \chi(x) \quad \text{on}\quad B(0, a_{\max})/B(0,\frac{a_{\max}}{2}),
\end{equation}
where $\chi(x)$ is the function equal to $1$ on $\mathcal{F}$ and $0$ on $(\mathcal{F})^c$ (we note
$(\mathcal{F})^c$  the complement of $\mathcal{F}$, i.e.\ the set of $x$ in $\R^{6W}$ that are not elements of $\mathcal{F}$).
Similarly,
\begin{equation}\label{eq:maxrhsinf}
\inf_{\mu \in \mathcal{A}_{F}} \mu\big[A\in \mathcal{F} \big]
=\sup_{(H_0,H)\in \R^{6W+1}} H_0 + \sum_{i=1}^{6W} H_{i} b_i,
\end{equation}
where the maximization problem in the right hand side of \eqref{eq:maxrhsinf} is subject to
\begin{equation}
\sum_{i=1}^{6W} H_{i} x_i^2 +H_0 \leq \chi(x) \quad \text{on}\quad B(0, a_{\max})/B(0,\frac{a_{\max}}{2}).
\end{equation}
We conclude from these equations (by optimizing first with respect to $H_0$) that the optimal upper bound on the probability of failure (defined as the probability that the displacement $Y(t)$ does not belong to the safe region $S$ for all time $t$ in the interval $[0,\tau_d]$) is
\begin{equation}\label{eq:redsimpseisu}
\sup_{\mu \in \mathcal{A}_{F}} \mu\big[ A \in \mathcal{F} \big]=\inf_{H\in \R^{6W}} \sup_{x\in B(0, a_{\max})/B(0,\frac{a_{\max}}{2})} \chi(x) + \sum_{i=1}^{6W} H_{i} (b_i-x_i^2),
\end{equation}
whereas the optimal lower bound is
\begin{equation}\label{eq:redsimpseisl}
\inf_{\mu \in \mathcal{A}_{F}} \mu\big[ A \in \mathcal{F} \big]=\sup_{H\in \R^{6W}} \inf_{x\in B(0, a_{\max})/B(0,\frac{a_{\max}}{2})}\chi(x) + \sum_{i=1}^{6W} H_{i} (b_i-x_i^2).
\end{equation}
Observe that problem \eqref{eq:redsimpseisu} is convex in $H\in \R^{6W}$ whereas problem \eqref{eq:redsimpseisl} is concave.

\begin{figure}[tp]
	\begin{center}
			\includegraphics[width=0.6\textwidth]{./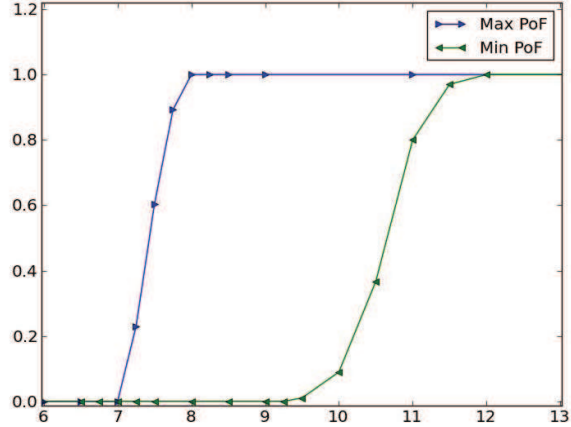}
		\caption{Maximum and minimum probability of failure of the structure (as defined in \eqref{eq:objspectrum}) versus
the earthquake of magnitude $M_{\mathrm{L}}$ in the Richter (local magnitude) scale at hypocentral distance $R=25 \, \text{km}$ ($a_{\max}$ is given by Esteva's semi-empirical expression \eqref{eq:maxgroundacc} as a function of $M_{\mathrm{L}}$). The curve corresponding to maximum probability of failure is not the same as the one given in Sub-figure \ref{fig:PoFvsML} because it is based on a different information set.
}
		\label{fig:seismicminmax}
	\end{center}
\end{figure}

\subsubsection{Numerical results}

The optimal bounds \eqref{eq:objspectrum} can be computed using the reduction to masses to Dirac described in Sub-section \ref{subsec:dirmassalt} or strong duality as described in Sub-section \ref{subsec:dirmassalt2}. While the latter does not identify the extremal measures it
leads to a smaller optimization problem than the former (i.e. to optimization variables in $\R^{12W}$, instead of $\R^{(6W+1)\times (6W+1)}$).
The simplification is allowed by the facts that the response function is well identified, that there are no independence constraints, and that the information on $A$ is limited to $6W$ (scalar) moment constraints. The vulnerability curves of Figure \ref{fig:seismicminmax} have been computed using strong duality as described in Subsection \ref{subsec:dirmassalt2} (an identification of extremal measures would require using the method described in Subsection \ref{subsec:dirmassalt}). Observe that to decrease the gap between the maximum probability of failure and the minimum probability of failure, one would have to refine the information on the probability distribution of ground motion (by, for instance, adding constraints involving the correlation between the amplitudes $A_i$ different Fourier modes).
 To solve optimization problems \eqref{eq:redsimpseisu} and \eqref{eq:redsimpseisl}
we use the modified Differential Evolution algorithm described in Subsection \ref{subsec:seismicnumresphysdom}.
Equation \eqref{eq:redsimpseisu} is implemented as a minimization over $H$, where a nested maximization over $x$ is used to solve for $\sup_{x\in B(0, a_{\max})/B(0,\frac{a_{\max}}{2})} \chi(x) - \sum_{i=1}^{6W} H_{i} x_i^2$ at each function evaluation. Both the minimization over $H$, and the maximization over $x$ use the Differential Evolution algorithm described above, where the optimizer configuration itself differs only in that for the nested optimization termination occurs when the maximization over $x$ does not improve by more than
$10^{-6}$ in $20$ iterations, while the outer optimization is terminated when there is not more than $10^{-6}$ improvement over $100$ iterations.
The optimization over $H$ is performed in parallel, as described in Subsection \ref{subsec:seismicnumresphysdom}, where each of the nested optimizations over $x$ are distributed across nodes of a high-performance computing cluster. Each of the (nested) optimizations over $x$ require only a few seconds on average, and thus are performed serially.  Convergence, on average takes about $15$ hours, and is obtained in roughly $2000$ iterations (over H), corresponding to $35000$ to $50000$ function evaluations.  Each function evaluation is a nested optimization over $x$, which takes a few seconds on a high-performance computing cluster.

\section{Application to Transport in Porous Media}
\label{Sec:porous}

We  now apply the OUQ framework and reduction theorems to divergence form elliptic PDEs and consider the situation where coefficients (corresponding to microstructure and source terms) are random and have imperfectly known probability distributions.  Treating those distributions as optimization variables (in an infinite-dimensional space) we obtain optimal bounds on probabilities of deviation of solutions.  Surprisingly, explicit and optimal bounds show that, with incomplete information on the probability distribution of the microstructure, uncertainties or information do not necessarily propagate across scales.

To make this more precise in a simple setting, let $\mathcal{D} \subseteq \R$ be a bounded domain and consider $u(x,\omega)$, the solution of the following stochastic elliptic PDE:
\begin{equation}
	\label{eq:stochasticellipticPDE}
  \begin{cases}
    -\operatorname{div}(\kappa(x,\omega) \nabla u(x,\omega)) = f(x,\omega), \enspace & x\in\mathcal{D}\\
    u(x,\omega) = 0, & x\in\partial\mathcal{D}
  \end{cases}
\end{equation}
with random microstructure $\kappa$ and random (positive) source term $f$.  Physically, $u$ can be interpreted as the pressure (or head) in a medium of permeability $\kappa$ with source $f$;  the fluid velocity is given by $\nabla u$.  For a given point $x_{0}$ in the interior of $\mathcal{D}$, we are interested in computing the least upper bound on the probability of an unsafe supercritical pressure at $x_{0}$:
\begin{equation}
	\mathcal{U}(\mathcal{A}):=\sup_{\mu\in \mathcal{A}} \mathbb{E}_\mu\big[\log u(x_0,\omega)\geq \mathbb{E}_\mu[\log u(x_0,\omega)]+a\big],
\end{equation}
where $\mathcal{A}$ is a set of probability measures on $(\kappa,f)$.  In this section we will focus on the two admissible sets $\mathcal{A}$ described below.

Let $D_1,D_2\geq 0$, $K, F \in L^\infty(\mathcal{D})$ such that
$\operatorname{essinf}_{\mathcal{D}} K >0$, $F\geq 0$, and $\int_{\mathcal{D}} F >0$. Define
\begin{equation}
	\mathcal{A}_{\kappa,f}:= \left\{ \mu\,\middle|\,
	\begin{matrix}
		\kappa,f \text{ independent under $\mu$,} \\
		K(x) \leq \kappa(x,\omega) \leq  e^{D_1} K(x),\\
		F(x) \leq f(x,\omega) \leq e^{D_2} F(x)
	\end{matrix} \right\}.
\end{equation}
We say that a function $g$ defined on $\mathcal{D}$ is periodic of period $\delta$ if for all $x\in \mathcal{D}$, it holds that $g(x)=g(x+\delta)$ whenever $x+\delta \in \mathcal{D}$.  We now define
\begin{equation}\label{eq:infosetA2k1k2}
	\mathcal{A}_{\kappa_1 \kappa_2}:= \left\{ \mu\,\middle|\,
		\begin{matrix}
			\kappa=\kappa_1 \kappa_2, \\
			\kappa_1,\kappa_2 \text{ independent under $\mu$,} \\
			\|\nabla\kappa_1\|_{L^\infty}\leq e^{D_1} \|\nabla K_1\|_{L^\infty}  \\
			\kappa_2 \text{ periodic of period } \delta   \\
			K_1(x) \leq \kappa_1(x,\omega) \leq  e^{D_1} K_1(x),\\
			K_2(x) \leq \kappa_2(x,\omega) \leq  e^{D_2} K_2(x),\\
		\end{matrix} \right\},
\end{equation}
where $0<\delta\ll 1$, $K_2 \in L^\infty(\mathcal{D})$ is uniformly elliptic over  $\mathcal{D}$ and periodic of period $\delta$, and $K_1$ is smooth and uniformly elliptic over $\mathcal{D}$.

PDEs of the form \eqref{eq:stochasticellipticPDE} have become a benchmark for stochastic expansion methods \cite{Ghanem:1999, Xiu:2009, BabuskaNobRa:2010, EldredWebsterConstantine:2008, DoostanOwhadi:2010, Todor07a, Bieri09a} and we also refer to \cite{GhanemDham:1998} for their importance for transport in porous media.

These PDEs have also been studied as classical examples in the UQ literature on the basis that the randomness in the coefficients (with a perfectly known probability distribution on the coefficients $(\kappa,f)$) is an adequate model of the lack of information on the microstructure $\kappa$. In these situations the quantification of uncertainties is equivalent to a push forward of the measure probability on $(\kappa,f)$.

However, in  practical situations the probability distribution on the coefficients $(\kappa,f)$ may not be known a priori and the sole randomness in coefficients may not constitute a complete characterization of uncertainties. This is our motivation for considering the problem described in this section.
We have also introduced the admissible set \eqref{eq:infosetA2k1k2} as a simple illustration of uncertainty quantification with multiple scales and incomplete information on probability distributions. To relate this example to classical homogenization \cite{BensoussanLionsPapanicolaou:1978} we
have assumed  $\kappa_2$ to be periodic of small period $\delta\ll 1$.

\begin{thm}\label{thmporous}
	We have
	\begin{equation}
		\mathcal{U}(\mathcal{A}_{\kappa,f})=\mathcal{U}(\mathcal{A}_{\kappa_1 \kappa_2})=\mathcal{U}(\mathcal{A}_{\mathrm{McD}}),
	\end{equation}
	with
		\begin{equation}
			\label{eq:McD_reduced_explicit_2dporous}
			\mathcal{U}(\mathcal{A}_{\mathrm{McD}}) =
			\begin{cases}
				0, & \text{if } D_{1} + D_{2} \leq a, \\
				\dfrac{(D_{1} + D_{2} -a)^{2}}{4 D_{1} D_{2}}, & \text{if } | D_{1} - D_{2} | \leq a \leq  D_{1} + D_{2}, \\
				1 - \dfrac{a}{\max (D_{1}, D_{2})}, & \text{if } 0\leq  a \leq | D_{1} - D_{2} |.
			\end{cases}
		\end{equation}
\end{thm}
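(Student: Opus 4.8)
The plan is to show that each of the two PDE admissible sets is, after an exact change of variables, an instance of the two-dimensional McDiarmid OUQ problem $\mathcal{A}_{\mathrm{McD}}$ of \eqref{eq:MDassumptions}, so that the explicit value \eqref{eq:McD_reduced_explicit_2dporous} is delivered by Theorem \ref{thm:m2}. Concretely, I would regard $r(\omega):=\log u(x_0,\omega)$ as a response function of two independent sources of randomness --- the coefficient field $\kappa$ and the source $f$ in the case of $\mathcal{A}_{\kappa,f}$, and the macroscopic and microscopic factors $\kappa_1,\kappa_2$ of the conductivity in the case of $\mathcal{A}_{\kappa_1\kappa_2}$ --- and establish that the component-wise oscillations of this response are bounded by $D_1$ and $D_2$, with equality attained. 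Once this is in hand, $(\,r-\E_\mu[r]\,)$ satisfies exactly the hypotheses defining $\mathcal{A}_{\mathrm{McD}}$ for $m=2$ with sub-diameters $(D_1,D_2)$, the reduction of Corollary \ref{cor:ouqreduce} applies, and Theorem \ref{thm:m2} gives the closed form.

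The two oscillation bounds rest on the scaling and comparison structure of \eqref{eq:stochasticellipticPDE}. For the source, linearity together with the maximum principle gives, for fixed $\kappa$, the sandwich $u_{F}\le u_{f}\le e^{D_2}u_{F}=u_{e^{D_2}F}$ whenever $F\le f\le e^{D_2}F$, since $f\mapsto u_f$ is linear and order preserving and $\int_{\mathcal{D}}F>0$ forces $u_F(x_0)>0$; hence $\operatorname{Osc}_f\bigl(\log u(x_0)\bigr)\le D_2$, with equality for the uniform choices $f\in\{F,e^{D_2}F\}$. For the coefficient, I would use the scaling identity $\kappa\mapsto e^{c}\kappa\Rightarrow u\mapsto e^{-c}u$ (so $\log u(x_0)\mapsto\log u(x_0)-c$) together with a comparison principle controlling the pointwise dependence of $u(x_0)$ on $\kappa$ to obtain $e^{-D_1}u_{K}(x_0)\le u_{\kappa}(x_0)\le u_{K}(x_0)$ for $K\le\kappa\le e^{D_1}K$, whence $\operatorname{Osc}_{\kappa}\bigl(\log u(x_0)\bigr)\le D_1$, again with equality for $\kappa\in\{K,e^{D_1}K\}$. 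I expect this pointwise comparison for the coefficient to be the main obstacle: unlike the source direction, the map $\kappa\mapsto u_\kappa(x_0)$ is governed by the Green's function of the operator and its monotonicity must be argued carefully (for instance through the energy/variational characterization of $u$ and the associated flux balance), rather than being immediate from the maximum principle.

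For the reverse inequality I would not need the full oscillation bound but only admissible scenarios attaining the extremal McDiarmid value. Restricting $\mu$ to product measures supported on the uniform scalings $\kappa\in\{K,e^{D_1}K\}$ and $f\in\{F,e^{D_2}F\}$, the scaling identities give the exactly additive representation
\[
	\log u(x_0,\omega)=\log u_{K,F}(x_0)-\xi_1(\omega)+\xi_2(\omega),\qquad \xi_1\in\{0,D_1\},\ \xi_2\in\{0,D_2\},
\]
with $\xi_1,\xi_2$ independent. This is precisely the additive extremal response $h^{C}$ underlying Theorem \ref{thm:m2}, so choosing the two-point weights to match the optimizer of \eqref{eq:McD_reduced_explicit_2d} produces a scenario in $\mathcal{A}_{\kappa,f}$ whose failure probability equals the right-hand side of \eqref{eq:McD_reduced_explicit_2dporous}. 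Combined with the upper bound this yields $\mathcal{U}(\mathcal{A}_{\kappa,f})=\mathcal{U}(\mathcal{A}_{\mathrm{McD}})$.

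Finally, for $\mathcal{A}_{\kappa_1\kappa_2}$ the macroscopic factor $\kappa_1$ plays the role of $\kappa$ above (its slow variation, enforced by the gradient constraint, keeps the perturbation essentially a multiplicative scaling of log-range $D_1$), while the microscopic factor $\kappa_2$, periodic of period $\delta\ll1$, must be treated by homogenization: as $\delta\to0$ the rapidly oscillating $\kappa_2$ enters $\log u(x_0)$ only through its effective (harmonic-type) homogenized constant, whose admissible log-range is again $D_2$ and whose extremizers are the uniform levels $\{K_2,e^{D_2}K_2\}$. Carrying out this homogenization estimate --- so that the microscopic uncertainty contributes exactly a $D_2$-oscillation variable independent of the $D_1$-variable --- is the second delicate point; once it is established, $\mathcal{A}_{\kappa_1\kappa_2}$ reduces to the same two-dimensional McDiarmid problem as $\mathcal{A}_{\kappa,f}$, giving $\mathcal{U}(\mathcal{A}_{\kappa_1\kappa_2})=\mathcal{U}(\mathcal{A}_{\mathrm{McD}})$ and completing the chain of equalities.
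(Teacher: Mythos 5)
Your overall strategy is the same as the paper's: show that the component-wise oscillations of $(\kappa,f)\mapsto\log u(x_0)$ are bounded by $D_1$ and $D_2$, so that every scenario in $\mathcal{A}_{\kappa,f}$ is a McDiarmid scenario with $m=2$ and Theorem \ref{thm:m2} supplies the upper bound, and then attain that bound with product measures supported on the exact scalings $\kappa\in\{K,e^{D_1}K\}$, $f\in\{F,e^{D_2}F\}$; your achievability step (the exactly additive representation of $\log u(x_0)$ on these supports) is correct and is precisely the paper's second step. The genuine gap is your argument for the $\kappa$-oscillation bound. The comparison principle you assert, $e^{-D_1}u_{K}(x_0)\le u_{\kappa}(x_0)\le u_{K}(x_0)$ for $K\le\kappa\le e^{D_1}K$, is false: the map $\kappa\mapsto u_\kappa(x_0)$ is not monotone. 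In the one-dimensional setting of Section \ref{Sec:porous}, the Green function of \eqref{eq:stochasticellipticPDE} on $(0,1)$ is $G_\kappa(x_0,y)=A(x_0\wedge y)\bigl(A(1)-A(x_0\vee y)\bigr)/A(1)$ with $A(x)=\int_0^x\kappa(z)^{-1}\,\mathrm{d}z$; writing $a,b,c$ for the contributions to $A(1)$ of the three subintervals cut out by $y$ and $x_0$, this equals $ac/(a+b+c)$, which is \emph{decreasing} in $b$. Hence raising $\kappa$ from $K$ to $e^{D_1}K$ only on the interval between the source location and $x_0$ leaves $a,c$ unchanged, decreases $b$, and strictly increases $u_\kappa(x_0)$ above $u_K(x_0)$. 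Worse, optimizing the three intervals separately (up for $a,c$, down for $b$, and conversely) shows that when $F$ is concentrated near a point the oscillation of $\log u(x_0)$ in the $\kappa$-direction genuinely exceeds $D_1$ (it tends to $2D_1$ already for $x_0$ at the midpoint and the source near the boundary), so no refinement of a maximum-principle, flux-balance, or variational argument can deliver the bound you need; the quantity that the variational characterization does control monotonically in $\kappa$ is the compliance $\int f u_\kappa$, not the pointwise value $u_\kappa(x_0)$. The paper does not attempt to prove this step at all: it imports the oscillation bounds on $\log u(x_0)$ from Theorem 2.11 and Proposition 2.13 of \cite{Owhadi:2003}, and this nontrivial external input (with whatever structural hypotheses it carries) is exactly what your proposal is missing.

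A second, independent problem is your treatment of $\mathcal{A}_{\kappa_1\kappa_2}$ by homogenization, which is both unnecessary and insufficient. Theorem \ref{thmporous} is an exact identity for each fixed period $\delta>0$, whereas a homogenization argument controls the influence of the oscillating factor $\kappa_2$ only up to errors that vanish as $\delta\to 0$; at best it would yield the equality asymptotically, not the stated result. The paper's proof of this case is verbatim the same as for $\mathcal{A}_{\kappa,f}$: the cited oscillation bounds give $\mathcal{U}(\mathcal{A}_{\kappa_1\kappa_2})\le\mathcal{U}(\mathcal{A}_{\mathrm{McD}})$, and the bound is attained by product measures supported on the constant multiples $\kappa_1\in\{K_1,e^{D_1}K_1\}$ and $\kappa_2\in\{K_2,e^{D_2}K_2\}$, which are admissible for \eqref{eq:infosetA2k1k2} (constant scalings preserve both the gradient bound and the $\delta$-periodicity) and for which $\log u(x_0)$ is again exactly additive by linearity of the PDE. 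No multiscale limit enters the argument.
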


Before giving the proof of Theorem \ref{thmporous}, we make a few important observations:

It follows from Theorem \ref{thmporous} that if $D_2 \geq a+D_1$,  then $\mathcal{U}(\mathcal{A}_{\kappa,f})(a,D_1,D_2)=\mathcal{U}(\mathcal{A}_{\kappa,f})(a,0,D_2)$. In other words, if the uncertainty on the source term $f$ is dominant, then the uncertainty associated with the microstructure, $\kappa$, does not propagate to the uncertainty corresponding to the probability of deviation of $\log u(x_0,\omega)$ from its mean.

Now consider $\mathcal{A}_{\kappa_1 \kappa_2}$.  Since $\kappa_1$ is constrained to be smooth and $\kappa_2$ periodic with period $\delta \ll 1$, one would expect the microstructure $\kappa_2$ to appear in the probability of deviation in a homogenized form.  However, Theorem \ref{thmporous} shows that if
$D_1\geq a+D_2$, then $\mathcal{U}(\mathcal{A}_{\kappa_1 \kappa_2})(a,D_1,D_2)=\mathcal{U}(\mathcal{A}_{\kappa_1 \kappa_2})(a,D_1,0)$.  More precisely, if the uncertainty associated with the background $\kappa_1$ is dominant, then the uncertainty associated with the microstructure $\kappa_2$ does not propagate to the uncertainty corresponding to the probability of deviation of $\log u(x_0,\omega)$ from its mean.

This simple but generic example suggests that for structures characterized by multiple scales or multiple modules, information or uncertainties may not propagate across modules or scales. This phenomenon can be explained by the fact that, with incomplete information, scales or modules may not communicate certain types of information. Henceforth, the global uncertainty of a modular system cannot be reduced without decreasing local dominant uncertainties. In particular, for modular or multi-scale systems, one can identify  (possibly large) accuracy thresholds (in terms of numerical solutions of PDEs or SPDEs) below which the global uncertainty of the system does not decrease.

\begin{proof}[Proof of Theorem \ref{thmporous}.]
	Let us now prove Theorem \ref{thmporous} with the admissible set $\mathcal{A}_{\kappa,f}$ (the proof with the set $\mathcal{A}_{\kappa_1 \kappa_2}$ is similar).  It follows from Theorem 2.11 and Proposition 2.13 of \cite{Owhadi:2003} that the maximum oscillation of $\log u(x_0,\omega)$ with respect to $\kappa$ and $f$ are bounded by $D_1$ and $D_2$ we obtain that
	\begin{equation}
		\mathcal{U}(\mathcal{A}_{\kappa,f})\leq \mathcal{U}(\mathcal{A}_{\mathrm{McD}}),
	\end{equation}
	where $\mathcal{U}(\mathcal{A}_{\mathrm{McD}})$ is defined in equation \eqref{eq:sjjshgdjhgejhge} (we consider the case $m=2$).
	
	Next, from the proof of Theorem \ref{thm:m2}, we observe that the bound $\mathcal{U}(\mathcal{A}_{\mathrm{McD}})$ can be achieved by $\mathcal{A}_{\kappa,f}$ by considering measures $\mu$ that are tensorizations of two weighted Dirac masses in $\kappa$ (placed at $K$ and $e^{D_1}K$) and two weighted Dirac masses in $f$ (placed at $F$ and $e^{D_1}F$). This concludes the proof.
\end{proof}

\section{Conclusions}
\label{sec:Conclusions}

\paragraph{The UQ Problem --- A Problem with UQ?}
The 2003 \emph{Columbia} space shuttle accident and the 2010 Icelandic volcanic ash cloud crisis have demonstrated two sides of the same problem:  discarding information may lead to disaster, whereas over-conservative safety certification may result in unnecessary economic loss and supplier-client conflict.  Furthermore, while everyone agrees that UQ is a fundamental component of objective science (because, for instance, objective assertions of the validity of a model or the certification of a physical system require UQ), it appears that not only is there no universally accepted notion of the objectives of UQ, there is also no universally accepted framework for the communication of UQ results.  At present, the ``UQ problem'' appears to have all the symptoms of an ill-posed problem;  at the very least, it lacks a coherent general presentation, much like the state of probability theory before its rigorous formulation by Kolmogorov in the 1930s.
\begin{compactitem}
	\item  At present, UQ is an umbrella term that encompasses a large spectrum of methods: Bayesian methods, Latin hypercube sampling, polynomial chaos expansions, stochastic finite-element methods, Monte Carlo, \emph{etc.}  Most (if not all) of these methods are characterized by a list of assumptions required for their application or efficiency.  For example, Monte Carlo methods require a large number of samples to estimate rare events;  stochastic finite-element methods require the precise knowledge of probability density functions and some regularity (in terms of decays in spectrum) for their efficiency; and concentration-of-measure inequalities require uncorrelated (or weakly correlated) random variables.
	\item  There is a disconnect between theoretical UQ methods and  complex systems of importance requiring UQ in the sense that the assumptions of the methods do not match the assumption/information set of the application.  This disconnect means that often a specific method adds inappropriate implicit or explicit assumptions (for instance, when the knowledge of probability density functions is required for polynomial chaos expansions, but is unavailable) and/or the repudiation of relevant information (for instance, the monotonicity of a response function in a given variable) that the method is not designed to incorporate.
\end{compactitem}

\paragraph{OUQ as an opening gambit.}
OUQ is not the definitive answer to the UQ problem, but we hope that it will help to stimulate a discussion on the development of a rigorous and well-posed UQ framework analogous to that surrounding the development of probability theory.
The reduction theorems of Section \ref{sec:Reduction}, the Optimal Concentration Inequalities and non-propagation of input uncertainties of Section \ref{sec:Reduction-mcd}, the possibility of the selection of optimal experiments described at the end of Section \ref{sec:OUQ}, and the numerical evidence of Section \ref{sec:ComputationalExamples} that (singular, i.e.\ low-dimensional) optimizers are also attractors, suggest that such a discussion may lead to non-trivial and worthwhile questions and results at the interface of optimization theory, probability theory, computer science and statistics.

In particular, many questions and issues raised by the OUQ formulation remain to be investigated. A few of those questions and issues are as follows:
\begin{compactitem}
\item Any (possibly numerical) method that finds admissible states $(f,\mu)\in \mathcal{A}$ leads to rigorous lower bounds on $\mathcal{U}(\mathcal{A})$. It is known that duality techniques  lead to upper bounds on $(f,\mu)\in \mathcal{A}$ provided that the associated Lagrangians can be computed. Are there interesting classes of problems for which those Lagrangians can rigorously be estimated or bounded from above?
\item  The reduction theorems of Section \ref{sec:Reduction} are limited to linear constraints on probability distribution marginals and the introduction of sample data may lead to other situations of interest (for instance, relative-entropy type constraints).
\item Although general in its range of application,  the algorithmic framework introduced in Section \ref{sec:ComputationalExamples} is still lacking general convergence theorems.
\item  The introduction of sample data appears to render the OUQ optimization problem even more complex. Can this  optimization problem be made equivalent to applying the deterministic setting to an information set $\mathcal{A}$ randomized by the sample data?
\item In the presence of sample data, instead of doing theoretical analysis to describe the optimal statistical test, one formulation of the OUQ approach provides an optimization problem that must be solved to determine the test. Is this optimization problem reducible?
\end{compactitem}

\section*{Acknowledgements}
\addcontentsline{toc}{section}{Acknowledgements}

The authors gratefully acknowledge  portions of this work supported by the Department of Energy National Nuclear Security Administration under Award Number DE-FC52-08NA28613 through Caltech's ASC/PSAAP Center for the Predictive Modeling and Simulation of High Energy Density Dynamic Response of Materials.  Calculations for this paper were performed using the \emph{mystic} optimization framework \cite{McKernsHungAivazis:2009}.  We thank the Caltech PSAAP Experimental Science Group --- Marc Adams, Leslie Lamberson, Jonathan Mihaly, Laurence Bodelot, Justin Brown, Addis Kidane, Anna Pandolfi, Guruswami  Ravichandran and Ares Rosakis --- for Formula \eqref{eq:PSAAP_SPHIR_surr} and figures \ref{fig:sphir}. We thank Sydney Garstang and Carmen Sirois for proofreading the manuscript. We thank Ilse Ipsen and four anonymous referees for detailed comments and suggestions.

\section{Appendix: Proofs}\label{sec-appendixproofs}

\subsection{Proofs for Section \ref{sec:Reduction}}
\label{sec-reduceproofs}

\begin{proof}[Proof of Theorem \ref{thm:baby_measure}.]
In this proof, we use $(\mu_{1}, \dots, \mu_{m})$ as a synonym for the product $\mu_{1} \otimes \dots \otimes \mu_{m}$.
	For $\mu =\bigotimes_{i=1}^{m}{\mu_{i}} \in \mathcal{M}^{\G}$, consider the optimization problem
	\begin{align*}
		\text{maximize: } & \E_{(\mu_{1}',\mu_{2},\dots,\mu_{m})}[r], \\
		\text{subject to: } & \mu_{1}'\in \mathcal{M}(\mathcal{X}_{1}), \\
		& \G(\mu_{1}',\mu_{2},\dots,\mu_{m})\leq 0.
	\end{align*}
	By Fubini's Theorem,
	\[
		\E_{(\mu_{1}',\mu_{2},\dots,\mu_{m})}[r] = \E_{\mu_{1}'} \left[ \E_{(\mu_{2},\dots,\mu_{m})} [r] \right],
	\]
	where $\E_{(\mu_{2},\dots,\mu_{m})}[r]$ is a Borel-measurable function on $\mathcal{X}_{1}$ and, for $j = 1, \dots, n$, it holds that
	\[
		\E_{(\mu_{1}',\mu_{2},\dots,\mu_{m})} [g'_{j}] = \E_{\mu_{1}'} \left[ \E_{(\mu_{2},\dots,\mu_{m})} [g'_{j}] \right],
	\]
	where $\E_{(\mu_{2},\dots,\mu_{m})}{{g'}_{j}}$ is a Borel-measurable function on $\mathcal{X}_{1}$.  In the same way, we see that
	\[
		\E_{(\mu_{1}',\mu_{2},\dots,\mu_{m})} [g^{1}_{j}] = \E_{\mu_{1}'} [g^{1}_{j}],
	\]
	and, for $k = 2, \dots, m$ and $j = 1, \dots, n_{k}$, it holds that
	\[
		\E_{(\mu_{1}',\mu_{2},\dots,\mu_{m})} [g^{k}_{j}] = \E_{\mu_{k}} [g^{k}_{j}],
	\]
	which are constant in $\mu_{1}'$.

  Since each $\mathcal{X}_{i}$ is Suslin, it follows that all the measures in $\mathcal{M}(\mathcal{X}_{i})$ are regular.  Consequently, by \cite[Theorem 11.1]{Topsoe:1970}, the extreme set of $\mathcal{M}(\mathcal{X}_{i})$ is the set of Dirac masses. For fixed $(\mu_{2},\dots,\mu_{m})$, let $\G_{1} \subseteq \mathcal{M}(\mathcal{X}_{1})$
 denote those measures that satisfy the constraints $\G(\mu_{1}', \mu_{2}, \dots, \mu_{m}) \leq 0$.  Consequently, since for $k = 2, \dots, m$ and $j = 1, \dots, n_{k}$, $\E_{(\mu_{1}',\mu_{2},\dots,\mu_{m})}[g^{k}_{j}]$ is constant in $\mu_{1}'$, it follows from \cite[Theorem 2.1]{Winkler:1988} that the extreme set $\mathrm{ex}(\G_{1}) \subseteq \mathcal{M}(\mathcal{X}_{1})$ of the constraint set consists only of elements of $\Delta_{n_{1}+n'}(\mathcal{X}_{1})$.  In addition, von Weizs\"{a}cker and Winkler \cite[Corollary 3]{WeizsackerWinkler:1979} show that a Choquet theorem holds:  let $\mu'$ satisfy the constraints.  Then
	\[
		\mu'(B) =\int_{\mathrm{ex}(\G_{1})} \nu(B) \, \mathrm{d} p(\nu),
	\]
	for all Borel sets $B \subseteq \mathcal{X}_{1}$, where $p$ is a probability measure on the extreme set $\mathrm{ex}(\G_{1})$.
	
	According to Winkler, an extended-real-valued function $K$ on $\G_{1}$ is called \emph{measure affine} if it satisfies the barycentric formula
	\[
		K(\mu') = \int_{\mathrm{ex}(\G_{1})} K(\nu) \, \mathrm{d} p(\nu).
	\]
	When $K$ is measure affine, \cite[Theorem 3.2]{Winkler:1988} asserts that
	\[
		\sup_{\mu' \in \G_{1}} K(\mu') = \sup_{\nu \in \mathrm{ex}(\G_{1})} K(\nu),
	\]
	and so we conclude that
	\[
		\sup_{\mu' \in \G_{1}} K(\mu') = \sup_{\nu \in \mathrm{ex}(\G_{1})} K(\nu) \leq \sup_{\nu \in \Delta_{n_{1}+n'}(\mathcal{X}_{1}) \cap \G_{1}} K(\nu).
	\]
	However, since
	\[
		\sup_{\nu \in \Delta_{n_{1}+n'}(\mathcal{X}_{1}) \cap \G_{1} } K(\nu) \leq \sup_{\nu \in \G_{1} } K(\nu),
	\]
	it follows that
	\[
		\sup_{\mu' \in \G_{1}} K(\mu') = \sup_{\nu  \in \Delta_{n_{1}+n'}(\mathcal{X}_{1}) \cap \G_{1} } K(\nu).
	\]
	
	To apply this result, observe that \cite[proposition 3.1]{Winkler:1988} asserts that the evaluation function
	\[
		\mu_{1}' \mapsto \E_{\mu_{1}'} \left[ \E_{(\mu_{2},\dots,\mu_{m})} [r] \right]
	\]
	is measure affine.  Therefore,
	\begin{equation}
		\label{eq:winkler}
		\sup_{\substack{\mu_{1}' \in \mathcal{M}(\mathcal{X}_{1}) \\ \G(\mu_{1}',\mu_{2},\dots,\mu_{m})\leq 0}} \E_{(\mu_{1}', \mu_{2},\dots, \mu_{m})} [r] = \sup_{\substack{\mu_{1}' \in \Delta_{n_{1}+n'}(\mathcal{X}_{i}) \\ \G(\mu_{1}', \mu_{2}, \dots, \mu_{m}) \leq 0 }} \E_{(\mu_{1}',\mu_{2},\dots,\mu_{m})}[r].
	\end{equation}
	Now let $\varepsilon > 0$ and let $\mu_{1}^{\ast} \in \Delta_{n_{1}+n'}(\mathcal{X}_{1})$ be $\varepsilon$-suboptimal for the right-hand side of \eqref{eq:winkler}:  that is, $\G(\mu_{1}^{\ast}, \mu_{2}, \dots, \mu_{m}) \leq 0$, and
	\[
		\E_{(\mu^{\ast}_{1}, \mu_{2}, \dots, \mu_{m})} [r] \geq \sup_{\substack{\mu_{1}' \in \Delta_{n_{1}+n'}(\mathcal{X}_{i}) \\ \G(\mu_{1}',\mu_{2},\dots,\mu_{m}) \leq 0  }} \E_{(\mu_{1}',\mu_{2}, \dots, \mu_{m})} [r] -\varepsilon .
	\]
	Hence, by \eqref{eq:winkler},
	\[
		\E_{(\mu^{\ast}_{1},\mu_{2},\dots,\mu_{m})} [r] \geq \sup_{\substack{\mu_{1}'\in \mathcal{M}(\mathcal{X}_{1}) \\ \G(\mu_{1}',\mu_{2},\dots,\mu_{m}) \leq 0 }} \E_{(\mu_{1}',\mu_{2},\dots,\mu_{m})} [r] - \varepsilon \geq \E_{(\mu_{1},\mu_{2},\dots,\mu_{m})} [r] -\varepsilon.
	\]
	Consequently, the first component of $\mu$ by can be replaced some element of $\Delta_{n_{1}+n'}(\mathcal{X}_{1})$ to produce a feasible point $\mu' \in \mathcal{M}^{\G}$ without decreasing $\E[r]$ by more than $\varepsilon$.  By repeating this argument, it follows that for every point $\mu \in \mathcal{M}^{\G}$ there exists a $\mu' \in \mathcal{M}_{\Delta}$ such that
	\[
		\E_{\mu'}[r] \geq \E_{\mu}[r] - m \varepsilon.
	\]
	Since $\varepsilon$ was arbitrary the result follows.
\end{proof}

\begin{proof}[Proof of Corollary \ref{cor:ouqreduce}.]
	Simply use the identity
	\[
		\mathcal{U}(\mathcal{A}) = \sup_{(f,\mu) \in \mathcal{A}} \E_{\mu}[r_{f}] = \sup_{f \in \mathcal{G}}\sup_{\substack{\mu \in \mathcal{M}^{m}(\mathcal{X}) \\ \G(f,\mu) \leq 0}} \E_{\mu}[r_{f}]
	\]
	and then apply Theorem \ref{thm:baby_measure} to the inner supremum.
\end{proof}

\begin{proof}[Proof of Theorem \ref{thm:valuereduce}.]
	Corollary \ref{cor:ouqreduce} implies that $\mathcal{U}(\mathcal{A}) =\mathcal{U}(\mathcal{A}_{\Delta})$ where,
	\[
		\mathcal{A}_{\Delta} := \left\{ (f,\mu) \in \mathcal{G} \times \bigotimes_{i=1}^{m}{\Delta_{n}(\mathcal{X}_{i})} \,\middle|\,
		\E_{\mu}[g_{i} \circ f] \leq 0 \text{ for all } j = 1, \dots, n \right\}.
	\]
	For each $i=1,\dots,m$, the indexing of the Dirac masses pushes forward the measure $\mu^{i}$ with weights $\alpha^{i}_{k}, k=0,\dots,n$ to a measure  $\alpha^{i}$ on $\mathcal{N}$ with weights $\alpha^{i}_{k},k=0,\dots,n$. Let $\alpha :=\bigotimes_{i=1}^{m}{\alpha^{i}}$ denote the corresponding product measure on $\mathcal{D}=\mathcal{N}^{m}$. That is, we have a map
	\[
		\mathbb{A} \colon \bigotimes_{i=1}^{m} \Delta_{n}(\mathcal{X}_{i}) \to \mathcal{M}^{m}(\mathcal{D})
	\]
	and the product map
	\[
	\mathbb{F} \times \mathbb{A} \colon \mathcal{G} \times \bigotimes_{i=1}^{m} \Delta_{n}(\mathcal{X}_{i}) \to \mathcal{F}_\mathcal{D} \times \mathcal{M}^{m}(\mathcal{D}).
	\]

	Since for any function $g \colon \R \to \R$, we have $\mathbb{F}(g\circ f,\mu) = g \circ\mathbb{F}(f,\mu)$, it follows that for any
	$(f,\mu) \in \mathcal{F}\times \bigotimes_{i=1}^{m} \Delta_{n}(\mathcal{X}_{i})$ that
	\[
		\E_{\mu}[g\circ f] = \E_{\alpha_{\mu}} \left[ \mathbb{F}(g\circ f,\mu) \right] = \E_{\alpha_{\mu}} \left[ g \circ \mathbb{F}(f,\mu) \right].
	\]
	Consequently, with the function $\mathcal{R}^\mathcal{D} \colon \mathcal{F}_\mathcal{D} \times \mathcal{M}^{m}(\mathcal{D}) \to
	\R$ defined by
	\[
		\mathcal{R}^\mathcal{D}(h,\alpha) := \E_{\alpha}[r\circ h],
	\]
	and for each $j=1,\dots,n$, the functions $\G^\mathcal{D}_{j} \colon \mathcal{F}_\mathcal{D} \times \mathcal{M}^{m}(\mathcal{D}) \to \R$ defined by
	\[
		\G^\mathcal{N}_{j}(h,\alpha) := E_{\alpha} [g_{i}\circ h],
	\]
	we have that, for all $(f,\mu) \in \mathcal{F} \times \bigotimes_{i=1}^{m} \Delta_{n}(\mathcal{X}_{i})$,
	\begin{equation}
		\label{eq:r}
		\mathcal{R}(f,\mu)=\mathcal{R}^\mathcal{D}(\mathbb{F}(f,\mu),\alpha_{\mu}),
	\end{equation}
	and, for all $j = 1, \dots, n$ and all $(f,\mu) \in \mathcal{F}_\mathcal{D}\times \bigotimes_{i=1}^{m} \Delta_{n}(\mathcal{X}_{i})$,
	\begin{equation}
	\label{eq:g}
		\G_{j}(f,\mu)= \G^\mathcal{D}_{j}(\mathbb{F}(f,\mu),\alpha_{\mu}),
	\end{equation}
	where $\alpha_{\mu} := \mathbb{A}(\mu)$.

	That is,
	\[
		\mathcal{R} = \mathcal{R}^\mathcal{D} \circ \left( \mathbb{F}\times \mathbb{A} \right),
	\]
	\[
		\G_{j}=\G^\mathcal{D}_{j} \circ \left( \mathbb{F}\times \mathbb{A} \right) \text{ for each } j = 1, \dots, n.
	\]
	Consequently, any $(f,\mu) \in \mathcal{A}_{\Delta}$ is mapped by $\mathbb{F}\times \mathbb{A}$ to a point in $\mathcal{F} \times \mathcal{M}^{m}(\mathcal{D})$ that preserves the criterion value and the constraint, and by the assumption must lie in $\mathcal{G}_\mathcal{D} \times \mathcal{M}^{m}(\mathcal{D})$. This establishes $\mathcal{U}(\mathcal{A}_{\Delta}) \leq \mathcal{U}(\mathcal{A}_\mathcal{D})$.

	To obtain equality, consider  $(h,\alpha) \in \mathcal{A}_\mathcal{D}$.  By assumption, there exists an $(f,\mu) \in \mathcal{G}\times \bigotimes_{i=1}^{m} \Delta_{n}(\mathcal{X}_{i})$ such that $\mathbb{F}(f,\mu) = h$.  If we adjust the weights on $\mu$ so that $\mathbb{A}(\mu)=\alpha$, we still maintain $\mathbb{F}(f,\mu) = h$.  By \eqref{eq:r} and \eqref{eq:g}, this point has the same criterion value and satisfies the integral constraints of $\mathcal{A}_{\Delta}$.  The proof is finished.
\end{proof}

\begin{proof}[Proof of Proposition \ref{prop:mcd}.]
	Let $I := \eins_{[a,\infty)}$ be the indicator function and consider $r_{f}:= I \circ f$ so that $\mu[f \geq a] = \E_{\mu} [I \circ f]$.  Since $I \circ f$ is integrable for all $\mu \in \mathcal{M}^{m}(\mathcal{X})$ and we have one constraint $\E_{\mu}[f] \leq 0$, the result follows from Theorem \ref{thm:valuereduce}, provided that we have

	\[
		\mathbb{F} \left( \mathcal{G} \times \bigotimes_{i=1}^{m} \Delta_{1}(\mathcal{X}_{i}) \right) = \mathcal{G}_\mathcal{D}.
	\]
	To establish this, consider $f \in \mathcal{G}$ and observe that for all $\mu \in \bigotimes_{i=1}^{m} \Delta_{1}(\mathcal{X}_{i})$ it holds that $\mathbb{F}(f,\mu) \in \mathcal{G}_{D}$.  Therefore, we conclude that $\mathbb{F} \left(\mathcal{G} \times \bigotimes_{i=1}^{m} \Delta_{1}(\mathcal{X}_{i}) \right) \subseteq \mathcal{G}_\mathcal{D}$.  On the other hand, for any $h \in \mathcal{G}_\mathcal{D}$, we can choose a  measurable product partition of $\mathcal{X}$ dividing each $\mathcal{X}_{i}$ into $2$ cells.  We pull back the function $h$ to a function $f\in \mathcal{F}$ that  has the correct constant values in the partition cells, and place the  Dirac masses into the correct cells. Set the weights to any nonzero values.  It is easy to see that $f \in \mathcal{G}$. Moreover, we have a measure $\mu$ which satisfies $\mathbb{F}(f,\mu) =h$. Therefore, we conclude that $\mathbb{F} \left( \mathcal{G} \times \bigotimes_{i=1}^{m} \Delta_{1}(\mathcal{X}_{i}) \right) \supseteq \mathcal{G}_\mathcal{D}$.  This completes the proof.
\end{proof}

\begin{proof}[Proof of Theorem \ref{thm:C}.]
	First, observe that $\mathcal{G}_\mathcal{D}$ is a sub-lattice of $\mathcal{F}_\mathcal{D}$ in the usual lattice structure on functions.  That is, if $h_{1}, h_{2} \in \mathcal{G}_\mathcal{D}$, then it follows that both $\min (h_{1}, h_{2}) \in \mathcal{G}_\mathcal{D}$ and $\max (h_{1}, h_{2}) \in \mathcal{G}_\mathcal{D}$.  Therefore, for any admissible $(h,\alpha) \in \mathcal{A}_\mathcal{D}$, it follows that clipping $h$ at $a$ to $\min(h,a)$ produces an admissible $(\min(h,a),\alpha)$ and does not change the criterion value $\alpha[h \geq a]$.  Consequently, we can reduce to functions with maximum value $a$.  Moreover, since each function $h^{s}$ is in the sub-lattice $\mathcal{G}_{\mathcal{D}}$, it follows that $h^{C} \in \mathcal{G}_\mathcal{D},  C \in \mathcal{C}$.  For $C \in \mathcal{C}$, define the sub-lattice
	\[
		C_\mathcal{D} := \left\{ h \in \mathcal{F}_\mathcal{D} \,\middle|\, \{ s \mid h(s) = a \} = C\} \right\}
	\]
	of functions with value $a$ precisely on the set $C$.  Now, consider a function $h \in \mathcal{G}_\mathcal{D}$ such that $h \leq a$ and let $C$ be the set where $h=a$.  It follows that $h^{C} \leq h$, $h^{C} \in \mathcal{G}_\mathcal{D}$, and $h^{C}\in C_\mathcal{D}$.  Since $h^{C} \leq h$ implies that $\E_{\alpha}[h^{C}] \leq \E_{\alpha}[h]$ for all $\alpha$, it follows that replacing $(h,\alpha)$ by $(h^{C},\alpha)$ keeps it admissible, and $\alpha[h^{C} \geq a] =\alpha[h \geq a]$. The proof is finished.
\end{proof}

\subsection{Proofs for Section \ref{sec:Reduction-mcd}}
\label{sec-mcdproofs}

The proofs given in this subsection are direct applications of Theorem \ref{thm:C}. In particular, they are based on an analytical calculation of \eqref{eq:jkshdjshdjheer}.  Because Proposition \ref{prop:McD_reduced_explicit_allm} is fundamental to all the other results of the section, its proof will be given first.

\begin{proof}[Proof of Proposition \ref{prop:McD_reduced_explicit_allm}.]
	When non-ambiguous, we will use the notation $\E[h^{C_0}]$ for $\E_\alpha[h^{C_0}]$ and $\P[h^{C_0}\geq a]$ for $\alpha[h^{C_0} \geq a]$.  First, observe that, if $\sum_{j=1}^m D_j \leq a$, then $\min(h^{C_0})\geq 0$, and the constraint $\E[h^{C_0}]\leq 0$ imply $\P[h^{C_0}=0]=1$. This proves the first equation of \eqref{eq:McD_reduced_explicit_allm}.  Now, assume $a<\sum_{j=1}^m D_j$ and observe that
	\[
		h^{C_0}(s) = a - \sum_{j=1}^m (1-s_j) D_j.
	\]
	It follows that
	\begin{equation}
		\label{eq:hdjsgddhj}
		\E_{\alpha}[h^{C_0}] = a - \sum_{j=1}^m (1 - \alpha_j) D_j.
	\end{equation}
	If $D_m=0$, then the optimum is achieved on boundary of $[0,1]^m$ (i.e.\ by taking $\alpha_m=1$ since $C_0 = \{ (1, \dots, 1) \}$ and since $h^{C_0}$ does not depend on $s_m$) and the optimization reduces to an ($m-1$)-dimensional problem.  For that reason, we will assume in all of the proofs of the results given in this section that all the $D_i$s are strictly positive.  The statements of those results remain valid even if one or more of the $D_i$s are equal to zero.

	The condition $D_m > 0$ implies that $\min (D_{1}, \dots, D_{m}) > 0$ and that
	\begin{equation}
		\label{eq:jhgjshgdge}
		\alpha[h^{C_0} \geq a] = \prod_{j=1}^m \alpha_j.
	\end{equation}
	If the optimum in  $\alpha$ is achieved in the interior of the hypercube $[0,1]^m$,  then at that optimum the gradients of \eqref{eq:hdjsgddhj} and \eqref{eq:jhgjshgdge} are collinear.  Hence, in that case, there exists $\lambda\in \mathbb{R}$ such that for all $i\in \{ 1, \dots, m \}$,
	\begin{equation}
		\label{eq:shkjdhghjee}
		\frac{\prod_{j=1}^m \alpha_j}{\alpha_i}= \lambda D_i.
	\end{equation}
	Since $\alpha[h^{C_0}\geq a]$ is increasing in each $\alpha_j$, the optimum is achieved at $\E_\alpha[h^{C_0}]=0$.  Substitution of \eqref{eq:shkjdhghjee} into the equation $\E_\alpha[h^{C_0}]=0$ yields that
	\[
		\lambda = \frac{m \prod_{j=1}^m \alpha_j}{\sum_{j=1}^m D_j-a}
	\]
	and, hence,
	\begin{equation}
		\label{eq:ksdhdskjhdkdjhjd}
		\alpha_i=\frac{\sum_{j=1}^m D_j - a}{m D_i}.
	\end{equation}
	For all $i \in \{ 1, \dots, m \}$, the condition $0 < \alpha_i < 1$ is equivalent to $a < \sum_{j=1}^m D_j$ and
	\begin{equation}
		\label{eq:kdhkjhdjiee}
		\sum_{j=1}^m D_j < a + m D_i.
	\end{equation}
	It follows that for $\sum_{j=1}^m D_j - m D_m < a <\sum_{j=1}^m D_j $, the $\alpha$ defined by \eqref{eq:ksdhdskjhdkdjhjd} lies in the interior of $[0, 1]^{m}$ and satisfies
	\[
		\alpha[h^{C_0} \geq a] = \frac{\left( \sum_{j=1}^m D_j - a \right)^{m}}{m^m \prod_{j=1}^m D_j}.
	\]
	If $a \leq \sum_{j=1}^m D_j - m D_m$, then the optimum is achieved on boundary of $[0,1]^m$ (i.e.\ by taking $\alpha_m = 1$, since $C_0 = \{ (1, \dots, 1) \}$), and the optimization reduces to an $(m - 1)$-dimensional problem.

	To complete the proof, we use an induction.  Observe in particular that, for $k\leq m-1$,
	\[
		\frac{(\sum_{j=1}^k D_j-a)^{k}}{k^k \prod_{j=1}^k D_j} = \frac{(\sum_{j=1}^{k+1} D_j-a)^{k+1}}{(k+1)^{k+1} \prod_{j=1}^{k+1} D_j}
	\]
	for $a= \sum_{j=1}^{k+1}D_j - (k+1)D_{k+1}$, and that
	\begin{equation}
		\label{eq:ksdjejrfff}
		\frac{(\sum_{j=1}^k D_j-a)^{k}}{k^k \prod_{j=1}^k D_j} \leq \frac{(\sum_{j=1}^{k+1} D_j-a)^{k+1}}{(k+1)^{k+1} \prod_{j=1}^{k+1} D_j}
	\end{equation}
	is equivalent to $a \geq \sum_{j=1}^{k+1}D_j - (k+1)D_{k+1}$.  Indeed, writing $a = \sum_{j=1}^{k+1}D_j - (k + 1) D_{k + 1} + b$, equation \eqref{eq:ksdjejrfff} is equivalent to
	\[
		\left( 1 - \frac{b}{k D_{k+1}} \right)^k \leq \left( 1 - \frac{b}{(k+1) D_{k+1}} \right)^{k+1}.
	\]
	The function $f$ given by $f(x):=\big(1-\frac{y}{x}\big)^x$ is increasing in $x$ (for $0<y<x$):  simply examine the derivative of $\log f$, and use the elementary inequality
	\[
		\log (1 - z) + \frac{z}{1 - z} \geq 0 \text{ for }0 < z < 1.
	\]

	We will now give the outline of the induction.  It is trivial to obtain that equation \eqref{eq:McD_reduced_explicit_allm} is true for $m = 1$. Assume that it is true for $m = q-1$ and consider the case $m = q$.  Equation \eqref{eq:ksdhdskjhdkdjhjd} isolates the only potential optimizer $\alpha^q$, which is not on the boundary of $[0,1]^q$ (not ($q-1$)-dimensional).

	One obtains that equation \eqref{eq:McD_reduced_explicit_allm} holds for $m=q$ by comparing the value of $\alpha[h^{C_0}\geq a]$ at locations $\alpha$ isolated by equations \eqref{eq:ksdhdskjhdkdjhjd} and \eqref{eq:kdhkjhdjiee} with those isolated at step $q-1$. This comparison is performed via equation \eqref{eq:ksdjejrfff}.

	More precisely, if $\alpha^q$ (the candidate for the optimizer in $\alpha$ isolated by the previous paragraph) is not an optimum, then the optimum must lie in the boundary of $[0, 1]^q$.  Hence, the optimum must be achieved by taking $\alpha_i=1$ for some $i\in \{1,\dots,q\}$.  Observing that $\mathcal{U}(\mathcal{A}_{C_0})$ is increasing in each $D_i$, and since $D_q = \min_{i\in \{1,\dots,q\}} D_i$, that optimum can be achieved by taking $i=q$, which leads to computing $\mathcal{U}(\mathcal{A}_{C_0})$ with $(D_1,\dots,D_{q-1})$, where we can use the $(q-1)$-step  of the induction. Using equation \eqref{eq:ksdjejrfff} for $k=q-1$, we obtain that $\alpha^q$ is an optimum for $a\geq \sum_{j=1}^{q}D_j - qD_{q}$, and that, for $a\leq \sum_{j=1}^{q}D_j - qD_{q}$, the optimum is achieved by calculating $\mathcal{U}(\mathcal{A}_{C_0})$ with $q-1$ variables and $(D_1,\dots,D_{q-1})$. This finishes the proof by using the induction assumption (see formula \eqref{eq:McD_reduced_explicit_allm}).
\end{proof}

The following two lemmas illustrate simplifications that can be made using the symmetries of the hypercube:

\begin{lem}
	\label{lem:symmetry}
	Let $C_0 \in \mathcal{C}$. If $C_0$ is symmetric with respect to the hyperplane containing the center of the hypercube and normal to the direction $i$, then the optimum of \eqref{eq:sup_over_AC0} can be achieved by taking $\alpha_i=1$.
\end{lem}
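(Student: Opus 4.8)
The plan is to work directly with the reduced optimization problem $\mathcal{U}(\mathcal{A}_{C_0})$ in \eqref{eq:sup_over_AC0}, where we maximize $\alpha[h^{C_0}\geq a]$ over product measures $\alpha = \alpha_1 \otimes \dots \otimes \alpha_m$ on $\{0,1\}^m$ subject to $\E_{\alpha}[h^{C_0}]\leq 0$. Each factor $\alpha_j$ is a Bernoulli measure, so I identify it with the single scalar weight $\alpha_j := \alpha_j(\{1\}) \in [0,1]$; the full measure is then parametrized by the vector $(\alpha_1,\dots,\alpha_m)\in[0,1]^m$. The objective and the constraint are both explicit polynomial functions of these weights once $C_0$ is fixed (as in equations \eqref{eq:hdjsgddhj} and \eqref{eq:jhgjshgdge} of the all-$m$ proof). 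The claim is that when $C_0$ is invariant under the reflection swapping the $i$th coordinate $s_i \mapsto 1-s_i$, we may restrict the search to the face $\{\alpha_i = 1\}$ without loss.

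**The reflection symmetry argument.** First I would make precise the symmetry hypothesis: let $\sigma_i\colon \{0,1\}^m \to \{0,1\}^m$ be the involution flipping the $i$th coordinate, and assume $\sigma_i(C_0)=C_0$. The key observation is how $h^{C_0}$ behaves under $\sigma_i$. Recall from \eqref{eq:jhdsjdgjhsgdgh} that $h^{C_0}(t) = a - \min_{s\in C_0}\sum_{k\in I(s,t)}D_k$, where $I(s,t)$ is the set of coordinates on which $s$ and $t$ differ. Since $\sigma_i$ preserves $C_0$ and $I(\sigma_i s, \sigma_i t) = I(s,t)$, the function $h^{C_0}$ is itself invariant: $h^{C_0}(\sigma_i t) = h^{C_0}(t)$ for all $t$. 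The plan is to exploit this to show that the partial derivatives of both the objective and the constraint with respect to $\alpha_i$ have a definite sign, forcing the optimizer to the boundary value $\alpha_i = 1$. Concretely, since $h^{C_0}$ does not distinguish $t_i = 0$ from $t_i = 1$, both $\E_{\alpha}[h^{C_0}]$ (the constraint) is independent of $\alpha_i$, while the objective $\alpha[h^{C_0}\geq a] = \prod_{j}\alpha_j$ (under the nondegeneracy $D_j>0$, from \eqref{eq:jhgjshgdge}) is nondecreasing in $\alpha_i$. Therefore increasing $\alpha_i$ to $1$ leaves the constraint satisfied and can only increase the objective.

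**Handling the event structure carefully.** The step I expect to require the most care is verifying that $h^{C_0}$ is genuinely $\sigma_i$-invariant \emph{and} that the event $\{h^{C_0}\geq a\}$ decomposes multiplicatively in a way compatible with dropping the $i$th factor. The subtle point is that the $\sigma_i$-invariance means the set $\{t : h^{C_0}(t) \geq a\} = C_0$ (the points where $h^{C_0}$ attains its maximum $a$) is itself $\sigma_i$-symmetric, so membership in $C_0$ imposes no net constraint linking $t_i = 0$ versus $t_i = 1$; this is what decouples the $\alpha_i$ dependence. I would verify this by writing $\E_\alpha[\mathbbm{1}_{C_0}]$ as a sum over $t\in C_0$ of $\prod_j \alpha_j(\{t_j\})$ and pairing each $t$ with $\sigma_i(t)\in C_0$, so that the $\alpha_i$ and $(1-\alpha_i)$ contributions combine. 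Once the monotonicity of the objective and the $\alpha_i$-independence of the constraint are established, setting $\alpha_i = 1$ is optimal, which is exactly the assertion of the lemma. The whole argument is short and geometric; the only genuine obstacle is bookkeeping the index sets $I(s,t)$ correctly under the reflection, which is routine once set up.
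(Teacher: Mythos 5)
Your proof is correct and follows essentially the same route as the paper: the paper's one-sentence proof is precisely your key observation that symmetry of $C_0$ under the coordinate flip $\sigma_i$ forces $h^{C_0}$ to be independent of $s_i$, whence both the constraint $\E_{\alpha}[h^{C_0}]$ and the objective are unaffected by $\alpha_i$ and one may take $\alpha_i = 1$. One small caveat: the identity $\alpha[h^{C_0}\geq a]=\prod_j \alpha_j$ that you cite from \eqref{eq:jhgjshgdge} holds only for the singleton $C_0=\{(1,\dots,1)\}$ of Proposition \ref{prop:McD_reduced_explicit_allm}, but this slip is harmless because your final pairing argument correctly establishes the general fact — the objective $\alpha(C_0)$ is itself independent of $\alpha_i$ (not merely nondecreasing in it) — which is all the lemma needs.
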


\begin{proof}
	The proof follows by observing that if $C_0$ is symmetric with respect to the hyperplane containing the center of the hypercube and normal to the direction $i$, then $h^{C_0}(s)$ does not depend on the variable $s_i$.
\end{proof}

The following lemma is trivial:

\begin{lem}
	\label{lem:trivial}
	Let $(\alpha,C)$ be an optimizer of \eqref{eq:jkshdjshdjheer}. Then, the images of $(\alpha, C)$ by reflections with respect to hyperplanes containing the center of the hypercube and normal to its faces are also optimizers of \eqref{eq:jkshdjshdjheer}.
\end{lem}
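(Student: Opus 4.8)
The plan is to exploit the fact that the maps in question are symmetries of the entire optimization problem \eqref{eq:jkshdjshdjheer}, so that they carry admissible pairs to admissible pairs of equal objective value. A reflection with respect to a hyperplane through the center of $\mathcal{D} = \{0,1\}^{m}$ and normal to a face amounts to flipping a single coordinate: for fixed $k \in \{1,\dots,m\}$, let $\rho_{k} \colon \mathcal{D} \to \mathcal{D}$ be given by $\rho_{k}(s_{1},\dots,s_{k},\dots,s_{m}) = (s_{1},\dots,1-s_{k},\dots,s_{m})$. Each $\rho_{k}$ is an involution, and a general reflection of the stated kind is a composition of such $\rho_{k}$, so it suffices to treat a single $\rho := \rho_{k}$ and then compose.

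First I would record the key combinatorial identity. Since flipping the same coordinate in both arguments does not change the set of coordinates in which they differ, we have $I(\rho(s),\rho(t)) = I(s,t)$ for all $s,t \in \mathcal{D}$. Reindexing the minimum defining \eqref{eq:jhdsjdgjhsgdgh} over $\rho(C)$ by $s \mapsto \rho(s)$ then gives, for every $C \subseteq \mathcal{D}$ and every $t$,
\[
h^{\rho(C)}(\rho(t)) = a - \min_{s \in C} \sum_{i \in I(\rho(s),\rho(t))} D_{i} = a - \min_{s \in C} \sum_{i \in I(s,t)} D_{i} = h^{C}(t),
\]
that is, $h^{\rho(C)} \circ \rho = h^{C}$. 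Note that the diameters $D_{i}$ are never permuted — only a coordinate is flipped — which is exactly why $\rho$ is a symmetry even when the $D_{i}$ are distinct.

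Next I would check that $\rho$ acts on the measure by the pushforward $\rho_{\ast}\alpha$, which again lies in $\mathcal{M}(\{0,1\})^{m}$, since it merely swaps the two weights of the $k$-th marginal and leaves the product structure intact. Using the change-of-variables identity $\E_{\rho_{\ast}\alpha}[g] = \E_{\alpha}[g \circ \rho]$ together with $h^{\rho(C)} \circ \rho = h^{C}$, the constraint is preserved,
\[
\E_{\rho_{\ast}\alpha}[h^{\rho(C)}] = \E_{\alpha}[h^{\rho(C)} \circ \rho] = \E_{\alpha}[h^{C}] \leq 0,
\]
so that $(\rho(C),\rho_{\ast}\alpha) \in \mathcal{A}_{\mathcal{C}}$, and the objective is preserved,
\[
(\rho_{\ast}\alpha)(h^{\rho(C)} \geq a) = \alpha\big( \{ t : h^{\rho(C)}(\rho(t)) \geq a \} \big) = \alpha(h^{C} \geq a).
\]

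Hence if $(\alpha,C)$ attains $\mathcal{U}(\mathcal{A}_{\mathcal{C}})$, then so does $(\rho_{\ast}\alpha,\rho(C))$, and composing over the relevant coordinates yields the claim for an arbitrary reflection of the stated type. There is no genuine obstacle here — as the paper remarks, the lemma is trivial — and the only point requiring a moment's care is verifying that the pushforward keeps $\alpha$ a product measure, which is immediate for a single-coordinate flip.
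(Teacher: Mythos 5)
Your proof is correct: the paper states this lemma without proof (it simply declares it trivial), and your argument --- a face-normal reflection through the center is a single coordinate flip $\rho_k$, which preserves $I(s,t)$ and hence satisfies $h^{\rho(C)}\circ\rho = h^{C}$, while the pushforward $\rho_{\ast}\alpha$ stays a product measure with unchanged constraint and objective values --- is exactly the symmetry argument the paper's triviality claim presupposes. Nothing is missing, and no comparison is needed since there is no alternative proof in the paper to diverge from.
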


The proofs of the remaining theorems now follow in the order that the results were stated in the main part of the paper.

\begin{proof}[Proof of Theorem \ref{thm:m1}.]
	The calculation of $\mathcal{U}(\mathcal{A}_{\mathcal{C}})$ for $m=1$ is trivial and also follows from Proposition \ref{prop:McD_reduced_explicit_allm}.
\end{proof}

\begin{figure}[tp]
	\begin{center}
		\scalebox{1}{
			\begin{pspicture}(0,0)(3.25,3.25)
				\definecolor{blue1}{rgb}{0.8,0.9,1.0}
				\pspolygon[linewidth=0.04,fillstyle=solid,fillcolor=blue1](0,0)(3,0)(3,3)(0,3)
				\pscircle[linewidth=0.04,fillstyle=solid,fillcolor=red](3,3){0.25}
			\end{pspicture}
		}
		\caption{For $m=2$, the optimum associated with $\mathcal{U}(\mathcal{A}_{\mathcal{C}})$ can be achieved with $C = \{ (1, 1) \}$. For that specific value of $C$, the linearity of $h^C(s) = a - D_1 (1 - s_1) - D_2 (1 - s_2)$ implies $\mathcal{U}(\mathcal{A}_{\mathrm{Hfd}}) = \mathcal{U}(\mathcal{A}_{\mathrm{McD}})$.}
		\label{fig:squareQm2}
	\end{center}
\end{figure}
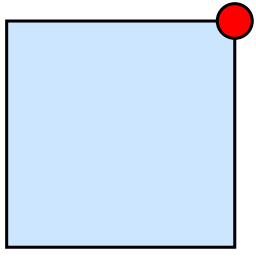

\begin{proof}[Proof of Theorem \ref{thm:m2}.]
	Write $C_1=\{ (1, 1) \}$ (see Figure \ref{fig:squareQm2}).  Theorem \ref{thm:m2} is a consequence of the following inequality:
	\begin{equation}
		\label{eq:jssjhffdgjhge}
		\max_{C_0 \in \mathcal{C}}\mathcal{U}(\mathcal{A}_{C_0})\leq \mathcal{U}(\mathcal{A}_{C_1})
	\end{equation}
	Assuming equation \eqref{eq:jssjhffdgjhge} to be true, equation \eqref{eq:McD_reduced_explicit_2d} is obtained by calculating
	$\mathcal{U}(\mathcal{A}_{C_1})$ from Proposition \ref{prop:McD_reduced_explicit_allm} with $m=2$. Let us now prove equation \eqref{eq:jssjhffdgjhge}.  Let $C_0 \in \mathcal{C}$;  we need to prove that
	\begin{equation}
		\label{eq:UAC0leqUAC1}
		\mathcal{U}(\mathcal{A}_{C_0}) \leq \mathcal{U}(\mathcal{A}_{C_1}).
	\end{equation}
	By symmetry (using Lemma \ref{lem:trivial}), it is no loss of generality to assume that $(1, 1) \in C_0$.  By Lemma \ref{lem:symmetry} the optima for $C_0 = \{ (1, 1), (1, 0) \}$ and $C_0 = \{ (1, 1), (0, 1) \}$ can be achieved with $C_1$ by taking $\alpha_1 = 1$ or $\alpha_2 = 1$.

	The case $C_0=\{(1,1);(1,0);(0,1);(0,0)\}$ is infeasible.

	For $C_0 = \{ (1, 1), (1, 0), (0, 1)\}$, we have $\P[h^{C_0} = a] = \beta$ and $\E[h^{C_0}] = a - (1-\beta) \min (D_1, D_2)$ with $\beta = 1 - (1 - \alpha_1) (1 - \alpha_2)$ (recall that $h^{C_0}$ is defined by equation \eqref{eq:jhdsjdgjhsgdgh}). Hence, at the optimum (in $\alpha$),
	\begin{equation}
		\label{eq:m2examlesolved}
		\P[h^{C_0} = a] =
		\begin{cases}
		1 - a / \min (D_1,D_2), & \text{ if } a < \min(D_1, D_2), \\
		0, & \text{ if } a\geq \min(D_1, D_2).
		\end{cases}
	\end{equation}
	Equation \eqref{eq:UAC0leqUAC1} then holds by observing that one always has both
	\[
		1 - \frac{a}{\min (D_1, D_2)} \leq 1- \frac{a}{\max (D_1, D_2)}
	\]
	and
	\[
		1 - \frac{a}{\min (D_1, D_2)} \leq \frac{(D_{1} + D_{2} -a)^{2}}{4 D_{1} D_{2}}.
	\]
	The last inequality is equivalent to $(D_1 - D_2 + a)^2 \geq 0$, which is always true.  The case $C_0=\{(1,1), (0,0)\}$ is bounded by the previous one since $\P[h^{C_0} = a] = \beta$ and $\E[h^{C_0}] = a \beta - (1 - \beta) \min (D_1, D_2)$ with $\beta = \alpha_1 \alpha_2 + (1 - \alpha_1) (1 - \alpha_2)$.  This finishes the proof.
\end{proof}

\begin{figure}[tp]
	\begin{center}
		\subfigure[$C_1$]{
			\scalebox{1}{
				\begin{pspicture}(0,0)(4.25,4.25)
					\definecolor{blue1}{rgb}{0.8,0.9,1.0}
					\definecolor{blue2}{rgb}{0.7,0.8,0.9}
					\pspolygon[linewidth=0.04,fillstyle=solid,fillcolor=blue1](0,0)(3,0)(3,3)(0,3)
					\pspolygon[linewidth=0.04,fillstyle=solid,fillcolor=blue1](0,3)(3,3)(4,4)(1,4)
					\pspolygon[linewidth=0.04,fillstyle=solid,fillcolor=blue2](3,0)(4,1)(4,4)(3,3)
					\pscircle[linewidth=0.04,fillstyle=solid,fillcolor=red](3,3){0.25}
				\end{pspicture}
			}
		}
		\subfigure[$C_2$]{
			\scalebox{1}{
				\begin{pspicture}(0,0)(4.25,4.25)
					\definecolor{blue1}{rgb}{0.8,0.9,1.0}
					\definecolor{blue2}{rgb}{0.7,0.8,0.9}
					\pspolygon[linewidth=0.04,fillstyle=solid,fillcolor=blue1](0,0)(3,0)(3,3)(0,3)
					\pspolygon[linewidth=0.04,fillstyle=solid,fillcolor=blue1](0,3)(3,3)(4,4)(1,4)
					\pspolygon[linewidth=0.04,fillstyle=solid,fillcolor=blue2](3,0)(4,1)(4,4)(3,3)
					\pscircle[linewidth=0.04,fillstyle=solid,fillcolor=red](0,3){0.25}
					\pscircle[linewidth=0.04,fillstyle=solid,fillcolor=red](3,3){0.25}
					\pscircle[linewidth=0.04,fillstyle=solid,fillcolor=red](3,0){0.25}
					\pscircle[linewidth=0.04,fillstyle=solid,fillcolor=red](4,4){0.25}
				\end{pspicture}
			}
		}
		\caption{For $m=3$, the optimum associated with $\mathcal{U}(\mathcal{A}_{\mathcal{C}})$ can be achieved with $C_1 = \{ (1, 1, 1) \}$ (leading to $\mathcal{F}_1$) or $C_2 = \{ (1, 1, 1), (0, 1, 1), (1, 0, 1), (1, 1, 0) \}$ (leading to $\mathcal{F}_2$).  The linearity of $h^{C_1}(s) = a - D_1 (1 - s_1) - D_2 (1 - s_2) - D_3 (1 - s_3)$ implies that $\mathcal{U}(\mathcal{A}_{\mathrm{Hfd}}) = \mathcal{U}(\mathcal{A}_{\mathrm{McD}})$ when $\mathcal{F}_1 \geq \mathcal{F}_2$.  Similarly, the nonlinearity of $h^{C_2}$ leads to $\mathcal{U}(\mathcal{A}_{\mathrm{Hfd}}) < \mathcal{U}(\mathcal{A}_{\mathrm{McD}})$ when $\mathcal{F}_1 < \mathcal{F}_2$.}
		\label{fig:cubem3}
	\end{center}
\end{figure}
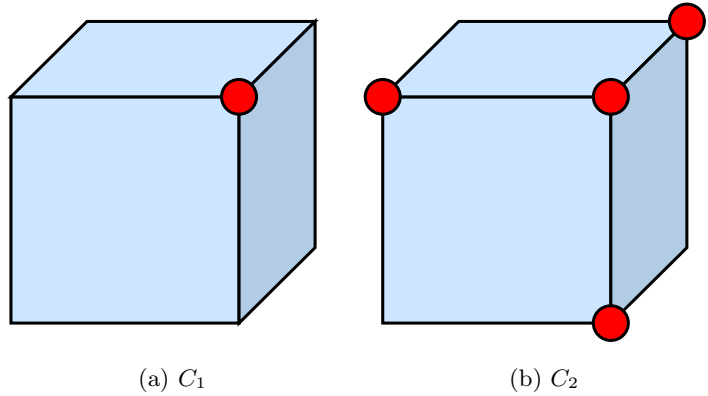

\begin{proof}[Proof of Theorem \ref{thm:m3}.]
	Recall that
	\[
		\mathcal{U}(\mathcal{A}_{\mathrm{McD}}) = \max_{C_0 \in \mathcal{C}}\mathcal{U}(\mathcal{A}_{C_0}).
	\]
	It follows from Proposition \ref{prop:McD_reduced_explicit_allm} that $\mathcal{F}_1$ corresponds to $\mathcal{U}(\mathcal{A}_{C_1})$
	with $C_1=\{(1,1,1)\}$. Write $C_2 = \{ (1, 1, 1), (0, 1, 1), (1, 0, 1), (1, 1, 0) \}$ (see Figure \ref{fig:cubem3}).  Let us now calculate $\mathcal{U}(\mathcal{A}_{C_2})$ ($\mathcal{F}_2$ corresponds to $\mathcal{U}(\mathcal{A}_{C_2})$, which is the optimum, and it is achieved in the interior of $[0,1]^3$).
	We have $\mathbb{P}[h^{C_{2}}=a]=\alpha_{2}\alpha_{3}+\alpha_{1}\alpha_{3}+\alpha_{1}\alpha_{2}-2\alpha_{1}\alpha_{2}\alpha_{3}$, and
	\[ \E[h^{C_{2}}] =a -D_{2}(1-\alpha_{1})(1-\alpha_{2})-D_{3}\left((1-\alpha_{2})(1-\alpha_{3})+(1-\alpha_{1})\alpha_{2}
	(1-\alpha_{3})\right).\]
	An internal optimal point $\alpha$ must satisfy, for some $\lambda \in \R$,
	\begin{subequations}
		\label{eq:lagrange}
		\begin{align}
			\alpha_{2}+\alpha_{3}-2\alpha_{2}\alpha_{3} &= \lambda \left( D_{2}(1-\alpha_{2})+D_{3}\alpha_{2}(1-\alpha_{3})\right), \\
			\alpha_{1}+\alpha_{3}-2\alpha_{1}\alpha_{3} &= \lambda \left( D_{2}(1-\alpha_{1})+D_{3}\alpha_{1}(1-\alpha_{3})\right), \\
			\alpha_{1}+\alpha_{2}-2\alpha_{1}\alpha_{2} &= \lambda \left( D_{3}(1-\alpha_{1}\alpha_{2})\right).
		\end{align}
	\end{subequations}
	If we multiply the first equation by $\alpha_{1}$ and subtract the second equation multiplied by
	$\alpha_{2}$, the we obtain that
	\[
		(\alpha_{1} - \alpha_{2}) \alpha_{3} = \lambda D_{2} (\alpha_{1} - \alpha_{2}),
	\]
	which implies that either  $\alpha_{1}=\alpha_{2}$ or $\alpha_{3}=\lambda D_{2}$.

	Suppose that $\alpha_{1}\neq \alpha_{2}$, so that $\alpha_{3}=\lambda D_{2}$. 	Subtraction of the second equation in \eqref{eq:lagrange} from the first yields
	\[
		(\alpha_{2} - \alpha_{1}) (1 - 2 \alpha_{3}) = \lambda \left( -D_{2} (\alpha_{2} - \alpha_{1}) + D_{3} (\alpha_{2} - \alpha_{1}) (1-\alpha_{3}) \right),
	\]
	which implies that either $\alpha_{1}=\alpha_{2}$ or
	\[
		1 - 2 \alpha_{3} = \lambda \left( - D_{2} + D_{3} (1 - \alpha_{3}) \right).
	\]
	Since $\alpha_{3} = \lambda D_{2}$, this becomes
	\[
		1 - \alpha_{3} = \frac{\alpha_{3} D_{3}}{D_{2}} (1 - \alpha_{3}),
	\]
	which implies, since $\alpha_{3} \neq 1$, that $\alpha_{3} = \frac{D_{2}}{D_{3}}$.  Therefore, $\lambda =\frac{1}{D_{3}}$.  Therefore, the third equation in \eqref{eq:lagrange} becomes
	\[
		\alpha_{1} + \alpha_{2} - 2 \alpha_{1} \alpha_{2} = \lambda \left( D_{3} (1 - \alpha_{1} \alpha_{2})\right) = 1 - \alpha_{1} \alpha_{2},
	\]
	which amounts to
	\[
		\alpha_{1} + \alpha_{2} - \alpha_{1} \alpha_{2} = 1,
	\]
	which in turn amounts to $\alpha_{1} (1 - \alpha_{2}) = 1 - \alpha_{2}$.  Since $\alpha_{2} \neq 1$, we conclude that $\alpha_{1} = 1$, contradicting the supposition that $\alpha$ is an interior point.  Therefore, $\alpha_{1} = \alpha_{2}$ and equations \eqref{eq:lagrange}, with $\alpha := \alpha_{1} = \alpha_{2}$, become
	\begin{subequations}
		\label{eq:lagrange2}
		\begin{align}
			\alpha+\alpha_{3}-2\alpha\alpha_{3} =& \lambda \left( D_{2}(1-\alpha)+D_{3}\alpha(1-\alpha_{3})\right) \\
			2\alpha-2\alpha^{2} &= \lambda \left( D_{3}(1-\alpha^{2}) \right).
		\end{align}
	\end{subequations}
	Hence,
	\begin{equation}
		\label{eq:bigm3}
		\mathbb{P}[h^{C_{2}}=a]=2\alpha\alpha_{3}+\alpha^{2}-2\alpha^{2}\alpha_{3}
	\end{equation}
	and
	\[
		\E[h^{C_{2}}] =a -D_{2}(1-\alpha)^{2}-D_{3} \left( (1-\alpha^{2})(1-\alpha_{3}) \right).
	\]
	The hypothesis that the optimum is not achieved on the boundary requires that
	\[
		D_3, 0 < \alpha < 1, D_2 + D_3 > a \text{ and } E[h^{C_{2}}]=0.
	\]
	The condition $\E[h^{C_{2}}]=0$ is required because equation \eqref{eq:bigm3} is strictly increasing along the direction $\alpha = \alpha_3$.

	Suppose that those conditions are satisfied.  The condition $\E[h^{C_{2}}]=0$ implies that
	\[
		1 - \alpha_3 = \frac{a}{D_3 (1-\alpha^2)} - \frac{D_2 (1-\alpha)}{D_3 (1+\alpha)},
	\]
	which in turns implies that
	\begin{equation}
		\label{eq:intm3}
		\alpha_3 = 1 - \frac{a}{D_3 (1 - \alpha^2)} + \frac{D_2 (1 - \alpha)}{D_3 (1 + \alpha)}.
	\end{equation}
	Substitution of \eqref{eq:intm3} into \eqref{eq:bigm3} yields that $\mathbb{P}[h^{C_{2}} = a] = \Psi(\alpha)$, with
	\[
		\Psi(\alpha) = \alpha^{2} + 2 (\alpha - \alpha^{2}) \left( 1 - \frac{a}{D_3} \frac{1}{(1 - \alpha^2)} + \frac{D_2}{D_3} \frac{1 - \alpha}{1 + \alpha} \right).
	\]
	Hence,
	\[
		\Psi(\alpha) = 2 \alpha - \alpha^2 - 2 \frac{a}{D_3} \frac{\alpha}{1 + \alpha} + 2 \frac{D_2}{D_3} \alpha \frac{(1 - \alpha)^2}{1 + \alpha}.
	\]
	$\Psi(\alpha)$ can be simplified using polynomial division.  In particular, using
	\[
		\alpha \frac{(1 - \alpha)^2}{1 + \alpha} = (1 - \alpha)^2 - \frac{(1 - \alpha)^2}{1 + \alpha},
	\]
	\[
		\alpha \frac{(1 - \alpha)^2}{1 + \alpha} = \alpha^2 + 1 - 2 \alpha - (1 + \alpha) + 4 - \frac{4}{1 + \alpha},
	\]
	where the last step is obtained from
	\[
		(1 - \alpha)^2 = (\alpha + 1 - 2)^2 = (\alpha + 1)^2 - 4 (1 + \alpha) + 4,
	\]
	we obtain that
	\[
		\Psi(\alpha) = 2 \alpha - \alpha^2 - 2 \frac{a}{D_3} \frac{\alpha}{1 + \alpha} + 2 \frac{D_2}{D_3} \left( 4 + \alpha^2 - 3 \alpha - \frac{4}{1+\alpha} \right).
	\]
	Therefore,
	\[
		\Psi(\alpha) = \alpha^2 \left( 2 \frac{D_2}{D_3} - 1 \right) - \frac{2 a}{D_3} \frac{\alpha}{1 + \alpha} + 2 \alpha \left( 1 - 3 \frac{D_2}{D_3} \right) + 8 \frac{D_2}{D_3} \frac{\alpha}{1 + \alpha}
	\]
	and
	\begin{equation}
		\label{eq:bigdfm3}
		\Psi(\alpha) = \alpha^2 \left( 2 \frac{D_2}{D_3} - 1 \right) - 2 \alpha \left( 3 \frac{D_2}{D_3} - 1 \right) + \frac{\alpha}{1+\alpha} \left( 8 \frac{D_2}{D_3} - 2 \frac{a}{D_3} \right).
	\end{equation}
	Equation \eqref{eq:bigdfm3} implies that
	\[
		D_3 \Psi'(\alpha) = 2 \alpha (2 D_2 - D_3) + 2 (D_3 - 3 D_2) - \frac{1}{(1 + \alpha)^2} (2 a - 8 D_2).
	\]
	The equation $\Psi'(\alpha)=0$ is equivalent to equation \eqref{eq:magic_cubic}.  An interior optimum requires the existence of an $\alpha \in (0, 1)$ such that $\Psi'(\alpha) = 0$ and $\alpha_3 \in (0, 1)$, which leads to the definition of $\mathcal{F}_2$.  This establishes the theorem for the $\mathcal{F}_2$ case.

	Next, using symmetries of the hypercube and through direct computation (as in the $m=2$ case), we obtain that
	\begin{equation}
		\label{eq:hfgfhfhhtf}
		C_0 \neq C_2 \implies \mathcal{U}(\mathcal{A}_{C_0}) \leq \mathcal{U}(\mathcal{A}_{C_1}).
	\end{equation}
	For the sake of concision, we will give the detailed proof of \eqref{eq:hfgfhfhhtf} only for
	\[
		C_3 =\{(1,1,1),  (0,1,1), (1,0,1)\}.
	\]
	This proof will give an illustration of generic reduction properties used in other cases.  To address all the symmetric transformations of $C_3$, we will give the proof without assuming that $D_1, D_2$ and $D_3$ are ordered.
	Let us now consider the $C_3$ scenario. If the optimum in $\alpha$ is achieved on the boundary of $[0, 1]^3$, then equation \eqref{eq:jssjhffdgjhge} implies $\mathcal{U}(\mathcal{A}_{C_3}) \leq \mathcal{U}(\mathcal{A}_{C_1})$.  Let us assume that the optimum is not achieved on the boundary of $[0,1]^3$.  Observe that
	\begin{equation}
		\label{eq:dskjdlksjhdkheh}
		h^{C_3}(s_1,s_2,0)=h^{C_3}(s_1,s_2,1)-D_3.
	\end{equation}
	Combining \eqref{eq:dskjdlksjhdkheh} with
	\[
		\E[h^{C_3}] = \alpha_3 \E[h^{C_3}(s_1, s_2, 1)] + (1 - \alpha_3) \E[h^{C_3}(s_1, s_2, 0)]
	\]
	implies that
	\[
		\E[h^{C_3}] = \E[h^{C_3}(s_1, s_2, 1)] - (1 - \alpha_3) D_3.
	\]
	Furthermore,
	\begin{equation}
		\label{eq:wlkwhclkhksdejckc}
		\P[h^{C_3} = a] = \alpha_3 \P[h^{C_3}(s_1, s_2, 1) = a].
	\end{equation}
	Maximizing \eqref{eq:wlkwhclkhksdejckc} with respect to $\alpha_3$ under the constraint $\E[h^{C_3}]\leq 0$ leads to $\E[h^{C_3}]=0$ (because $\P[h^{C_3}=a]$ and $\E[h^{C_3}]$ are linear in $\alpha_3$) and
	\begin{equation}
		\label{eq:alpha3}
		\alpha_3 = 1 - \frac{\E[h^{C_3}(s_1, s_2, 1)]}{D_3}.
	\end{equation}
Observe that the condition $\alpha_3 <1$ requires $\E[h^{C_3}(s_1, s_2, 1)] > 0$. If
 $\E[h^{C_3}(s_1, s_2, 1)]\leq 0$ then $\alpha_3 = 1$, and the optimum is achieved on the boundary of $[0,1]^3$.

	The maximization of $\P[h^{C_3}(s_1,s_2,1)=a]$ under the constraint $\E[h^{C_3}(s_1,s_2,1)]\leq E$ (where $E$ is a slack optimization variable) leads to (using the $m=2$ result)
	\[
		\P[h^{C_3}(s_1,s_2,1)=a]= 1-\frac{(a-E)}{\min(D_1,D_2)}
	\]
	if $a-E \leq \min(D_1,D_2)$, and $\P[h^{C_3}(s_1,s_2,1)=a]=0$ otherwise.  It follows from \eqref{eq:alpha3} and \eqref{eq:wlkwhclkhksdejckc} that if the optimum is achieved at an interior point, then the optimal value  of $\P[h^{C_3}=a]$ is achieved by taking the maximum of
	\[
		\P[h^{C_3}=a] = \left( 1-\frac{E}{D_3} \right) \left( 1-\frac{a-E}{\min(D_1,D_2)} \right)
	\]
	with respect to $E$ with the constraints $0\leq E \leq D_3$ and $a-\min(D_1,D_2) \leq E$. If the optimum is not achieved on the boundary of $[0,1]^3$ then one must have
	\[
		E = \frac{D_3+\min(D_1,D_2)-a}{2},
	\]
	which leads to
	\begin{equation}
		\label{eq:wlkwhclkhksdfdrejckc}
		\P[h^{C_3}=a]= \frac{\big(D_3+\min(D_1,D_2)-a\big)^2}{4D_3 \min(D_1,D_2)}.
	\end{equation}
	Comparison of \eqref{eq:wlkwhclkhksdfdrejckc} and \eqref{eq:McD_reduced_explicit_2d} implies that $\mathcal{U}(\mathcal{A}_{C_3}) \leq \mathcal{U}(\mathcal{A}_{C_1})$, by Proposition \ref{prop:McD_reduced_explicit_allm}.
\end{proof}

\begin{proof}[Proof of Proposition \ref{jhgjhgejeerrsddeedffr}.]
	The idea of the proof is to show that $h^C$ can be chosen so that $C$ contains only one vertex of the hypercube, in which case we have the explicit formula obtained in Proposition \ref{prop:McD_reduced_explicit_allm}.

	First, observe that if $a > \sum_{j=1}^{m-1} D_j$, then it is not possible to satisfy the constraint $\E_\alpha [h^C] \leq 0$ whenever $C$ contains two or more vertices of the hypercube.  Next, if $C$  contains two vertices $s^1, s^2$ of the hypercube, and the Hamming distance between those points is $1$, then $C$ is symmetric with respect to a hyperplane containing the center of the hypercube and normal to one of its faces, and the problem reduces to dimension $m-1$. It follows from Lemma \ref{lem:symmetry} that the optimum of \eqref{eq:sup_over_AC0} can be achieved by a  $C$ that has only one element.  If $C$ contains two vertices of the hypercube, and the Hamming distance between those points is greater than or equal to $2$, then the constraint $\E_\alpha [h^C] \leq 0$ is infeasible if $a >  \sum_{j=1}^{m-2} D_j +D_m$ (because $h^C>0$ in that case).  Therefore, we conclude using Proposition \ref{prop:McD_reduced_explicit_allm}.
\end{proof}

\begin{proof}[Proof of Theorem \ref{thm:Hfdm2}.]
	First, we observe that we always have
	\begin{equation}
		\label{shdhgdjhgd}
		\mathcal{U}(\mathcal{A}_{\mathrm{Hfd}})\leq \mathcal{U}(\mathcal{A}_{\mathrm{McD}}).
	\end{equation}
	We observe from equation \eqref{eq:jssjhffdgjhge} that the maximizer ($h^C$) of $\mathcal{U}(\mathcal{A}_{\mathrm{McD}})$ is linear (see Figure \ref{fig:squareQm2}), and hence is also an admissible function under
	$\mathcal{U}(\mathcal{A}_{\mathrm{Hfd}})$. This finishes the proof.
\end{proof}

\begin{proof}[Proof of Theorem \ref{thm:Hfdm3}.]
	Just as for $m=2$, equation \eqref{shdhgdjhgd} is always satisfied. Next, observing that $\mathcal{F}_1$, in Theorem \ref{thm:m3}, is associated with
	a linear maximizer $h^C$ (see Figure \ref{fig:cubem3}), we deduce that
	\[
		\mathcal{F}_1 \leq \mathcal{U}(\mathcal{A}_{\mathrm{Hfd}})\leq \max(\mathcal{F}_1,\mathcal{F}_2).
	\]
	This finishes the proof for equation \eqref{eq:McD_reduhdm3njfd03}. Let us now prove equation \eqref{eq:McD_reduhdm3njfd03bis}.  Assuming that $\mathcal{U}(\mathcal{A}_{\mathrm{Hfd}})=\mathcal{U}(\mathcal{A}_{\mathrm{McD}})$, it follows that $\mathcal{U}(\mathcal{A}_{\mathrm{McD}})$ can be achieved by a linear function $h_0$. Since at the optimum we must have $\mathbb{E}[h_0]=0$, and since $\min(h_0,a)$ is also a maximizer of $\mathcal{U}(\mathcal{A}_{\mathrm{McD}})$, it follows that $\min(h_0,a)=h_0$. Using the linearity of $h_0$, and the lattice structure of the set of functions in $\mathcal{U}(\mathcal{A}_{\mathrm{McD}})$, we deduce that $h_0=h^C$, where $C$ contains only one vertex of the cube. It follows that $\mathcal{F}_1\geq \mathcal{F}_2$. This finishes the proof.
\end{proof}

\addcontentsline{toc}{section}{References}
\bibliographystyle{plain}
\bibliography{./refs}

\end{document}